\theoremstyle{plain}
\newtheorem{teo}{Theorem}[section]
\newtheorem{lema}[teo]{Lemma}
\newtheorem{prop}[teo]{Proposition}
\newtheorem{cor}[teo]{Corollary}
\theoremstyle{definition}
\newtheorem{dfn}[teo]{Definition}
\newtheorem{rem}[teo]{Remark}
\theoremstyle{definition}
\theoremstyle{definition}
\theoremstyle{remark}
\theoremstyle{remark}
\newtheorem{ex}[teo]{Example}
\newcommand{\liep}{\mathfrak{p}}
\newcommand{\lieg}{\mathfrak{g}}
\newcommand{\liea}{\mathfrak{a}}
\newcommand{\g}{\mathsf{G}}
\newcommand{\p}{\mathsf{P}}
\newcommand{\ko}{\mathsf{K}}
\newcommand{\w}{\mathsf{W}}
\newcommand{\n}{\mathsf{N}}
\newcommand{\m}{\mathsf{M}}
\newcommand{\f}{\mathscr{F}}
\newcommand{\rr}{\mathbb{R}}
\newcommand{\zz}{\mathbb{Z}}
\newcommand{\pp}{\mathbb{P}}
\newcommand{\calhu}{\mathcal{H}^\upsilon(X)}
\newcommand{\callu}{\mathcal{L}^\upsilon(X)}
\newcommand{\calpu}{\mathcal{P}^\upsilon(X)}
\newcommand{\calbu}{\mathcal{B}^\upsilon(X)}
\newcommand{\hr}{\textnormal{HR}(\phi)}
\newcommand{\hru}{\textnormal{HR}^\upsilon(\phi)}
\newcommand{\phr}{\mathbb{P}\textnormal{HR}(\phi)}
\newcommand{\phru}{\mathbb{P}\textnormal{HR}^\upsilon(\phi)}
\newcommand{\intm}{\textbf{I}_m(\psi,\widehat{\psi})}
\newcommand{\rintm}{\textbf{J}_m(\psi,\widehat{\psi})}
\newcommand{\rintBM}{\textbf{J}_{m^{\textnormal{BM}}(\psi)}(\psi,\widehat{\psi})}
\newcommand{\pphi}{\mathscr{P}(\phi)}
\newcommand{\ppsi}{\mathscr{P}(\psi)}
\newcommand{\pwpsi}{\mathscr{P}(\widehat{\psi})}
\newcommand{\ephi}{\mathscr{E}(\phi)}
\newcommand{\epsi}{\mathscr{E}(\psi)}
\newcommand{\od}{\mathrm{d}}
\newcommand{\cone}{\mathscr{L}_\rho^\Theta}
\newcommand{\dcone}{(\mathscr{L}_\rho^\Theta)^*}
\newcommand{\cha}{\mathfrak{X}(\Gamma, \g)} 
\newcommand{\ha}{\mathfrak{X}_{\Theta}(\Gamma, \g)} 
\newcommand{\haz}{\mathfrak{X}^{\tn{Z}}_{\Theta}(\Gamma, \g)}
\newcommand{\Teich}{\tn{Teich}} 
\newcommand{\teichrep}{\mathfrak{T}(S)}
\newcommand{\frakX}{\mathfrak X}
\newcommand{\Hit}{\textnormal{Hit}}
\newcommand{\Ben}{\textnormal{Ben}}
\newcommand{\Pos}{\textnormal{Pos}}
\newcommand{\bc}{\begin{center}}
\newcommand{\ec}{\end{center}}
\newcommand{\bg}{\partial\Gamma}
\newcommand{\bgs}{\partial^{(2)}\Gamma}
\newcommand{\gh}{\Gamma_{\tn{H}}}
\newcommand{\gsp}{\Gamma_{\tn{SP}}}
\newcommand{\G}{\Gamma}
\newcommand{\PGL}{{\sf PGL}}
\newcommand{\PSL}{{\sf PSL}}
\newcommand{\PO}{{\sf PO}}
\newcommand{\R}{\mathbb R}
\newcommand{\tn}{\textnormal}
\newcommand{\A}{{\sf A}}
\newcommand{\D}{{\sf D}}
\numberwithin{equation}{section}
\begin{document}

\title[Asymmetric metrics for Anosov representations]{Thurston's asymmetric metrics for Anosov representations}
\author{Le\'on Carvajales, Xian Dai, Beatrice Pozzetti and Anna Wienhard}
\address{\newline Le\'on Carvajales \newline Universidad de la Rep\'ublica \newline Facultad de Ciencias Econ\'omicas y de Administraci\'on \newline Instituto de Estad\'istica \newline e-mail: leon.carvajales@fcea.edu.uy \newline \newline Xian Dai \newline Ruhr-Universit\"at Bochum \newline Fakult\"at f\"ur Mathematik \newline e-mail: xian.dai@ruhr-uni-bochum.de \newline \newline Beatrice Pozzetti \newline Ruprecht-Karls Universit\"at Heidelberg \newline Mathematisches Institut \newline e-mail: pozzetti@mathi.uni-heidelberg.de \newline \newline Anna Wienhard \newline Ruprecht-Karls Universit\"at Heidelberg \newline Mathematisches Institut \newline HITS gGmbH, Heidelberg Institute for Theoretical Studies \newline e-mail: wienhard@mathi.uni-heidelberg.de}
\thanks{This investigation is part of the project B5 in the CRC/TRR 191 Funded by the Deutsche Forschungsgemeinschaft (DFG, German Research Foundation) – Project-ID 281071066 – TRR 191. LC, BP and AW acknowledge funding by the DFG through the project 338644254 (SPP2026). LC acknowledges funding by Agencia Nacional de Investigación e Innovación - FCE\_3\_2020\_1\_162840. AW is supported by the European Research Council under ERC-Advanced Grant 101018839. This work is supported by the Deutsche Forschungsgemeinschaft under Germany's Excellence Strategy EXC-2181/1 - 390900948 (the Heidelberg STRUCTURES Cluster of Excellence).}

\maketitle

\begin{abstract}
We provide a good dynamical framework allowing to generalize Thurston's asymmetric metric and the associated Finsler norm from Teichmüller space to large classes of Anosov representations. In many cases, including the space of Hitchin representations, this gives a (possibly asymmetric) Finsler distance. In some cases we explicitly compute the associated Finsler norm.
\end{abstract}

\setcounter{tocdepth}{1}
\tableofcontents

\section{Introduction}

Let $S$ be a connected orientable surface without boundary, with finitely many punctures and negative Euler characteristic. The \textit{Teichm{\"u}ller space} $\Teich(S)$ of $S$ is the space of isotopy classes of complete, finite area hyperbolic structures on $S$. For a pair of points $g_1,g_2\in \Teich(S)$, Thurston \cite{ThurstonStretch} introduces the function $$d_{\tn{Th}}(g_1,g_2):=\log\sup_{c}  \bigg(\frac{L_{g_2}(c)}{L_{g_1}(c)}\bigg),$$ \noindent where the supremum is taken over all free isotopy classes $c$ of closed curves in $S$ and, for $g\in \Teich(S)$, the number $L_g(c)$ denotes the length of the unique geodesic in the class $c$, with respect to the metric $g$. In \cite[Theorem 3.1]{ThurstonStretch} Thurston shows that $d_{\tn{Th}}(\cdot,\cdot)$ defines an asymmetric distance on $\Teich(S)$, and investigates many properties of this metric. For instance, he shows (see \cite[Theorem 8.5]{ThurstonStretch}) that $d_{\tn{Th}}(g_1,g_2)$ coincides with the least possible Lipschitz constant of homeomorphisms from $(S,g_1)$ to $(S,g_2)$ isotopic to $\tn{id}_S$, and constructs families of geodesic rays for this metric, called \textit{stretch lines}.

Thurston also constructs a Finsler norm $\Vert\cdot\Vert_{\tn{Th}}$ on the tangent bundle of Teichm\"uller space: For $v\in T_g\Teich(S)$, he sets 
\begin{equation}\label{eq: finsler teich introd}
    \Vert v\Vert_{\tn{Th}}:=\displaystyle\sup_{c} \frac{ \od_g(L_\cdot (c))(v)}{ L_g(c)}.  
\end{equation} \noindent This is indeed a non-symmetric Finsler norm, namely it is non-negative, non degenerate, $(\rr_{\geq 0})$-homogeneous and satisfies the triangle inequality. Moreover, Thurston shows that the path metric on $\Teich(S)$ induced by this Finsler norm coincides with $d_{\tn{Th}}(\cdot,\cdot)$.

Assume now that $S$ is closed. Then $\Teich(S)$ identifies with a connected component $\teichrep$ of the character variety 
$$\mathfrak{X}(\pi_1(S),\mathsf{PSL}(2, \mathbb{R})):=\textnormal{Hom}(\pi_1(S), \mathsf{PSL}(2, \mathbb{R}))/\!\!/\mathsf{PSL}(2, \mathbb{R}).$$ \noindent For a conjugacy class $[\gamma]$ in $\pi_1(S)$ and a point $\rho\in\teichrep$, we set 
$$L_\rho^{2\lambda_1}([\gamma]):=2\lambda_1(\rho(\gamma)),$$ 
\noindent where $\lambda_1(\rho(\gamma))$ denotes the logarithm of the spectral radius of $\rho(\gamma)$. Identifying isotopy classes of closed curves in $S$ with conjugacy classes in $\pi_1(S)$, one deduces from Thurston's result that \begin{equation}\label{e.Th}d^{2\lambda_1}_{\tn{Th}}(\rho_1,\rho_2):=\sup_{[\gamma]\in[\pi_1(S)]} \log \bigg(\frac{L^{2\lambda_1}_{\rho_2}([\gamma])}{L^{2\lambda_1}_{\rho_1}([\gamma])}\bigg)
\end{equation} \noindent defines an asymmetric distance on $\teichrep$. Similarly, one gets an expression for the associated Finsler norm. The main goal of this note is to generalize this viewpoint, constructing asymmetric metrics and Finsler norms in other representation spaces that share many features with $\teichrep$, namely, spaces of \textit{Anosov} representations, with a particular attention to \textit{Hitchin}, \textit{Benoist} and \textit{positive} representations.

\subsection{Results}
For a finitely generated group $\Gamma$ and a semisimple Lie group $\sf G$ of non-compact type, we denote by $\cha$ the character variety
$$\cha:= \textnormal{Hom}(\Gamma, \sf G)/\!\!/\sf G.$$
We furthermore denote by $\liea^+$ a chosen Weyl chamber of $\sf G$, and by $\lambda:\sf G\to \liea^+$ the Jordan projection. A functional $\varphi\in\liea^*$ is \emph{positive on the limit cone} of a representation $\rho\in\cha$ if for all $\gamma\in\G$ of infinite order one has $\varphi(\lambda(\rho(\gamma)))\geq c\|\lambda(\rho(\gamma))\|$ for some $c>0$ and some norm on $\liea$. With this at hand, for any functional $\varphi\in\liea^*$ positive on the limit cone of  $\rho\in\cha$, we can consider its \emph{$\varphi$-marked length spectrum} 
$$ L^\varphi_{\rho}(\gamma):= \varphi(\lambda(\rho(\gamma))),
 $$ and its $\varphi$-\emph{entropy} 
$$h_{\rho}^\varphi:=\displaystyle\limsup_{t\to\infty}\frac{1}{t}\log\#\{[\gamma]\in[\Gamma]:L_\rho^\varphi(\gamma)\leq t\}\in[0,\infty]. $$ 

If $\frakX\subset \cha$ is a subset, let $\varphi\in\liea^*$ be a functional positive on the limit cone of  each representation $\rho\in\frakX$. Naively, one would like to define $d_{\tn{Th}}^{\varphi}: \frakX \times \frakX  \to \rr\cup\{\infty\}$ by 
\begin{equation}\label{e.ThurINTRO}
d_{\tn{Th}}^\varphi(\rho_1,\rho_2):=\log\left(\displaystyle\sup_{[\gamma]\in[\Gamma]}\frac{ L_{\rho_2}^\varphi(\gamma) }{L_{\rho_1}^\varphi(\gamma)}\right)
\end{equation} 
\noindent and prove that it defines an asymmetric metric for some specific choices of $\frakX$. However, in this general setting, there could exist pairs of representations so that the $\varphi$-length spectrum of $\rho_1$ is uniformly larger than the $\varphi$-length spectrum of $\rho_2$: with the above definition, in that situation we would have $d_{\tn{Th}}^\varphi(\rho_1,\rho_2)<0$ (see Remark \ref{rem: AvoidDomination} and references therein). To resolve this issue, we normalize the length ratio by the entropy: $$d_{\tn{Th}}^\varphi(\rho_1,\rho_2):=\log\left(\displaystyle\sup_{[\gamma]\in[\Gamma]}\frac{h_{\rho_2}^\varphi}{h_{\rho_1}^\varphi}\frac{ L_{\rho_2}^\varphi(\gamma) }{L_{\rho_1}^\varphi(\gamma)}\right)$$ \noindent (see Definition \ref{def: asymmetric distance anosov reps} for more details in the case when $\G$ has torsion). Observe that in the case when $\frakX$ is the Teichmüller space,  $h_{\rho}^{2\lambda_1}=1$, and thus this definition is compatible with the one given in Equation \eqref{e.Th}.

By construction $d_{\tn{Th}}^\varphi$ satisfies the triangular inequality. Our first result determines a setting in which such function is furthermore positive and separates points. 
For this we consider the definition of the space  of $\Theta$-\textit{Anosov} representations, an open subset of the character variety $\cha$ depending on a subset $\Theta$ of the set of simple roots $\Pi$ of $\sf G$ (we refer the reader to Section \ref{sec, AnosovReps} for the precise definition).  For any such set $\Theta$ we denote by 
$$\liea_\Theta:=\bigcap_{\alpha\in\Pi\setminus \Theta}\ker \alpha$$
and by $\liea_\Theta^*<\liea^*$ the set functionals invariant under the unique projection $p_\Theta:\liea\to\liea_\Theta$ 
invariant under the subgroup $\w_\Theta$ of the Weyl group of $\g$ fixing $\liea_\Theta$ pointwise.
\begin{teo}[See Theorems \ref{thm: dth for anosov} and \ref{thm:rigidity}]\label{thm:INTROZdense}
Assume that $\g$ is connected, real algebraic, simple and center free. Assume furthermore that $\frakX\subset \cha$ consists only of Zariski dense $\Theta$-Anosov representations. Let $\varphi\in\liea_\Theta^*$ be positive on the limit cone of each representation in $\frakX$, and suppose that an automorphism $\tau:\sf G\to\sf G$ leaving $\varphi$ invariant is necessarily inner. Then $d_{\tn{Th}}^\varphi(\cdot,\cdot)$ defines a (possibly asymmetric) metric on $\frakX$.
\end{teo}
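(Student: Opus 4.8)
The plan is to check the axioms of a (possibly asymmetric) metric for $d_{\tn{Th}}^\varphi$ on $\frakX$ one at a time. Vanishing on the diagonal, $d_{\tn{Th}}^\varphi(\rho,\rho)=0$, and the triangle inequality are immediate from the definition (a supremum of telescoping quotients is at most the product of the two suprema, as already noted in the introduction). So the real content is in three points: finiteness, non-negativity, and separation of points. I expect finiteness and non-negativity to be packaged in Theorem \ref{thm: dth for anosov}, and separation to rest on the rigidity statement Theorem \ref{thm:rigidity}.

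\emph{Finiteness and non-negativity.} Since every $\rho\in\frakX$ is $\Theta$-Anosov and $\varphi\in\liea_\Theta^*$ is positive on its limit cone, the $\varphi$-length spectrum is comparable, uniformly in $\gamma$, to a fixed stable translation-length function $\ell_\G$ on $\G$ (a hallmark of the Anosov condition): $c_\rho\,\ell_\G(\gamma)\le L_\rho^\varphi(\gamma)\le C_\rho\,\ell_\G(\gamma)$ for all infinite-order $\gamma$. Comparing counting functions gives $0<h_\rho^\varphi<\infty$, and the two-sided comparison gives $L_{\rho_2}^\varphi(\gamma)/L_{\rho_1}^\varphi(\gamma)\le C_{\rho_2}/c_{\rho_1}$ uniformly in $\gamma$, whence $d_{\tn{Th}}^\varphi(\rho_1,\rho_2)<\infty$. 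For non-negativity I would argue by contradiction: if $d_{\tn{Th}}^\varphi(\rho_1,\rho_2)=-\delta<0$ then $L_{\rho_2}^\varphi\le\kappa\,L_{\rho_1}^\varphi$ pointwise with $\kappa=e^{-\delta}h_{\rho_1}^\varphi/h_{\rho_2}^\varphi$, so $\{[\gamma]:L_{\rho_1}^\varphi(\gamma)\le t\}\subseteq\{[\gamma]:L_{\rho_2}^\varphi(\gamma)\le\kappa t\}$; using that $h_{\rho_2}^\varphi$ is a $\limsup$ (so $\#\{[\gamma]:L_{\rho_2}^\varphi(\gamma)\le s\}\le e^{(h_{\rho_2}^\varphi+\epsilon)s}$ for all large $s$) and that $h_{\rho_1}^\varphi$ is a $\limsup$ (so $\#\{[\gamma]:L_{\rho_1}^\varphi(\gamma)\le t\}\ge e^{(h_{\rho_1}^\varphi-\epsilon)t}$ along a sequence $t\to\infty$), one obtains $(h_{\rho_2}^\varphi+\epsilon)\kappa\ge h_{\rho_1}^\varphi-\epsilon$ for every $\epsilon>0$, hence $e^{-\delta}\ge 1$, a contradiction. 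As a byproduct, $d_{\tn{Th}}^\varphi(\rho_1,\rho_2)=0$ holds if and only if $h_{\rho_2}^\varphi L_{\rho_2}^\varphi(\gamma)\le h_{\rho_1}^\varphi L_{\rho_1}^\varphi(\gamma)$ for all infinite-order $\gamma$.

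\emph{Separation of points.} Assume $d_{\tn{Th}}^\varphi(\rho_1,\rho_2)=0$, so $h_{\rho_2}^\varphi L_{\rho_2}^\varphi\le h_{\rho_1}^\varphi L_{\rho_1}^\varphi$ pointwise. I would first establish a domination-rigidity: realize each $\varphi$-length spectrum as the period function of a Hölder reparametrization of the geodesic flow of $\G$, normalized to topological entropy $1$, so that the pressures of $-h_{\rho_1}^\varphi L_{\rho_1}^\varphi$ and of $-h_{\rho_2}^\varphi L_{\rho_2}^\varphi$ both vanish; since a Hölder function with non-negative periods is cohomologous to a non-negative one and the Bowen--Margulis measure has full support, a standard equilibrium-state computation forces the two reparametrizations to be cohomologous, i.e. $h_{\rho_2}^\varphi L_{\rho_2}^\varphi(\gamma)=h_{\rho_1}^\varphi L_{\rho_1}^\varphi(\gamma)$ for all $\gamma$. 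Thus the $\varphi$-length spectra of $\rho_1$ and $\rho_2$ are proportional, and the marked length spectrum rigidity for Zariski dense $\Theta$-Anosov representations with $\varphi\in\liea_\Theta^*$ (Theorem \ref{thm:rigidity}) produces $\tau\in\tn{Aut}(\g)$ with $\tau^*\varphi=\varphi$ such that $\rho_2$ is conjugate to $\tau\circ\rho_1$ in $\cha$; here one recovers from the scalar length data the limit map of $\rho$ and its associated Hölder cocycle (a Ledrappier-type correspondence) and uses Zariski density to identify $\rho$, up to the stabilizer of $\varphi$ in $\tn{Aut}(\g)$, among all representations carrying that cocycle. Finally, the hypothesis that every automorphism of $\g$ fixing $\varphi$ is inner, together with $\g$ being connected and center free (so that $\tn{Inn}(\g)$ is exactly conjugation by group elements), shows that $\tau\circ\rho_1$ is conjugate to $\rho_1$; hence $\rho_1=\rho_2$ in $\frakX\subseteq\cha$. (If one only wants the weaker separation axiom $d_{\tn{Th}}^\varphi(\rho_1,\rho_2)=d_{\tn{Th}}^\varphi(\rho_2,\rho_1)=0\Rightarrow\rho_1=\rho_2$, the domination-rigidity step is unnecessary, as the two vanishings give $h_{\rho_2}^\varphi L_{\rho_2}^\varphi=h_{\rho_1}^\varphi L_{\rho_1}^\varphi$ outright.)

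The main obstacle is clearly the marked length spectrum rigidity of the last step — recovering the conjugacy class of $\rho_1$ up to $\tn{Aut}(\g)$-equivalence from the single scalar invariant $\gamma\mapsto\varphi(\lambda(\rho_1(\gamma)))$, rather than from the full Jordan projection. This is precisely where simplicity of $\g$, the Zariski density assumption, and the condition $\varphi\in\liea_\Theta^*$ all genuinely enter, and I expect it to be the content of Theorem \ref{thm:rigidity}, from which — combined with the elementary analytic arguments above, which presumably form Theorem \ref{thm: dth for anosov} — Theorem \ref{thm:INTROZdense} follows.
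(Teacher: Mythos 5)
Your proposal is correct, and its skeleton is the paper's: the statement is proved there as Corollary \ref{cor: distance in zariski dense components}, by combining Theorem \ref{thm: dth for anosov} (non-negativity, triangle inequality, and $d_{\tn{Th}}^\varphi(\rho_1,\rho_2)=0\Leftrightarrow h^\varphi_{\rho_1}L^\varphi_{\rho_1}=h^\varphi_{\rho_2}L^\varphi_{\rho_2}$) with Theorem \ref{thm:rigidity} and the hypothesis that an automorphism fixing $\varphi$ is inner; your last step reproduces this reduction exactly. Where you genuinely diverge is inside the content of Theorem \ref{thm: dth for anosov}. You obtain non-negativity from an elementary entropy-counting argument using only $0<h_\rho^\varphi<\infty$, whereas the paper transports everything to reparametrizations of the Gromov--Mineyev flow via Theorem \ref{thm: reparametrizing theorem} and deduces non-negativity from Propositions \ref{prop:sup of periods and measures} and \ref{prop: BCLS renorm int rigidity}; your route is more elementary and bypasses the coding entirely, but it only yields the one-sided domination at distance zero. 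For the upgrade to equality of renormalized spectra you then re-prove, rather than quote, the rigidity of \cite[Proposition 3.8]{BCLS} (Proposition \ref{prop: BCLS renorm int rigidity} here) by a pressure argument; this works, and in fact the positive-Liv\v{s}ic step you invoke is unnecessary: domination of periods gives $\int(h^\varphi_{\rho_1}g_1-h^\varphi_{\rho_2}g_2)\,\od m\geq 0$ for every invariant measure by density of periodic-orbit measures, and evaluating the two zero pressures at the equilibrium state of $-h^\varphi_{\rho_1}g_1$ shows it is also the equilibrium state of $-h^\varphi_{\rho_2}g_2$, whence cohomology by Theorem \ref{teo: CohomologousEquilibrium}. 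The one point you pass over is torsion: when $\G$ has finite-order elements the period of the orbit attached to $[\gamma]$ is only a divisor of $L_\rho^\varphi(\gamma)$, and translating equality of renormalized periods of the two flows back into equality of $h^\varphi_\rho L^\varphi_\rho$ on all of $[\G]$ (as needed to invoke Theorem \ref{thm:rigidity} as stated) requires the multiplicity $n_\rho^\varphi(\gamma)$ to be independent of $\rho$, which is Lemma \ref{lem: strongly primitive for other reparametrization}; as written your argument is complete only for torsion-free $\G$. Finally, your parenthetical guess at the proof of Theorem \ref{thm:rigidity} (a Ledrappier-type reconstruction of the limit map) is not how that theorem is actually obtained --- it follows from Benoist's limit-cone theorem applied, via \cite[Corollary 11.6]{BCLS}, to the product representation --- but since you use it only as a black box this does not affect your proof.
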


The Thurston distance on the Teichm\"uller space of a closed surface is complete, however in general the distance $d_{\tn{Th}}^\varphi$ might be incomplete also due to the entropy renormalization. This is for example the case for the Teichm\"uller space of surfaces with boundary of variable length.
It would be interesting to investigate the relation between suitable metric completions and subsets of the length spectrum compactification, as introduced in \cite{Parreau}.

Provided we have a good understanding of all possible Zariski closures in a given subset $\frakX\subset\cha$, we can weaken the Zariski density assumption. This is for instance the case for the set of \emph{Benoist representations}. A Benoist representation is a representation $\rho:\G\to\mathsf{PGL}(d+1,\rr)$ that preserves and acts cocompactly on a strictly convex domain $\Omega_\rho\subset\pp(\rr^{d})$. We let $\Ben_d(\G)$ be the space of conjugacy classes of Benoist representations, which by work of Koszul \cite{Koszul} and Benoist \cite{BenoistDivIII} is a union of connected components of the character variety $\frak X(\G,\mathsf{PGL}(d+1,\rr))$. Benoist representations are  $\Theta$-Anosov for $\Theta=\{\alpha_1,\alpha_d\}$, see \cite{BenoistDivI} and \cite[Proposition 6.1]{GW}. In particular, the logarithm of the spectral radius $\lambda_1$ and the \textit{Hilbert length function} $\tn{H}:=\lambda_1-\lambda_{d+1}$ belong to $\liea_\Theta^*$. Here we recall that $\lambda_{d+1}(g)$ denotes the logarithm of the smallest eigenvalue of $g$. 

Since Benoist computed the possible Zariski closures of a Benoist representation \cite{BenoistAutomorphismes}, the argument of Theorem \ref{thm:INTROZdense} can be pushed further to show the following.

\begin{teo}[See Corollary \ref{cor: asymm for benoist hilbert} and Remark \ref{rem: other functionals in benoist components}]\label{thm:INTROBenoist} The following holds:
\begin{enumerate}
    \item 
The function $d_{\tn{Th}}^{\lambda_1}: \Ben_d(\G) \times \Ben_d(\G)  \to \rr$ given by $$d_{\tn{Th}}^{\lambda_1}(\rho,\widehat{\rho}):=\log\left(\displaystyle\sup_{[\gamma]\in[\Gamma]} \frac{h_{\widehat{\rho}}^{\lambda_1}}{h_{\rho}^{\lambda_1}} \frac{ L_{\widehat{\rho}}^{\lambda_1}(\gamma) }{L_{\rho}^{\lambda_1}(\gamma)}\right)$$ \noindent defines a (possibly asymmetric) distance on $\Ben_d(\G)$. 

\item The function $d_{\tn{Th}}^{\tn{H}}: \Ben_d(\G) \times \Ben_d(\G)  \to \rr$ given by $$d_{\tn{Th}}^{\tn{H}}(\rho,\widehat{\rho}):=\log\left(\displaystyle\sup_{[\gamma]\in[\Gamma]} \frac{h_{\widehat{\rho}}^{\tn{H}}}{h_{\rho}^{\tn{H}}} \frac{ L_{\widehat{\rho}}^{\tn{H}}(\gamma) }{L_{\rho}^{\tn{H}}(\gamma)}\right)$$ \noindent is non-negative, and one has $$d_{\tn{Th}}^{\tn{H}}(\rho,\widehat{\rho})=0 \Leftrightarrow \rho=\widehat{\rho} \tn{ or } \widehat{\rho}=\rho^\star,$$ \noindent where $\rho^\star$ is the \tn{contragredient} of $\rho$.
\end{enumerate}
\end{teo}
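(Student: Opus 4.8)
My plan is to reduce the two claims to the Zariski dense situation, which is covered by Theorems~\ref{thm:INTROZdense} and~\ref{thm:rigidity}, by feeding in Benoist's classification~\cite{BenoistAutomorphismes} of the Zariski closures of Benoist representations. First I would record the structural facts: $\g=\PGL(d+1,\rr)$ is connected, real algebraic, simple and center free; Benoist representations are $\Theta$-Anosov for $\Theta=\{\alpha_1,\alpha_d\}$; and $\lambda_1$ and $\tn{H}=\lambda_1-\lambda_{d+1}$ both lie in $\liea_\Theta^*$ and are automatically positive on the limit cone of every $\rho\in\Ben_d(\G)$, since both functionals are strictly positive on $\overline{\liea^+}\setminus\{0\}$ and the limit cone is a closed subcone thereof. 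Consequently $d_{\tn{Th}}^{\lambda_1}$ and $d_{\tn{Th}}^{\tn{H}}$ are well defined, automatically satisfy the triangle inequality, and are non-negative: setting $\kappa:=\sup_{[\gamma]}L_{\widehat\rho}^\varphi(\gamma)/L_\rho^\varphi(\gamma)$ we get the inclusion $\{[\gamma]:L_\rho^\varphi(\gamma)\le t/\kappa\}\subseteq\{[\gamma]:L_{\widehat\rho}^\varphi(\gamma)\le t\}$, hence $h_{\widehat\rho}^\varphi\ge h_\rho^\varphi/\kappa$, i.e.\ $(h_{\widehat\rho}^\varphi/h_\rho^\varphi)\,\kappa\ge 1$. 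So all that remains is to identify the zero locus of each function.

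Next I would invoke (the proof of) Theorem~\ref{thm: dth for anosov} to reduce the zero-locus question to $\varphi$-marked length spectrum rigidity. Renormalizing the $\varphi$-length cocycles of $\rho$ and $\widehat\rho$ over the geodesic flow of $\G$ so that they have vanishing topological pressure---which amounts to scaling them by $h_\rho^\varphi$, respectively $h_{\widehat\rho}^\varphi$---one has that $d_{\tn{Th}}^\varphi(\rho,\widehat\rho)=0$ holds if and only if these two renormalized cocycles are Liv\v{s}ic cohomologous, equivalently
\[
h_\rho^\varphi\, L_\rho^\varphi(\gamma)=h_{\widehat\rho}^\varphi\, L_{\widehat\rho}^\varphi(\gamma)\qquad\text{for all }[\gamma]\in[\Gamma].
\]
Indeed, $d_{\tn{Th}}^\varphi(\rho,\widehat\rho)=0$ says that one renormalized period function pointwise dominates the other while the two entropies agree; comparing topological pressures then forces the two cocycles to share their unique pressure-zero equilibrium state, hence to differ by a coboundary plus a constant, and the equality of entropies kills the constant. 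This condition is symmetric, so both assertions are now questions about which pairs in $\Ben_d(\G)$ are $\varphi$-isospectral after entropy renormalization.

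Then I would bring in Benoist's classification~\cite{BenoistAutomorphismes}: the Zariski closure $\mathsf H_\rho$ of a Benoist representation is, up to conjugacy, either $\PGL(d+1,\rr)$ or the subgroup $\PO(d,1)$ preserving a Lorentzian form, the latter exactly when $\Omega_\rho$ is an ellipsoid (the Fuchsian case). The rigidity underlying Theorem~\ref{thm:rigidity}, run in this setting, upgrades the displayed equality to: $\mathsf H_\rho$ and $\mathsf H_{\widehat\rho}$ are conjugate in $\PGL(d+1,\rr)$ (so no mixed Fuchsian/Zariski-dense pair can be $\varphi$-isospectral), and $\widehat\rho$ is conjugate to $\tau\circ\rho$ for some $\tau\in\mathrm{Aut}(\mathsf H)$, $\mathsf H:=\mathsf H_\rho$, with $\tau^*$ fixing the restriction of $\varphi$ to $\liea_{\mathsf H}$. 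If $\mathsf H=\PGL(d+1,\rr)$, then $\mathrm{Out}(\mathsf H)\cong\zz/2$ is generated by the contragredient $\tau_0\colon g\mapsto{}^tg^{-1}$, whose action on $\liea^*$ sends $\lambda_j$ to $-\lambda_{d+2-j}$; thus $\tau_0^*\lambda_1=-\lambda_{d+1}\neq\lambda_1$ (using $d\ge 2$), while $\tau_0^*\tn{H}=\tn{H}$. Hence for $\varphi=\lambda_1$ only inner $\tau$ are allowed, so $\widehat\rho=\rho$; for $\varphi=\tn{H}$ we get $\widehat\rho\in\{\rho,\rho^\star\}$ with $\rho^\star=\tau_0\circ\rho$ the contragredient (again a Benoist representation), and conversely $\rho^\star$ shares both the $\tn{H}$-length spectrum and the $\tn{H}$-entropy of $\rho$, so $d_{\tn{Th}}^{\tn{H}}(\rho,\rho^\star)=0$. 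If instead $\mathsf H$ is conjugate to $\PO(d,1)$, then $\liea_{\mathsf H}$ is one-dimensional and both $\lambda_1|_{\liea_{\mathsf H}}$ and $\tn{H}|_{\liea_{\mathsf H}}$ are positive multiples of the unique restricted simple root of $\mathfrak{so}(d,1)$; since every automorphism of $\PO(d,1)$ acts on $\liea_{\mathsf H}^*$ by $\pm 1$ and $-1$ is realized by the restricted Weyl group, after composing with an inner automorphism every $\tau$ fixes these functionals, so $\widehat\rho$ is conjugate to $\tau\circ\rho$ for an arbitrary $\tau\in\mathrm{Aut}(\PO(d,1))$. But the standard $(d+1)$-dimensional representation $\sigma_0$ of $\PO(d,1)$ is irreducible and self-dual, so $\sigma_0\circ\tau\cong\sigma_0$ and $\tau$ becomes inner inside $\PGL(d+1,\rr)$; thus $\widehat\rho=\rho$ in $\Ben_d(\G)$ (and $\rho^\star=\rho$ on this stratum). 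Assembling the cases proves~(1) and~(2).

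The hard part, which I would import from Theorems~\ref{thm: dth for anosov} and~\ref{thm:rigidity} rather than reprove, is precisely that $d_{\tn{Th}}^\varphi(\rho,\widehat\rho)=0$ forces the renormalized $\varphi$-length spectra to coincide (the thermodynamic/Liv\v{s}ic step above) and, conversely, that this entropy-renormalized $\varphi$-isospectrality is rigid enough to recover $\widehat\rho$ from $\rho$ up to a $\varphi$-preserving automorphism of the common Zariski closure---in particular to force that closure to be well defined, which is what excludes the mixed Fuchsian/Zariski-dense pairs. Granting this input, the genuinely new work in the Benoist setting is the elementary bookkeeping of automorphisms of $\PGL(d+1,\rr)$ and $\PO(d,1)$ that fix $\lambda_1$ respectively $\tn{H}$, together with the appeal to Benoist's classification~\cite{BenoistAutomorphismes} telling us exactly which Zariski closures arise.
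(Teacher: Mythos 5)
Your argument is correct in substance and, in the main (Zariski dense) stratum, it is the same as the paper's: reduce via Theorem \ref{thm: dth for anosov} (your pressure/equilibrium-state sketch is essentially a re-derivation of Proposition \ref{prop: BCLS renorm int rigidity} in the pressure-zero case) to renormalized $\varphi$-length spectrum rigidity, invoke Benoist's dichotomy for Zariski closures, apply Theorem \ref{thm:rigidity}, and observe that a non-inner automorphism acts on $\liea$ by the opposition involution $\iota$, which fixes $\tn{H}$ but not $\lambda_1$. Your elementary counting argument for non-negativity ($h^\varphi_{\widehat\rho}\geq h^\varphi_\rho/\kappa$) is a nice shortcut the paper does not use, though, as you note, it only gives positivity and not the zero-locus characterization, which must still come from the thermodynamic rigidity.

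Where you genuinely diverge is the Fuchsian stratum. The paper disposes of it by Mostow rigidity ($d>2$) and hyperbolic length-spectrum rigidity ($d=2$), whereas you argue that the abstract isomorphism $\sigma$ furnished by Theorem \ref{thm:rigidity} is realized by conjugation inside $\PGL(d+1,\rr)$, because precomposing the standard embedding of $\mathsf{PSO}_0(d,1)$ with any automorphism does not change its equivalence class. The conclusion is true, but your stated justification -- ``the standard representation is irreducible and self-dual, so $\sigma_0\circ\tau\cong\sigma_0$'' -- is not a proof: irreducibility and self-duality of $\sigma_0$ by themselves say nothing about how an arbitrary outer automorphism acts on the representation ring. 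What you actually need is that every automorphism of $\mathsf{PSO}_0(d,1)$ is induced by conjugation by an element of $\PO(d,1)\subset\PGL(d+1,\rr)$ (equivalently, that the only candidate exotic symmetry, triality for $d=7$, does not preserve the real form $\mathfrak{so}(7,1)$, since its half-spin representations are not real while the vector representation is). With that standard fact your argument closes, and it is arguably a uniform alternative to the paper's Mostow/Teichm\"uller step; alternatively you can simply quote Mostow rigidity and surface length-spectrum rigidity as the paper does. Everything else (the converse for $\rho^\star$, the fact that $\rho^\star$ is again Benoist, and the exclusion of mixed Fuchsian/Zariski-dense pairs because the two identity components of the Zariski closures must be isomorphic) is fine.
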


A similar result holds for a class of representations of fundamental groups of closed real hyperbolic manifolds into $\mathsf{PO}_0(2,q)$ called \textit{AdS-quasi-Fuchsian}. These were introduced by Mess \cite{Mess} and Barbot-M\'erigot \cite{Barbot,BM}. See Corollary \ref{cor: asymm for AdSQF hilbert}.

The renormalization by the entropy in Equation \eqref{e.ThurINTRO} while necessary to ensure positivity, might seem inconvenient: it may be difficult to obtain concrete control on the entropy, and thus the relation between such distance and the best Lipschitz constant of associated equivariant maps is lost. There are, however, natural classes of representations on which the entropy of some explicit functionals in the Levi-Anosov subspace $\liea_\Theta^*$ is constant. For instance, this is the case for the \textit{unstable Jacobian} $\tn{J}_{d-1}:=d\lambda_1+\lambda_{d+1}$ on Benoist components, thanks to work of Potrie-Sambarino \cite[Corollary 1.7]{PS}. In Corollary \ref{cor: unst Jac for benoist hilbert} we define the corresponding metric. Another important example is the case of \emph{Hitchin representations}, the representations in the connected component $\Hit(S,\g)$ of $\mathfrak X(\pi_1(S),\sf G)$, for a split real Lie group $\sf G$ and the fundamental group of a closed surface $S$, containing the composition of a lattice embedding $\pi_1(S)\to\PSL(2,\rr)$ and the principal embedding $\PSL(2,\rr)\to\sf G$ \cite{Lab, FG}.  Hitchin representations are Anosov with respect to the minimal parabolic \cite{FG,GLW}, so that $\liea_\Theta^*=\liea^*$ and the entropy with respect to all simple roots is constant on $\Hit(S,\g)$ and equal to one, when $\g$ is classical \cite{PS,PSW1}. All possible Zariski closures of $\mathsf{PSL}(d,\rr)$-Hitchin representations have been determined by Guichard \cite{Guichard}, and recently a written proof appeared in \cite{sambarino2020infinitesimal}. This result also covers $\mathsf{PSp}(2r,\rr)$ and $\mathsf{PSO}(p,p+1)$-Hitchin representations, but not the Hitchin component of $\mathsf{PSO}_0(p,p)$ (see Subsection \ref{subsec: rigidity hitchin} for details). As we explain in Subsection \ref{subsec: rigidity hitchin}, Sambarino's approach also works in that case. We deduce the following. 

\begin{teo}[See Corollary \ref{cor: asymm for hitchin roots}]\label{thm:INTROHitchin}
Let $\g$ be an adjoint, simple, real-split Lie group of classical type. Let $\alpha$ be any simple root of $\sf G$, with the exception of the roots listed in Table \ref{table:1}. Then the function $d_{\tn{Th}}^{\alpha}: \Hit(S,\g) \times \Hit(S,\g)  \to \rr$ given by $$d_{\tn{Th}}^{\alpha}(\rho,\widehat{\rho}):=\log\left(\displaystyle\sup_{[\gamma]\in[\Gamma]}  \frac{ L_{\widehat{\rho}}^{\alpha}(\gamma) }{L_{\rho}^{\alpha}(\gamma)}\right)$$ \noindent defines an asymmetric distance on $\Hit(S,\sf G)$. 
\end{teo}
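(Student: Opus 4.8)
The plan is to deduce this as a special case of Theorem \ref{thm:INTROZdense}, checking that its three hypotheses (Zariski density, $\varphi \in \liea_\Theta^*$ positive on the limit cone, and the automorphism rigidity) are satisfied when $\frakX = \Hit(S,\g)$ and $\varphi = \alpha$ is a simple root not in Table \ref{table:1}. The first point to recall is that Hitchin representations are Anosov with respect to the minimal parabolic \cite{FG,GLW}, so $\Theta = \Pi$ and hence $\liea_\Theta = \liea$, $\liea_\Theta^* = \liea^*$; in particular every simple root $\alpha$ lies in $\liea_\Theta^*$ automatically, so the condition ``$\varphi \in \liea_\Theta^*$'' is vacuous here. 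Next, one checks that each simple root $\alpha$ is positive on the limit cone of every Hitchin representation: this follows because Hitchin representations are $\{\alpha\}$-Anosov, and for an Anosov representation with respect to a parabolic the corresponding roots are positive on the limit cone by the very definition of Anosov (the gap in the relevant singular-value/Jordan data grows linearly). The only subtlety is that Theorem \ref{thm:INTROZdense} as stated requires \emph{Zariski dense} representations, whereas $\Hit(S,\g)$ contains non-Zariski-dense points (e.g.\ the Fuchsian locus and, more generally, representations whose Zariski closure is a proper subgroup). Here one invokes the classification of possible Zariski closures of Hitchin representations due to Guichard \cite{Guichard}, with the written proof in \cite{sambarino2020infinitesimal}, extended to $\mathsf{PSO}_0(p,p)$ as indicated in Subsection \ref{subsec: rigidity hitchin}: the Zariski closure of a Hitchin representation is again (the image of) a Hitchin representation into a smaller classical split group, sitting via a principal-type embedding. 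Because the relevant quantities $L^\alpha_\rho(\gamma)$ only depend on $\lambda(\rho(\gamma))$, one runs the argument of Theorem \ref{thm:INTROZdense} on each Zariski-closure stratum and concludes exactly as in the Benoist case (Theorem \ref{thm:INTROBenoist}) that $d^\alpha_{\tn{Th}}$ separates points on all of $\Hit(S,\g)$.

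The heart of the matter, and the reason the roots in Table \ref{table:1} must be excluded, is the rigidity hypothesis: an automorphism $\tau : \g \to \g$ fixing $\varphi = \alpha$ must be inner. The outer automorphism group of an adjoint simple classical group acts on $\liea^*$ by a subgroup of the symmetries of the Dynkin diagram (for type $\A_n$ it is the order-two flip, for $\D_n$ with $n\neq 4$ the order-two flip of the two terminal nodes, for $\D_4$ the full $\mathfrak{S}_3$, and trivial for types $\B_n$, $\Ce_n$). So for most simple roots $\alpha$ the only diagram automorphism fixing $\alpha$ is the identity, forcing $\tau$ inner; the exceptions are precisely the roots that are fixed by a nontrivial diagram symmetry — the middle root of $\A_n$ with $n$ odd, and certain roots in the $\D_n$ cases — and these are exactly what Table \ref{table:1} records. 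I would carry out this step by going type-by-type: write down the diagram automorphism group, its induced permutation action on the set of simple roots, identify the fixed roots, and for each non-excluded $\alpha$ conclude that $\tau$ induces the trivial diagram automorphism, hence differs from an inner automorphism by at most something trivial, hence is inner (using that for adjoint groups $\mathrm{Aut}(\g)/\mathrm{Inn}(\g) \cong \mathrm{Out}(\g)$ is exactly the diagram-automorphism group).

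With all three hypotheses verified, Theorem \ref{thm:INTROZdense} (via the stratified argument above, exactly as in Theorem \ref{thm:INTROBenoist}) gives that $d^\alpha_{\tn{Th}}(\cdot,\cdot)$ is a well-defined, non-negative, point-separating function satisfying the triangle inequality on $\Hit(S,\g)$, i.e.\ a possibly asymmetric metric. Finally, to get the clean formula in the statement — with no entropy renormalization factor — one uses that the entropy $h^\alpha_\rho$ with respect to any simple root is constant and equal to $1$ on $\Hit(S,\g)$ for $\g$ classical, by Potrie–Sambarino and Potrie–Sambarino–Wienhard \cite{PS,PSW1}; this makes the ratio $h^\alpha_{\widehat\rho}/h^\alpha_\rho$ equal to $1$ and recovers the expression displayed in the theorem. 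I expect the main obstacle to be the careful bookkeeping in the Zariski-closure step: one must be sure that Guichard's classification, together with the $\mathsf{PSO}_0(p,p)$ supplement of Subsection \ref{subsec: rigidity hitchin}, genuinely lets one reduce to the Zariski-dense case for \emph{every} excluded-root-free $\alpha$, and that the rigidity hypothesis also survives the restriction to each sub-group appearing as a Zariski closure (which it does, since a sub-diagram of a classical diagram with no nontrivial symmetry fixing $\alpha$ still has none).
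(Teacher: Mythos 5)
Your overall strategy is the same as the paper's (Corollary \ref{cor: asymm for hitchin roots}): remove the entropy factor using $h^\alpha_\rho\equiv 1$ \cite{PS,PSW1}, get non-negativity, the triangle inequality and the degeneration criterion from the general flow machinery, and then prove length-spectrum rigidity on $\Hit(S,\g)$ via the classification of Zariski closures and the diagram-automorphism condition encoded in Table~\ref{table:1}. The gap is in the step where the real work lies, namely rigidity for \emph{non}-Zariski-dense representations. ``Running the argument of Theorem~\ref{thm:INTROZdense} on each Zariski-closure stratum'' does not address the actual difficulty: the two representations with equal $\alpha$-length spectra may have \emph{different} Zariski closures (one Zariski dense, the other Fuchsian, say), so there is no single stratum on which to argue. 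The correct mechanism, used in the paper (Theorem~\ref{Thm:rigitity length hitchin}), is to apply Theorem~\ref{thm:rigidity} to the pair, which requires verifying that each Zariski closure is simple, center-free, algebraic and contains $\rho(\G)$ in its identity component --- for $\mathsf{PSO}_0(p,p)$ this needs genuine arguments (reductivity via Fock--Goncharov positivity, center-freeness: Lemmas~\ref{lem: Zclosure hitchin is reductive}--\ref{lem: center of Z closure hitchin opp} and Corollaries~\ref{cor: Z closure hitchin opp simple and center free}, \ref{cor: Z closure0 contains rho}) --- and which then yields an isomorphism $\sigma:(\g_\rho)_0\to(\g_{\widehat\rho})_0$ with $\sigma\circ\rho=\widehat\rho$ and $\alpha\circ\pi_{\g_{\widehat\rho}}\circ\sigma=\alpha\circ\pi_{\g_\rho}$, followed by a case analysis on the closure. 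Your heuristic that ``a sub-diagram with no nontrivial symmetry fixing $\alpha$ still has none'' is both imprecise and wrong where it matters: for the intermediate closures (types $\B$, $\Ce$, $\Ge_2$) what saves the argument is that their outer automorphism groups are trivial (the restricted functional $\alpha\circ\pi_{\g_\rho}$ is not a simple root of the subgroup, so ``fixing $\alpha$'' is not the relevant condition), while the principal $\mathsf{PSL}(2,\rr)$ closure has a nontrivial outer automorphism even though $\A_1$ has no diagram symmetry; that case must be, and in the paper is, handled separately by classical length-spectrum rigidity in Teichm\"uller space.

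A second, smaller omission: the statement claims an \emph{asymmetric} distance, not merely a possibly asymmetric one, and your argument stops at ``possibly asymmetric''. The paper concludes by noting that Thurston's examples of asymmetry in $\Teich(S)$ persist here because $\Hit(S,\g)$ contains a copy of $\Teich(S)$; without this (easy) final step the statement as written is not proved.
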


\begin{table}[h!]

  \begin{center}
    \begin{tabular}{c|c|c|c} 
      Type & Group & Diagram &Bad roots  \\
      \hline
 
     $\A_{2n-1}$ & $\PSL_{2n}(\rr)$ & $\begin{dynkinDiagram}A{oo.*.oo}\draw[thick] (root 1) to [out=-45, in=-135] (root 5);\draw[thick] (root 2) to [out=-45, in=-135] (root 4);\end{dynkinDiagram}$&$\{\alpha_n\}$
      \\\hline
   
      \multirow{2}{*}{$\D_n$} & $\PO(n,n)$ $\forall n\geq5$ & $\begin{dynkinDiagram}D{**.**oo}\draw[thick] (root 5) to [out=-45, in=45] (root 6);\end{dynkinDiagram}$ &$\{\alpha_1,\ldots, \alpha_{n-2}\}$\\ 
       & $\PO(4,4)$ & $\begin{dynkinDiagram}D{****}
       \end{dynkinDiagram}$&$\{\alpha_1,\ldots, \alpha_{4}\}$\\
    \end{tabular}
    \caption{The  roots marked in black are fixed by a non-trivial automorphism, and are therefore not covered by Theorem \ref{thm:INTROHitchin}.}\label{table:1} 
  \end{center}
\end{table}

Also in this case, even for the bad roots we can understand precisely when two representations have distance zero. See Subsection \ref{s.Zdense} for further families of representations for which we can generalize Theorem \ref{thm:INTROHitchin}; this is notably the case for some connected components of \emph{$\Theta$-positive representations} of fundamental groups of surfaces in $\PO(p,p+1)$ \cite{GWTheta}, which are smooth and conjectured to only consist of Zariski dense representations \cite[Conjecture 1.7]{Collier}.

\medskip
As a second theme in the paper we give an explicit formula for the Finsler norm associated to the distance  on the set $\ha$ of $\Theta$-Anosov representations. More specifically, we introduce a function $\Vert\cdot\Vert_{\tn{Th}}^\varphi:T\ha\to \rr\cup\{\pm\infty\}$ which is defined as follows. For a given tangent vector $v\in T_\rho\ha$, we set $$\Vert v\Vert_{\tn{Th}}^\varphi:= \displaystyle\sup_{[\gamma]\in[\G]} \frac{\od_{\rho}(h_{\cdot}^\varphi)(v)L_\rho^\varphi(\gamma)+h_\rho^\varphi \od_\rho(L_\cdot^\varphi(\gamma))(v)}{h_\rho^\varphi L_\rho^\varphi(\gamma)}. $$ \noindent If $\rho\mapsto h_{\rho}^\varphi$ is constant, then this expression naturally generalizes Thurston's Finsler norm (\ref{eq: finsler teich introd}). We prove

\begin{prop}[See Corollary \ref{cor: link finsler and asymm for reps}]

Let $\{\rho_s\}_{s\in (-1,1)}\subset \ha$ be a real analytic family and set $\rho:=\rho_0$ and $ v:=\left.\frac{\od}{\od s}\right\vert_{s=0}\rho_s$. Then $s\mapsto d_{\tn{Th}}^\varphi(\rho,\rho_s)$ is differentiable at $s=0$ and $$\Vert v\Vert_{\tn{Th}}^\varphi=\left.\frac{\od}{\od s}\right\vert_{s=0}d_{\tn{Th}}^\varphi(\rho,\rho_s).$$
\end{prop}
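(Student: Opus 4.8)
The plan is to compute the derivative of $s\mapsto d_{\tn{Th}}^\varphi(\rho,\rho_s)$ at $s=0$ by analyzing the supremum defining $d_{\tn{Th}}^\varphi$ as a supremum of a family of functions that are real analytic in $s$. First I would fix notation: for each conjugacy class $[\gamma]$, write
$$
f_\gamma(s):=\log\!\left(\frac{h_{\rho_s}^\varphi}{h_\rho^\varphi}\cdot\frac{L_{\rho_s}^\varphi(\gamma)}{L_\rho^\varphi(\gamma)}\right),
$$
so that $d_{\tn{Th}}^\varphi(\rho,\rho_s)=\sup_{[\gamma]}f_\gamma(s)$ and $f_\gamma(0)=0$ for all $\gamma$. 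Since $\{\rho_s\}$ is real analytic and the functionals $\varphi$, the Jordan projection $\lambda$, and the entropy $h_\cdot^\varphi$ depend (real-)analytically on the representation on $\ha$ — here I would invoke the results cited earlier that guarantee $\rho\mapsto L_\rho^\varphi(\gamma)$ and $\rho\mapsto h_\rho^\varphi$ are analytic (resp. differentiable) — each $f_\gamma$ is differentiable at $0$, with
$$
f_\gamma'(0)=\frac{\od_\rho(h_\cdot^\varphi)(v)}{h_\rho^\varphi}+\frac{\od_\rho(L_\cdot^\varphi(\gamma))(v)}{L_\rho^\varphi(\gamma)}=\frac{\od_{\rho}(h_{\cdot}^\varphi)(v)L_\rho^\varphi(\gamma)+h_\rho^\varphi \od_\rho(L_\cdot^\varphi(\gamma))(v)}{h_\rho^\varphi L_\rho^\varphi(\gamma)}.
$$
Thus the asserted quantity $\Vert v\Vert_{\tn{Th}}^\varphi$ is precisely $\sup_{[\gamma]}f_\gamma'(0)$, and the goal becomes the identity
$$
\left.\tfrac{\od}{\od s}\right\vert_{s=0}\sup_{[\gamma]}f_\gamma(s)=\sup_{[\gamma]}f_\gamma'(0).
$$

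The inequality $\left.\frac{\od}{\od s}\right\vert_{s=0^+}d_{\tn{Th}}^\varphi(\rho,\rho_s)\geq\sup_\gamma f_\gamma'(0)$ (and the symmetric statement for $s\to 0^-$) is the easy direction: for any fixed $\gamma$, $d_{\tn{Th}}^\varphi(\rho,\rho_s)\geq f_\gamma(s)$, and since both sides vanish at $s=0$, dividing by $s>0$ and letting $s\to 0^+$ gives the bound against $f_\gamma'(0)$; taking the sup over $\gamma$ yields the claim. The reverse inequality is where the real work lies and it requires a uniform/equicontinuity control: I would establish that the family $\{f_\gamma\}$ is, after the entropy normalization, uniformly Lipschitz near $s=0$ — concretely, there is $\delta>0$ and $C$ with $|f_\gamma(s)-f_\gamma(0)-sf_\gamma'(0)|\leq Cs^2$ for all $\gamma$ and all $|s|<\delta$, or at least a modulus-of-continuity version $|f_\gamma(s)-sf_\gamma'(0)|\leq s\,\omega(s)$ with $\omega(s)\to 0$ independent of $\gamma$. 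Granting such a bound, for $s>0$ we get $d_{\tn{Th}}^\varphi(\rho,\rho_s)=\sup_\gamma f_\gamma(s)\leq \sup_\gamma(sf_\gamma'(0)+s\,\omega(s))=s(\Vert v\Vert_{\tn{Th}}^\varphi+\omega(s))$, hence $\limsup_{s\to 0^+}\frac{1}{s}d_{\tn{Th}}^\varphi(\rho,\rho_s)\leq \Vert v\Vert_{\tn{Th}}^\varphi$, which combined with the easy direction proves the right-derivative equals $\Vert v\Vert_{\tn{Th}}^\varphi$; the left derivative is handled identically (using that $d_{\tn{Th}}^\varphi(\rho,\rho_s)\geq 0$ only for the positivity, not for this computation), giving differentiability and the stated value.

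The main obstacle is therefore the \emph{uniform} second-order (or uniform first-order-with-vanishing-remainder) estimate on $f_\gamma$ over the infinite family of conjugacy classes. I expect this to follow from the Anosov property: along a real analytic family of $\Theta$-Anosov representations, the flat/Livšic-type cocycle controlling the $\liea_\Theta$-valued Jordan projection varies analytically and with derivatives bounded uniformly over a neighborhood in $\ha$, so that $\od_{\rho_s}(L_\cdot^\varphi(\gamma))(v)/L_{\rho_s}^\varphi(\gamma)$ and the analogous entropy-derivative term are uniformly bounded in $\gamma$ and continuous in $s$; this is exactly the kind of uniform control provided by the thermodynamic formalism for Anosov representations (pressure forms, analyticity of $s\mapsto h_{\rho_s}^\varphi$ and of the reparametrized length spectrum) invoked in the earlier sections. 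Once that uniform regularity is in place, one can also note that the supremum over $\gamma$ of the family $\{f_\gamma\}$ is convex as a function of $s$ when the underlying length functions behave like a pressure (or at least the one-sided derivatives exist and are monotone), so the two one-sided derivatives automatically agree; but the cleanest route is simply the uniform remainder estimate above, which makes the interchange of $\frac{\od}{\od s}$ and $\sup_\gamma$ legitimate. I would present the argument in this order: (1) reduce to $\sup_\gamma f_\gamma'(0)=\Vert v\Vert_{\tn{Th}}^\varphi$ by the chain rule; (2) prove the easy lower bound on one-sided derivatives; (3) state and use the uniform-remainder lemma (citing the thermodynamic-formalism input) to get the matching upper bound; (4) conclude differentiability and the equality.
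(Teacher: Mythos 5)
Your reduction to the identity $\left.\frac{\od}{\od s}\right\vert_{s=0}\sup_{[\gamma]}f_\gamma(s)=\sup_{[\gamma]}f_\gamma'(0)$ and the easy one-sided lower bound are fine, but the proposal has a genuine gap exactly at the point you yourself flag as "the main obstacle": the uniform remainder estimate $|f_\gamma(s)-sf_\gamma'(0)|\leq |s|\,\omega(|s|)$ with $\omega$ independent of $\gamma$ is never proved, only "expected" from the Anosov property. Pointwise analyticity of $s\mapsto L_{\rho_s}^\varphi(\gamma)$, one conjugacy class at a time, gives no uniformity over the infinitely many classes, and the thermodynamic facts you cite (analyticity of $s\mapsto h_{\rho_s}^\varphi$, pressure forms) do not by themselves produce such a modulus. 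This uniform control is precisely what the paper's proof is built to supply, by a different route: Proposition \ref{prop: Rvarhpi analytic} encodes the whole family of length functions simultaneously as periods of a single real analytic family of H\"older functions $\widetilde g_{z}$ on the compact space $\tn{U}\Gamma$; Corollary \ref{cor: dist for reps coincides with distance for flows} and Lemma \ref{lem: finsler reps and flows} transport $d_{\tn{Th}}^\varphi$ and $\Vert\cdot\Vert_{\tn{Th}}^\varphi$ to that flow picture; and in Proposition \ref{prop: FinslerNorm} the supremum over classes is replaced by a supremum over invariant probability measures (Proposition \ref{prop:sup of periods and measures}, Theorem \ref{teo: periodic orbits dense}), after which the needed uniformity is just uniform convergence of $(r_s-1)/s\to\dot r_0$ on a compact metric space, an immediate consequence of the Mean Value Theorem. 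So your step (3) is not a citable technicality: carrying it out essentially amounts to reconstructing this machinery, and without it the interchange of $\od/\od s$ and $\sup_{[\gamma]}$ is unjustified.

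A second, independent problem is the claim that "the left derivative is handled identically". It is not: dividing $d_{\tn{Th}}^\varphi(\rho,\rho_s)\geq f_\gamma(s)$ by a negative $s$ reverses the inequality, and your own uniform remainder estimate would yield $\lim_{s\to 0^-}\frac{1}{s}\,d_{\tn{Th}}^\varphi(\rho,\rho_s)=\inf_{[\gamma]}f_\gamma'(0)=-\Vert -v\Vert_{\tn{Th}}^\varphi$, not $\sup_{[\gamma]}f_\gamma'(0)$. Since $d_{\tn{Th}}^\varphi(\rho,\rho_s)\geq 0$ (Theorem \ref{thm: dth for anosov}) and vanishes at $s=0$, a genuine two-sided derivative would in any case have to vanish, so what your argument can establish, once the uniform estimate is in place, is the statement for the derivative from the right (equivalently, two-sided only when $\Vert v\Vert_{\tn{Th}}^\varphi=\Vert -v\Vert_{\tn{Th}}^\varphi=0$). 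Be aware that the same sup--inf subtlety occurs in the proof of Proposition \ref{prop: FinslerNorm}, where dividing by $s$ is commuted with the supremum over measures irrespective of the sign of $s$; so the honest conclusion of both arguments is the one-sided version, and your write-up should state it that way rather than assert symmetry.
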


It is natural to ask whether $\Vert\cdot\Vert_{\tn{Th}}^\varphi$ defines a Finsler norm. In this direction we show:

\begin{teo}[See Corollary \ref{cor: finsler for reps}]\label{thm: finsler INTRO}
Let $\rho\in\ha$ be a point admitting an analytic neighbourhood in $\ha$. Then the function $\Vert\cdot\Vert_{\tn{Th}}^\varphi:T_\rho\ha\to\rr\cup\{\pm\infty\}$ is real valued and non-negative. Furthermore, it is $(\rr_{>0})$-homogeneous, satisfies the triangle inequality and one has $\Vert v\Vert_{\tn{Th}}^\varphi=0$ if and only if \begin{equation}\label{eq: non deg finsler INTRO}
    d_\rho (L_\cdot^\varphi(\gamma))(v)=-\frac{\od_{\rho}(h_{\cdot}^\varphi)(v)}{h_\rho^\varphi}L_\rho^\varphi(\gamma)
\end{equation} \noindent for all $\gamma\in\G$. In particular, if the function $\widehat{\rho}\mapsto h_{\widehat{\rho}}^\varphi$ is constant, then $$\Vert v\Vert_{\tn{Th}}^\varphi=0\Leftrightarrow d_\rho (L_\cdot^\varphi(\gamma))(v)=0$$ \noindent for all $\gamma\in\G$.
\end{teo}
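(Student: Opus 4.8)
The plan is to establish the four claimed properties of $\Vert\cdot\Vert_{\tn{Th}}^\varphi$ essentially one at a time, starting from the definition as a supremum over conjugacy classes of the quantity
$$ f_\gamma(v):=\frac{\od_{\rho}(h_{\cdot}^\varphi)(v)L_\rho^\varphi(\gamma)+h_\rho^\varphi \,\od_\rho(L_\cdot^\varphi(\gamma))(v)}{h_\rho^\varphi L_\rho^\varphi(\gamma)}. $$
First I would observe that each $f_\gamma$ is a linear functional on $T_\rho\ha$ (the maps $\widehat\rho\mapsto h_{\widehat\rho}^\varphi$ and $\widehat\rho\mapsto L_{\widehat\rho}^\varphi(\gamma)$ are differentiable on the analytic neighbourhood, with $L_\rho^\varphi(\gamma)>0$ and $h_\rho^\varphi\in(0,\infty)$ by positivity on the limit cone and the Anosov property), so $v\mapsto f_\gamma(v)$ is $\rr$-homogeneous and additive in $v$. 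Being a supremum of a family of linear functionals, $\Vert\cdot\Vert_{\tn{Th}}^\varphi$ is automatically $(\rr_{>0})$-homogeneous and subadditive (hence satisfies the triangle inequality); this is a formal step. The content to check is that the supremum is real-valued (finite) and non-negative, and the non-degeneracy statement.

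For non-negativity, the natural move is to evaluate $f_\gamma$ at suitable classes, or rather to use that for the constant curve deformation $\rho_s\equiv\rho$ the distance is zero, together with the differentiation statement (the Proposition quoted just before: $\Vert v\Vert_{\tn{Th}}^\varphi=\frac{\od}{\od s}|_{s=0}d_{\tn{Th}}^\varphi(\rho,\rho_s)$) applied to an analytic path through $\rho$: since $d_{\tn{Th}}^\varphi(\rho,\rho)=0$ is the minimum of $s\mapsto d_{\tn{Th}}^\varphi(\rho,\rho_s)$ over nothing useful — instead I would argue directly. A cleaner route: the renormalized length spectrum satisfies, for any $\gamma$, the ``mass one'' type constraint coming from the entropy being the critical exponent, so $\sup_\gamma f_\gamma(v)\ge 0$ because if $f_\gamma(v)<0$ for all $\gamma$ with a uniform gap then $d_{\tn{Th}}^\varphi(\rho,\rho_s)<0$ for small $s>0$ along the corresponding path, contradicting that this distance (established in Theorem~\ref{thm: dth for anosov}, or at least its non-negativity in the relevant setting) is $\ge 0$; more robustly, one shows $\sup_\gamma f_\gamma(v)+\sup_\gamma f_\gamma(-v)\ge 0$ since $f_\gamma(v)+f_\gamma(-v)=0$, and then uses a symmetrization/entropy-rigidity argument to upgrade to each term being $\ge 0$. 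Finiteness of the supremum should follow from uniform bounds: $\od_\rho(L_\cdot^\varphi(\gamma))(v)/L_\rho^\varphi(\gamma)$ is uniformly bounded in $\gamma$ because the marked-length-spectrum map is locally Lipschitz in the appropriate sense on $\ha$ (analytic dependence plus the coarse comparison $L_\rho^\varphi(\gamma)\asymp L_{\rho_s}^\varphi(\gamma)$ valid uniformly for $s$ small and all $\gamma$, a standard Anosov stability fact), and the $\od_\rho(h_\cdot^\varphi)(v)$ term is a single number.

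For the non-degeneracy characterization I would argue both directions. If \eqref{eq: non deg finsler INTRO} holds for all $\gamma$ then $f_\gamma(v)=0$ for every $\gamma$, so the supremum is $0$. Conversely if $\Vert v\Vert_{\tn{Th}}^\varphi=0$ then $f_\gamma(v)\le 0$ for all $\gamma$; combined with $f_\gamma(-v)\le \Vert -v\Vert_{\tn{Th}}^\varphi$ and the non-negativity $\Vert -v\Vert_{\tn{Th}}^\varphi\ge0$, and the identity $f_\gamma(v)=-f_\gamma(-v)$, I get $-\Vert -v\Vert_{\tn{Th}}^\varphi\le f_\gamma(v)\le 0$; to force $f_\gamma(v)=0$ I would invoke that $\Vert v\Vert_{\tn{Th}}^\varphi=0$ together with the analogous vanishing being incompatible with a strict negative value somewhere — concretely, if $f_{\gamma_0}(v)<0$ for some $\gamma_0$, I would build an analytic path with derivative $v$ and derive, via the differentiation Proposition and the already-known positivity of $d_{\tn{Th}}^\varphi$ off the diagonal, a contradiction with $d_{\tn{Th}}^\varphi(\rho,\rho_s)=o(s)$ forcing it eventually negative. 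The last sentence of the theorem is then immediate: when $h_\cdot^\varphi$ is locally constant, $\od_\rho(h_\cdot^\varphi)(v)=0$, so \eqref{eq: non deg finsler INTRO} reduces to $\od_\rho(L_\cdot^\varphi(\gamma))(v)=0$ for all $\gamma$.

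The main obstacle I expect is the non-negativity together with the ``each $f_\gamma(v)\ge$ something'' control — i.e. ruling out the pathological scenario in which all the linear functionals $f_\gamma$ take values in $(-\infty,0]$ on some fixed $v$ (which would make the ``norm'' vanish or be negative on a nonzero vector in a way not captured by \eqref{eq: non deg finsler INTRO}). This is exactly where the entropy renormalization is doing work, and handling it rigorously requires either a thermodynamic-formalism input (the $\varphi$-entropy being the topological entropy of a reparametrized flow, so that perturbing length functions down uniformly is obstructed by the pressure equation) or a clean reduction to the already-proved fact that $d_{\tn{Th}}^\varphi$ is non-negative on $\ha\times\ha$; I would lean on the latter, citing Theorem~\ref{thm: dth for anosov} and the differentiation Proposition, since that keeps the argument self-contained within the paper.
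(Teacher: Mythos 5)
The decisive gap is in the hard direction of the non-degeneracy statement. Writing $f_\gamma(v)$ for the quantity whose supremum over $[\gamma]$ defines $\Vert v\Vert_{\tn{Th}}^\varphi$, you must exclude the scenario in which $\sup_\gamma f_\gamma(v)=0$ while $f_{\gamma_0}(v)<0$ for some single class $\gamma_0$. The route you choose to lean on (Theorem \ref{thm: dth for anosov} plus the differentiation statement of Corollary \ref{cor: link finsler and asymm for reps}) cannot do this: since $d_{\tn{Th}}^\varphi(\rho,\rho_s)$ is a supremum over \emph{all} classes, one class with a strictly decreasing renormalized length creates no tension whatsoever with $d_{\tn{Th}}^\varphi(\rho,\rho_s)\geq 0$ or with $d_{\tn{Th}}^\varphi(\rho,\rho_s)=o(s)$; and the symmetrization $f_\gamma(v)=-f_\gamma(-v)$ only yields $-\Vert -v\Vert_{\tn{Th}}^\varphi\le f_\gamma(v)\le 0$, which is not a contradiction because nothing forces $\Vert -v\Vert_{\tn{Th}}^\varphi=0$. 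What is actually needed is precisely the thermodynamic input you listed as the alternative and then set aside: transfer $v$ to a tangent class $[\dot g_0]$ on the flow side via Proposition \ref{prop: Rvarhpi analytic} and Lemma \ref{lem: finsler reps and flows}, so that $\Vert v\Vert_{\tn{Th}}^\varphi=\sup_{m\in\pphi}\frac{\int \dot g_0\,\od m}{\int g_0\,\od m}$, and then apply Lemma \ref{lem: norm is non degenerate} — whose proof rests on strict convexity of the pressure function, the same mechanism as the rigidity in Proposition \ref{prop: BCLS renorm int rigidity} — to conclude $\dot g_0\sim_\phi 0$; unwinding the periodic-orbit integrals (and using Lemma \ref{lem: strongly primitive for other reparametrization} to pass from strongly primitive elements to all of $\gh$) then gives \eqref{eq: non deg finsler INTRO}. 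This pressure argument is the core of the paper's proof and cannot be replaced by positivity of the distance alone.

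Two further points. Your finiteness argument is also incomplete: a uniform two-sided comparison $L^\varphi_{\rho_s}(\gamma)\asymp L^\varphi_{\rho}(\gamma)$ for small $s$ does not bound $\od_\rho(L_\cdot^\varphi(\gamma))(v)/L_\rho^\varphi(\gamma)$ uniformly in $\gamma$ — a family of functions of $s$ can stay in a fixed multiplicative band while its derivatives at $s=0$ are unbounded along the family. The uniform bound the paper uses comes again from Proposition \ref{prop: Rvarhpi analytic}: all length functions are periodic-orbit integrals of one analytic family $\widetilde g_z$ of positive H\"older functions, whence $|f_\gamma(v)|\le \Vert\dot g_0\Vert_\infty/\min g_0$ for every $\gamma$. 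Finally, non-negativity does not require your contradiction argument either: after the same transfer, $[\dot g_0]$ lies in the tangent space to the pressure-zero level set, i.e. $\int\dot g_0\,\od m_{-g_0}=0$, so the supremum over invariant measures is $\geq 0$ simply by evaluating at the equilibrium state $m_{-g_0}$; your detour through Corollary \ref{cor: link finsler and asymm for reps} is not wrong, but it relies on exactly the same flow machinery while being less direct.
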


Condition (\ref{eq: non deg finsler INTRO}) has been studied by Bridgeman-Canary-Labourie-Sambarino \cite{BCLS,BCLSSIMPLEROOTS} in some situations. By applying their results we obtain:

\begin{cor}[See Corollaries \ref{cor: finsler norm hitchin first root} and \ref{cor: finsler norm hitchin spectral}]
The functions $\Vert\cdot\Vert^{\alpha_1}_{\tn{Th}}$ and $\Vert\cdot\Vert^{\lambda_1}_{\tn{Th}}$ define Finsler norms on $\Hit_d(S):=\Hit(S,\mathsf{PSL}(d,\rr))$.
\end{cor}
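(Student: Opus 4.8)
The plan is to deduce the statement from Theorem~\ref{thm: finsler INTRO} together with the non-degeneracy results for (renormalized) pressure forms of Bridgeman-Canary-Labourie-Sambarino \cite{BCLS, BCLSSIMPLEROOTS}. Since $\Hit_d(S)$ is a real analytic manifold consisting of representations that are Anosov with respect to the minimal parabolic, we have $\Theta=\Pi$, so $\alpha_1$ and $\lambda_1=\omega_1$ lie in $\liea_\Theta^*=\liea^*$ and are positive on the limit cone of every $\rho\in\Hit_d(S)$ (being sums of simple roots with positive coefficients, each of which grows linearly in word length by the Anosov property), and every such $\rho$ admits an analytic neighbourhood. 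Hence Theorem~\ref{thm: finsler INTRO} applies to $\varphi\in\{\alpha_1,\lambda_1\}$, giving that $\Vert\cdot\Vert_{\tn{Th}}^\varphi$ is real valued, non-negative, $(\rr_{>0})$-homogeneous and satisfies the triangle inequality; what remains is non-degeneracy. By the same theorem, $\Vert v\Vert_{\tn{Th}}^\varphi=0$ for $v\in T_\rho\Hit_d(S)$ if and only if \eqref{eq: non deg finsler INTRO} holds for all $\gamma\in\G$, i.e.\ $d_\rho(L_\cdot^\varphi(\gamma))(v)=c\,L_\rho^\varphi(\gamma)$ for all $\gamma$, with $c=-d_\rho(h_\cdot^\varphi)(v)/h_\rho^\varphi$ independent of $\gamma$. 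Picking an analytic path $\rho_s$ through $\rho$ with $\dot\rho_0=v$ and letting $f_{\rho_s}$ be a H\"older reparametrization of the geodesic flow of $\G$ with periods $L_{\rho_s}^\varphi(\gamma)$, this condition says precisely that $\left.\tfrac{\od}{\od s}\right\vert_{s=0}f_{\rho_s}$ is Liv\v{s}ic-cohomologous to $c\,f_\rho$, i.e.\ that $v$ lies in the kernel of the renormalized pressure form of $\varphi$ on $\Hit_d(S)$.

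It therefore suffices to know that this pressure form is non-degenerate. For $\varphi=\alpha_1$ one has $h_\rho^{\alpha_1}\equiv 1$ on $\Hit_d(S)$ by \cite{PS, PSW1}, so $c=0$ and the condition reads $d_\rho(L_\cdot^{\alpha_1}(\gamma))(v)=0$ for all $\gamma$; this forces $v=0$ because the first simple-root pressure form on $\Hit_d(S)$ is non-degenerate, which is the content of \cite{BCLS} (see also \cite{BCLSSIMPLEROOTS}). For $\varphi=\lambda_1=\omega_1$ one runs the same argument with the reparametrization attached to the first fundamental weight --- admissible since $\lambda_1$ is positive on the limit cone --- whose renormalized pressure form is non-degenerate on $\Hit_d(S)$ by the analytic machinery of \cite{BCLS}; alternatively, if $h_\rho^{\lambda_1}$ turns out to be constant, this again reduces, as in the $\alpha_1$ case, to injectivity of the differential of the $\lambda_1$-marked length spectrum. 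In both cases $v=0$, so $\Vert\cdot\Vert_{\tn{Th}}^{\alpha_1}$ and $\Vert\cdot\Vert_{\tn{Th}}^{\lambda_1}$ are Finsler norms.

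The part I expect to require the most care is the identification in the first paragraph together with pinning down which non-degeneracy statement of \cite{BCLS, BCLSSIMPLEROOTS} to invoke: one must check that \cite{BCLS} indeed covers the first-fundamental-weight reparametrization (and not merely the first simple root), that the pressure form used there is the \emph{renormalized} one so that the constant $c$ is harmless when $h_\rho^{\lambda_1}$ is not constant, and that ``being Liv\v{s}ic-cohomologous to a scalar multiple of the reparametrization'' is exactly the degeneracy their theorem rules out. All the remaining Finsler-norm axioms are already packaged in Theorem~\ref{thm: finsler INTRO}.
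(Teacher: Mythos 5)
Your proposal is correct and takes essentially the same route as the paper: Corollary \ref{cor: finsler for reps} gives all Finsler axioms except non-degeneracy, constant $\alpha_1$-entropy reduces that case to the vanishing of all $d_\rho(L_\cdot^{\alpha_1}(\gamma))(v)$, which is ruled out by \cite[Theorem 1.7]{BCLSSIMPLEROOTS}, while for $\lambda_1$ the degeneracy condition \eqref{eq: cor finsler resp} is exactly the kernel condition for the renormalized spectral-radius pressure form, ruled out by \cite[Lemma 9.8, Proposition 10.1]{BCLS}. Only two small points: your attributions are swapped (the simple-root non-degeneracy lives in \cite{BCLSSIMPLEROOTS}, the spectral-radius one in \cite{BCLS}), and the fallback ``if $h_\rho^{\lambda_1}$ turns out to be constant'' is vacuous, since $h_\rho^{\lambda_1}\leq 1$ with equality only on the Fuchsian locus.
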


We don't know, in this general setting, if the length metric induced by the Finsler norm $\Vert\cdot\Vert^{\varphi}_{\tn{Th}}$ agrees with the distance $d_{\tn{Th}}^{\varphi}$: indeed it is not clear if the latter distance is geodesic.

Our final result is an application of Labourie-Wentworth's computation of the derivative of some length functions on $\Hit_d(S)$ along some special directions \cite{VariationAlongFuchsian}. By the work of Hitchin \cite{Hitchin_HitchinComponent}, fixing a Riemann surface structure $X_0$ on $S$, we can parametrize $\Hit_d(S)$ by a vector space of holomorphic differentials (of different degrees) over $X_0$. Given an holomorphic differential $q$  of degree $k$, we associate  to a ray $t\mapsto tq$ for $t\geq 0$ a family $\{\rho_t\}_{t\geq 0}$ of Hitchin representations by the above mentioned Hitchin's parametrization. We denote by $v(q)\in T_{X_0}\Hit_d(S)$ its tangent direction at $t=0$. The holomorphic differential $q$ also defines a function $\tn{Re}(q):T^1X_0\to\rr$. Details for this construction will be given in Subsection \ref{subsec: finsler for hitchin}.

\begin{teo}[See Proposition \ref{prop: finsler fuchsian locus hitchin}]\label{thm:INTROFinslerhitchin} There exist constants $C_1$ and $C_2$, only depending on $d$ and $k$, such that for every vector $v=v(q)\in T_{X_0} \Hit_d(S)$ as above, one has $$\Vert v(q)\Vert_{\tn{Th}}^{\lambda_1}=C_1\displaystyle\sup_{[\gamma]\in[\Gamma]} \int   \textnormal{Re}(q) \od \delta_\phi(a_{[\gamma]}) $$\noindent and $$\Vert v(q)\Vert_{\tn{Th}}^{\alpha_1}=C_2\displaystyle\sup_{[\gamma]\in[\Gamma]}  \int \textnormal{Re}(q) \od\delta_\phi(a_{[\gamma]}), $$ \noindent where $\phi$ denotes the geodesic flow of $X_0$, $a_{[\gamma]}\subset T^1X_0$ denotes the $\phi$-periodic orbit coresponding to $[\gamma]$, and $\delta_\phi(a_{[\gamma]})$ denotes the $\phi$-invariant Dirac probability measure supported on $a_{[\gamma]}$.
\end{teo}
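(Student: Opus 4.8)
The plan is to combine two inputs. The first is the general Finsler norm formula of Theorem~\ref{thm: finsler INTRO} (Corollary~\ref{cor: finsler for reps}), together with the fact that on $\Hit_d(S)$ the entropy $h^{\lambda_1}_\rho$ and $h^{\alpha_1}_\rho$ are constant; consequently the term $\od_\rho(h^\varphi_\cdot)(v)$ vanishes and one has the simplified expression
$$\Vert v\Vert^\varphi_{\tn{Th}}=\sup_{[\gamma]\in[\Gamma]}\frac{\od_\rho(L^\varphi_\cdot(\gamma))(v)}{L^\varphi_\rho(\gamma)}$$
for $\varphi\in\{\lambda_1,\alpha_1\}$. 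The second input is Labourie--Wentworth's formula from \cite{VariationAlongFuchsian}: along the ray $t\mapsto tq$ associated to a holomorphic differential $q$ of degree $k$, they compute the derivative at $t=0$ of a length function of the form $L^{\lambda_1}_{\rho_t}(\gamma)$ (equivalently of the appropriate eigenvalue length), and the answer, evaluated at the Fuchsian point $X_0$, is a constant depending only on $d$ and $k$ times $\int\textnormal{Re}(q)\,\od\delta_\phi(a_{[\gamma]})$, where $\phi$ is the geodesic flow of $X_0$ and $\delta_\phi(a_{[\gamma]})$ the Dirac probability measure on the periodic orbit $a_{[\gamma]}$. So the first step is to quote their result carefully, identifying exactly which length function they differentiate and fixing the normalization constant; this produces $\od_{X_0}(L^{\lambda_1}_\cdot(\gamma))(v(q))=C_1'\int\textnormal{Re}(q)\,\od\delta_\phi(a_{[\gamma]})$ with $C_1'$ independent of $[\gamma]$.

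The second step is to divide by $L^{\lambda_1}_{X_0}(\gamma)$ and take the supremum over $[\gamma]$. Here one uses that at the Fuchsian point $X_0$, coming from a lattice embedding $\pi_1(S)\to\PSL(2,\rr)$ post-composed with the principal embedding, the length $L^{\lambda_1}_{X_0}(\gamma)$ is a fixed multiple (depending only on $d$) of the hyperbolic translation length $\ell_{X_0}(\gamma)$, which is exactly the period of the orbit $a_{[\gamma]}$, i.e. the total mass of the \emph{unnormalized} Dirac measure on $a_{[\gamma]}$. Therefore the ratio $\frac{\od_{X_0}(L^{\lambda_1}_\cdot(\gamma))(v(q))}{L^{\lambda_1}_{X_0}(\gamma)}$ becomes $C_1\int\textnormal{Re}(q)\,\od\delta_\phi(a_{[\gamma]})$ with the length normalization absorbing a factor so that $\delta_\phi(a_{[\gamma]})$ is the \emph{probability} measure, and $C_1$ depends only on $d$ and $k$. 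Taking the supremum over $[\gamma]$ gives the first displayed formula. For the $\alpha_1$-case one repeats the argument: on $\Hit_d(S)$ the simple-root length function $L^{\alpha_1}_\rho(\gamma)$ and the eigenvalue length $L^{\lambda_1}_\rho(\gamma)$ differ, but at the Fuchsian locus both are proportional (with ratios depending only on $d$) to $\ell_{X_0}(\gamma)$, and Labourie--Wentworth's computation (or its analogue for $\alpha_1=\lambda_1-\lambda_2$) again yields a constant multiple of $\int\textnormal{Re}(q)\,\od\delta_\phi(a_{[\gamma]})$; absorbing proportionality constants into $C_2$ gives the second formula.

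The main obstacle I anticipate is bookkeeping of normalizations rather than conceptual difficulty: matching the precise length function that \cite{VariationAlongFuchsian} differentiates with the functionals $\lambda_1$ and $\alpha_1$ used here (their work is phrased in terms of specific coordinates on the Hitchin section and specific eigenvalue data), keeping track of the degree-$k$ dependence of the constant through the principal $\mathfrak{sl}_2$-weights, and verifying that the supremum over $[\gamma]$ is attained/finite and not obstructed by the entropy normalization (which is harmless here since the entropy is constant). One should also confirm that $v(q)$ as defined via Hitchin's parametrization is genuinely the tangent vector to the analytic family $\{\rho_t\}$, so that Corollary~\ref{cor: link finsler and asymm for reps} applies and $\Vert v(q)\Vert^\varphi_{\tn{Th}}$ computes the one-sided derivative of $d^\varphi_{\tn{Th}}(\rho_0,\rho_t)$ consistently. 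Once these normalizations are pinned down, the statement follows by direct substitution.
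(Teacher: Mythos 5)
Your overall route is the same as the paper's: specialize the norm of Definition \ref{dfn: finsler reps} at the Fuchsian point, plug in Labourie--Wentworth's derivative formula \cite[Theorem 4.0.2, Corollary 4.0.5]{VariationAlongFuchsian}, and use that at $X_0$ both $L^{\lambda_1}_{X_0}(\gamma)$ and $L^{\alpha_1}_{X_0}(\gamma)$ are fixed multiples (depending only on $d$) of the hyperbolic length $l_{X_0}(\gamma)$, i.e. of the period of $a_{[\gamma]}$, so that dividing the line integral by the length produces the integral against the Dirac \emph{probability} measure $\delta_\phi(a_{[\gamma]})$.

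There is, however, one genuine error in your justification of the $\lambda_1$-case: you assert that $h^{\lambda_1}_\rho$ is constant on $\Hit_d(S)$ and use this to discard the term $\od_\rho(h^{\lambda_1}_\cdot)(v)$. This is false. Only the simple-root entropies are constant on the Hitchin component (\cite{PS,PSW1}); the spectral-radius entropy genuinely varies, which is precisely why Corollary \ref{cor: finsler norm hitchin spectral} keeps the entropy-derivative term in the formula for $\Vert\cdot\Vert^{\lambda_1}_{\tn{Th}}$ while Corollary \ref{cor: finsler norm hitchin first root} does not, and why the distance $d^{\lambda_1}_{\tn{Th}}$ on $\Hit_d(S)$ must be renormalized by entropy at all. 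The correct reason the term vanishes here --- and the one the paper's proof of Proposition \ref{prop: finsler fuchsian locus hitchin} invokes --- is entropy rigidity: by Potrie--Sambarino, $h^{\lambda_1}_\rho$ attains its maximum over $\Hit_d(S)$ exactly on the Fuchsian locus. Since $\rho\mapsto h^{\lambda_1}_\rho$ is differentiable (indeed analytic) and $X_0$ is a point of maximum, one has $\od_{X_0}(h^{\lambda_1}_\cdot)(v)=0$ for every $v$, so the first summand in the numerator of Definition \ref{dfn: finsler reps} drops out, and the remaining factor $h^{\lambda_1}_{X_0}$ cancels between numerator and denominator; after this correction your computation goes through as written. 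For the $\alpha_1$-case your constancy claim is correct ($h^{\alpha_1}\equiv 1$ by \cite[Theorem B]{PS}), so that half needs no change.
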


\subsection{Outline of the proofs}\label{subsec: outlineINTRO}

The proofs of our main results follow closely the approach by Guillarmou-Knieper-Lefeuvre \cite{GeodesicStretchPressureMetric}, which is based on work of Knieper \cite{KnieperVolumeGrowth} and Bridgeman-Canary-Labourie-Sambarino \cite{BCLS}. In \cite{GeodesicStretchPressureMetric}, the authors work with the space $\mathfrak{M}$ of isometry classes of negatively curved, entropy one Riemannian metrics on a closed manifold $M$. For $g\in\mathfrak{M}$ and an isotopy class $c$ of closed curves in $M$, one may define $L_g(c)$  as we did when $g$ was a point in Teichm\"uller space. Guillarmou-Knieper-Lefeuvre define $$d_{\tn{Th}}(g_1,g_2):= \log \sup_{c} \frac{L_{g_2}(c)}{L_{g_1}(c)},$$ 
\noindent where the supremum is taken over all isotopy classes $c$ of closed curves in $M$. In \cite[Proposition 5.4]{GeodesicStretchPressureMetric} the authors show \begin{equation}\label{eq: ineq GKL}
    d_{\tn{Th}}(g_1,g_2)\geq 0
\end{equation} \noindent for all $g_1,g_2\in\mathfrak{M}$, and moreover \begin{equation}\label{eq: eq GKL}
    d_{\tn{Th}}(g_1,g_2)= 0 \Leftrightarrow L_{g_1}=L_{g_2}.
\end{equation}  Guillarmou-Lefeuvre's Local Length Spectrum Rigidity Theorem \cite[Theorem 1]{LocalLengthRigidity} (see also \cite[Theorem 1.1]{GeodesicStretchPressureMetric}) gives that Equation (\ref{eq: eq GKL}) is equivalent to $g_1=g_2$, provided that these two metrics are sufficiently regular and close enough in some appropriate topology. Hence, $d_{\tn{Th}}(\cdot,\cdot)$ defines an asymmetric metric on a neighbourhood of the diagonal of $\mathfrak{M}'\subset\mathfrak{M}$, where $\mathfrak{M}'$ is the subset of $\mathfrak{M}$ consisting of sufficiently regular metrics (see \cite{LocalLengthRigidity,GeodesicStretchPressureMetric} for details). Guillarmou-Knieper-Lefeuvre also construct an associated Finsler norm \cite[Lemma 5.6]{GeodesicStretchPressureMetric}.

Even though the Local Length Spectrum Rigidity Theorem is a geometric statement, the proofs of (\ref{eq: ineq GKL}) and (\ref{eq: eq GKL}) can be abstracted to a more  general dynamical framework inspired from \cite[Section 3]{BCLS}. We develop this general dynamical framework  in detail in Sections \ref{sec: thermodynamics} and \ref{sec: asymmetric metric and finsler norm for flows}, as well as the specific statements needed for the construction of an asymmetric distance and a Finsler norm in that setting. As we explain, these general constructions can then be applied not only to the space $\mathfrak{M}$ as in Guillarmou-Knieper-Lefeuvre, but also to other geometric settings, such as spaces of Anosov representations.  We expect that this can be applicable in many more geometric contexts.

The general dynamical framework in Guillarmou-Knieper-Lefeuvre's setting arises as follows:  Gromov observed that the geodesic flows of any two $g_1,g_2\in \mathfrak{M}$ are \textit{orbit equivalent} \cite{GromovGeodesicFlow}. Roughly speaking, this means that the two flows have the same orbits, travelled at possibly different ``speeds" (see Subsection \ref{subsec: orbit equiv} for details). The change of speed (or \textit{reparametrization}) is encoded by a positive H\"older continuous function $r=r_{g_1,g_2}$ on the unit tangent bundle $X:=T^1M$ of $M$. To be more precise, the function $r_{g_1,g_2}$ is only well defined up to an equivalence relation, called \textit{Liv\v{s}ic cohomology} (see Definition \ref{dfn: livsic}).  Thus, we  work in the general dynamical setting of studying the ``geometry" of the space $\mathcal{L}_1(X)$ of Liv\v{s}ic cohomology classes of entropy one H\"older functions on $X$ over the geodesic flow $\phi$ of $g_1$.

Since $\phi$ is an Anosov flow, one may study $\mathcal{L}_1(X)$ through the lens of \textit{Thermodynamic Formalism} (see Subsection \ref{subsec:coding and metric Anosov}). Crucial for us is the following rigidity result by Bridgeman-Canary-Labourie-Sambarino \cite[Proposition 3.8]{BCLS} (see Proposition \ref{prop: BCLS renorm int rigidity} below): there exists a distinguished $\phi$-invariant probability measure $m^{\tn{BM}}(\phi)$ so that \begin{equation}\label{eq: BCLS rigidity intro}
    \int r\od m^{\tn{BM}}(\phi) \geq 1
\end{equation} \noindent and equality holds if and only if $r$ is Liv\v{s}ic cohomologous to the constant function $1$, namely  the periods of periodic orbits of $\phi$ and the reparametrized flow by $r$ coincide. Thus \begin{equation}\label{eq: erg optim introd}
    \displaystyle\sup_{m}\int r\od m \geq 1,
\end{equation} \noindent where the supremum is taken over all $\phi$-invariant probability measures, and equality in the above formula holds if and only if $r$ is Liv\v{s}ic cohomologous to $1$. By Proposition \ref{prop:sup of periods and measures}, the quantity in (\ref{eq: erg optim introd}) coincides with the supremum of ratios of periods of periodic orbits for $\phi$ and the reparametrized flow by $r$.
These general dynamical considerations, when applied specifically to reparametrizing functions associated to $g_1,g_2\in\mathfrak{M}$, readily imply (\ref{eq: ineq GKL}) and (\ref{eq: eq GKL}).

Now as in \cite{BCLS} for their construction of a \textit{pressure metric} (see Subsections \ref{subsec: other related work} and \ref{subsec: comparison pressure norm} for a detailed comparison), the above general approach can also be applied to study spaces of Anosov representations. 
We  use  Sambarino's Reparametrizing Theorem \cite{Quantitative} (see Theorem \ref{thm: reparametrizing theorem} below) to map $\ha$  to a space of Liv\v{s}ic cohomology classes of H\"older functions  over  \textit{Gromov's geodesic flow} $\tn{U}\G$ of $\G$. 
More precisely, we  associate to each Anosov representation $\rho$ and each $\varphi\in\liea_\Theta^*$ a H\"older reparametrization of the geodesic flow $\tn{U}\G$ encoding the $\varphi$-spectral data of $\rho$. 
This procedure is  more involved than in the case of negatively curved metrics, not only because it depends on the additional choice of the functional $\varphi$, but also because the entropy of $\varphi$ is, in general, non-constant.
While, when working with the space $\mathfrak{M}$ one can bypass this problem by normalizing the metric, this is not a natural procedure in our setting, this is why the extra normalization appears in the expression for $d_{\tn{Th}}^\varphi(\cdot,\cdot)$ (see Remark \ref{rem: domination flows} for further comments on this point).
Nevertheless, Bridgmeman-Canary-Labourie-Sambarino's rigidity statement (\ref{eq: BCLS rigidity intro}) is adapted to the setting of arbitrary entropy and we deduce \begin{equation}\label{eq: ineq CDPW}
    d_{\tn{Th}}^\varphi(\rho_1,\rho_2)\geq 0
\end{equation} \noindent for all $\rho_1,\rho_2\in\ha$, and moreover \begin{equation}\label{eq: eq CDPW}
    d_{\tn{Th}}^\varphi(\rho_1,\rho_2)= 0 \Leftrightarrow h_{\rho_1}^\varphi L_{\rho_1}^\varphi=h_{\rho_2}^\varphi L_{\rho_2}^\varphi,
\end{equation} \noindent which are the exact analogues of Equations (\ref{eq: ineq GKL}) and (\ref{eq: eq GKL}). 

To finish the proof of Theorems \ref{thm:INTROZdense}, \ref{thm:INTROBenoist} and \ref{thm:INTROHitchin} we need to understand under which conditions one can guarantee \textit{Renormalized Length Spectrum Rigidity}, that is, under which conditions the equality $h_{\rho_1}^\varphi L_{\rho_1}^\varphi=h_{\rho_2}^\varphi L_{\rho_2}^\varphi$  implies that $\rho_1$ and $\rho_2$ are conjugate. As in the case of negatively curved metrics, where length spectrum rigidity is only known to hold locally, this typically requires to restrict to a subset of $\ha$. More precisely, we need to control the Zariski closure $\g_{\rho_i}$ of $\rho_i$, for $i=1,2$. Since central elements and compact factors are invisible to the Jordan projection, we must require that $\g_{\rho_i}$ is center free and without compact factors. Once this is assumed, and if we assume moreover that $\g_{\rho_i}$ is semisimple, renormalized length spectrum rigidity follows essentially from properties of Benoist's limit cone (see Theorem \ref{thm:rigidity} and \cite[Corollary 11.6]{BCLS}). In some special cases, such as Hitchin components and some components of Benoist and positive representations, these arguments can be pushed further to guarantee global rigidity (see Theorem \ref{Thm:rigitity length hitchin} and Section \ref{s.other}). 

We study the Finsler norm on $\ha$ following the same approach, namely, by finding a general dynamical construction inspired by \cite{GeodesicStretchPressureMetric}, and then pulling back this construction to spaces of Anosov representations. Observe, however, that in this case we need a more complicated expression than what's available in \cite{GeodesicStretchPressureMetric} because we cannot assume that the entropy is constant.

We may summarize the above discussion by saying that the results of this paper are obtained by adapting the corresponding constructions in \cite{GeodesicStretchPressureMetric} to the context of Anosov representations: we can rely on the Thermodynamical Formalism, on which part of the constructions in \cite{GeodesicStretchPressureMetric} are based, using the work of Sambarino \cite{Quantitative} and Bridgeman-Canary-Labourie-Sambarino \cite{BCLS}, and the local rigidity statement needed in \cite{GeodesicStretchPressureMetric} is replaced here by rigidity statements for Anosov representations from \cite{BCLS}. One of the strong points of our approach is to find a suitable general setup where both contexts can be encompassed, and which might prove useful for other geometric situations.

\subsection{Other related work}\label{subsec: other related work}

In \cite{BCLS,BCLSSIMPLEROOTS} the authors construct  $\mathsf{Out}(\G)$-invariant analytic Riemannian metrics on $\ha$: they deduce from the aforementioned rigidity result  that the Hessian of the renormalized intersection (\ref{eq: BCLS rigidity intro}) is a semidefinite non-negative form, called the \textit{pressure form}. This can be pulled back to spaces of Anosov representations, sometimes yielding a positive definite form \cite{BCLS,BCLSSIMPLEROOTS}. 
The construction of this paper is different: instead of integrating with respect to a given measure and taking a second derivative, we integrate with respect to all invariant measures (see Subsection \ref{subsec: comparison pressure norm} for more detailed comparisons).

The rigidity result in Equation (\ref{eq: BCLS rigidity intro}) was previously known to hold in other settings. When restricted to geodesic flows of closed hyperbolic surfaces, this is a reinterpretation of Bonahon's Rigidity Intersection Theorem \cite[p. 156]{BonahonCurrents} (see Appendix \ref{appendix: currents} for more details). More generally, that same result was known to hold for pairs of convex co-compact, rank one representations $\rho_1$ and $\rho_2$ of a word hyperbolic group $\G$: see Burger \cite[p. 219]{BurgerManhattan}. Burger's results readily imply that $$d_{\tn{Th}}(\rho_1,\rho_2):=\log\sup_{[\gamma]\in[\Gamma]} \frac{h_{\rho_2}}{h_{\rho_1}}\frac{ L_{\rho_2}(\gamma) }{L_{\rho_1}(\gamma)}$$ \noindent defines an asymmetric distance on a subset of the space of conjugacy classes of convex co-compact representations $\G\to\g$, where $\g$ has real rank one (note that in a rank one situation the choice of a functional $\varphi$ is irrelevant). Burger also relates the number \begin{equation}\label{eq: sup of ratios no entropy}
    \sup_{[\gamma]\in[\Gamma]} \frac{ L_{\rho_2}(\gamma) }{L_{\rho_1}(\gamma)}
\end{equation} \noindent with one of the asymptotic slopes of the corresponding \textit{Manhattan curve}: see \cite[Theorem 1]{BurgerManhattan}.  Gu\'eritaud-Kassel \cite[Proposition 1.13]{GK} extend Burger's asymmetric metric to some not necessarily convex co-compact representations into the isometry group of the real hyperbolic space. They also show that in some situations the value (\ref{eq: sup of ratios no entropy}) coincides with the best possible Lipschitz constant for maps between the two underlying real hyperbolic manifolds.

Our construction of the asymmetric metric is done on a very general dynamical setting, and pulled back to Anosov representations spaces through Sambarino's Reparametrizing Theorem. 
For reparametrizations of the geodesic flow of a closed surface, a construction with similar flavor was introduced by Tholozan \cite[Theorem 1.31]{ThoHighest}. His construction leads to a symmetric distance, and it is described in terms of the projective geometry of some appropriate Banach space (see \cite{ThoHighest} and Remark \ref{rem: tholozan symmetric} for further details). It would be intriguing to understand the relation between Tholozan's construction and the approach we carry out here.

\subsection{Plan of the paper}

In Section \ref{sec: thermodynamics} we discuss the  dynamical setup, and in Section \ref{sec: asymmetric metric and finsler norm for flows} we construct the asymmetric metric and the corresponding Finsler norm in this general setting. In Section \ref{sec, AnosovReps} we recall the definition and main examples of interest of Anosov representations. In Section \ref{sec: anosov flows and reps} we recall Sambarino's Reparametrizing Theorem. In Section \ref{sec, generalizedThurston} we pull back the construction of Section \ref{sec: asymmetric metric and finsler norm for flows} to spaces of Anosov representations and also discuss the renormalized length spectrum rigidity in general. In Sections \ref{sec: hitchin} and \ref{s.other} we specify the discussion to Hitchin representations, as well as some components of Benoist and positive representations. In Appendix \ref{appendix: currents} we discuss in detail the link between the rigidity statement (\ref{eq: BCLS rigidity intro}) and Bonahon's Rigidity Intersection Theorem.

\subsection{Acknowledgements}

We are grateful to Gerhard Knieper, Rafael Potrie and Andr\'es Sambarino for several helpful discussions and comments.

\section{Thermodynamical formalism}\label{sec: thermodynamics}

We begin by recalling some important terminology and results about the dynamics of topological flows on compact metric spaces. In Subsection \ref{subsec: orbit equiv} we recall the notions of H\"older orbit equivalence and Liv\v{s}ic cohomology. In Subsection \ref{subsec: measures and pressure} we recall the important concept of pressure, and fix some terminology that will be used throughout the paper. In Subsection \ref{subsec:coding and metric Anosov} we recall the notion of \textit{Markov coding} of a topological flow, and state the main consequences of admitting such a coding. We also recall the notion of \textit{metric Anosov flows}, an important class of flows that admit Markov codings. Finally, in Subsection \ref{subsec: inter and renormalized intersection} we recall the notion of \textit{renormalized intersection}, which is central in our study of the asymmetric metric. The exposition follows closely Bridgeman-Canary-Labourie-Sambarino \cite[Section 3]{BCLS}.

\subsection{Topological flows, reparametrizations and (orbit) equivalence}\label{subsec: orbit equiv}

Let $\phi=(\phi_t: X\to X)$ be a H\"older continuous flow on a compact metric space $X$. In this paper we always assume that $\phi$ is \textit{topologically transitive}. This means that $\phi$ has a dense orbit. 

The choice of a continuous function $r:X\to\rr_{>0}$ induces a ``reparametrization" $\phi^r$ of the flow $\phi$. Informally, this is a flow with the same orbits than $\phi$, but travelled at a  different ``speed". To define this notion properly, we first let $\kappa_r:X\times\rr\to\rr$ be given by 
$$\kappa_r(x,t):=\displaystyle\int_{0}^tr(\phi_s(x))\od s.$$
\noindent  The function $\kappa_r(x,\cdot):\rr\to\rr$ is an increasing homeomorphism for all $x\in X$ and therefore admits an (increasing) inverse $\alpha_r(x,\cdot):\rr\to\rr$. That is, we have $$\kappa_r(x,\alpha_r(x,t))=\alpha_r(x,\kappa_r(x,t))=t$$
\noindent for all $x\in X$ and $t\in\rr$.

\begin{dfn}
The \textit{reparametrization} of $\phi$ by a continuous function $r:X\to\rr_{>0}$  is the flow $\phi^r=(\phi^r_t:X\to X)$ defined by the formula
$$\phi_t^r(x):=\phi_{\alpha_r(x,t)}(x)$$
\noindent for all $x\in X$ and $t\in\rr$. We say that $\phi^r$ is a \textit{H\"older reparametrization} of $\phi$ if $r$ is H\"older continuous. We let $\hr$ be the set of H\"older reparametrizations of $\phi$. 
\end{dfn}

The reader may wonder why we choose the function $\alpha_r$ to reparametrize, instead of directly considering the function $\kappa_r$. One reason is the following. Let $\psi\in\hr$, and denote by $r_{\phi,\psi}$ the corresponding reparametrizing function, i.e. $\psi=\phi^{r_{\phi,\psi}}$. Denote by $\mathcal{O}$ the set of periodic orbits of $\psi$ (note that this set is independent of the choice of $\psi$). Given $a\in\mathcal{O}$ we denote by $p_\psi(a)$ the period, according to the flow $\psi$, of the periodic orbit $a$. Then for every $x\in a$ one has the following equality $$\displaystyle\int_{0}^{p_{\phi}(a)}r_{\phi,\psi}(\phi_t(x))\od t=p_{\psi}(a).$$
\noindent Hence, by choosing the function $\alpha_{r_{\phi,\psi}}$ (instead of $\kappa_{r_{\phi,\psi}}$) we avoid a cumbersome formula involving the integral of $1/r_{\phi,\psi}$ when computing the periods of the new flow.

If we take another point $\widehat{\psi}\in\hr$, then $\widehat{\psi}$ is a reparametrization of $\psi$, that is, one has $\widehat{\psi}=\psi^{r_{\psi,\widehat{\psi}}}$ for some positive continuous function $r_{\psi,\widehat{\psi}}$. In fact, an explicit computation shows
\begin{equation}\label{eq: rep funtion fro psi to widehatpsi}
    r_{\psi,\widehat{\psi}}=\frac{r_{\phi,\widehat{\psi}}}{r_{\phi,\psi}}.
\end{equation} \noindent As above, for every $a\in\mathcal{O}$ and every $x\in a$ one has
\begin{equation}\label{eq: integral of reparametrizing over periodic orbit}
 \displaystyle\int_{0}^{p_{\psi}(a)}r_{\psi,\widehat{\psi}}(\psi_t(x))\od t=p_{\widehat{\psi}}(a).
\end{equation}

There are two notions of equivalence between topological flows that we now recall. A H\"older continuous flow $\phi'=(\phi_t':X'\to X')$ on a compact metric space $X'$ is said to be \textit{(H\"older) conjugate} to $\phi$ if there is a (H\"older) homeomorphism $h:X\to X'$ satisfying $$h\circ\phi_t=\phi'_t\circ h$$ \noindent for all $t\in\rr$. A weaker notion is that of orbit equivalence: the flow $\phi'=(\phi'_t:X'\to X')$ is said to be \textit{(H\"older) orbit equivalent} to $\phi$ if it is (H\"older) conjugate to a (H\"older) reparametrization of $\phi$. One can see that every flow in the orbit equivalence class of $\phi$ is topologically transitive.

To single out elements in $\hr$ which are conjugate to $\phi$, one introduces \textit{Liv\v{s}ic cohomology}. To motivate this notion, consider a H\"older continuous function $V:X\to\rr$ of class C$^1$ along $\phi$, and let $$r(x):=\left(\left.\dfrac{\od}{\od t}\right\vert_{t=0}V(\phi_t(x))\right)+1.$$
\noindent If $r$ is positive, then $\phi^r$ is conjugate to $\phi$. Explicitly, if one defines $h(x):=\phi_{V(x)}(x)$, then $$h\circ \phi_t^r=\phi_t\circ h$$ \noindent for all $t\in\rr$. 
\begin{dfn}\label{dfn: livsic}
Two H\"older continuous functions $f,g:X\to\rr$ are said to be  \textit{Liv\v{s}ic cohomologous} (with respect to $\phi$) if there is a H\"older continuous function $V:X\to\rr$ of class C$^1$ along the direction of $\phi$, so that for all $x\in X$ one has $$f(x)-g(x)=\left.\frac{\od}{\od t}\right\vert_{t=0}V(\phi_t(x)).$$
\noindent In that case we write $f\sim_\phi g$, and denote the Liv\v{s}ic cohomology class of $f$ with respect to $\phi$ by $[f]_\phi$. 
\end{dfn}

\subsection{Invariant measures, entropy and pressure}\label{subsec: measures and pressure}

For $\psi\in\hr$ we denote by $\ppsi$ the set of $\psi$-invariant probability measures on $X$. This is a convex compact metrizable space. We also let $\epsi\subset\ppsi$ be the subset consisting of \textit{ergodic} measures, that is, the subset of measures for which $\psi$-invariant measurable subsets have measure either equal to zero or one. The set $\epsi$ is the set of extremal points of $\ppsi$. 

By the Choquet Representation Theorem (see Walters \cite[p. 153]{WalterErgodicBook}), every element $m\in\ppsi$ admits an \textit{Ergodic Decomposition}. This means that there exists a unique probability measure $\tau_m$ on $\epsi$ 
such that 
$$\int_X f(x) \mathrm{d}m(x)= \int_{\epsi} \bigg(\int_X f(x) \mathrm{d}\mu(x)\bigg)\mathrm{d}\tau_{m}(\mu)$$ \noindent holds for every continuous function $f$ on $X$.

The set of periodic orbits of $\psi$ embeds into $\ppsi$ as follows: for $a\in\mathcal{O}$, recall that $p_\psi(a)$ is the period of the periodic orbit $a$ according to the flow $\psi$. We denote by $\delta_{\psi}(a)\in\ppsi$ the \textit{Dirac mass} supported on $a$, that is the push-forward of the Lebesgue probability measure on $S^1\cong [0,1]/\sim$ (where $0\sim 1$) under the map $$S^1 \to X: t\mapsto \psi_{p_{\psi}(a)t}(x),$$
\noindent where $x$ is any point in $a$. Note that $\delta_{\psi}(a)\in\epsi$. Using Equation (\ref{eq: integral of reparametrizing over periodic orbit}), we conclude that for every $\widehat{\psi}\in\hr$ one has
\begin{equation}\label{eq: integral of reparametrizing over delta in periodic orbit}
    p_{\widehat{\psi}}(a)=p_{\psi}(a)\displaystyle\int_{X}r_{\psi,\widehat{\psi}}\od\delta_{\psi}(a).
\end{equation}

More generally, for $m\in\ppsi$, the map $m\mapsto\widehat{m}$ given by \begin{equation}\label{eq: iso between ppsi and pwpsi}
    \od \widehat{m}:=\frac{r_{\psi,\widehat{\psi}}\od m}{\int r_{\psi,\widehat{\psi}}\od m}
\end{equation} \noindent defines an isomorphism $\ppsi\cong\pwpsi$.

We now recall the notion of \textit{topological pressure}, which will be central for our purposes.

\begin{dfn}
Let $f: X\to \rr$ be a continuous function (or \textit{potential}). The \emph{topological pressure} (or \emph{pressure}) of $f$ is defined by 
\begin{equation}\label{eq: def pressure}
    \textbf{P}(\phi,f):=\sup\limits_{m\in\pphi}\left(h(\phi,m)+ \int_X f \od m\right),
\end{equation}
\noindent where $h(\phi,m)$ is the \textit{metric entropy} of $m$. 
\end{dfn}

The metric entropy (or \emph{measure theoretic entropy}) $h(\phi,m)$ is defined using $m$-measurable partition of $X$ and is a metric isomorphism invariant (see \cite[Chapter 4]{WalterErgodicBook}). When there is no risk of confusion we will omit the flow $\phi$ in the notation and simply write $\textbf{P}(f)=\textbf{P}(\phi,f)$. 

A special and important case is the pressure of the potential $f\equiv 0$, which is called the \textit{topological entropy} of $\phi$. It is denoted by $h_{\tn{top}}(\phi)$, or simply by $h_\phi$. The topological entropy is a topological invariant: conjugate flows have the same topological entropy. In contrast, the topological entropy is not invariant under reparametrizations. 

A measure $m \in \pphi$ realizing the supremum in Equation (\ref{eq: def pressure}) is called an \emph{equilibrium state} of $f$. An equilibrium state for $f\equiv 0$ is called a \emph{measure of maximal entropy} of $\phi$.

Liv\v{s}ic cohomologous functions share some common invariants defined in thermodynamical formalism.

\begin{rem}\label{LivsicCommon}
If $f: X\to \mathbb{R}$ and $g: X\to \mathbb{R}$ are Liv\v{s}ic cohomologous functions (w.r.t $\phi$), then $\textbf{P}(\phi,f)=\textbf{P}(\phi,g)$ and $m\in\pphi$ is an equilibrium state for $f$ if and only if it is an equilibrium state for $g$. Indeed, if $f\sim_\phi g$ and $m\in\pphi$ then
$$\displaystyle\int_Xf\mathrm{d}m=\displaystyle\int_Xg\mathrm{d}m.$$ \noindent This is a consequence of $\phi$-invariance of $m$ and the Mean Value Theorem for derivatives of real functions.
\end{rem}

The following is well-known and useful.

\begin{prop}[Bowen-Ruelle {\cite[Proposition 3.1]{BowenRuelle}, Sambarino \cite[Lemma 2.4]{Quantitative}}]\label{prop: pressureZero}
Let $\phi=(\phi_t:X\to X)$ be a H\"older continuous flow on a compact metric space $X$ and $r: X \to \mathbb{R}_{>0}$ be a H{\"o}lder continuous function. Then a real number $h$ satisfies 
$$\textnormal{\textbf{P}}(\phi, -hr)=0$$
\noindent if and only if $h=h_{\phi^{r}}$.
\end{prop}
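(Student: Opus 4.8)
The plan is to reduce everything to two facts: that $h\mapsto\textbf{P}(\phi,-hr)$ is strictly decreasing, and that $h_{\phi^r}$ is a zero of this map. For the monotonicity, I would use only the variational definition \eqref{eq: def pressure}: whenever $f\geq g$ on $X$ one has $\textbf{P}(\phi,f)\geq \textbf{P}(\phi,g)+\inf_X(f-g)$. Since $r>0$ is continuous on the compact space $X$, for $h_1<h_2$ we have $-h_1 r\geq -h_2 r+(h_2-h_1)\min_X r$ with $(h_2-h_1)\min_X r>0$, hence $\textbf{P}(\phi,-h_1 r)>\textbf{P}(\phi,-h_2 r)$. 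In particular $h\mapsto\textbf{P}(\phi,-hr)$ has at most one zero, so the whole statement follows once we show $\textbf{P}(\phi,-h_{\phi^r}r)=0$.

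The core of the argument is the interaction between invariant measures of $\phi$ and of its reparametrization $\phi^r$. By \eqref{eq: iso between ppsi and pwpsi}, applied with $\psi=\phi$ and $\widehat\psi=\phi^r$ (so that the reparametrizing function is $r$ itself), the assignment $m\mapsto\widehat m$ with $\od\widehat m=(r\,\od m)/\int_X r\,\od m$ is a bijection $\pphi\to\mathscr{P}(\phi^r)$, and Abramov's formula for the entropy of a time change gives $h(\phi^r,\widehat m)\int_X r\,\od m=h(\phi,m)$ for every $m\in\pphi$. Granting this, for any $m\in\pphi$ one computes
$$h(\phi,m)+\int_X(-h_{\phi^r}r)\,\od m=\Big(\int_X r\,\od m\Big)\big(h(\phi^r,\widehat m)-h_{\phi^r}\big)\leq 0,$$
since $h(\phi^r,\widehat m)\leq h_{\tn{top}}(\phi^r)=h_{\phi^r}$ and $\int_X r\,\od m>0$; taking the supremum over $m\in\pphi$ gives $\textbf{P}(\phi,-h_{\phi^r}r)\leq 0$. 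For the reverse inequality, choose by the variational principle for $\phi^r$ a sequence $\widehat m_n\in\mathscr{P}(\phi^r)$ with $h(\phi^r,\widehat m_n)\to h_{\phi^r}$ — note no measure of maximal entropy is required — and let $m_n\in\pphi$ be the preimages. Because $0<\min_X r\leq \int_X r\,\od m_n\leq \max_X r<\infty$, the same computation yields
$$\textbf{P}(\phi,-h_{\phi^r}r)\geq h(\phi,m_n)+\int_X(-h_{\phi^r}r)\,\od m_n=\Big(\int_X r\,\od m_n\Big)\big(h(\phi^r,\widehat m_n)-h_{\phi^r}\big),$$
and the right-hand side tends to $0$ as $n\to\infty$, so $\textbf{P}(\phi,-h_{\phi^r}r)\geq 0$. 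Hence $\textbf{P}(\phi,-h_{\phi^r}r)=0$, which together with the uniqueness from the first paragraph finishes the proof.

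The step I expect to be the main obstacle is Abramov's formula $h(\phi^r,\widehat m)\int_X r\,\od m=h(\phi,m)$: the definitions of $\phi^r$ via the time change $\kappa_r$ and of the correspondence $m\mapsto\widehat m$ make it natural, but a careful proof requires comparing $m$-measurable partitions for $\phi$ with $\widehat m$-measurable partitions for $\phi^r$ along the reparametrization, or quoting Abramov's theorem directly. In the setting of this paper this is precisely the content of the cited results of Bowen--Ruelle and Sambarino, so in practice one invokes it as a black box; the remaining ingredients — strict monotonicity of pressure and the variational principle — are elementary consequences of \eqref{eq: def pressure}.
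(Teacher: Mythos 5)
Your argument is correct: the monotonicity of $h\mapsto\textbf{P}(\phi,-hr)$ plus the computation $h(\phi,m)+\int(-h_{\phi^r}r)\,\od m=(\int r\,\od m)(h(\phi^r,\widehat m)-h_{\phi^r})$ via Abramov's formula and the measure correspondence, combined with the variational principle for $\phi^r$, is exactly how this is proved. The paper itself offers no proof, quoting Bowen--Ruelle and Sambarino, and your argument is essentially the one in those references (Sambarino's Lemma 2.4), so this matches the paper's approach.
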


\subsection{Symbolic coding and metric Anosov flows}\label{subsec:coding and metric Anosov}

We now specify an important class of topological flows for which pressure, equilibrium states and Liv\v{s}ic cohomology behave particularly well. The property we are interested in is the existence of a \textit{strong Markov coding} for the flow. Informally speaking, a Markov coding provides a way of modelling the flow by a suspension flow over a shift space. This allows us to obtain many properties about the dynamics of the flow, by studying the corresponding properties at the symbolic level. The reader can find a general introduction on how to model flows by Markov codings and suspension flows in Bowen \cite{Bowen-Symbolic} and Parry-Pollicott \cite[Appendix III]{ZetaFunction_Pollicott}. We give a cursory introduction of suspension flows and Markov partitions here.

Suppose $(\Sigma, \sigma_A)$ is a two-sided shift of finite type. Given a ``roof function" $r: \Sigma \to \mathbb{R}_{>0}$, the \emph{suspension flow} of $(\Sigma, \sigma_A)$ under $r$ is the quotient space
$$\Sigma_r:=\{(x,t)\in \Sigma\times\mathbb{R}: 0\leq t\leq r(x)\}/(x,r(x))\sim (\sigma_A(x), 0)$$
\noindent equipped with the natural flow $\sigma^r_{A,s}(x,t):=(x,t+s)$.

\begin{dfn}
A  \emph{Markov coding} for the flow $\phi=(\phi_t:X \to X)$ is a 4-tuple $(\Sigma, \sigma_A, \pi, r)$ where $(\Sigma,\sigma_A)$ is an irreducible two-sided subshift of finite type, the function $r:\Sigma \to \mathbb{R}_{>0}$ and the map $\pi:\Sigma_r \to X$ are continuous, and the following conditions hold:
\begin{itemize}
    \item The map $\pi$ is surjective and bounded-to-one.
    \item The map $\pi$ is injective on a set of full measure (for any ergodic measure of full support) and on a dense residual set.
    \item For all $t\in\rr$ one has $\pi\circ \sigma^r_{A,t}=\phi_t \circ \pi$.
\end{itemize}
If both $\pi$ and $r$ are H{\"o}lder continuous, we call the Markov coding a \emph{strong Markov coding}.
\end{dfn}

The proof of the following proposition can be found in Sambarino \cite[Lemma 2.9]{Quantitative}.

\begin{prop}\label{prop: coding for reparametrization}
Let $\phi=(\phi_t:X\to X)$ be a topological flow admitting a strong Markov coding. Then every flow in the H\"older orbit equivalence class of $\phi$ admits a strong Markov coding.
\end{prop}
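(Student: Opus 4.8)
The plan is to show that a strong Markov coding for $\phi$ can be transported along any Hölder orbit equivalence. Since an arbitrary flow $\psi$ in the Hölder orbit equivalence class of $\phi$ is, by definition, Hölder conjugate to a Hölder reparametrization $\phi^f$ of $\phi$, and Hölder conjugacy obviously preserves the existence of a strong Markov coding (just precompose $\pi$ with the conjugating homeomorphism), it suffices to treat the case $\psi=\phi^f$ for a positive Hölder function $f:X\to\rr_{>0}$.

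So suppose $(\Sigma,\sigma_A,\pi,r)$ is a strong Markov coding for $\phi$, with $\pi:\Sigma_r\to X$. First I would pull the reparametrizing function back to the symbolic level: set $\bar f:=f\circ\pi:\Sigma_r\to\rr_{>0}$, which is Hölder continuous and positive. The key observation is that the suspension flow $\sigma^r_A$ on $\Sigma_r$ reparametrized by $\bar f$ is again, up to the natural identification, a suspension flow over the \emph{same} base shift $(\Sigma,\sigma_A)$ but with a new roof function. Concretely, for $x\in\Sigma$ the new roof value should be $\hat r(x):=\int_0^{r(x)} \bar f(x,t)\,\od t = \kappa_{\bar f}((x,0),r(x))$, i.e. the $\bar f$-weighted length of the fibre over $x$; this is Hölder because $r$ and $\bar f$ are. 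There is then a natural homeomorphism $\Sigma_{\hat r}\to \Sigma_r$ which is the identity on the base and rescales each fibre by the increasing change of variables $t\mapsto \alpha_{\bar f}((x,0),t)$, and it intertwines $\sigma^{\hat r}_A$ with $(\sigma^r_A)^{\bar f}$. Composing, we obtain $\hat\pi:=\pi\circ(\text{this homeomorphism}):\Sigma_{\hat r}\to X$, and I claim $(\Sigma,\sigma_A,\hat\pi,\hat r)$ is a strong Markov coding for $\phi^f$.

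The verification of the axioms then breaks into routine checks. Hölder continuity of $\hat r$ and of $\hat\pi$ follows from Hölder continuity of $r$, $\bar f$, $\pi$ and the fact that $(x,t)\mapsto \alpha_{\bar f}((x,0),t)$ is Hölder in both variables on the compact domain; surjectivity of $\hat\pi$ is inherited from $\pi$; bounded-to-one and injectivity on a full-measure set and on a dense residual set are inherited because the fibrewise rescaling is a homeomorphism, hence carries full-measure sets and residual sets to full-measure and residual sets (for the measure statement one uses that the relevant ergodic full-support measures correspond under the rescaling, which is the standard correspondence between invariant measures of a flow and of its reparametrization, cf. Equation \eqref{eq: iso between ppsi and pwpsi}). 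Finally the equivariance $\hat\pi\circ\sigma^{\hat r}_{A,t}=\phi^f_t\circ\hat\pi$ is exactly what the construction of the rescaling homeomorphism was designed to give: it conjugates $\sigma^{\hat r}_A$ to the $\bar f$-reparametrization of $\sigma^r_A$, which $\pi$ pushes to the $f$-reparametrization of $\phi$, namely $\phi^f$.

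The main obstacle I anticipate is the bookkeeping around the change of fibre coordinate: one must be careful that $\alpha_{\bar f}$ is well defined and Hölder on $\Sigma_r$ (not just on $X$), that it descends correctly through the identification $(x,r(x))\sim(\sigma_A(x),0)$ so that $\hat r$ is genuinely a roof function over $\sigma_A$ (this uses the cocycle identity for $\kappa$ together with the $\phi$-equivariance of $\pi$), and that the resulting map $\Sigma_{\hat r}\to\Sigma_r$ is a genuine homeomorphism. None of this is deep, but it is the only place where something could go wrong, and it is essentially the content of Sambarino's \cite[Lemma 2.9]{Quantitative}, which I would cite for the details; the argument above is the conceptual skeleton of that proof.
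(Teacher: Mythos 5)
Your proposal is correct and matches the paper's treatment: the paper simply defers the proof to Sambarino \cite[Lemma 2.9]{Quantitative}, and your change-of-roof-function construction (new roof $\hat r(x)=\int_0^{r(x)} f(\pi(x,t))\,\od t$, fibrewise rescaling homeomorphism, plus the easy observation that H\"older conjugacy transports a strong Markov coding) is exactly the argument behind that lemma. The bookkeeping points you flag (H\"older regularity of $\hat r$ and of the rescaling, descent through the identification, correspondence of full-measure and residual sets) are precisely the routine checks carried out there.
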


Thanks to the previous proposition, if $\phi$ admits a strong Markov coding, then every element $\psi\in\hr$ also does. This has deep consequences for the dynamics of $\psi$ that we will discuss in this section. However, before doing that we will discuss an important class of topological flows that admit Markov codings, namely, \textit{metric Anosov} flows. This class is important to us because, as proved by Bridgeman-Canary-Labourie-Sambarino \cite[Sections 4 and 5]{BCLS}, every Anosov representation induces a \textit{geodesic flow} which is a topologically transitive and metric Anosov.

Among flows of class C$^1$ on compact manifolds, \textit{Anosov} flows provide an important class exhibiting many interesting dynamical properties. They were introduced by Anosov \cite{Anosov} in his study of the geodesic flow of closed negatively curved manifolds.  Anosov flows were generalized to \textit{Axiom A} flows by Smale \cite{SmaleDifferentiableDynamicalSystems}; we do not give full definitions here and refer the reader to Smale's original paper. An example of an Axiom A flow which is not Anosov is the geodesic flow of a noncompact convex cocompact real hyperbolic manifold, the restriction of the flow to the set of vectors tangent to geodesics in the convex hull of the limit set shares many dynamical properties with Anosov flows, even though this set is not a manifold. In some contexts (and particularly in the setting we are focusing on), C$^1$-regularity is too much to expect; \textit{Metric Anosov} flows form a class that further generalize Axiom A flows to the topological setting and still share many desirable properties with them. They were introduced by Pollicott \cite{PolMetricAnosov}, who also showed that these flows admit a Markov coding, generalizing the corresponding results for Axiom A flows obtained previously by Bowen \cite{Bowen-Symbolic}.

Let $\phi=(\phi_t: X\to X)$ be a continuous flow on a compact metric space $X$. For $\varepsilon>0$, we define the $\varepsilon$-\textit{local stable set} of $x$ by 
$$W_\varepsilon^s(x):=\{y\in X: d(\phi_t x, \phi_t y)\leq \varepsilon, \forall t\geq 0 \textnormal{ and }  d(\phi_t x, \phi_t y) \to 0 \textnormal{ as } t\to \infty\}$$
\noindent and the $\varepsilon$-\textit{local unstable set} of $x$ by
$$W_\varepsilon^u(x):=\{y\in X: d(\phi_{-t} x, \phi_{-t} y)\leq \varepsilon, \forall t\geq 0\textnormal{ and }  d(\phi_{-t} x, \phi_{-t} y) \to 0 \textnormal{ as } t\to \infty\}.$$

\begin{dfn}\label{dfn,metricAnosov} A topological flow $\phi=(\phi_t: X\to X)$ is  \textit{metric Anosov} if the following conditions hold: 
\begin{enumerate}
\item There exist positive constants $C, \lambda, \varepsilon$ such that 
    $$d(\phi_t(x), \phi_t(y))\leq C e^{-\lambda t}d(x,y) \text{   for all $y\in W^{s}_{\varepsilon}(x)$ and $t\geq 0$},$$
\noindent and    
 $$d(\phi_{-t}(x), \phi_{-t}(y))\leq C e^{-\lambda t}d(x,y) \text{   for all $y\in W^{u}_{\varepsilon}(x)$ and $t\geq 0$}.$$

\item There exists $\delta>0$ and a continuous function $v$ on the set 
$$X_\delta:=\{(x,y)\in X\times X: d(x,y)\leq \delta\}$$ such that for every $(x,y)\in X_\delta$, the number $v=v(x,y)$ is the unique value for which 
$W_{\varepsilon}^u(\phi_v x)\cap W_{\varepsilon}^s(y)$ is not empty  consists of a single point, denoted by $\langle x, y \rangle$.
\end{enumerate}
\end{dfn}

\begin{teo}[Pollicott \cite{PolMetricAnosov}]\label{teo: pollicott coding metric anosov}
A topologically transitive metric Anosov flow on a compact metric space admits a Markov coding.
\end{teo}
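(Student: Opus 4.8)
The plan is to follow Bowen's construction of Markov partitions for Axiom~A flows \cite{Bowen-Symbolic}, replacing the differentiable local product structure of stable and unstable manifolds by the purely metric one encoded in the bracket $\langle\cdot,\cdot\rangle$ together with the contraction estimates of Definition~\ref{dfn,metricAnosov}. As preliminary steps I would first extract from the axioms the three standard consequences. First, a \emph{shadowing/closing lemma}: every sufficiently fine pseudo-orbit is shadowed by a genuine orbit, and in particular every almost-periodic pseudo-orbit is shadowed by a periodic one; this follows by iterating the contraction on $W^s_\varepsilon$ and $W^u_\varepsilon$ and using the bracket to splice stable and unstable pieces. Second, \emph{expansiveness} of the flow (in the sense of Bowen--Walters), again a direct consequence of the contraction estimates together with the uniqueness of the bracket point. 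Third, the existence of small \emph{local cross sections}: for $x\in X$ the set $D_x:=\langle W^u_\varepsilon(x),W^s_\varepsilon(x)\rangle$ meets each nearby orbit in an arc of uniformly bounded length, so a codimension-one ``time-zero slice'' can be extracted by collapsing the flow direction. Topological transitivity is used to guarantee that the resulting coding is irreducible.

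Next I would build the rectangles. A \emph{rectangle} is a set $R=\langle U,S\rangle$ with $U\subset W^u_\varepsilon(z)\cap D_z$ and $S\subset W^s_\varepsilon(z)\cap D_z$ closed and equal to the closure of their relative interiors in $D_z$; the bracket gives $R$ a product structure $R\cong U\times S$, hence stable and unstable ``slices'' $W^s(y,R),\,W^u(y,R)$ through each $y\in R$. Using compactness of $X$ and the uniform contraction, I would cover a neighbourhood of each point of a suitably chosen finite subset of $X$ by finitely many such rectangles of arbitrarily small diameter, arranged to have bounded overlap multiplicity, and take their union $\mathcal D=\bigcup_i R_i$ together with the first-return (Poincar\'e) map $P:\mathcal D\to\mathcal D$ and first-return time $\tau:\mathcal D\to\rr_{>0}$, both continuous off a nowhere-dense set coming from the boundaries $\partial R_i$.

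The heart of the argument is then Bowen's refinement procedure: replacing the $R_i$ by the connected pieces of the sets $R_i\cap P^{-1}R_j$ and iterating, one obtains a finite family $\{R_1,\dots,R_N\}$ with the \emph{Markov property} — if $x\in\mathrm{int}\,R_i$ and $Px\in\mathrm{int}\,R_j$ then $P\big(W^s(x,R_i)\big)\subset W^s(Px,R_j)$ and $P\big(W^u(Px,R_j)\big)\subset W^u(x,R_i)$ — exactly as in \cite{Bowen-Symbolic}, the only inputs being the local product structure and the uniform contraction, both now available. Setting $A_{ij}=1$ iff $\mathrm{int}\,R_i\cap P^{-1}\,\mathrm{int}\,R_j\neq\varnothing$, letting $(\Sigma,\sigma_A)$ be the corresponding two-sided subshift of finite type (irreducible by transitivity), defining $r:\Sigma\to\rr_{>0}$ by $r(\underline x)=\tau(\pi_0(\underline x))$ where $\pi_0(\underline x)\in R_{x_0}$ is the unique point with $P^k\pi_0(\underline x)\in R_{x_k}$ for all $k$, and $\pi:\Sigma_r\to X$ by $\pi(\underline x,t)=\phi_t(\pi_0(\underline x))$, one checks directly that $\pi\circ\sigma^r_{A,t}=\phi_t\circ\pi$ and that $r$ and $\pi$ are continuous. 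Surjectivity of $\pi$ is the statement that the $R_i$ flow out to cover $X$; bounded-to-one-ness is the bounded overlap multiplicity of the cover together with finitely many boundary identifications; and injectivity of $\pi$ on a dense $G_\delta$ and on a set of full measure for any ergodic measure of full support holds away from the flow-saturation of $\bigcup_i\partial R_i$, which is closed, nowhere dense, and — using expansiveness and the fact that it carries no flow-invariant mass — null for every such measure. This produces a Markov coding; if in addition the flow is H\"older and the bracket depends H\"older-continuously on its arguments, the same $r$ and $\pi$ are H\"older, giving a strong Markov coding.

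I expect the main obstacle to be carrying out, at the purely metric level, the two places where Bowen uses differentiability most heavily: the construction and control of the transverse cross sections $D_x$ (one must show that ``collapsing the flow direction'' yields spaces on which the bracket still induces a genuine product structure and on which the return map is well behaved), and the verification that the rectangle boundaries together with their flow-saturation form a set that is simultaneously nowhere dense and null for all ergodic full-support measures — which requires knowing that a metric Anosov flow is expansive and admits no wandering behaviour concentrated on a codimension-one set. Once these topological substitutes for the smooth structure are in place, the combinatorial refinement yielding the Markov property is essentially formal.
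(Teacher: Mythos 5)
The paper gives no proof of this statement: it is quoted directly from Pollicott \cite{PolMetricAnosov}, whose argument is precisely the strategy you outline, namely Bowen's Axiom~A construction of Markov partitions transported to the metric setting via the bracket $\langle\cdot,\cdot\rangle$, expansiveness, shadowing, and local cross-sections, so your sketch matches the cited proof's approach and correctly isolates the two points (metric cross-sections and the flow-saturated boundary set) where real work beyond Bowen is needed. Note only that the theorem as stated claims just a Markov coding; the strong (H\"older) version requires the extra regularity you mention in your last sentence, which in this paper is supplied separately (Theorem \ref{thm: gromov flow coding}, following \cite{BCLS}) for the flows of interest.
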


For the rest of the section, we fix a topologically transitive flow $\phi=(\phi_t:X\to X)$ admitting a strong Markov coding. In this case the entropy of $\phi$ agrees with the exponential growth rate of periodic orbits:
\begin{equation}\label{eq: entropy}
    h_\phi=\displaystyle\lim_{t\to\infty}\frac{1}{t}\log\#\{a\in\mathcal{O}: p_\phi(a)\leq t\}.
\end{equation}
\noindent Moreover this number is positive and finite (see Bowen \cite{PeriodicOrbits-Bowen} and Pollicott \cite{PolMetricAnosov}).

Another useful consequence of the existence of a Markov coding is the  density of $\mathcal{O}$ in $\ephi$. Combined with the Ergodic Decomposition (c.f. Subsection \ref{subsec: measures and pressure}), it provides a nice way of relating invariant measures and periodic orbits.

\begin{teo}\label{teo: periodic orbits dense}
Let $\phi=(\phi_t:X\to X)$ be a topologically transitive flow admitting a strong Markov coding. Then for every  measure $m\in\ephi$ there is a sequence of periodic orbits $\{a_j\}\subset\mathcal{O}$ such that, as $j\to\infty$, $$\delta_\phi(a_j)\to m$$ \noindent in the weak-$\star$ topology.
\end{teo}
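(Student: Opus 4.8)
The plan is to deduce this density statement from the combination of the Markov coding (Theorem \ref{teo: pollicott coding metric anosov}, which provides a strong Markov coding $(\Sigma,\sigma_A,\pi,r)$) together with the corresponding well-known statement at the level of the symbolic model, namely that for a suspension flow over an irreducible subshift of finite type, the Dirac masses on periodic orbits are weak-$\star$ dense in the set of ergodic invariant probability measures. More precisely, I would first transfer the question to the suspension flow $\sigma_A^r$ on $\Sigma_r$: since $\pi:\Sigma_r\to X$ is a semi-conjugacy which is bounded-to-one and injective on a full-measure set, the push-forward $\pi_*$ maps $\sigma_A^r$-invariant probability measures onto $\phi$-invariant probability measures, carries ergodic measures to ergodic measures, carries periodic orbits of $\sigma_A^r$ to periodic orbits of $\phi$ with matching Dirac masses, and is continuous for the weak-$\star$ topologies. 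Hence it suffices to prove the density statement for $\sigma_A^r$.

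For the suspension flow, the standard reduction is to pass from the flow to the base shift: a $\sigma_A^r$-invariant probability measure on $\Sigma_r$ is (up to the normalizing factor $\int r\,\od\mu$) of the form $\mu\times\mathrm{Leb}/\!\!\sim$ for a $\sigma_A$-invariant probability measure $\mu$ on $\Sigma$, and this correspondence is a homeomorphism on the level of ergodic measures which sends $\sigma_A$-periodic orbits to $\sigma_A^r$-periodic orbits. So the statement is reduced to: periodic orbit measures are weak-$\star$ dense in the ergodic $\sigma_A$-invariant measures on an irreducible SFT. This last fact is classical — it follows, for instance, from the specification property of irreducible subshifts of finite type, which lets one approximate any ergodic measure by measures supported on a single long periodic orbit built by concatenating blocks dictated by the ergodic theorem (Birkhoff applied to finitely many continuous test functions, i.e. finitely many cylinder indicator functions), then closing up the orbit using irreducibility. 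Alternatively one cites Sigmund's theorem on generic properties of invariant measures for systems with specification.

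The main obstacle — or rather the only point requiring care — is checking that $\pi_*$ genuinely surjects onto all of $\ephi$ and preserves ergodicity in the right direction, i.e. that every ergodic $\phi$-invariant measure lifts to an ergodic $\sigma_A^r$-invariant measure. For measures giving zero mass to the (measure-zero, by the coding hypothesis) set where $\pi$ fails to be injective, one can push back along a measurable section of $\pi$; the subtlety is the possible existence of ergodic measures of maximal entropy supported on the "bad" set, but the definition of Markov coding explicitly requires $\pi$ to be injective on a set of full measure for every ergodic measure of full support, and one handles non-fully-supported ergodic measures by restricting the coding to the corresponding subsystem. I would include a short lemma to this effect, or simply cite \cite{Quantitative} and \cite{BCLS} where exactly this transfer of measures through the coding is carried out. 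Once surjectivity of $\pi_*$ onto $\ephi$ is in hand, the continuity of $\pi_*$ and the density at the symbolic level give, for $m\in\ephi$ and a lift $\widetilde m\in\mathcal E(\sigma_A^r)$, periodic orbits $\widetilde a_j$ with $\delta_{\sigma_A^r}(\widetilde a_j)\to\widetilde m$, whence $\delta_\phi(a_j)=\pi_*\delta_{\sigma_A^r}(\widetilde a_j)\to\pi_*\widetilde m=m$, with $a_j:=\pi(\widetilde a_j)$ the corresponding periodic orbits of $\phi$.
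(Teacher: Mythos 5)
Your proposal is correct and follows essentially the same route as the paper: reduce through the strong Markov coding to the suspension flow and then to the base shift, lift the given ergodic measure to an ergodic shift-invariant measure, and approximate at the symbolic level by periodic orbit measures \`a la Sigmund. Two small remarks: the lifting step needs no injectivity or measurable-section argument (the paper simply cites Pollicott \cite[p.195]{PolMetricAnosov}, and ergodic lifts exist for any surjective semiconjugacy by ergodic decomposition of an arbitrary invariant lift), and at the base the paper invokes expansiveness plus the pseudo-orbit tracing property and Sigmund's argument \cite{SigmundHomeos} rather than specification, which strictly requires mixing --- though your concatenate-and-close-up-by-irreducibility argument works for transitive SFTs.
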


\begin{proof}
This is well known in hyperbolic dynamics (see e.g. Sigmund \cite[Theorem 1]{Sigmund_InvariantMeasures} when $\phi$ is Axiom A). We comment briefly on the ingredients of the proof, since we haven't found an explicit reference in our specific setting. 

By Pollicott \cite[p.195]{PolMetricAnosov} there is a $\sigma_A$-invariant ergodic measure $\mu$ on $\Sigma$ so that $m=\pi_*(\widehat{\mu})$, where $\widehat{\mu}$ is the probability measure on $\Sigma_r$ induced by the measure on $\Sigma\times\rr$ given by $$\frac{\mu\otimes \od t}{\int r\od \mu}.$$ \noindent Hence, it suffices to prove that $\mu$ can be approximated by periodic orbits of $\sigma_A$. This is a consequence of two dynamical properties of $\sigma_A$, called \textit{expansiveness} and the \textit{pseudo-orbit tracing property} (see e.g. \cite[Definition 3.2.11]{KH} and \cite[Theorem 1]{Walters}). Indeed, provided these properties Sigmund's argument \cite[Theorem 1]{SigmundHomeos} can be carried out in the present framework.
\end{proof}

With respect to equilibrium states we have the following theorem.

\begin{teo}[Bowen-Ruelle \cite{BowenRuelle}, Pollicott \cite{PolMetricAnosov}, Parry-Pollicott {\cite[Proposition 3.6]{ZetaFunction_Pollicott}}]\label{teo: CohomologousEquilibrium} 
Let $\phi=(\phi_t:X\to X)$ be a topologically transitive flow admitting a strong Markov coding. For every H{\"o}lder continuous function $f:X\to\rr$, there exists a unique equilibrium state $m_f(\phi)$ for $f$ with respect to $\phi$. Furthermore, the equilibrium state is ergodic. Finally, if $g:X\to\rr$ is H\"older continuous and $m_f(\phi)=m_g(\phi)$, then there exists a constant function $c$ so that $f-g\sim_\phi c$.
\end{teo}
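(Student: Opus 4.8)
The plan is to transport the statement to the symbolic model provided by the strong Markov coding and then invoke the classical thermodynamic formalism for subshifts of finite type, pushing the conclusions back to $X$. First I would fix a strong Markov coding $(\Sigma,\sigma_A,\pi,r)$ for $\phi$, which exists by Theorem~\ref{teo: pollicott coding metric anosov} together with topological transitivity; this realizes $\phi$, up to the bounded-to-one semiconjugacy $\pi:\Sigma_r\to X$, as the suspension flow $\sigma^r_A$ over the irreducible two-sided subshift of finite type $(\Sigma,\sigma_A)$. To a H\"older potential $f:X\to\rr$ I would associate the roof-integrated potential
\[
\Delta f(x):=\int_0^{r(x)}f\big(\pi(x,t)\big)\,\od t\qquad(x\in\Sigma),
\]
which is H\"older since $\pi$ and $r$ are. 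The Abramov formula, combined with the description of flow-invariant measures on a suspension used in the proof of Theorem~\ref{teo: periodic orbits dense} (every $m\in\pphi$ is $\pi_*\widehat\mu$ for a unique $\sigma_A$-invariant $\mu$), identifies the equilibrium states of $f$ for $\phi$ with the push-forwards $\pi_*\widehat\mu$ of the equilibrium states $\mu$ of the H\"older potential $\Delta f-\textbf{P}(\phi,f)\,r$ for $\sigma_A$; here the constant $\textbf{P}(\phi,f)$ is exactly the one making this potential have zero pressure, by the potential version of Proposition~\ref{prop: pressureZero}. Thus it suffices to establish the three assertions on $(\Sigma,\sigma_A)$.

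Existence, uniqueness and ergodicity of the equilibrium state of any H\"older potential on an irreducible two-sided subshift of finite type is the classical Ruelle--Perron--Frobenius theorem: the equilibrium state is the associated Gibbs measure, which is ergodic (indeed mixing). Applying this to $\Delta f-\textbf{P}(\phi,f)\,r$ produces a unique ergodic $\mu$; since the suspension of an ergodic base measure is ergodic for the suspension flow, and $\pi$ is injective on a set of full measure, $m_f(\phi):=\pi_*\widehat\mu$ is the unique equilibrium state of $f$ and is ergodic. Taking $f\equiv0$ recovers the measure of maximal entropy.

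For the final assertion, suppose $f,g:X\to\rr$ are H\"older with $m_f(\phi)=m_g(\phi)$. Pushing down through $\pi$, the Gibbs measures on $\Sigma$ of the zero-pressure potentials $\Delta f-\textbf{P}(\phi,f)r$ and $\Delta g-\textbf{P}(\phi,g)r$ coincide, so Liv\v{s}ic's theorem for subshifts of finite type yields a H\"older $u:\Sigma\to\rr$ and a constant $c$ with
\[
\big(\Delta f-\textbf{P}(\phi,f)r\big)-\big(\Delta g-\textbf{P}(\phi,g)r\big)=u\circ\sigma_A-u+c;
\]
comparing the pressures of the two sides forces $c=0$, whence $\Delta\big(f-g-(\textbf{P}(\phi,f)-\textbf{P}(\phi,g))\big)=u\circ\sigma_A-u$. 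By the standard dictionary between $\sigma_A$-coboundaries of roof-integrated potentials and flow-coboundaries for suspension flows (Parry--Pollicott \cite[Prop.~3.6]{ZetaFunction_Pollicott}, Bowen--Ruelle \cite{BowenRuelle}), this produces a H\"older function $V$ on $\Sigma_r$, of class $C^1$ along $\sigma^r_A$, with
\[
f\big(\pi(z)\big)-g\big(\pi(z)\big)-\big(\textbf{P}(\phi,f)-\textbf{P}(\phi,g)\big)=\left.\frac{\od}{\od t}\right|_{t=0}V\big(\sigma^r_{A,t}(z)\big)\qquad(z\in\Sigma_r).
\]
Descending $V$ through $\pi$ to a H\"older function on $X$ that is $C^1$ along $\phi$ --- using that $\pi$ is injective on the full-measure residual set and extending by continuity --- gives $f-g\sim_\phi\big(\textbf{P}(\phi,f)-\textbf{P}(\phi,g)\big)$ in the sense of Definition~\ref{dfn: livsic}.

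The hard part will be exactly this last translation: upgrading the discrete-time coboundary relation on the base shift to a genuine continuous-time Liv\v{s}ic coboundary on $X$, while keeping control of H\"older regularity and differentiability along the flow, and checking that the resulting function is well defined on $X$ even though $\pi$ is only bounded-to-one. Everything else is a routine application of the Markov coding (Theorem~\ref{teo: pollicott coding metric anosov}), the Abramov/Bowen--Ruelle reduction, and classical Ruelle--Perron--Frobenius and Liv\v{s}ic theory for subshifts of finite type.
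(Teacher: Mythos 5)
The paper does not prove this theorem at all: it is quoted from Bowen--Ruelle, Pollicott and Parry--Pollicott, so there is no internal argument to compare with. Your symbolic reduction (roof-integrated potential $\Delta f$, Abramov/pressure-zero normalization, Ruelle--Perron--Frobenius on the irreducible subshift, pushforward through $\pi$) is exactly the route those references take, and the existence/uniqueness/ergodicity part is essentially fine, modulo two points you should make explicit: (i) to identify equilibrium states upstairs and downstairs you need that $\pi_*$ preserves entropy (true because a bounded-to-one factor map has zero conditional entropy) and that every $m\in\pphi$ is a pushforward (Pollicott); (ii) to start the cohomology argument you need $\mu_f=\mu_g$ on $\Sigma$ from $m_f(\phi)=m_g(\phi)$ on $X$, which uses that $\pi$ is injective on a full-measure set for these measures -- legitimate here because Gibbs measures of H\"older potentials on an irreducible shift have full support, but it is not automatic and deserves a sentence.

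The genuine gap is the step you yourself flag as ``the hard part'': descending the transfer function $V$ from $\Sigma_r$ to $X$ ``by continuity''. This does not work as stated. For a point $x\in X$ with two preimages $z_1\neq z_2$, the difference $W(z_1)-W(z_2)$ is constant along the orbit pair but there is no reason it vanishes, and approximating $x$ by points with unique preimages does not help, since the approximating preimage sequences converge to at most one of $z_1,z_2$ and are images of \emph{different} base points, so continuity gives no comparison. The correct way to finish -- and it stays entirely inside the paper's toolkit -- is not to push $V$ down at all: use the shift-level identity $\Delta\bigl(f-g-(\mathbf{P}(\phi,f)-\mathbf{P}(\phi,g))\bigr)=u\circ\sigma_A-u$ only to conclude that $f-g-(\mathbf{P}(\phi,f)-\mathbf{P}(\phi,g))$ has zero integral over every periodic orbit of $\phi$. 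Indeed, every periodic orbit $a$ of $\phi$ is the image of a periodic orbit of the suspension flow (the compact invariant set $\pi^{-1}(a)$ fibers over the circle $a$ with fibers of bounded cardinality, so its orbits are periodic of period an integer multiple of $p_\phi(a)$), and the integral over that lifted orbit is a Birkhoff sum of a coboundary, hence zero. Then Liv\v{s}ic's Theorem \ref{teo: livsic} for the flow $\phi$ itself -- which the paper states precisely for topologically transitive flows with a strong Markov coding -- gives $f-g\sim_\phi \mathbf{P}(\phi,f)-\mathbf{P}(\phi,g)$, which is the desired conclusion with $c=\mathbf{P}(\phi,f)-\mathbf{P}(\phi,g)$. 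With that replacement your argument is complete.
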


The equilibrium state for $f\equiv 0$ is called the \textit{Bowen-Margulis measure} of $\phi$, and denoted by $m^{\tn{BM}}(\phi)$. For Anosov flows, the existence of this measure was proved by Margulis in his PhD Thesis \cite{MarThesis}. Uniqueness was originally conjectured by Bowen \cite{Bowen-Symbolic} and this justifies the name. In a more geometric context, e.g. for the geodesic flow of a convex cocompact real hyperbolic manifold, Sullivan \cite{SulDensity} gave a description of this measure using Patterson-Sullivan theory. Because of this, the measure of maximal entropy in those contexts is sometimes called the \textit{Bowen-Margulis-Sullivan measure}.

If $f\sim_\phi g$ then the integrals of $f$ and $g$ over every periodic orbit coincide. In the present setting we also have a converse statement.

\begin{teo}  [Liv\v{s}ic {\cite{Livsic}}] \label{teo: livsic} Let $\phi=(\phi_t:X\to X)$ be a topologically transitive flow admitting a strong Markov coding. Suppose that  $f$  and  $g$ are two H{\"o}lder continuous functions such that for all $a\in\mathcal{O}$ and all $x\in a$ one has $$\displaystyle\int_0^{p_\phi(a)}f(\phi_t(x))\mathrm{d}t=\displaystyle\int_0^{p_\phi(a)}g(\phi_t(x))\mathrm{d}t.$$ \noindent Then $f\sim_\phi g$.
\end{teo}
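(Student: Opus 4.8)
The plan is to reduce the Liv\v sic cohomology statement to the symbolic side, where the classical Liv\v sic theorem for subshifts of finite type is available. Since $\phi$ admits a strong Markov coding $(\Sigma, \sigma_A, \pi, r)$, the functions $f$ and $g$ can be transported to H\"older continuous functions on the suspension space $\Sigma_r$, and then (after integrating over the fibre, i.e. replacing a function $F$ on $\Sigma_r$ by $\widetilde F(x):=\int_0^{r(x)}F(x,t)\,\od t$ on $\Sigma$) to H\"older continuous functions $\widetilde f, \widetilde g$ on the base $\Sigma$. The key point is that periodic orbits of $\phi$ correspond, via $\pi$, to periodic orbits of $\sigma_A^r$, which in turn correspond to periodic orbits of $\sigma_A$ in $\Sigma$; and the integral of a function over a periodic $\phi$-orbit equals the Birkhoff sum of the fibre-integrated function over the corresponding periodic $\sigma_A$-orbit. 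So the hypothesis translates into: $\sum_{i=0}^{n-1}\widetilde f(\sigma_A^i x)=\sum_{i=0}^{n-1}\widetilde g(\sigma_A^i x)$ for every $\sigma_A$-periodic point $x$ of period $n$.

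Next I would invoke the discrete Liv\v sic theorem (Liv\v sic \cite{Livsic}; see also \cite[Appendix III]{ZetaFunction_Pollicott}): for a topologically transitive (irreducible) subshift of finite type and H\"older continuous functions with equal Birkhoff sums over all periodic orbits, the two functions are cohomologous, i.e. there is a H\"older continuous $u:\Sigma\to\rr$ with $\widetilde f-\widetilde g=u\circ\sigma_A-u$. From such a transfer function $u$ on the base one builds, by the standard suspension construction, a H\"older continuous function $V$ on $\Sigma_r$ that is $C^1$ along the flow direction and whose flow-derivative equals the pullback of $f-g$; then one pushes $V$ forward through $\pi$ to a function on $X$. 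Because $\pi$ is H\"older, surjective, bounded-to-one and a flow semiconjugacy, the resulting function on $X$ is well-defined H\"older continuous (the bounded-to-one fibres and injectivity on a full-measure residual set guarantee consistency), is $C^1$ along $\phi$, and realizes $f-g$ as $\left.\frac{\od}{\od t}\right|_{t=0}V(\phi_t(\cdot))$. This is exactly the statement that $f\sim_\phi g$.

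The main obstacle I anticipate is the careful bookkeeping at the passage between the suspension flow and the base shift, and between $\Sigma_r$ and $X$ through the coding map $\pi$: one must check that the fibre-integration $F\mapsto\widetilde F$ sends H\"older functions on $\Sigma_r$ to H\"older functions on $\Sigma$, that the correspondence of periodic orbits is exactly period-preserving (so that equality of flow-integrals is equivalent to equality of Birkhoff sums, with no spurious multiplicity), and that the transfer function descends from $\Sigma_r$ to $X$ in a well-defined H\"older way despite $\pi$ being only bounded-to-one rather than injective. The regularity of $V$ along the flow direction also needs a short argument (differentiating $u$ along orbits, using that $r$ is bounded away from $0$ and H\"older). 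All of these are by-now standard in hyperbolic symbolic dynamics, so I would cite \cite{Bowen-Symbolic}, \cite{ZetaFunction_Pollicott} and \cite{PolMetricAnosov} for the infrastructure and only indicate the adaptations; indeed the statement of the theorem already attributes the result to Liv\v sic and merely asserts its validity in the strong-Markov-coding setting, so the proof is really a remark that the classical proof goes through verbatim once the coding is in place.
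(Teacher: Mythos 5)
The paper does not reprove this statement: it points to \cite[Theorem 4.3]{Walkden} and merely observes that the argument there, although stated for \tn{C}$^1$ hyperbolic flows, only uses the existence of the Markov partition. That classical argument works \emph{directly on} $X$: one defines $V$ by integrating $f-g$ along a single dense orbit and proves a uniform H\"older estimate by shadowing almost-closed orbit segments with periodic orbits, where the hypothesis (vanishing of all periodic obstructions) enters; $V$ then extends continuously to $X$. Your route is different: you transfer the problem to the shift, apply the discrete Liv\v{s}ic theorem there, and try to push the resulting transfer function back down through $\pi$. The first half of this is fine (fibre-integration preserves H\"older regularity, symbolic periodic orbits map to flow periodic orbits, and the fact that the symbolic period may be a multiple of the flow period is harmless since the flow integrals vanish anyway), so the discrete Liv\v{s}ic theorem does give you $u$ and hence a H\"older $V_\Sigma$ on $\Sigma_r$ whose flow-derivative is $(f-g)\circ\pi$.

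The genuine gap is the descent step, which you dispose of in one sentence but which is in fact the crux. If $\pi(z_1)=\pi(z_2)=x$, then $t\mapsto V_\Sigma(\sigma^r_{A,t}z_1)-V_\Sigma(\sigma^r_{A,t}z_2)$ is constant (both terms have the same derivative along the orbit), but nothing forces this constant to be zero: the non-injectivity fibres of $\pi$ correspond to orbits running along the boundaries of the Markov boxes (e.g.\ a periodic orbit through the boundary, coded by two distinct symbolic periodic points), and there the two symbolic orbits stay a definite distance apart for all time, so no continuity argument closes the gap. Bounded-to-one fibres and injectivity on a residual full-measure set do not help, because that set need not be invariant and, more importantly, gives no uniform estimate. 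Moreover, even where the pushforward is defined, H\"older (or mere) continuity of $V$ on $X$ does not follow from H\"older continuity of $V_\Sigma$: points that are close in $X$ can have symbolic codings that are far apart in $\Sigma$, since $\pi$ admits no continuous inverse. In short, coboundary data does not push forward through a finite-to-one semiconjugacy for free; proving that it does here is essentially equivalent to the theorem itself. To repair the argument you should either follow the cited direct construction of $V$ on $X$ (dense orbit plus a closing/shadowing argument, which is what the Markov sections are really used for), or supply an actual proof that $V_\Sigma$ is constant on the fibres of $\pi$ and that the descended function satisfies a H\"older estimate on $X$ — neither of which follows from the formal properties of $\pi$ listed in the definition of a strong Markov coding.
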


A proof of Liv\v{s}ic's Theorem \ref{teo: livsic} can be found in \cite[Theorem 4.3]{Walkden}: even though it is stated for C$^1$ hyperbolic flows, the proof only uses the existence of the Markov partition.

The final property of metric Anosov flows we will need is convexity of the pressure function, and a characterization of its first derivative in terms of equilibrium states.  Let $S$ be a C$^k$ (resp. smooth, analytic) manifold. A family of functions $\{f_s: X\to  \mathbb{R}\}_{s\in S}$ is said to be a C\emph{$^k$ (resp. smooth, analytic) family}, if for all $x\in X$, the function $s\mapsto f_s(x)$ is C$^k$ (resp. smooth, analytic).

\begin{prop}[Parry-Pollicott {\cite[Propositions 4.7, 4.10 and 4.12]{ZetaFunction_Pollicott}}]\label{prop: firstDevPressure}
Let $\phi=(\phi_t:X\to X)$ be a topologically transitive flow admitting a strong Markov coding. Then:

\begin{enumerate}
    \item For every pair of H\"older continuous functions $f,g:X\to\rr$, the function $$s\mapsto\mathbf{P}(\phi,f+sg)$$ \noindent is convex. Furthermore, it is strictly convex if $g$ is not Liv\v{s}ic cohomologous (w.r.t. $\phi$) to a constant function.
    \item Let $\{f_s\}_{s\in(-1,1)}$ be a \tn{C}$^k$ (resp. smooth, analytic) family of $\upsilon$-H{\"o}lder continuous functions on $X$. Then $s\mapsto \mathbf{P}(\phi,f_s)$ is a \tn{C}$^k$ (resp, smooth, analytic) function, and $$\frac{\od \tn{\textbf{P}}(\phi,f_s)}{\od s}\bigg|_{s=0}= \int_X \left(\frac{\od f_s}{\od  s}\bigg|_{s=0}\right) \mathrm{d}m_{f_0},$$ \noindent where $m_{f_0}=m_{f_0}(\phi)$ is the equilibrium state of $f_0$ (w.r.t $\phi$). 
\end{enumerate}
\end{prop}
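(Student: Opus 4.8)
The plan is to reduce the statement to the spectral theory of the Ruelle transfer operator on the base of a strong Markov coding, treating the convexity part separately by a soft argument. Fix a strong Markov coding $(\Sigma,\sigma_A,\pi,r)$ of $\phi$. Since $\pi$ is bounded-to-one and injective on sets of full measure, and by Abramov's formula for suspension flows, the pressure, metric entropy and equilibrium states of $\phi$ for a H\"older potential $F$ on $X$ correspond, under $\pi$, to the same data for the suspension flow $\sigma^r_A$; moreover $P=\mathbf{P}(\phi,F)$ is the unique real number solving
$$\mathbf{P}_{\sigma_A}\!\left(\Delta F-P\,r\right)=0,\qquad \Delta F(x):=\int_0^{r(x)}F(x,s)\,\od s .$$
Given a C$^k$ (resp. smooth, analytic) family $\{f_s\}$ of $\upsilon$-H\"older potentials on $X$, the integrated potentials $\Delta f_s$ form a C$^k$ (resp. smooth, analytic) family of H\"older potentials on $\Sigma$, so it suffices to prove the corresponding regularity and first-variation statements on the two-sided subshift $(\Sigma,\sigma_A)$ and then transport them through the displayed implicit relation by the implicit function theorem, the relevant partial derivative being $\partial_P\mathbf{P}_{\sigma_A}(\Delta f_0-Pr)=-\int r\,\od m_0\neq 0$, where $m_0$ is the equilibrium state of $\Delta f_0-\mathbf{P}(\phi,f_0)r$.

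On $(\Sigma,\sigma_A)$ I would first pass, after a H\"older coboundary change (Sinai--Bowen), to potentials depending only on future coordinates, so that the Ruelle--Perron--Frobenius theorem applies: for a H\"older potential $\psi$ the transfer operator $\mathcal{L}_\psi$ on the Banach space of H\"older functions has a simple, isolated leading eigenvalue $\lambda_\psi=e^{\mathbf{P}_{\sigma_A}(\psi)}$, with positive eigenfunction $h_\psi$ and dual eigenmeasure $\nu_\psi$, and the unique equilibrium state is $\od m_\psi=h_\psi\,\od\nu_\psi$ after normalization. The assignment $\psi\mapsto\mathcal{L}_\psi$ is real-analytic as a map to bounded operators, hence along a C$^k$ (resp. smooth, analytic) family $\{\psi_s\}$ it is a C$^k$ (resp. smooth, analytic) operator family; Kato's perturbation theory for an isolated simple eigenvalue then yields that $s\mapsto \log\lambda_{\psi_s}=\mathbf{P}_{\sigma_A}(\psi_s)$ has the same regularity, and the first-order perturbation formula for a simple eigenvalue gives, writing $\dot{\,\cdot\,}$ for $\od/\od s|_{s=0}$,
$$\frac{\od}{\od s}\Big|_{s=0}\log\lambda_{\psi_s}=\frac{\langle\nu_0,\dot{\mathcal{L}}_0 h_0\rangle}{\lambda_0\,\langle\nu_0,h_0\rangle}=\int_\Sigma\dot\psi_0\,\od m_{\psi_0},$$
the last equality because $\dot{\mathcal{L}}_0 h_0=\mathcal{L}_0(\dot\psi_0 h_0)$ and $\mathcal{L}_0^{*}\nu_0=\lambda_0\nu_0$. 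Feeding $\psi_s=\Delta f_s-\mathbf{P}(\phi,f_s)r$ into the implicit relation of the first paragraph, differentiating, and using $\od\widehat{m_0}=(m_0\otimes\od t)/\!\int r\,\od m_0$ together with $\pi_*\widehat{m_0}=m_{f_0}(\phi)$, one obtains part (2): $s\mapsto\mathbf{P}(\phi,f_s)$ is C$^k$ (resp. smooth, analytic) and $\frac{\od}{\od s}|_{s=0}\mathbf{P}(\phi,f_s)=\int_X\dot f_0\,\od m_{f_0}$.

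For part (1), convexity of $s\mapsto\mathbf{P}(\phi,f+sg)$ is immediate from the variational principle: $f\mapsto\mathbf{P}(\phi,f)=\sup_{m\in\pphi}\big(h(\phi,m)+\int f\,\od m\big)$ is a supremum of affine functionals, hence convex, and restricting to an affine line gives the claim (equivalently, read off convexity of $\log\lambda_{\psi_s}$ from the transfer-operator picture). For strict convexity when $g$ is not Liv\v{s}ic cohomologous to a constant I would compute the second derivative: applying the first-variation formula twice, or second-order Kato perturbation theory, gives $\frac{\od^2}{\od s^2}|_{s=0}\mathbf{P}(\phi,f+sg)=\sigma^2_{m_f}(g)$, the asymptotic variance of $g$ along $\phi$ with respect to $m_f$; and $\sigma^2_{m_f}(g)=0$ forces the integrals of $g-\int g\,\od m_f$ over all periodic orbits to vanish, so by Liv\v{s}ic's Theorem \ref{teo: livsic} $g$ would be cohomologous to the constant $\int g\,\od m_f$, a contradiction. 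Finally I would note that the first-derivative formula of part (2) also admits a purely soft proof once convexity is in hand: the subdifferential of the convex function $\mathbf{P}(\phi,\cdot)$ at $f_0$ is the set of its tangent functionals, which by the variational principle is the set of equilibrium states of $f_0$, and this is the singleton $\{m_{f_0}\}$ by Theorem \ref{teo: CohomologousEquilibrium}; hence $\mathbf{P}(\phi,\cdot)$ is Gateaux differentiable at $f_0$ with differential $m_{f_0}$, which already yields the formula for C$^1$ families.

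The main obstacle is the analytic input underlying the second paragraph: establishing the spectral gap for $\mathcal{L}_\psi$ (the Ruelle--Perron--Frobenius theorem on a suitable H\"older Banach space), and verifying the hypotheses of Kato's theorem --- that $s\mapsto\mathcal{L}_{\psi_s}$ is genuinely a C$^k$/analytic family of bounded operators and that the leading eigenvalue remains simple and isolated under perturbation. Once this is in place, the suspension reduction, the implicit differentiation, and the identification of the eigenvalue derivative with integration against the equilibrium state are routine bookkeeping.
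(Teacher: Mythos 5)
The paper does not prove this proposition at all: it is quoted directly from Parry--Pollicott \cite{ZetaFunction_Pollicott}, and your argument (reduction to the suspension over the Markov coding, the implicit equation $\mathbf{P}_{\sigma_A}(\Delta f_s-Pr)=0$, Ruelle--Perron--Frobenius plus analytic perturbation of the simple leading eigenvalue for the regularity and first-derivative formula, and the asymptotic-variance/Liv\v{s}ic argument for strict convexity) is exactly the proof given in that reference. So your proposal is correct and takes essentially the same route as the paper's source; no further comment is needed.
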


\subsection{Intersection and renormalized intersection}\label{subsec: inter and renormalized intersection}
Intersection and renormalized intersection provide a way of ``measuring the difference" between two points in $\hr$. The notion of intersection was introduced by Thurston in the context of Teichm\"uller space (see Wolpert \cite{Wolpert}), and then reinterpreted by Bonahon \cite{BonahonCurrents} (see also Appendix \ref{appendix: currents}). Burger \cite{BurgerManhattan} generalized this notion to pairs of convex cocompact representations into Lie groups of real rank equal to one, and noticed a rigid inequality for this number after renormalizing by entropy. Bridgeman-Canary-Labourie-Sambarino \cite[Section 3.4]{BCLS} further generalized this (renormalized) intersection in the abstract dynamical setting we are focusing on. We will use these notions to study the asymmetric distance and Finsler norm in $\hr$ in Section \ref{sec: asymmetric metric and finsler norm for flows}.

\begin{dfn}
Let $\psi,\widehat{\psi}\in\hr$. For $m\in\ppsi$, the $m$-\textit{intersection number} between $\psi,\widehat{\psi}\in \hr$ is defined by $$\intm:=\displaystyle\int_{X}r_{\psi,\widehat{\psi}}\od m,$$
where the positive continuous function $r_{\psi,\widehat{\psi}}$ is given by Equation (\ref{eq: rep funtion fro psi to widehatpsi}).
\end{dfn}

Recall that $\phi$ is a topologically transitive flow admitting a strong Markov coding. Intersection numbers and ratios of periods are linked as follows.

\begin{prop} \label{prop:sup of periods and measures}
For every $\psi,\widehat{\psi}\in\hr$ the following equality holds
$$ \displaystyle\sup_{a\in\mathcal{O}}\frac{p_{\widehat{\psi}}(a)}{p_{\psi}(a)}=\displaystyle\sup_{m\in\ppsi} \mathbf{I}_m(\psi,\widehat{\psi}).$$
\end{prop}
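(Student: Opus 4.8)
The plan is to prove the two inequalities $\sup_{a\in\mathcal{O}}\frac{p_{\widehat{\psi}}(a)}{p_{\psi}(a)}\leq \sup_{m\in\ppsi}\intm$ and $\sup_{m\in\ppsi}\intm\leq\sup_{a\in\mathcal{O}}\frac{p_{\widehat{\psi}}(a)}{p_{\psi}(a)}$ separately. For the first inequality, I would use Equation \eqref{eq: integral of reparametrizing over delta in periodic orbit}, which states $p_{\widehat{\psi}}(a)=p_{\psi}(a)\int_X r_{\psi,\widehat{\psi}}\od\delta_\psi(a)$ for every $a\in\mathcal{O}$. Dividing by $p_\psi(a)>0$ gives $\frac{p_{\widehat\psi}(a)}{p_\psi(a)}=\int_X r_{\psi,\widehat\psi}\od\delta_\psi(a)=\mathbf{I}_{\delta_\psi(a)}(\psi,\widehat\psi)$. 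Since each $\delta_\psi(a)\in\epsi\subset\ppsi$, taking the supremum over $a\in\mathcal{O}$ on the left is bounded above by the supremum over all $m\in\ppsi$ on the right, giving the first inequality immediately.

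For the reverse inequality, the key point is that $r_{\psi,\widehat\psi}$ is a fixed continuous (indeed H\"older) function, so $m\mapsto\intm=\int_X r_{\psi,\widehat\psi}\od m$ is a continuous affine functional on the compact convex space $\ppsi$. Hence its supremum is attained at an extremal point, i.e. at some ergodic measure $m_0\in\epsi$ (this is the Krein--Milman / Bauer maximum principle, or one can argue directly via the Ergodic Decomposition: for any $m\in\ppsi$, $\int r_{\psi,\widehat\psi}\od m=\int_{\epsi}\big(\int r_{\psi,\widehat\psi}\od\mu\big)\od\tau_m(\mu)\leq\sup_{\mu\in\epsi}\int r_{\psi,\widehat\psi}\od\mu$). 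Now I would invoke Theorem \ref{teo: periodic orbits dense}: since $\psi$ admits a strong Markov coding (by Proposition \ref{prop: coding for reparametrization}, as $\psi\in\hr$ and $\phi$ admits one), there is a sequence of periodic orbits $\{a_j\}\subset\mathcal{O}$ with $\delta_\psi(a_j)\to m_0$ weak-$\star$. Since $r_{\psi,\widehat\psi}$ is continuous, $\int r_{\psi,\widehat\psi}\od\delta_\psi(a_j)\to\int r_{\psi,\widehat\psi}\od m_0=\sup_{m\in\ppsi}\intm$. But $\int r_{\psi,\widehat\psi}\od\delta_\psi(a_j)=\frac{p_{\widehat\psi}(a_j)}{p_\psi(a_j)}\leq\sup_{a\in\mathcal{O}}\frac{p_{\widehat\psi}(a)}{p_\psi(a)}$ by Equation \eqref{eq: integral of reparametrizing over delta in periodic orbit} again, so passing to the limit yields $\sup_{m\in\ppsi}\intm\leq\sup_{a\in\mathcal{O}}\frac{p_{\widehat\psi}(a)}{p_\psi(a)}$, completing the proof.

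The main subtlety — rather than a genuine obstacle — is ensuring the hypotheses of Theorem \ref{teo: periodic orbits dense} apply: one must record that the theorem is about a flow with a strong Markov coding, and that $\psi$ (not just $\phi$) has one, which is exactly the content of Proposition \ref{prop: coding for reparametrization}. A minor point worth stating carefully is why the supremum $\sup_{m\in\ppsi}\intm$ is actually attained (compactness of $\ppsi$ and continuity of the functional), since without attainment the weak-$\star$ approximation argument would need to be applied to a near-maximizing sequence of ergodic measures instead; either phrasing works, but the cleanest is to attain the sup at an ergodic measure and then approximate that single measure by periodic orbits.
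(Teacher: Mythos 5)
Your proof is correct and follows essentially the same route as the paper: both directions rest on Equation \eqref{eq: integral of reparametrizing over delta in periodic orbit}, the reduction to ergodic measures via the Ergodic Decomposition, and the density of periodic-orbit measures from Theorem \ref{teo: periodic orbits dense} (available because $\psi$ inherits a strong Markov coding by Proposition \ref{prop: coding for reparametrization}). Your handling of the first inequality (bounding each ratio directly by $\mathbf{I}_{\delta_\psi(a)}(\psi,\widehat{\psi})\leq\sup_{m\in\ppsi}\intm$) is a slight streamlining of the paper's maximizing-sequence argument, and your remark on attainment of the supremum at an ergodic measure is a valid alternative phrasing, but neither constitutes a genuinely different approach.
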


\begin{proof}

The proof follows closely Guillarmou-Knieper-Lefeuvre \cite[Lemma 4.10]{GeodesicStretchPressureMetric}. We include it for completeness.

First of all we observe that
\begin{equation}\label{eq: sup intersections on ergodic}
  \displaystyle\sup_{m\in\ppsi}\intm=\displaystyle\sup_{m\in\epsi}\intm.  
\end{equation}
\noindent Indeed, let $m_0\in\ppsi$ be such that $$\displaystyle\sup_{m\in\ppsi}\intm=\mathbf{I}_{m_0}(\psi,\widehat{\psi}).$$ \noindent By Ergodic Decomposition (c.f. Subsection \ref{subsec: measures and pressure}) we have

 \begin{align*}
 \mathbf{I}_{m_0}(\psi,\widehat{\psi}) &= \displaystyle\int_{\epsi}\left(\displaystyle\int_X r_{\psi,\widehat{\psi}}(x)\mathrm{d}\mu(x)\right)\mathrm{d}\tau_{m_0}(\mu)\\
 &\leq \displaystyle\sup_{m\in\epsi}\intm \times \displaystyle\int_{\epsi}\mathrm{d}\tau_{m_0}(\mu)\\ 
 &= \displaystyle\sup_{m\in\epsi}\intm.
 \end{align*}
\noindent The reverse inequality being trivial, this proves Equality (\ref{eq: sup intersections on ergodic}).

We now prove $$ \displaystyle\sup_{a\in\mathcal{O}}\frac{p_{\widehat{\psi}}(a)}{p_{\psi}(a)}\leq \displaystyle\sup_{m\in\epsi} \intm.$$
\noindent To do that, take a sequence $a_j\in\mathcal{O}$ such that
$$ \displaystyle\sup_{a\in\mathcal{O}}\frac{p_{\widehat{\psi}}(a)}{p_{\psi}(a)}= \displaystyle\lim_{j\rightarrow\infty}\frac{p_{\widehat{\psi}}(a_j)}{p_{\psi}(a_j)}.$$
\noindent Since $\epsi$ is compact we may assume $\delta_{\psi}(a_j) \rightarrow m$ for some $m\in\epsi$. By Equation (\ref{eq: integral of reparametrizing over delta in periodic orbit}) we have
$$\displaystyle\sup_{a\in\mathcal{O}}\frac{p_{\widehat{\psi}}(a)}{p_{\psi}(a)} =\displaystyle\lim_{j\rightarrow\infty}\displaystyle\int_{X} r_{\psi,\widehat{\psi}}\od\delta_{\psi}(a_j)=\displaystyle\int_{X} r_{\psi,\widehat{\psi}}\od m\leq \displaystyle\sup_{m\in\epsi} \intm.$$

To finish the proof, it remains to show $$ \displaystyle\sup_{a\in\mathcal{O}}\frac{p_{\widehat{\psi}}(a)}{p_{\psi}(a)}\geq \displaystyle\sup_{m\in\epsi} \intm.$$
\noindent By Theorem \ref{teo: periodic orbits dense}, given $m\in\epsi$ we may find a sequence $a_j\in\mathcal{O}$ such that $\delta_{\psi}(a_j)\rightarrow m $. Proceeding as above we have
$$ \displaystyle\sup_{a\in\mathcal{O}}\frac{p_{\widehat{\psi}}(a)}{p_{\psi}(a)} \geq \displaystyle\lim_{j\to\infty}\displaystyle\int_{X} r_{\psi,\widehat{\psi}}\od\delta_{\psi}(a_j)=\displaystyle\int_{X} r_{\psi,\widehat{\psi}}\od m =\intm.$$
\noindent The result follows taking supremum over all $m\in\epsi$.
\end{proof}

The supremum $$\displaystyle\sup_{m\in\ppsi} \mathbf{I}_m(\psi,\widehat{\psi})=\displaystyle\sup_{m\in\ppsi} \int r_{\psi,\widehat{\psi}}\od m$$ \noindent is a well studied quantity in dynamics. Indeed, this number and the measure(s) attaining the $\sup$ is the subject of study of \textit{Ergodic Optimization}. A general belief in this area is that ``typically" among sufficiently regular functions, the maximizing measure is unique, and supported on a periodic orbit. See Jenkinson \cite{jenkinson} and references therein for a nice survey. However, for the geometric applications we have in mind these types of generic results are not enough. In the specific case of reparametrizing functions arising from points in the Teichm\"uller space of a closed surface, Thurston gives a description of the measures realizing the $\sup$ above: these are always (partially) supported on a topological lamination on the surface, and this lamination is typically a simple closed geodesic (see \cite[p.4 and Section 10]{ThurstonStretch} for details).

The function $m\mapsto\intm$ is continuous with respect to the weak-$\star$ topology on $\ppsi$. Since $\ppsi$ is compact, Proposition \ref{prop:sup of periods and measures} implies \begin{equation}\label{eq: sup of periods is finite}
    \displaystyle\sup_{a\in\mathcal{O}}\frac{p_{\widehat{\psi}}(a)}{p_{\psi}(a)}<\infty.
\end{equation}

\begin{rem}\label{rem: domination flows}
Thanks to the above remark one may try to use directly the $\log$ of the number in (\ref{eq: sup of periods is finite}) to produce a metric on $\hr$. However, the following problem arises. For a constant function $r=c>1$, we have $$\log\left(\displaystyle\sup_{a\in\mathcal{O}}\frac{p_{\phi}(a)}{p_{\phi^r}(a)}\right)=\log\left(\frac{1}{c}\right)<0.$$
\noindent Hence, the quantity in Equation \eqref{eq: sup of periods is finite} 
cannot define a distance in $\hr$. This problem also arises in the  geometric setting we will focus on (c.f. Remark \ref{rem: AvoidDomination}). 
\end{rem}

A way of resolving the above issue, natural from the viewpoint of dynamical systems, is to normalize by the entropy. Together with Proposition \ref{prop:sup of periods and measures}, this motivates the following definition.

\begin{dfn}
Let $\psi,\widehat{\psi}\in\hr$ and $m\in\ppsi$. The $m$-\textit{renormalized intersection} between $\psi$ and $\widehat{\psi}$ is 
$$\rintm:=\frac{h_{\widehat{\psi}}}{h_{\psi}}\intm.$$
\end{dfn}

Considering renormalized intersection fixes the above issue:
\begin{prop}[Bridgeman-Canary-Labourie-Sambarino {\cite[Proposition 3.8]{BCLS}}]\label{prop: BCLS renorm int rigidity}
For every $\psi,\widehat{\psi}\in\hr$ one has $$\mathbf{J}_{m^{\tn{BM}}(\psi)}(\psi,\widehat{\psi})\geq 1.$$
\noindent Moreover, equality holds if and only if $(h_{\widehat{\psi}}r_{\phi,\widehat{\psi}})\sim_\phi( h_{\psi}r_{\phi,\psi})$.
\end{prop}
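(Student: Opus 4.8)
The plan is to establish Proposition \ref{prop: BCLS renorm int rigidity} by translating the inequality into a statement about topological pressure and then invoking the convexity and rigidity results collected in Subsection \ref{subsec: measures and pressure} together with Proposition \ref{prop: pressureZero}. Write $r:=r_{\phi,\psi}$ and $\widehat r:=r_{\phi,\widehat\psi}$, so that by Equation (\ref{eq: rep funtion fro psi to widehatpsi}) one has $r_{\psi,\widehat\psi}=\widehat r/r$, and hence by the isomorphism (\ref{eq: iso between ppsi and pwpsi}) between $\ppsi$ and $\pphi$ (both realised as $r$-weighted measures over $\pphi$), the $m^{\tn{BM}}(\psi)$-intersection can be rewritten as an integral against the Bowen--Margulis measure of $\phi$. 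More precisely, $m^{\tn{BM}}(\psi)$ corresponds under (\ref{eq: iso between ppsi and pwpsi}) to $m^{\tn{BM}}(\phi)$ — since the equilibrium state for $-h_\psi r$ with respect to $\phi$ is, by Proposition \ref{prop: pressureZero} and the discussion of reparametrized flows, exactly the pull-back of the measure of maximal entropy of $\psi$ — so that
$$\mathbf{I}_{m^{\tn{BM}}(\psi)}(\psi,\widehat\psi)=\int_X \frac{\widehat r}{r}\,\od m^{\tn{BM}}(\psi)=\frac{\int_X \widehat r\,\od m^{\tn{BM}}(\phi)}{\int_X r\,\od m^{\tn{BM}}(\phi)}.$$
Multiplying by $h_{\widehat\psi}/h_\psi$ then gives $\rintBM = \dfrac{h_{\widehat\psi}\int \widehat r\,\od m^{\tn{BM}}(\phi)}{h_\psi\int r\,\od m^{\tn{BM}}(\phi)}$.

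Next I would feed this into the pressure function. Consider the analytic family $s\mapsto \mathbf{P}(\phi,\,-(1-s)h_\psi r - s\, h_{\widehat\psi}\widehat r)$ for $s\in[0,1]$; call it $F(s)$. By Proposition \ref{prop: pressureZero}, $F(0)=\mathbf{P}(\phi,-h_\psi r)=0$ because $h_\psi=h_{\phi^r}$, and likewise $F(1)=\mathbf{P}(\phi,-h_{\widehat\psi}\widehat r)=0$ because $h_{\widehat\psi}=h_{\phi^{\widehat r}}$. By Proposition \ref{prop: firstDevPressure}(1), $F$ is convex. A convex function vanishing at both endpoints of $[0,1]$ has $F'(0)\le 0$. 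Now by Proposition \ref{prop: firstDevPressure}(2),
$$F'(0)=\int_X \bigl(h_\psi r - h_{\widehat\psi}\widehat r\bigr)\,\od m_{-h_\psi r}(\phi) = h_\psi\!\int_X r\,\od m^{\tn{BM}}(\phi) - h_{\widehat\psi}\!\int_X \widehat r\,\od m^{\tn{BM}}(\phi),$$
using again that the equilibrium state of $-h_\psi r$ is $m^{\tn{BM}}(\phi)$ (this is the same identification as above: $\mathbf{P}(\phi,-h_\psi r)=0$ forces the equilibrium state to be the one maximising $h(\phi,m)$, i.e. the Bowen--Margulis measure, since adding a coboundary-free potential that integrates to a constant against invariant measures cannot help — more carefully, one uses $\mathbf{P}(\phi,-h_\psi r)=0=h_{\tn{top}}(\phi^r)$ and the variational characterisation). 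From $F'(0)\le 0$ we get $h_{\widehat\psi}\int \widehat r\,\od m^{\tn{BM}}(\phi)\ge h_\psi\int r\,\od m^{\tn{BM}}(\phi)$, which upon dividing is exactly $\rintBM\ge 1$.

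For the equality case: $F$ is convex on $[0,1]$, vanishes at $0$ and $1$, and $\rintBM=1$ forces $F'(0)=0$, hence (a convex function with $F(0)=F(1)=0$ and $F'(0)=0$ must be identically $0$) $F\equiv 0$ on $[0,1]$. Then $F$ is not strictly convex, so by the strict-convexity clause of Proposition \ref{prop: firstDevPressure}(1) the potential $g:=h_\psi r - h_{\widehat\psi}\widehat r$ (the difference of the two potentials, i.e.\ the direction of the perturbation) is Liv\v{s}ic cohomologous with respect to $\phi$ to a constant $c$. Integrating against $m^{\tn{BM}}(\phi)$ and using $F'(0)=0$ shows $c=0$, i.e.\ $h_{\widehat\psi}\widehat r \sim_\phi h_\psi r$, which is precisely $(h_{\widehat\psi}r_{\phi,\widehat\psi})\sim_\phi(h_\psi r_{\phi,\psi})$. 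Conversely, if $h_{\widehat\psi}\widehat r\sim_\phi h_\psi r$ then by Remark \ref{LivsicCommon} their integrals against $m^{\tn{BM}}(\phi)$ agree, giving $\rintBM=1$ directly.

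The main obstacle I anticipate is the bookkeeping around which measure realises the relevant pressures and intersection — specifically, verifying cleanly that $m^{\tn{BM}}(\psi)$ transports under the isomorphism (\ref{eq: iso between ppsi and pwpsi}) to $m^{\tn{BM}}(\phi)$, and simultaneously that $m^{\tn{BM}}(\phi)$ is the equilibrium state of the potential $-h_\psi r$. Both facts hinge on Proposition \ref{prop: pressureZero} (so that $-h_\psi r$ has zero pressure, forcing its equilibrium state to maximise entropy among $\phi$-invariant measures) combined with the standard correspondence between invariant measures of $\phi$ and of $\phi^r$; once this identification is pinned down, the rest is the soft convexity argument above. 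A secondary subtlety is justifying that $F'(0)\le 0$ and, in the equality case, that $F\equiv 0$: these are elementary facts about convex functions on an interval vanishing at the endpoints, but they must be stated carefully since $F$ could a priori dip below zero in the interior only if it were not convex.
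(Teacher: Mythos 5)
Your overall strategy --- running the pressure function $F(s)=\mathbf{P}(\phi,-(1-s)h_\psi r-s\,h_{\widehat\psi}\widehat r)$ along the segment of potentials and exploiting Proposition \ref{prop: pressureZero}, convexity, and the strict-convexity clause of Proposition \ref{prop: firstDevPressure} --- is sound, and it is close in spirit to the variational argument of \cite[Proposition 3.8]{BCLS}, which the paper itself does not reproduce but simply invokes after rewriting $\rintBM$ via Equation \eqref{eq: rep funtion fro psi to widehatpsi}. However, your write-up fails at exactly the bookkeeping step you flagged as the main obstacle: it is \emph{not} true that $m^{\tn{BM}}(\psi)$ transports under \eqref{eq: iso between ppsi and pwpsi} to $m^{\tn{BM}}(\phi)$, nor that the equilibrium state of $-h_\psi r$ is $m^{\tn{BM}}(\phi)$. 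The justification you offer (``$\mathbf{P}(\phi,-h_\psi r)=0$ forces the equilibrium state to maximise $h(\phi,m)$'') is incorrect: the equilibrium state of $-h_\psi r$ maximises $m\mapsto h(\phi,m)-h_\psi\int r\,\od m$, not the entropy alone, and zero pressure only says this maximal value is $0$; moreover $-h_\psi r$ does not integrate to a constant against all invariant measures unless $r$ is cohomologous to a constant. By Theorem \ref{teo: CohomologousEquilibrium}, the identity $m_{-h_\psi r}(\phi)=m^{\tn{BM}}(\phi)$ would force $h_\psi r\sim_\phi c$, i.e.\ it holds precisely in the degenerate case. Consequently your first display, $\mathbf{I}_{m^{\tn{BM}}(\psi)}(\psi,\widehat\psi)=\int\widehat r\,\od m^{\tn{BM}}(\phi)\big/\int r\,\od m^{\tn{BM}}(\phi)$, and the subsequent rewriting of $F'(0)$ in terms of $m^{\tn{BM}}(\phi)$, are both false in general, so the proof as written has a genuine gap.

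The repair is short and keeps your convexity argument intact. The correct transport, which follows from Abramov's formula $h(\psi,\widehat m)=h(\phi,m)\big/\!\int r\,\od m$ under \eqref{eq: iso between ppsi and pwpsi} together with Proposition \ref{prop: pressureZero}, is that $m^{\tn{BM}}(\psi)$ pulls back to $\mu:=m_{-h_\psi r}(\phi)$, the equilibrium state of $-h_\psi r$. Hence $\mathbf{I}_{m^{\tn{BM}}(\psi)}(\psi,\widehat\psi)=\int\widehat r\,\od\mu\big/\!\int r\,\od\mu$, and by Proposition \ref{prop: firstDevPressure}(2) one has $F'(0)=\int\bigl(h_\psi r-h_{\widehat\psi}\widehat r\bigr)\od\mu$ --- exactly the expression you wrote before substituting the wrong measure. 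Keeping $\mu$ throughout, $F(0)=F(1)=0$ and convexity give $F'(0)\le 0$, i.e.\ $h_{\widehat\psi}\int\widehat r\,\od\mu\ge h_\psi\int r\,\od\mu$, which is $\rintBM\ge 1$; in the equality case $F\equiv 0$ on $[0,1]$, the strict-convexity clause yields $h_\psi r-h_{\widehat\psi}\widehat r\sim_\phi c$, and $c=F'(0)=0$, with the converse following from Remark \ref{LivsicCommon}, all as you argued. So what your proof needs is not the (false) identification with the Bowen--Margulis measure of $\phi$, but the standard Abramov-type lemma identifying $m^{\tn{BM}}(\psi)$ with the equilibrium state of $-h_\psi r_{\phi,\psi}$; that lemma is not stated in this paper and must be quoted or proved for your argument to be complete.
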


\begin{proof}
By Equation (\ref{eq: rep funtion fro psi to widehatpsi}) we have $$\mathbf{J}_{m^{\tn{BM}}(\psi)}(\psi,\widehat{\psi})= \frac{h_{\widehat{\psi}}}{h_\psi}  \displaystyle\int \left(\frac{r_{\phi,\widehat{\psi}}}{r_{\phi,\psi}}\right)\od m^{\tn{BM}}(\psi).$$ \noindent Now the statement becomes precisely that of \cite[Proposition 3.8]{BCLS}.
\end{proof}

\section{Asymmetric metric and Finsler norm for flows}\label{sec: asymmetric metric and finsler norm for flows}

As always we assume that $\phi$ is a topologically transitive flow admitting a strong Markov coding. 
We want to use the formula 
$$\log\left(\displaystyle\sup_{a\in\mathcal{O}}\frac{h_{\widehat{\psi}}}{h_\psi}\frac{p_{\widehat{\psi}}(a)}{ p_\psi(a)}\right)=\log\left(\frac{h_{\widehat{\psi}}}{h_\psi}\displaystyle\sup_{a\in\mathcal{O}}\frac{p_{\widehat{\psi}}(a)}{ p_\psi(a)}\right)$$ \noindent to define a distance on a suitable quotient of $\hr$. We begin understanding which pairs are at distance zero:

\begin{lema}\label{lem: equivalence rel in hr}
For $\psi$ and $\widehat{\psi}$  in $\hr$ the following are equivalent:
\begin{enumerate}
    \item For every $a\in\mathcal{O}$,  $h_{\widehat{\psi}}p_{\widehat{\psi}}(a)=h_\psi p_\psi(a)$.
    \item $(h_{\widehat{\psi}}r_{\phi,\widehat{\psi}}) \sim_\phi  (h_\psi r_{\phi,\psi})$.
    \item  $r_{\psi,\widehat{\psi}}\sim_\psi h_\psi/h_{\widehat{\psi}}$.
    \item There exists a constant function $c$ so that $r_{\psi,\widehat{\psi}}\sim_\psi c$.
\end{enumerate}
\end{lema}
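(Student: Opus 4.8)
The plan is to prove the cyclic chain of implications $(1)\Rightarrow(2)\Rightarrow(3)\Rightarrow(4)\Rightarrow(1)$, using the thermodynamical toolkit already established.

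For $(1)\Rightarrow(2)$: the hypothesis says $h_{\widehat\psi}p_{\widehat\psi}(a)=h_\psi p_\psi(a)$ for every periodic orbit $a\in\mathcal O$. Using the formula $p_{\widehat\psi}(a)=\int_0^{p_\phi(a)}r_{\phi,\widehat\psi}(\phi_t(x))\,\od t$ (and similarly for $\psi$, with the same underlying flow $\phi$), this reads $\int_0^{p_\phi(a)} h_{\widehat\psi}r_{\phi,\widehat\psi}(\phi_t(x))\,\od t=\int_0^{p_\phi(a)} h_\psi r_{\phi,\psi}(\phi_t(x))\,\od t$ for all $a$ and all $x\in a$. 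Since both $h_{\widehat\psi}r_{\phi,\widehat\psi}$ and $h_\psi r_{\phi,\psi}$ are H\"older continuous, Liv\v{s}ic's Theorem \ref{teo: livsic} applies and yields $(h_{\widehat\psi}r_{\phi,\widehat\psi})\sim_\phi(h_\psi r_{\phi,\psi})$.

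For $(2)\Rightarrow(3)$: by Equation \eqref{eq: rep funtion fro psi to widehatpsi} we have $r_{\psi,\widehat\psi}=r_{\phi,\widehat\psi}/r_{\phi,\psi}$; I want to transfer the Liv\v{s}ic relation from cohomology over $\phi$ to cohomology over $\psi$. The cleanest way is again via periods: $(2)$ together with Theorem \ref{teo: livsic} (in the direction ``cohomologous $\Rightarrow$ equal integrals over periodic orbits'', which is the easy direction noted before the theorem) gives $h_{\widehat\psi}p_{\widehat\psi}(a)=h_\psi p_\psi(a)$ for all $a$, i.e. $p_{\widehat\psi}(a)=(h_\psi/h_{\widehat\psi})p_\psi(a)$. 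Now apply Equation \eqref{eq: integral of reparametrizing over periodic orbit} with the flow $\psi$ in the role of the base flow: $\int_0^{p_\psi(a)}r_{\psi,\widehat\psi}(\psi_t(x))\,\od t=p_{\widehat\psi}(a)=(h_\psi/h_{\widehat\psi})p_\psi(a)=\int_0^{p_\psi(a)}(h_\psi/h_{\widehat\psi})\,\od t$. Since $\psi$ admits a strong Markov coding (Proposition \ref{prop: coding for reparametrization}) and $r_{\psi,\widehat\psi}$ is H\"older, Liv\v{s}ic's Theorem \ref{teo: livsic} applied to $\psi$ gives $r_{\psi,\widehat\psi}\sim_\psi h_\psi/h_{\widehat\psi}$. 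The implication $(3)\Rightarrow(4)$ is immediate, taking $c=h_\psi/h_{\widehat\psi}$.

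For $(4)\Rightarrow(1)$: suppose $r_{\psi,\widehat\psi}\sim_\psi c$ for some constant $c>0$. First I need to identify $c$. Integrating the Liv\v{s}ic relation over periodic orbits of $\psi$ and using Equation \eqref{eq: integral of reparametrizing over periodic orbit} gives $p_{\widehat\psi}(a)=c\,p_\psi(a)$ for every $a\in\mathcal O$. Taking the exponential growth rate of periods and using Equation \eqref{eq: entropy} (valid since both flows admit strong Markov codings), one gets $h_{\widehat\psi}=h_\psi/c$, hence $c=h_\psi/h_{\widehat\psi}$. Substituting back, $p_{\widehat\psi}(a)=(h_\psi/h_{\widehat\psi})p_\psi(a)$, i.e. $h_{\widehat\psi}p_{\widehat\psi}(a)=h_\psi p_\psi(a)$ for all $a$, which is $(1)$. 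Alternatively, and perhaps more slickly, $(4)$ combined with Proposition \ref{prop: pressureZero} identifies $c$: if $r_{\psi,\widehat\psi}\sim_\psi c$ then $\mathbf P(\psi,-h_{\widehat\psi}r_{\psi,\widehat\psi})=\mathbf P(\psi,-h_{\widehat\psi}c)=0$ forces $h_{\widehat\psi}c=h_{\psi^{r_{\psi,\widehat\psi}}}=h_{\widehat\psi}$... wait, that is circular; the period computation above is the safe route.

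The only mildly delicate point is bookkeeping about \emph{which} base flow the cohomology relations live over, and making sure Liv\v{s}ic's Theorem is legitimately applicable to $\psi$ and not just to $\phi$ — this is exactly what Proposition \ref{prop: coding for reparametrization} guarantees. I expect the main (very minor) obstacle to be this transfer between $\sim_\phi$ and $\sim_\psi$; routing everything through periods of periodic orbits sidesteps having to manipulate the coboundary functions $V$ directly.
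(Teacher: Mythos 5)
Your proof is correct and uses essentially the same ingredients as the paper's: Liv\v{s}ic's Theorem over $\phi$ and over $\psi$ (legitimate by Proposition \ref{prop: coding for reparametrization}), the easy direction that cohomologous functions have equal periods, and Equation \eqref{eq: entropy} to pin down the constant; the only difference is the cosmetic one of arranging the implications cyclically rather than through item (1) as a hub. The aborted pressure-based identification of $c$ is harmless since you discard it (and in fact it can be repaired: $\mathbf{P}(\psi,-h_{\widehat\psi}c)=0$ gives $h_{\widehat\psi}c=h_\psi$, not $h_{\widehat\psi}$), and the period computation you rely on is sound.
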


\begin{proof}

Since $\psi$ and $\widehat{\psi}$ are topologically transitive and admit a strong Markov coding (c.f. Proposition \ref{prop: coding for reparametrization}), all results from Section \ref{sec: thermodynamics} apply. In particular, the equivalence between (3) and (4) follows from Equation (\ref{eq: entropy}).

The implications (2)$\Rightarrow$(1) and (3)$\Rightarrow$(1) are straightforward. The implications (1)$\Rightarrow$(2) and  (1)$\Rightarrow$(3) hold thanks to Liv\v{s}ic's Theorem \ref{teo: livsic} (applied to $\phi$ and $\psi$ respectively).
\end{proof}

We say that $\psi$ and $\widehat{\psi}$ in $\hr$ are \textit{projectively equivalent} (and denote $\psi\sim\widehat{\psi}$) if any of the equivalent conditions of Lemma \ref{lem: equivalence rel in hr} hold. We denote by $\phr$ the quotient space under this relation, and denote by $[\psi]\in\phr$ the equivalence class of $\psi$.

\subsection{Asymmetric metric on $\phr$}\label{subsec: asymm metric flows}

Define $d_{\tn{\tn{Th}}}: \phr\times\phr  \to \rr$ by 
$$d_{\tn{Th}}([\psi],[\widehat{\psi}]):=\log\left(\displaystyle\sup_{a\in\mathcal{O}}\frac{h_{\widehat{\psi}}}{h_{\psi}} \frac{p_{\widehat{\psi}}(a)}{p_{\psi}(a)}\right),$$
\noindent where $\psi$ and $\widehat{\psi}$ are representatives of $[\psi]$ and $[\widehat{\psi}]$ respectively. Lemma \ref{lem: equivalence rel in hr} guarantees that $d_{\tn{Th}}$ is well-defined, as it does not depend on the choice of these representatives.

\begin{teo}\label{teo: asymmetric distance flows}
The function $d_{\tn{Th}}$ defines a (possibly asymmetric) distance on $\phr$. 
\end{teo}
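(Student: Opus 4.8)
The plan is to verify the three axioms of a (possibly asymmetric) distance for $d_{\tn{Th}}$: non-negativity, the triangle inequality, and that $d_{\tn{Th}}([\psi],[\widehat\psi])=0$ exactly when $[\psi]=[\widehat\psi]$. The fact that $d_{\tn{Th}}$ is well-defined on $\phr$ is already granted by Lemma \ref{lem: equivalence rel in hr}, so I would not re-prove that. Note also that $d_{\tn{Th}}$ takes values in $\rr$ (not $\rr\cup\{\infty\}$) because of Equation (\ref{eq: sup of periods is finite}) together with Proposition \ref{prop:sup of periods and measures}; I would remark on this first.

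\textbf{Non-negativity.} By Proposition \ref{prop:sup of periods and measures}, for representatives $\psi,\widehat\psi$ one has
\[
\exp\big(d_{\tn{Th}}([\psi],[\widehat\psi])\big)=\frac{h_{\widehat\psi}}{h_\psi}\sup_{a\in\mathcal{O}}\frac{p_{\widehat\psi}(a)}{p_\psi(a)}=\frac{h_{\widehat\psi}}{h_\psi}\sup_{m\in\ppsi}\intm=\sup_{m\in\ppsi}\rintm\geq\mathbf{J}_{m^{\tn{BM}}(\psi)}(\psi,\widehat\psi)\geq 1,
\]
where the last inequality is Bridgeman--Canary--Labourie--Sambarino's rigidity statement, Proposition \ref{prop: BCLS renorm int rigidity}. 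Taking $\log$ gives $d_{\tn{Th}}([\psi],[\widehat\psi])\geq 0$.

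\textbf{Vanishing iff equal.} If $d_{\tn{Th}}([\psi],[\widehat\psi])=0$, then the chain of inequalities above is an equality, so in particular $\mathbf{J}_{m^{\tn{BM}}(\psi)}(\psi,\widehat\psi)=1$; by the equality case of Proposition \ref{prop: BCLS renorm int rigidity} this forces $(h_{\widehat\psi}r_{\phi,\widehat\psi})\sim_\phi(h_\psi r_{\phi,\psi})$, which is condition (2) of Lemma \ref{lem: equivalence rel in hr}, i.e. $\psi\sim\widehat\psi$, i.e. $[\psi]=[\widehat\psi]$. Conversely if $[\psi]=[\widehat\psi]$ then $d_{\tn{Th}}$ vanishes by condition (1) of Lemma \ref{lem: equivalence rel in hr} (the periods $h_{\widehat\psi}p_{\widehat\psi}(a)$ and $h_\psi p_\psi(a)$ agree for all $a$, so the supremum is $1$).

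\textbf{Triangle inequality.} Given three classes with representatives $\psi_1,\psi_2,\psi_3$, I would use Equation (\ref{eq: integral of reparametrizing over periodic orbit}), which gives $p_{\psi_3}(a)=\int_0^{p_{\psi_2}(a)} r_{\psi_2,\psi_3}(\psi_{2,t}(x))\,\od t$ and the cocycle relation $r_{\psi_1,\psi_3}=r_{\psi_1,\psi_2}\cdot r_{\psi_2,\psi_3}$ from Equation (\ref{eq: rep funtion fro psi to widehatpsi}), whence for each periodic orbit $a$,
\[
\frac{p_{\psi_3}(a)}{p_{\psi_1}(a)}=\frac{p_{\psi_3}(a)}{p_{\psi_2}(a)}\cdot\frac{p_{\psi_2}(a)}{p_{\psi_1}(a)}\leq\Big(\sup_{b\in\mathcal O}\frac{p_{\psi_3}(b)}{p_{\psi_2}(b)}\Big)\Big(\sup_{b\in\mathcal O}\frac{p_{\psi_2}(b)}{p_{\psi_1}(b)}\Big).
\]
Taking the supremum over $a$, multiplying by $h_{\psi_3}/h_{\psi_1}=(h_{\psi_3}/h_{\psi_2})(h_{\psi_2}/h_{\psi_1})$, and taking $\log$ yields $d_{\tn{Th}}([\psi_1],[\psi_3])\leq d_{\tn{Th}}([\psi_1],[\psi_2])+d_{\tn{Th}}([\psi_2],[\psi_3])$.

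The only genuine input here is Proposition \ref{prop: BCLS renorm int rigidity}, which does all the work for non-negativity and the vanishing criterion; everything else is formal manipulation of periods and the cocycle identity for reparametrizing functions. So I do not expect a serious obstacle — the content has already been packaged into the earlier results. The one point requiring a little care is making sure the entropies $h_\psi,h_{\widehat\psi}$ are positive and finite so that dividing by them and taking logarithms is legitimate; this is Equation (\ref{eq: entropy}) and the remark following it, valid since $\psi,\widehat\psi$ admit strong Markov codings by Proposition \ref{prop: coding for reparametrization}. I would state that at the outset.
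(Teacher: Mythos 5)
Your proof is correct and follows essentially the same route as the paper: rewrite $d_{\tn{Th}}$ via Proposition \ref{prop:sup of periods and measures} as a supremum of renormalized intersections, apply Proposition \ref{prop: BCLS renorm int rigidity} for non-negativity and the equality case, and conclude with Lemma \ref{lem: equivalence rel in hr}. The paper leaves the triangle inequality as "easily verified," which is exactly the formal period-ratio manipulation you spell out, so there is no substantive difference.
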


By ``possibly asymmetric" we mean that there is no reason to expect that the equality $d_{\tn{Th}}([\psi],[\widehat{\psi}])=d_{\tn{Th}}([\widehat{\psi}],[\psi])$  holds for all pairs $[\psi],[\widehat{\psi}]\in\phr$. In fact, in some specific situations it is possible to show that $d_{\tn{Th}}(\cdot,\cdot)$ is indeed asymmetric (c.f. Remark \ref{rem: asymmetric}).

\begin{proof}[Proof of Theorem \ref{teo: asymmetric distance flows}]

Let $[\psi],[\widehat{\psi}]\in\phr$ and pick representatives $\psi,\widehat{\psi}\in\hr$. By Proposition \ref{prop:sup of periods and measures} we have
$$d_{\tn{Th}}([\psi],[\widehat{\psi}])=\log\left(\displaystyle\sup_{m\in \ppsi} \rintm\right).$$
\noindent Proposition \ref{prop: BCLS renorm int rigidity} implies
$$\displaystyle\sup_{m\in \ppsi} \rintm\geq \rintBM\geq 1,$$
\noindent and therefore $d_{\tn{Th}}([\psi],[\widehat{\psi}])\geq 0$. Moreover, if $d_{\tn{Th}}([\psi],[\widehat{\psi}])= 0$, then Proposition \ref{prop: BCLS renorm int rigidity} implies $(h_{\widehat{\psi}}r_{\phi,\widehat{\psi}}) \sim_\phi (h_{\psi}r_{\phi,\psi})$, which by Lemma \ref{lem: equivalence rel in hr} means $[\psi]=[\widehat{\psi}]$. Since the triangle inequality for $d_{\tn{Th}}(\cdot,\cdot)$ is easily verified, the proof is complete. 
\end{proof}

\begin{rem}\label{rem: tholozan symmetric}

When $\phi$ is a (not necessarily H\"older) continuous parametrization of the geodesic flow of a closed orientable surface of genus $g\geq 2$, Tholozan \cite{ThoHighest} defined a symmetric distance in $\phr$ which has similar flavor to our $d_{\tn{Th}}(\cdot,\cdot)$. More precisely, he works in the space of (not necessarily H\"older) continuous reparametrizations of $\phi$ and considers an appropriate equivalence relation on this space, which restricts to $\sim$ in the H\"older setting. Tholozan proves that the quotient space under this equivalence relation sits as an open, weakly proper, convex domain in the projective space of some Banach space. Hence, it carries a natural \textit{Hilbert metric} (see \cite[Proposition 1.29]{ThoHighest} for details). In \cite[Theorem 1.31]{ThoHighest}, he gives an expression for this Hilbert metric which is a symmetrized version of $d_{\tn{Th}}(\cdot,\cdot)$.

\end{rem}

\subsection{Finsler norm}\label{subsection, FinslerNorm}
We now define a Finsler norm $\Vert\cdot\Vert_{\tn{Th}}$ on the ``tangent space" $T_{[\psi]}\phr$ of every $[\psi]\in\phr$, and provide a link with the asymmetric distance $d_{\tn{Th}}(\cdot,\cdot)$ (Proposition \ref{prop: FinslerNorm} below). Recall that a \textit{Finsler norm} on a vector space $V$ is a function $\Vert\cdot\Vert:V\to\rr$ such that for all $v,w\in V$ and all $a\geq 0$ one has:

\begin{itemize}
    \item $\Vert v\Vert \geq 0$, with equality if and only if $v=0$,
    \item $\Vert a v\Vert=a\Vert v\Vert$, and
    \item $\Vert v+w\Vert\leq \Vert v\Vert+\Vert w\Vert$.
\end{itemize}

Before starting we need to make sense of the ``tangent space" $T_{[\psi]}\phr$ (c.f. also \cite[Subsection 3.5.2]{BCLS}). To do this, we express our space of reparametrizations as a level set of the pressure function, and apply Proposition \ref{prop: firstDevPressure} and the Implicit Function Theorem in Banach spaces \cite{ImplicitFunctiontheorem}. We need to be careful though, because the space of H\"older continuous functions on $X$ is not closed in the topology of uniform convergence. To fix this issue, we will fix a H\"older exponent $\upsilon$ and work restricted to the space $\calhu$ of $\upsilon$-H\"older functions. In the geometric applications we have in mind, namely for spaces of Anosov representations, this is not a strong assumption as discussed in \cite[Section 6]{BCLS} (see also Subsection \ref{subsec: finsler for anosov reps} below).

Fix $\upsilon>0$ and endow $\calhu$ with the Banach norm $$\Vert f\Vert_\upsilon:= \Vert f\Vert_\infty+\sup_{x\neq y}\frac{\vert f(x)-f(y)\vert}{d(x,y)^\upsilon},$$ \noindent where $\Vert\cdot\Vert_\infty$ denotes the uniform norm. Let $\calbu\subset\calhu$ be the space of $\phi$-Liv\v{s}ic \textit{coboundaries}, that is, the set of $\upsilon$-H\"older functions on $X$ which are $\phi$-Liv\v{s}ic cohomologus to zero. By Liv\v{s}ic's Theorem \ref{teo: livsic}, $\calbu$ is a closed (vector) subspace of $\calhu$. We  endow the quotient space $\callu:=\calhu/\calbu$ of Liv\v{s}ic cohomology classes in $\calhu$ with the norm $$ [f]_{\phi} \mapsto\inf_{u\in [f]_{\phi}} \Vert u \Vert_{\upsilon},$$ \noindent which by abuse of notations will also be denoted by $\Vert \cdot\Vert_\upsilon$. Note that $(\calhu,\Vert\cdot\Vert_\upsilon)$ is a Banach space.

Let $\hru$ be the set of reparametrizations $\psi\in\hr$ so that $r_{\phi,\psi}\in\calhu$,  and $\phru$ be its projection to $\phr$. Let $[\psi]\in\phru$ be any point and take a representative $\psi\in\hru$ satisfying $h_\psi=1$. By Proposition \ref{prop: pressureZero} we have $$\mathbf{P}(\phi,-r_{\phi,\psi})=0.$$ \noindent Moreover, if $\widehat{\psi}\in[\psi]$ is another representative satisfying $h_{\widehat{\psi}}=1$, Lemma \ref{lem: equivalence rel in hr} states that $r_{\phi,\widehat{\psi}}\sim_\phi r_{\phi,\psi}$. We then have an injective map from $\phru$ to the space $$\calpu:=\left\{[r]_\phi\in\callu: \mathbf{P}(\phi,-r)=0\right\}.$$ \noindent Hence, $\phru$ identifies with the open subset of $\calpu$ consisting of Liv\v{s}ic cohomology classes of pressure zero, strictly positive, $\upsilon$-H\"older continuous functions on $X$. In view of this discussion, throughout this section all representatives $\psi$ of points $[\psi]$ in $\phru$ are assumed to satisfy $h_\psi=1$.

From now on we simply denote $[r]_\phi$ by $[r]$, omitting the underlying flow $\phi$. By Proposition \ref{prop: firstDevPressure}, for any positive $g\in\calhu$ one has $$\od_{[r]}\mathbf{P}(\phi,\cdot)([g])>0.$$ \noindent That same proposition and the Implicit Function Theorem in Banach spaces imply that the tangent space to $\calpu$ at $[r]$ is given by $$T_{[r]}\calpu=\left\{[g]\in\callu:  \int_X g \mathrm{d}m_{-r}=0 \right\},$$ \noindent where $m_{-r}=m_{-r} (\phi)$ denotes the equilibrium state of $-r$ (w.r.t. $\phi$). Since $\phru$ sits as an open subset of $\calpu$, it is natural to define the \textit{tangent space} to $\phru$ at $[\psi]$ by $$T_{[\psi]}\phru:=T_{[r_{\phi,\psi}]}\calpu.$$

We are now ready to define our Finsler norm. 

\begin{dfn} \label{def:Finslernorm}
 Let $[g]$ be a vector in $T_{[\psi]}\phru$. We define
$$\Vert [g] \Vert_{\tn{Th}}:=\displaystyle\sup_{m\in\pphi}\frac{\int g\od m}{\int r_{\phi,\psi}\od m}.$$ 
\end{dfn}
\noindent Note that this is well-defined, i.e. it does not depend on the choice of the representatives $g$ and $r_{\phi,\psi}$ in the respective $\phi$-Liv\v{s}ic cohomology classes (c.f. Remark \ref{LivsicCommon}). Furthermore, by Equation (\ref{eq: iso between ppsi and pwpsi}) we have the following more succinct expression: \begin{equation}\label{eq: finsler succint}
    \Vert[g]\Vert_{\tn{Th}}=\sup_{m\in\ppsi}\int \left(\frac{g}{r_{\phi,\psi}}\right)\od m.
\end{equation}

By definition of the tangent space,  $\Vert[g]\Vert_{\tn{Th}}\geq 0$ Moreover, $(\rr_{>0})$-homogeneity and the triangle inequality are easily verified. Hence, the following shows that $\Vert\cdot\Vert_{\tn{Th}}$ is a Finsler norm.

\begin{lema}\label{lem: norm is non degenerate}
Let $[g]\in T_{[\psi]}\phru$ be such that $\Vert [g]\Vert_{\tn{Th}}=0$. Then $[g]=0$.
\end{lema}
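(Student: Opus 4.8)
The plan is to show that $\Vert[g]\Vert_{\tn{Th}}=0$ forces $[g]$ to be the zero class in $T_{[\psi]}\phru\subset\callu$, i.e. that $g$ is $\phi$-Liv\v{s}ic cohomologous to zero. Working with the succinct expression \eqref{eq: finsler succint}, the hypothesis says $\sup_{m\in\ppsi}\int (g/r_{\phi,\psi})\,\od m=0$. Since $\ppsi$ contains all Dirac masses $\delta_\psi(a)$ for $a\in\mathcal{O}$, evaluating the supremum on these and using Equation \eqref{eq: integral of reparametrizing over delta in periodic orbit} (which expresses $\int r_{\phi,\psi}\,\od\delta_\psi(a)$ in terms of periods) together with the fact that $r_{\phi,\psi}>0$, I get $\int_0^{p_\psi(a)} (g/r_{\phi,\psi})(\psi_t x)\,\od t\le 0$ for every periodic orbit $a$ and every $x\in a$.

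The key observation is that we also have a lower bound. Recall that $[g]\in T_{[\psi]}\phru=T_{[r_{\phi,\psi}]}\calpu$, which by the computation preceding Definition \ref{def:Finslernorm} means precisely that $\int_X g\,\od m_{-r_{\phi,\psi}}=0$ with respect to the equilibrium state of $-r_{\phi,\psi}$ (with respect to $\phi$). Passing through the isomorphism \eqref{eq: iso between ppsi and pwpsi} (equivalently \eqref{eq: finsler succint}), this says that $\int_X (g/r_{\phi,\psi})\,\od m^{\tn{BM}}(\psi)=0$: the Bowen--Margulis measure of $\psi$ integrates $g/r_{\phi,\psi}$ to zero, because $m^{\tn{BM}}(\psi)$ is exactly the image under \eqref{eq: iso between ppsi and pwpsi} of the equilibrium state $m_{-r_{\phi,\psi}}(\phi)$. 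Hence $\sup_{m\in\ppsi}\int (g/r_{\phi,\psi})\,\od m\ge 0$ is attained, and the hypothesis that the supremum equals $0$ together with the fact that $m^{\tn{BM}}(\psi)$ has full support forces, via an ergodic-optimization type argument, that $g/r_{\phi,\psi}$ cannot be strictly positive on any orbit either.

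To upgrade "$\le 0$ on every periodic orbit" to "$=0$ on every periodic orbit", I would argue as follows. If some periodic orbit $a_0$ had $\int_0^{p_\psi(a_0)} (g/r_{\phi,\psi})(\psi_t x)\,\od t<0$, I want to contradict $\int (g/r_{\phi,\psi})\,\od m^{\tn{BM}}(\psi)=0$. Here I use Theorem \ref{teo: periodic orbits dense}: the Dirac masses of periodic orbits are weak-$\star$ dense in $\epsi$, and in particular one can approximate $m^{\tn{BM}}(\psi)$ by $\delta_\psi(a_j)$; since each $\int (g/r_{\phi,\psi})\,\od\delta_\psi(a_j)\le 0$ and the integral against $m^{\tn{BM}}(\psi)$ equals $0$, the function $g/r_{\phi,\psi}$ must have supremum $0$ of its orbital averages, with $0$ attained on a dense set of ergodic measures. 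Combined with unique ergodicity being false here, I instead run the cleaner argument: the condition $\int (g/r_{\phi,\psi})\,\od m\le 0$ for all $m\in\ppsi$ and $=0$ for $m=m^{\tn{BM}}(\psi)$, together with the variational characterization, shows that $m^{\tn{BM}}(\psi)$ is a maximizing measure; but then standard ergodic optimization (or more elementarily, applying the same bound to $-g$ is not available, so one really does need the maximizing-measure-of-full-support argument) forces $g/r_{\phi,\psi}\le 0$ everywhere and $\int_0^{p_\psi(a)} (g/r_{\phi,\psi})(\psi_t x)\,\od t=0$ for all $a$. Finally, by Liv\v{s}ic's Theorem \ref{teo: livsic} applied to the flow $\psi$ (which is topologically transitive and admits a strong Markov coding by Proposition \ref{prop: coding for reparametrization}), the vanishing of all orbital integrals of $g/r_{\phi,\psi}$ implies $g/r_{\phi,\psi}\sim_\psi 0$; translating back (multiplying by $r_{\phi,\psi}$ and using the isomorphism of cohomology between the flows $\phi$ and $\psi$, as in Remark \ref{LivsicCommon} and Equation \eqref{eq: rep funtion fro psi to widehatpsi}) gives $g\sim_\phi 0$, i.e. $[g]=0$ in $\callu$, hence in $T_{[\psi]}\phru$.

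The main obstacle I anticipate is the passage from "$\int (g/r_{\phi,\psi})\,\od m\le 0$ for all $m$, with equality for the Bowen--Margulis measure of full support" to "$g/r_{\phi,\psi}$ has all orbital integrals equal to zero". The clean way is: since $m^{\tn{BM}}(\psi)$ has full support, if $g/r_{\phi,\psi}$ were negative somewhere it would be negative on an open set, but that does not immediately contradict $\int (g/r_{\phi,\psi})\,\od m^{\tn{BM}}(\psi)=0$ unless one knows it is $\le 0$ everywhere. So the real content is: a function whose integral against \emph{every} invariant probability measure is $\le 0$, and whose orbital integrals we want to show vanish — this is where one must carefully invoke that the equilibrium state realizing the supremum has full support, so its being a maximizing measure with maximal value $0$ pins down the function's orbital averages on a dense set of ergodic measures, and then continuity of $m\mapsto\int(g/r_{\phi,\psi})\,\od m$ plus Theorem \ref{teo: periodic orbits dense} forces all periodic orbital integrals to be $\ge 0$ as well, hence $=0$. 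I would present this step with care, and then the Liv\v{s}ic step is routine.
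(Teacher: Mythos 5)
Your overall route (reduce to the vanishing of all $\psi$-periodic averages of $F:=g/r_{\phi,\psi}$, then apply Liv\v{s}ic's Theorem \ref{teo: livsic} and change variables back to $\phi$) would indeed prove the lemma, and the first and last steps are fine; but the ``upgrade'' step is exactly where the proof is missing, and neither justification you offer works. Density of periodic-orbit measures in $\epsi$ (Theorem \ref{teo: periodic orbits dense}) only produces \emph{some} sequence $a_j$ with $\int F\,\od\delta_\psi(a_j)\to\int F\,\od m^{\tn{BM}}(\psi)=0$; it says nothing about a fixed orbit $a_0$ with strictly negative average, so no contradiction is obtained, and the later variant (``continuity plus density forces all periodic integrals to be $\geq 0$'') is the same non sequitur. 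The assertion that a full-support maximizing measure with maximal value $0$ ``forces $g/r_{\phi,\psi}\leq 0$ everywhere'' is false as a general principle: a nonzero Liv\v{s}ic coboundary has all invariant averages equal to $0$, so every invariant measure (in particular $m^{\tn{BM}}(\psi)$) is maximizing, yet such a function necessarily takes both signs. What is true is the cohomological version: by a Ma\~n\'e--Bousch type sub-action lemma one may replace $F$ by $\tilde F\sim_\psi F$ with $\tilde F\leq 0$ pointwise, and then full support of $m^{\tn{BM}}(\psi)$ gives $\tilde F\equiv 0$, hence vanishing of all periodic averages. But the existence of such a sub-action for H\"older observables over metric Anosov flows is a genuinely nontrivial external input, not contained in the paper and not supplied by the phrase ``standard ergodic optimization''; in addition you use, without proof, that $m^{\tn{BM}}(\psi)$ has full support and that it is the image of $m_{-r_{\phi,\psi}}(\phi)$ under \eqref{eq: iso between ppsi and pwpsi} (an Abramov-type fact, true but also not established in the text).

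For comparison, the paper closes this point with a short pressure argument that avoids ergodic optimization entirely: if $g$ is not Liv\v{s}ic cohomologous to a constant, then $s\mapsto\mathbf{P}(\phi,-r_{\phi,\psi}+sg)$ is strictly convex (Proposition \ref{prop: firstDevPressure}) with derivative $\int g\,\od m_{-r_{\phi,\psi}}=0$ at $s=0$, whence $\mathbf{P}(\phi,-r_{\phi,\psi}+g)>0$; on the other hand $\Vert[g]\Vert_{\tn{Th}}=0$ and $r_{\phi,\psi}>0$ give $\sup_{m\in\pphi}\int g\,\od m\leq 0$, hence $\mathbf{P}(\phi,-r_{\phi,\psi}+g)\leq\mathbf{P}(\phi,-r_{\phi,\psi})=0$, a contradiction; so $g\sim_\phi c$ and the tangency condition forces $c=0$. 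If you wish to keep your route, you must either quote a precise sub-action theorem applicable to this setting (together with the full-support and Abramov facts) or replace the upgrade step by an argument of this pressure-convexity type.
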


\begin{proof}
To prove the lemma it suffices to show that $g$ is Liv\v{s}ic cohomologous (w.r.t. $\phi$) to a constant function $c$. Indeed, if this is the case, then by Remark \ref{LivsicCommon} we have $$c=\displaystyle\int c\od m_{-r_{\phi,\psi}}=\displaystyle\int g\od m_{-r_{\phi,\psi}}=0.$$ \noindent Hence $[g]=0$ as desired. 

Let us assume by contradiction that $g$ is not Liv\v{s}ic cohomologous to a constant. By Proposition \ref{prop: firstDevPressure} the function $s\mapsto\mathbf{P}(\phi,-r_{\phi,\psi}+sg)$ is then strictly convex and $$\left.\frac{\od}{\od s}\right\vert_{s=0}\mathbf{P}(\phi,-r_{\phi,\psi}+sg)=\displaystyle\int g\od m_{-r_{\phi,\psi}}=0.$$ \noindent Strict convexity implies then $$\mathbf{P}(\phi,-r_{\phi,\psi}+g)>\mathbf{P}(\phi,-r_{\phi,\psi})=0.$$ 

On the other hand, we show that $\Vert[g]\Vert_{\tn{Th}}=0$ implies $\mathbf{P}(\phi,-r_{\phi,\psi}+g)\leq 0$, giving the desired contradiction. Indeed, note that  $$\mathbf{P}(\phi,-r_{\phi,\psi}+g)\leq \displaystyle\sup_{m\in\pphi}\left( h(\phi,m)-\int  r_{\phi,\psi}\od m\right)+\displaystyle\sup_{m\in\pphi} \int  g\od m.$$ \noindent Since $\Vert [g]\Vert_{\tn{Th}}=0$ and $r_{\phi,\psi}$ is positive, we have $$
\displaystyle\sup_{m\in\pphi}\int g\od m\leq 0,$$ \noindent and therefore $$\mathbf{P}(\phi,-r_{\phi,\psi}+g)\leq \displaystyle\sup_{m\in\pphi}\left( h(\phi,m)-\int  r_{\phi,\psi}\od m\right)=\mathbf{P}(\phi,-r_{\phi,\psi})=0.$$

\end{proof}

We now  link the Finsler norm $\Vert\cdot\Vert_{\tn{Th}}$ and the asymmetric distance $d_{\tn{Th}}(\cdot,\cdot)$. A path $\{[\psi^s]\}_{s\in(-1,1)}\subset\phru$ is  \textit{analytic} (resp. C$^k$, \textit{smooth}) if there is an analytic (resp. C$^k$, smooth) path $\{\widetilde{g}_s\}_{s\in(-1,1)}\subset\calhu$ of strictly positive functions so that $\left[\phi^{\widetilde{g}_s}\right]=[\psi^s]$ for all $s\in(-1,1)$.

Pick a path $\{[\psi^s]\}_{s\in(-1,1)}\subset\phru$ of class C$^1$ and let $\{\widetilde{g}_s\}_{s\in(-1,1)}\subset\calhu$ be as above. By Bridgeman-Canary-Labourie-Sambarino \cite[Proposition 3.12]{BCLS}, the function $s\mapsto h_{\phi^{\widetilde{g}_s}}$ is of class C$^1$. Hence, $s\mapsto g_s:=h_{\phi^{\widetilde{g}_s}}\widetilde{g}_s$ is also C$^1$. Furthermore, we have $$\left[\phi^{g_s}\right]=\left[ \phi^{\widetilde{g}_s} \right]=[\psi^s]$$ \noindent for all $s$, and therefore we may choose $\psi^s=\phi^{g_s}$. By construction we have $h_{\psi^s}=1$, that is, $\mathbf{P}(\phi,-g_s)=0$ for all $s\in(-1,1)$ (Proposition \ref{prop: pressureZero}). If we denote $\dot{g}_0:= \left.\frac{\od}{\od s}\right\vert_{s=0} g_{s}$, we have $$\left[\dot{g}_0\right]=\left.\frac{\od}{\od s}\right\vert_{s=0}\left[g_s\right],$$ \noindent and Proposition \ref{prop: firstDevPressure} gives $$0=\displaystyle\int\left(-\dot{g}_0\right)\od m_{-g_0},$$ \noindent where $m_{-g_0}=m_{-g_0}(\phi)$ is the equilibrium state of $-g_0$ (w.r.t. $\phi$). That is, setting $\psi:=\psi^0$ we have $[\dot{g}_0]\in T_{[\psi]}\phru$.

\begin{prop}\label{prop: FinslerNorm}
With the notations above, the function $s\mapsto d_{\tn{Th}}([\psi],[\psi^s])$ is differentiable at $s=0$. Furthermore, one has $$ \left\Vert \left[\dot{g}_0\right]\right\Vert_{\tn{Th}}=\left.\frac{\od }{\od s}\right\vert_{s=0}d_{\tn{Th}}([\psi],[\psi^s]).$$
\end{prop}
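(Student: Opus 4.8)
The plan is to reduce the statement to a differentiation of the pressure function along the given analytic (or $\mathrm C^1$) path, using the variational characterization of $d_{\tn{Th}}$ via the renormalized intersection and a Bowen--Ruelle type computation of the entropy variation. First I would recall, from the proof of Theorem \ref{teo: asymmetric distance flows}, that for $s$ in a neighbourhood of $0$ one has
$$d_{\tn{Th}}([\psi],[\psi^s])=\log\Big(\sup_{m\in\mathscr{P}(\psi)}\mathbf J_m(\psi,\psi^s)\Big)=\log\Big(\sup_{m\in\mathscr{P}(\phi)}\frac{\int g_s\od m}{\int g_0\od m}\Big),$$
using $h_{\psi^s}=1$ for all $s$, Equation \eqref{eq: finsler succint} and the isomorphism \eqref{eq: iso between ppsi and pwpsi} (here $g_s$ is the $\mathrm C^1$ family with $\psi^s=\phi^{g_s}$, $\mathbf P(\phi,-g_s)=0$, and $\psi=\psi^0$, $g_0=r_{\phi,\psi}$). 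Since $g_0=g_s$ at $s=0$, the supremand equals $1$ there, so $d_{\tn{Th}}([\psi],[\psi^0])=0$ and the claim is about the one-sided derivative of $s\mapsto\log F(s)$ where $F(s):=\sup_{m}\int g_s\od m/\int g_0\od m$.

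The key step is to show that $F$ is differentiable at $s=0$ with
$$F'(0)=\sup_{m\in\mathscr{P}(\phi)}\frac{\int \dot g_0\,\od m}{\int g_0\od m}=\Vert[\dot g_0]\Vert_{\tn{Th}},$$
which would finish the proof since $\frac{\od}{\od s}\big|_{s=0}\log F(s)=F'(0)/F(0)=F'(0)$. For the lower bound $\liminf_{s\to 0^+}(F(s)-1)/s\ge \Vert[\dot g_0]\Vert_{\tn{Th}}$, I would fix a measure $m$ that nearly realizes the supremum defining $\Vert[\dot g_0]\Vert_{\tn{Th}}$, plug it into $F(s)\ge \int g_s\od m/\int g_0\od m$, and differentiate the (smooth in $s$, for fixed $m$) right-hand side. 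For the upper bound one needs uniform control: for each $s$ pick $m_s$ attaining $F(s)$ (the supremum is attained by compactness of $\mathscr P(\phi)$ and continuity of $m\mapsto \int g_s\od m$), extract a weak-$\star$ limit $m_0$ of a subsequence $m_{s_j}$, note that necessarily $m_0$ realizes $\sup_m \int g_0 \od m/\int g_0 \od m = 1$ — but that supremum is $1$ and is attained by \emph{every} measure, so this alone is not enough; instead I would use the envelope/Danskin-type argument together with the fact that the family $s\mapsto g_s$ is $\mathrm C^1$ \emph{uniformly} in $x\in X$ (this is exactly the definition of a $\mathrm C^1$ family of functions preceding Proposition \ref{prop: firstDevPressure}), which gives $\int g_s\od m = \int g_0\od m + s\int\dot g_0\od m + o(s)$ with $o(s)$ uniform over $m\in\mathscr P(\phi)$. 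From this uniform expansion,
$$F(s)=\sup_m\Big(1+s\,\frac{\int\dot g_0\od m}{\int g_0\od m}+\frac{o(s)}{\int g_0\od m}\Big),$$
and since $\int g_0\od m\ge \min_X g_0>0$ is bounded below uniformly, the $o(s)$ term is uniform, giving $F(s)=1+s\Vert[\dot g_0]\Vert_{\tn{Th}}+o(s)$ directly. (I should double-check signs: if $s\to 0^-$ the same expansion with $\sup$ over a possibly different optimizer gives the two-sided derivative, using that $\inf_m$ would appear — but the problem only asserts differentiability at $s=0$, and the uniform first-order expansion of the supremum of an affine-in-$s$-to-first-order family is genuinely two-sided as long as the remainder is uniform, which it is.)

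The main obstacle I expect is making the interchange of $\sup_m$ and $\frac{\od}{\od s}$ rigorous, i.e.\ establishing that the remainder $\int g_s\od m-\int g_0\od m - s\int\dot g_0\od m$ is $o(s)$ \emph{uniformly} in $m\in\mathscr P(\phi)$. This is where the hypothesis that $\{\widetilde g_s\}$ (hence, after multiplying by the $\mathrm C^1$ function $s\mapsto h_{\phi^{\widetilde g_s}}$ from \cite[Proposition 3.12]{BCLS}, also $\{g_s\}$) is a $\mathrm C^1$, or even analytic, family of $\upsilon$-H\"older functions is essential: for each fixed $x$, $g_s(x)=g_0(x)+s\dot g_0(x)+\varepsilon(s,x)$ with $\varepsilon(s,x)/s\to 0$, and one must upgrade this to uniformity in $x$ — which follows because $s\mapsto g_s$ is actually a $\mathrm C^1$ curve in the Banach space $(\mathcal H^\upsilon(X),\Vert\cdot\Vert_\upsilon)$ (as used implicitly throughout Subsection \ref{subsection, FinslerNorm}), so $\Vert g_s-g_0-s\dot g_0\Vert_\infty\le\Vert g_s-g_0-s\dot g_0\Vert_\upsilon=o(s)$. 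Once uniformity is in hand, integrating against any probability measure and dividing by $\int g_0\od m\ge\min_X g_0>0$ preserves it, and the computation above goes through; I would then also remark, as in Proposition \ref{prop:sup of periods and measures}, that the supremum over $\mathscr P(\phi)$ may equivalently be taken over periodic Dirac masses, but this is not needed for the differentiability statement itself.
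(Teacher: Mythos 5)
Your argument is essentially the paper's proof: both rewrite $e^{d_{\tn{Th}}([\psi],[\psi^s])}$ as a supremum of integral ratios over invariant measures (the paper uses $r_s=g_s/g_0$ and $m\in\ppsi$, you use $g_s$ and $m\in\pphi$, which are equivalent via the measure isomorphism of Equation \eqref{eq: iso between ppsi and pwpsi}), and both conclude by exchanging $\sup_m$ with the $s$-derivative through uniform control of the first-order remainder — the paper via the Mean Value Theorem giving uniform convergence of $(r_s-1)/s$ to $\dot r_0$, you via $\mathrm C^1$-dependence of $s\mapsto g_s$ in the H\"older norm. Your parenthetical claim that the expansion yields a genuinely two-sided derivative is not correct as stated (for $s<0$ the supremum of the expanded family picks out $\inf_m$ rather than $\sup_m$, so one really obtains the one-sided derivative), but the paper's own proof passes over exactly the same point — its displayed identity for the difference quotient is only valid for $s>0$ — so your argument matches the paper's to the same standard.
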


\begin{proof}
Compare Guillarmou-Knieper-Lefeuvre \cite[Lemma 5.6]{GeodesicStretchPressureMetric}. Let $$r_s:=\frac{g_s}{r_{\phi,\psi}}=\frac{g_s}{g_0},$$ \noindent which is the reparametrizing function from $\psi$ to $\psi^s$. Note that $$\dot{r}_0:=\left.\frac{\od}{\od s}\right\vert_{s=0} r_{s}=\frac{\dot{g}_0}{r_{\phi,\psi}},$$ and by Equation (\ref{eq: finsler succint}) we have \begin{equation}\label{eq: finsler and dth}
   \left\Vert \left[\dot{g}_0\right]\right\Vert_{\tn{Th}}=\displaystyle\sup_{m\in\ppsi}\int \dot{r}_0\od m.
\end{equation}

On the other hand, let $u(s):=e^{d_{\tn{Th}}([\psi],[\psi^s])}$. Notice $h_{\psi^s}\equiv 1$.  By Theorem \ref{teo: periodic orbits dense}, periodic orbit measures are dense in the space of invariant probability measures.  We therefore have,   $$u(s)=\displaystyle\sup_{m\in\ppsi}\int r_s\od m.$$
\noindent It suffices to show that $u$ is differentiable at $s=0$ and $u'(0)=\Vert[\dot{g}_0]\Vert_{\tn{Th}}$. 
Since $r_0\equiv 1$, we have $$\frac{u(s)-u(0)}{s}=\frac{\displaystyle\sup_{m\in\ppsi}\int r_s\od m-\displaystyle\sup_{m\in\ppsi}\int 1 \od m}{s}=\displaystyle\sup_{m\in\ppsi}\int \left(\frac{r_s-1}{s}\right)\od m,$$ \noindent and thanks to Equation (\ref{eq: finsler and dth}) we need to show $$\displaystyle\lim_{s\to 0}\left(\displaystyle\sup_{m\in\ppsi}\int \left(\frac{r_s-1}{s}\right)\od m\right)= \displaystyle\sup_{m\in\ppsi}\int \dot{r}_0\od m.$$

Fix some $\varepsilon>0$. The Mean Value Theorem implies that $\frac{r_s-1}{s}$ converges uniformly to $\dot{r}_0$ as $s\to 0$. There exists then $\delta>0$ so that, for all $0<\vert s \vert<\delta$ one has $$\sup\limits_{x\in X}\left\vert\frac{r_s(x)-1}{s}- \dot{r}_0(x)\right\vert<\varepsilon.$$ \noindent Fix any $s$ so that $0<\vert s \vert<\delta$. For every $m\in\ppsi$ we have $$\left\vert\int \frac{r_s-1}{s} \od m - \int \dot{r}_0 \od m\right\vert \leq \sup\limits_{x\in X}\left\vert\frac{r_s(x)-1}{s}- \dot{r}_0(x)\right\vert <\varepsilon.$$ \noindent Therefore $$\int \dot{r}_0 \od m-\varepsilon <\int \frac{r_s-1}{s} \od m<\int \dot{r}_0 \od m+\varepsilon,$$ \noindent for all $m\in\ppsi$. Taking supremum over all $m\in\ppsi$ the result follows.

\end{proof}

\begin{rem}\label{rem: first derivative of rint}
\begin{enumerate}
\item Keeping the notations from above, Proposition \ref{prop: FinslerNorm} can be restated as $$\left\Vert \left[\left.\frac{\od}{\od s}\right\vert_{s=0}g_s\right]\right\Vert_{\tn{Th}}=\left.\frac{\od}{\od s}\right\vert_{s=0}\left(\displaystyle\sup_{m\in\ppsi}\mathbf{J}_m(\psi,\psi^s)\right).$$ \noindent We will come back to this equality in Subsection \ref{subsec: comparison pressure norm}, comparing our viewpoint with previous work of Bridgeman-Canary-Labourie-Sambarino \cite{BCLS}.

\item Notice that although $\Vert\cdot\Vert_{\tn{Th}}$ is a Finsler norm induced from the asymmetric distance $d_{\tn{Th}}(\cdot,\cdot)$, it is not clear whether $d_{\tn{Th}}(\cdot,\cdot)$ is the length distance induced from $\Vert\cdot\Vert_{\tn{Th}}$. In the context of Teichm\"uller space (c.f. Remark \ref{rem: asymmetric}), Thurston \cite{ThurstonStretch} shows that $d_{\tn{Th}}(\cdot,\cdot)$ coincides with the length distance induced by the Finsler norm.

\item  The Finsler norm $\Vert\cdot\Vert_{\tn{Th}}$ is, in general, not induced by an inner product. Indeed, in some concrete examples (c.f. Remark \ref{rem: asymmetric}) one may find tangent vectors $[g]$ for which $$\Vert [g]\Vert_{\tn{Th}}\neq\Vert -[g]\Vert_{\tn{Th}}.$$

\end{enumerate}
\end{rem}

\subsection{Comparison with pressure norm}\label{subsec: comparison pressure norm}
Thurston also introduced a Riemannian metric on the Teichm\"uller space of a closed surface $S$, which agrees with the Weil-Petersson metric (see Wolpert \cite{Wolpert}). McMullen \cite{McMullen} reinterpreted this construction using Thermodynamical Formalism, and Bridgeman-Canary-Labourie-Sambarino \cite{BCLS} took inspiration from this to produce a Euclidean norm $\Vert\cdot\Vert_\mathbf{P}$ on $T_{[\psi]}\phru$. We now briefly recall the construction of \cite{BCLS} and point out the difference with our approach.

Let $[\psi]\in\phru$ and $[g]\in T_{[\psi]}\phru$ be a tangent vector. Thanks to Proposition \ref{prop: firstDevPressure}, one has $\left.\frac{\od^2}{\od s^2}\right\vert_{s=0}\mathbf{P}(-r_{\phi,\psi}+sg)\geq 0$. Hence, one may define $$\Vert[g]\Vert_\mathbf{P}:=\sqrt{\frac{\left.\frac{\od^2}{\od s^2}\right\vert_{s=0}\mathbf{P}(-r_{\phi,\psi}+sg)}{\int r_{\phi,\psi}\od m_{-r_{\phi,\psi}}}}.$$ \noindent Work of Ruelle and Parry-Pollicott implies that $\Vert\cdot\Vert_{\mathbf{P}}$ is a norm\footnote{In particular one has to show that $\Vert[g]\Vert_\mathbf{P}=0$ if and only if $[g]=0$.} on $T_{[\psi]}\phru$, called the \textit{pressure norm}. Moreover, this norm is induced from an inner product, and in fact one has $$\Vert[g]\Vert_\mathbf{P}^2=\frac{\displaystyle\lim_{T\to\infty}\frac{1}{T}\int\left(\displaystyle\int_0^T g(\phi_s(x))\od s\right)^2\od m_{-r_{\phi,\psi}}(x)}{\int r_{\phi,\psi}\od m_{-r_{\phi,\psi}}}.$$ \noindent See \cite[Subsection 3.5.1]{BCLS} for details.

As noticed in \cite[Subsection 3.5.2]{BCLS} the pressure norm is related to the $m^{\tn{BM}}(\psi)$-renormalized intersection. Indeed, consider the function $\mathbf{J}_{[\psi]}(\cdot)$ on $\phru$ given by $$\mathbf{J}_{[\psi]}([\widehat{\psi}]):=\rintBM,$$ \noindent where $\psi$ (resp. $\widehat{\psi}$) is a representative of $[\psi]$ (resp. $[\widehat{\psi}]$). One may check that this is a well-defined function, as it does not depend on the choice of these representatives.  Furthermore, by Proposition \ref{prop: BCLS renorm int rigidity} this function has a minimum at $[\psi]$ and therefore its Hessian at $[\psi]$ defines a non-negative symmetric bilinear form on $T_{[\psi]}\phru$. In fact, if we let $\{g_s\}_{s\in(-1,1)}$ be a smooth path as in Proposition \ref{prop: FinslerNorm}, then one has $$\left\Vert\left[\left.\frac{\od}{\od s}\right\vert_{s=0}g_s\right]\right\Vert_\mathbf{P}^2=\left.\frac{\od^2}{\od s^2}\right\vert_{s=0}\mathbf{J}_{[\psi]}([\psi^s]).$$ \noindent See \cite[Proposition 3.11]{BCLS} for details.

Hence, the second derivative of the $m^{\tn{BM}}(\psi)$-renormalized intersection defines an inner product on $T_{[\psi]}\phru$. In contrast, our viewpoint is different: rather than taking a second derivative of the renormalized intersection with respect to a given measure, we take the supremum of renormalized intersections over all measures, and then take a first derivative  (c.f. Remark \ref{rem: first derivative of rint}).

\section{Anosov representations} \label{sec, AnosovReps}

Anosov representations were introduced by Labourie \cite{Lab} for fundamental groups of negatively curved manifolds, and then extended by Guichard-W. \cite{GW} to general word hyperbolic groups. They provide a stable class of discrete representations with finite kernel into semisimple Lie groups, that share many features with holonomies of convex cocompact hyperbolic manifolds. We will briefly recall this notion in Subsection \ref{subsec: anosov reps}, after fixing some notations and terminology in Subsection \ref{subsec: structure lie groups}. In Subsection \ref{subsec: examples anosov reps} we discuss examples. For a more complete account on the state of the art of the field, see e.g. \cite{KasICM,PozBourbaki,WieICM} and references therein.

\subsection{Structure of semisimple Lie groups}\label{subsec: structure lie groups}
Standard references for this part are the books of Knapp \cite{Kna} and Helgason \cite{Hel}.

Let $\g$ be a connected real semisimple algebraic group of non compact type with Lie algebra $\lieg$. Let $\ko$ be a maximal compact subgroup of $\g$ and $\tau$ be the corresponding Cartan involution of $\mathfrak{g}$. Let
$$\mathfrak{p}:=\{v\in\mathfrak{g}: \tau v=-v\}.$$
\noindent We fix a Cartan subspace $\mathfrak{a}\subset\mathfrak{p}$ and let $\m$ be the centralizer of $\liea$ in $\ko$. 

A natural dynamical system one may look at when studying a discrete subgroup $\Delta<\g$, is the right action of $\liea$ on $\Delta\backslash\g/\m$. When $\g$ has real rank equal to one, this action is conjugate to the action of the geodesic flow of the underlying negatively curved manifold. However, in general it may be hard to study the action $\liea\curvearrowright \Delta\backslash\g/\m$. In many situations (including the setting we are aiming for), it proves useful to consider a ``more hyperbolic" dynamical system, namely, the action of the center of the Levi group associated to a parallel set. We now fix the terminology needed to define this dynamical system.

Denote by $\Sigma$ the set of \textit{roots} of $\mathfrak{a}$ in $\lieg$, that is, the set of functionals $\alpha\in\liea^*\setminus\{0\}$ for which the \textit{root space}
$$\mathfrak{g}_{\alpha}:=\{Y\in \mathfrak{g}: [X,Y] =\alpha(X)Y \tn{ for all } X\in\mathfrak{a}\}$$
\noindent is non zero. Fix a positive system $\Sigma^{+}\subset\Sigma$ associated to a closed Weyl chamber $\mathfrak{a}^+\subset\mathfrak{a}$. The set of simple roots for $\Sigma^{+}$ is denoted by $\Pi$.

\begin{ex}\label{ex: roots}
Suppose $\g=\mathsf{PSL}(V)$, where $V$ is a real (resp. complex) vector space of dimension $d\geq 2$. The Lie algebra of $\g$ is the space of traceless linear operators in $V$. Hence every element of $\lieg$ acts on $V$. A maximal compact subgroup is the subgroup of orthogonal (resp. unitary) matrices with respect to an inner (resp. Hermitian inner) product $o$ in $V$. A Cartan subspace $\liea\subset\liep$ is the subalgebra of matrices which are diagonal on a given projective basis $\mathcal{E}$ of $V$ orthogonal with respect to $o$. The choice of a closed Weyl chamber $\liea^+\subset\liea$ corresponds to the choice of a total order $\{\ell_1,\dots,\ell_d\}$ on $\mathcal{E}$. Explicitly, if $\lambda_j(X)$ denotes the eigenvalue of $X\in\liea$ on the eigenline $\ell_j$, the Weyl chamber $\liea^+$ is given by the set of matrices $X\in\liea$ for which $$\lambda_1(X)\geq\dots\geq\lambda_d(X).$$ \noindent For $i\neq j$ we let $\alpha_{i,j}(X):=\lambda_i(X)-\lambda_j(X)$. Then $$\Sigma=\{\alpha_{i,j}:i\neq j\} \tn{ and } \Sigma^+=\{\alpha_{i,j}:i< j\}.$$ \noindent The set of simple roots is $$\Pi=\{\alpha_{i,i+1}:i=1,\dots,d-1\}.$$
\noindent Sometimes we will write the elements of $\Pi$ simply by $\alpha_i:=\alpha_{i,i+1}$.
\end{ex}

Let $\w$ be the \textit{Weyl group} of $\Sigma$. We realize it as $$\w\cong\mathsf{N}_{\ko}(\liea)/\m,$$
\noindent where $\mathsf{N}_{\ko}(\liea)$ is the normalizer of $\liea$ in $\ko$. The group $\w$ acts simply transitively on the set of Weyl chambers in $\liea$, thus there exists a unique element $w_0\in\w$ taking $\mathfrak{a}^{+}$ to $-\mathfrak{a}^{+}$. The \textit{opposition involution} associated to $\mathfrak{a}^+$ is $\iota:=-w_0$.

We will furthermore need the structure of parabolic subgroups of $\g$. Fix a non empty subset $\Theta \subset \Pi$. Consider the subalgebras
$$\mathfrak{p}_{\Theta}:= \lieg_0 \oplus\bigoplus_{\alpha \in\Sigma^{+}} \mathfrak{g}_{\alpha}\bigoplus_{\alpha\in\langle\Pi-\Theta\rangle}\mathfrak{g}_{-\alpha}$$
\noindent and
$$\overline{\mathfrak{p}_{\Theta}}:= \lieg_0 \oplus\bigoplus_{\alpha \in\Sigma^{+}} \mathfrak{g}_{-\alpha}\bigoplus_{\alpha\in\langle\Pi-\Theta\rangle}\mathfrak{g}_{\alpha},$$
\noindent where $\langle\Pi-\Theta\rangle$ denotes the set of positive roots generated by roots in $\Pi-\Theta$. We let $\p_\Theta$ and $\overline{\p}_\Theta$ be the corresponding subgroups of $\g$. Every parabolic subgroup of $\g$ is conjugate to a unique $\p_{\Theta}$, for some $\Theta \subset \Pi$. Note that $\overline{\p}_\Theta$ is conjugate to $\p_{\iota(\Theta)}$, where $$\iota(\Theta):=\{\alpha\circ\iota: \alpha\in\Theta\}.$$
\noindent The parabolic subgroup $\overline{\p}_\Theta$ is  \textit{opposite} to $\p_\Theta$. 

Let $$\f_\Theta:=\g/\p_\Theta \textnormal{ and }\overline{\f}_\Theta:=\g/\overline{\p}_\Theta$$
\noindent be the corresponding \textit{flag manifolds} of $\g$. Two flags $\xi\in\f_\Theta$ and $\overline{\xi}\in\overline{\f}_\Theta$ are  \textit{transverse} if $(\overline{\xi},\xi)$ belongs to $\f^{(2)}_\Theta$, the unique open orbit of the action of $\g$ on $\overline{\f}_\Theta\times\f_\Theta$. We also let $\f:=\f_\Pi$ and $\f^{(2)}:=\f_\Pi^{(2)}$.

\begin{ex}\label{ex: flags in psl}
Let $\g$ be as in Example \ref{ex: roots}. 
When $d=2$ there is only one flag manifold, which identifies with $\mathbb{P}(\rr^2)$ (the projective space of $\rr^2$).
When $d=3$ there are three flag manifolds, namely $\mathbb{P}(\rr^3)$, $\mathbb{P}((\rr^3)^*)$, and $$\mathscr{F}=\{(\xi^1,\xi^2)\in \mathbb{P}(\rr^3)\times\mathbb{P}((\rr^3)^*): \xi^1\subset\xi^2 \},$$ \noindent where in the above formula we have implicitly identified $\mathbb{P}((\rr^3)^*)$ with the Grassmannian of $2$-dimensional subspaces of $\rr^3$.

More generally, for arbitrary $d\geq 2$ the choice of $\Theta$ is equivalent to the choice of a subset $\{1\leq i_1<\dots<i_p\leq d-1\}$, for some $1\leq p\leq d-1$. Then $\mathscr{F}_\Theta$ identifies with the space of \textit{partial flags} indexed by $\Theta$, that is, the space of sequences $\xi$ of the form $(\xi^{i_1}\subset\dots\subset\xi^{i_p})$, where $\xi^{i_j}$ is a linear subspace of $V$ of dimension $i_j$, for all $j=1,\dots,p$. Furthermore, one has $\iota(\Theta)=\{1\leq d-i_p<\dots<d-i_1\leq d-1\}$. A flag $\overline{\xi}\in\overline{\mathscr{F}}_\Theta$ is transverse to $\xi\in\mathscr{F}_\Theta$ if and only if for all $j=1,\dots ,p$ the sum $\overline{\xi}^{d-i_j}+\xi^{i_j}$ is direct. 
\end{ex}

A point in $(\overline{\xi},\xi)\in\f^{(2)}_\Theta$ determines a \textit{parallel set} of the Riemannian symmetric space $X_\g$ of $\g$. It is the union of all parametrized flat subspaces $f$ of $X_\g$ so that the flag associated to $f(\liea^+)$ (resp. $f(-\liea^+)$) belongs to the fiber over $\xi$ (resp. $\overline{\xi}$), for the fibration $\f\to\f_\Theta$ (resp. $\f\to\overline{\f}_\Theta$). When the real rank of $\g$ is equal to $1$, this is just a geodesic of $X_\g$. When $\Theta=\Pi$, it is a maximal flat subspace of $X_\g$. Any parallel set is identified with the Riemannian symmetric space of the Levi subgroup $L_\Theta = \p_\Theta\cap\overline{\p}_\Theta$, a reductive subgroup of $\g$. 

Let $$\mathfrak{a}_{\Theta} := \bigcap_{\alpha \in \Pi -\Theta} \text{ker } \alpha$$ \noindent be the Lie algebra of the center of $L_\Theta =\p_{\Theta}\cap \overline{\p}_{\Theta}$ (in particular, $\liea_\Pi=\liea$). There is a unique projection $p_{\Theta}: \mathfrak{a} \to \mathfrak{a}_{\Theta}$ invariant under the group 
$$\w_{\Theta}:=\{w\in \w: w\vert_{\mathfrak{a}_{\Theta}}=\operatorname{id}_{\mathfrak{a}_{\Theta}}\}.$$ \noindent The dual space $\liea_\Theta^*$ identifies naturally with $\{\varphi\in\liea^*: \varphi\circ p_\Theta=\varphi\}$. We will use this identification throughout the paper.
 
Consider the space $\f_\Theta^{(2)}\times\liea_\Theta$, endowed with the action of $\liea_\Theta$ by translations on the last coordinate. This action commutes with a natural action of $\g$ that we now describe, and the quotient dynamics is the ``more hyperbolic" dynamical system we have referred to at the beginning of this subsection.

Let $\n$ be the \textit{unipotent radical} of $\p=\p_\Pi$, i.e. the connected subgroup of $\g$ associated to the Lie algebra $\sum_{\alpha\in\Sigma^+}\lieg_\alpha$.  The \textit{Iwasawa Decomposition} is
$$\g=\ko\exp(\liea)\n.$$
\noindent   In particular, $\mathscr{F}\cong\ko/\m$ and for $\xi\in\f$ we may find $k\in\ko$ such that $k\m=\xi$. Quint \cite{QuintCocylce} defines a map $\sigma: \g\times \mathscr{F} \to \mathfrak{a}$ by the formula
$$gk=l \exp(\sigma(g,k\m))n,$$
\noindent where $n\in\n$ and $l\in\ko$. Quint \cite[Lemme 6.11]{QuintCocylce} also shows that $p_{\Theta}\circ\sigma:\g \times \f \to \liea_{\Theta}$ factors through a map $\sigma_{\Theta}: \g \times \f_{\Theta} \to \liea_{\Theta}$. For every $g, h\in \g$ and $\xi\in \mathscr{F}_{\Theta}$ one has
$$\sigma_{\Theta}(gh,\xi)=\sigma_{\Theta}(g,h\cdot\xi)+\sigma_{\Theta}(h,\xi).$$
\noindent  The map $\sigma_\Theta$ is called the $\Theta$-\textit{Busemann-Iwasawa cocycle} of $\g$. Observe that the action of $\liea_\Theta$ on $\f_\Theta^{(2)}\times\liea_\Theta$ commutes with the action of $\g$ given by $$g\cdot(\overline{\xi},\xi,X):=(g\cdot \overline{\xi},g\cdot \xi,X-\sigma_\Theta(g,\xi)).$$

\begin{rem}
The Busemann-Iwasawa cocycle of $\g$ is a vector valued version of the \textit{Busemann function} of the Riemannian symmetric space $X_\g$ of $\g$. Indeed, when $\g$ has real rank equal to one, then $\mathscr{F}$ identifies with the visual boundary $\partial X_\g$ of $X_\g$. Let $o\in X_\g$ be the point fixed by $\ko$. After identifying $\liea$ with $\rr$ suitably, one has $$\sigma(g,\xi)=b_\xi(o,g^{-1}\cdot o),$$
\noindent where $b_\cdot(\cdot,\cdot):\partial X_\g\times X_\g\times X_\g\to\rr$ is the Busemann function. A similar interpretation holds in higher rank (c.f. \cite[Lemme 6.6]{QuintCocylce}). 
\end{rem}

In Section \ref{sec: anosov flows and reps} we will consider a flow space which is even better behaved than the action of $\liea_\Theta$ associated to a parallel set. It will be induced by the choice of a functional in $\liea_\Theta^*$. Natural generators of $\liea_\Theta^*$ are the \textit{fundamental weights} associated to $\Theta$, whose definition we now recall.

Denote by $(\cdot,\cdot)$ the inner product on $\liea^*$ dual to the Killing form of $\lieg$. For $\varphi,\psi\in\liea^*$ set $$\langle\varphi,\psi\rangle:=2\frac{(\varphi,\psi)}{(\psi,\psi)}.$$
\noindent Given $\alpha\in\Pi$, the corresponding \textit{fundamental weight} is the functional $\omega_\alpha\in\liea^*$ defined by the formulas $ \langle\omega_\alpha,\beta\rangle=\delta_{\alpha\beta}$ for $\beta\in\Pi$. One has \begin{equation}\label{eq: fund weight and ptheta}
\omega_\alpha\circ p_\Theta =\omega_\alpha    
\end{equation}\noindent for all $\alpha\in\Theta$ (c.f. Quint \cite[Lemme II.2.1]{QuiDivergence}). In particular, we have $\omega_\alpha\in\liea_\Theta^*$.

Fundamental weights are related to a special set of linear representations of $\g$ introduced by Tits \cite{Tits}. If $\Lambda:\g\to\mathsf{PGL}(V)$ is an irreducible representation, a functional $\chi\in\liea^*$ is a \textit{weight} of $\Lambda$ if the \textit{weight space} $$V_\chi:=\{v\in V: \Lambda(\exp(X))\cdot v=e^{\chi(X)}v, \tn{ for all } X\in\liea\}$$ \noindent is non zero. Tits \cite{Tits} shows that there exists a unique weight $\chi_\Lambda$ which is maximal with respect to the order given by $\chi\geq \chi'$ if $\chi-\chi'$ is a linear combination of simple roots with non-negative coefficients. The functional $\chi_\Lambda$ is called the \textit{highest weight} of $\Lambda$ and the representation is  \textit{proximal} if the associated weight space $V_{\chi_\Lambda}$ is one dimensional. The next proposition is useful.

\begin{prop}[Tits \cite{Tits}]\label{prop: tits}
For every $\alpha\in\Pi$ there exists a finite dimensional real vector space $V_\alpha$ and a proximal irreducible representation $\Lambda_\alpha:\g\to\mathsf{PGL}(V_\alpha)$ such that the highest weight $\chi_\alpha=\chi_{\Lambda_\alpha}$ is of the form $k_\alpha\omega_\alpha$, for some integer $k_\alpha\geq 1$.
\end{prop}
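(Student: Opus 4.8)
The plan is to deduce this from the highest weight classification of irreducible representations over $\cc$ and then descend to the real group $\g$; this is essentially Tits's original argument. Write $\lieg_\cc$ for the complexification of $\lieg$, fix a Cartan subalgebra $\mathfrak h\supset\liea$ which is defined over $\rr$ and $\tau$-stable, and let $\Sigma_\cc$ be the root system of $\mathfrak h$ in $\lieg_\cc$. Choosing compatible positive systems, the restriction map $\mathfrak h^{*}\to\liea^{*}$, $\widetilde\mu\mapsto\widetilde\mu|_{\liea}$, carries $\Sigma_\cc$ onto $\Sigma\cup\{0\}$, carries positive roots to non-negative combinations of $\Pi$, and annihilates exactly the roots spanned by the black nodes $\{\widetilde\alpha_j\}_{j\in J}$ of the Satake diagram of $\lieg$; these generate the root system of the anisotropic kernel.

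First I would choose the complex highest weight. Let $I_\alpha:=\{i:\widetilde\alpha_i|_{\liea}=\alpha\}$ be the set of simple roots of $\lieg_\cc$ lying over $\alpha$; the Galois $*$-action preserves $I_\alpha$ and fixes $J$. Setting $\widetilde\chi:=c\sum_{i\in I_\alpha}\widetilde\omega_i$ with $c\in\zz_{>0}$ and $\widetilde\omega_i$ the fundamental weights of $\lieg_\cc$, one obtains a dominant, $*$-invariant weight which is orthogonal to $\widetilde\alpha_i^{\vee}$ for every $i\notin I_\alpha$ — in particular for every $j\in J$ — and whose restriction pairs non-negatively with all of $\Pi$ but trivially with $\Pi\setminus\{\alpha\}$. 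Hence $\widetilde\chi|_{\liea}=k_\alpha\,\omega_\alpha$ for a positive integer $k_\alpha$ determined by $c$ and by the multiplicities of the restriction map. Enlarging $c$ if necessary (doubling it always suffices, since the antilinear equivariant structure on a self-conjugate module has positive square after tensoring), I would also arrange that the $\lieg_\cc$-module of highest weight $\widetilde\chi$ is of real — as opposed to quaternionic or complex — type.

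Let $W$ be the irreducible $\lieg_\cc$-module of highest weight $\widetilde\chi$, with highest weight vector $v$. I claim $W$ is proximal, i.e.\ the weight space $W_{\widetilde\chi|_{\liea}}$ for the $\liea$-action is one-dimensional. Because $\widetilde\chi$ is orthogonal to every $\widetilde\alpha_j^{\vee}$ with $j\in J$, the line $\cc v$ is a trivial module for the anisotropic Levi subalgebra, so a PBW factorization gives $W=U(\overline{\mathfrak u})\cdot v$, where $\overline{\mathfrak u}$ is the span of the negative root spaces for roots with nonzero restriction. Every $\liea$-weight of $W$ is therefore $\widetilde\chi|_{\liea}$ minus a sum of positive restricted roots; if such a weight equals $\widetilde\chi|_{\liea}$ the sum vanishes, and since $\Sigma^{+}$ lies in a pointed cone the sum must be empty, so $W_{\widetilde\chi|_{\liea}}=\cc v$. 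Passing to a real form $V_\alpha$, which exists by the real-type assumption and satisfies $V_\alpha\otimes_\rr\cc\cong W$, one gets an absolutely irreducible real $\lieg$-module which is still proximal, with highest restricted weight $k_\alpha\,\omega_\alpha$.

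Finally I would integrate the $\lieg$-action on $V_\alpha$ to the simply connected group $\widetilde{\g}$ and projectivize: the centre of $\widetilde{\g}$ acts by scalars on the absolutely irreducible $V_\alpha$, so $\widetilde{\g}\to\PGL(V_\alpha)$ factors through the adjoint group, and precomposing with $\g\to\g_{\tn{ad}}$ yields the desired algebraic morphism $\Lambda_\alpha\colon\g\to\PGL(V_\alpha)$ with $\chi_{\Lambda_\alpha}=k_\alpha\,\omega_\alpha$. The main obstacle is the last part of the second step: simultaneously controlling the arithmetic of the restriction map and forcing $W$ to be of real type — had $W$ been of complex or quaternionic type, restriction of scalars would double the multiplicity of the top $\liea$-weight and destroy proximality. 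Making this precise is exactly the delicate Satake-diagram bookkeeping carried out by Tits, and it is the only place where the exact value of $k_\alpha$, and the occasional need for $k_\alpha>1$, is forced.
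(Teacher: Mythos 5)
The paper offers no proof of Proposition \ref{prop: tits}: it is stated as a quotation of Tits's theorem on irreducible representations over $\rr$ and is used later only as a black box, essentially through Equation \eqref{eq: spectral radious and fund weight}. So there is no "paper's route" to compare with; what you have written is a reconstruction of the standard Satake-diagram argument behind Tits's theorem, and it is essentially correct. The choice $\widetilde\chi=c\sum_{i\in I_\alpha}\widetilde\omega_i$ over the $*$-orbit of the white nodes above $\alpha$, the observation that $\widetilde\chi$ is orthogonal to the black coroots so that the anisotropic kernel kills the highest weight line, the PBW argument giving $\dim W_{\widetilde\chi|_{\liea}}=1$, the reality discussion (doubling the highest weight does force real type, since the antilinear intertwiner squares to $+1$ on the Cartan component of $W\otimes W$, which has multiplicity one there), and the descent to $\PGL(V_\alpha)$ via the adjoint group are all sound. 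One step is asserted rather than proved, and it is the actual content of the restriction claim: the fact that $\widetilde\chi|_{\liea}$ pairs to zero with every $\beta\in\Pi\setminus\{\alpha\}$ does not follow formally from $\langle\widetilde\chi,\widetilde\alpha_i^\vee\rangle=0$ for $i\notin I_\alpha$, because restricted coroots are not restrictions of coroots. It follows from the projection formula: viewing $\liea^*$ as the $(-1)$-eigenspace of $\theta^*$ in $\mathfrak{h}^*$, one has $\beta=\widetilde\alpha_j|_{\liea}=\tfrac12\bigl(\widetilde\alpha_j-\theta^*\widetilde\alpha_j\bigr)=\tfrac12\bigl(\widetilde\alpha_j+\widetilde\alpha_{j^*}+\sum_{k\in J}c_{jk}\widetilde\alpha_k\bigr)$ with $c_{jk}\geq0$, whence $(\widetilde\chi|_{\liea},\beta)=(\widetilde\chi,\beta)\geq0$, vanishing exactly when $\{j,j^*\}\cap I_\alpha=\emptyset$, i.e.\ when $\beta\neq\alpha$. (This computation also shows that a single $\widetilde\omega_i$ with $i\in I_\alpha$ already restricts to a positive multiple of $\omega_\alpha$; the full $*$-orbit is needed only to make $\widetilde\chi$ self-conjugate.) Finally, the argument a priori yields $\widetilde\chi|_{\liea}=k_\alpha\omega_\alpha$ with $k_\alpha\in\qqq_{>0}$; enlarging $c$ once more clears denominators and gives the integer $k_\alpha\geq1$ demanded by the statement. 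These are points of exposition, not gaps.
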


We fix from now on a set of representations $\{\Lambda_\alpha\}_{\alpha\in\Pi}$ as in Proposition \ref{prop: tits}. Observe that for all $\alpha\in\Theta$ we have\begin{equation}\label{eq: highest weight and ptheta}
\chi_\alpha\circ p_\Theta =\chi_\alpha,    
\end{equation} \noindent and therefore $\chi_\alpha$ belongs to $\liea_\Theta^*$.

We conclude recalling the definitions of Cartan and Jordan projections of $\g$ for later use. The \textit{Cartan projection} of $g\in\g$ is the unique element $\mu(g)\in\mathfrak{a}^+$ satisfying $$g\in\ko\exp(\mu(g))\ko.$$
\noindent The \textit{Jordan projection} of $g$ is defined by $$\lambda(g):=\displaystyle\lim_{n\to\infty}\frac{\mu(g^n)}{n}.$$
\noindent One may show that for all $\alpha\in\Pi$ and all $g\in\g$ one has \begin{equation}\label{eq: spectral radious and fund weight}
    \lambda_1(\Lambda_\alpha(g))=\chi_\alpha(\lambda(g))=k_\alpha\omega_\alpha(\lambda(g)),
\end{equation} \noindent where $\lambda_1(\Lambda_\alpha(g))$ denotes the logarithm of the modulus of the highest eigenvalue of $\Lambda_\alpha(g)$.

We denote $$\mu_{\Theta}:=p_{\Theta}\circ \mu \textnormal{ and } \lambda_{\Theta}:=p_{\Theta}\circ \lambda.$$

\subsection{Anosov representations and their length functions}\label{subsec: anosov reps}

We now define Anosov representations and their corresponding length functions and entropies. The definition that we present here is not the original definition, but an equivalent one established in \cite{KLPanosovcharacterizations,GGKW,BPS}.

Let $\Gamma$ be a finitely generated group and $\vert\cdot\vert$ be the word length associated to a finite generating set (that we fix from now on).

\begin{dfn}\label{def: anosov rep}
Let $\Theta\subset\Pi$ be a non empty set. A representation $\rho: \Gamma \to \g$ is  $\p_\Theta$-\textit{Anosov} (or $\Theta$-\textit{Anosov}) if there exist positive constants $C$ and $c$ such that for all $\alpha\in\Theta$ one has
$$\alpha(\mu(\rho(\gamma)))\geq C\vert\gamma\vert-c.$$
\noindent When $\Theta=\Pi$ and $\g$ is split, $\rho$ is sometimes called \textit{Borel-Anosov}. When $\g=\mathsf{PSL}(V)$ with $V$ as in Example \ref{ex: roots}, $\{\alpha_1\}$-Anosov representations are also called \textit{projective Anosov}.
\end{dfn}

An immediate consequence of Definition \ref{def: anosov rep} is that Anosov representations are quasi-isometric embeddings from $\Gamma$ to $\g$. In particular, they are discrete and have finite kernels. A deeper consequence is a theorem by Kapovich-Leeb-Porti \cite[Theorem 1.4]{KLPmorse} (see also \cite[Section 3]{BPS}): if $\rho:\Gamma\to\g$ is $\Theta$-Anosov then $\Gamma$ is word hyperbolic. Throughout the paper we shall assume that $\Gamma$ is non elementary and denote by $\partial\Gamma$ its Gromov boundary. We also let $\bgs$ be the space of ordered pairs of different points in $\bg$. Every infinite order element $\gamma\in\Gamma$ has a unique attracting (resp. repelling) fixed point in $\bg$, denoted by $\gamma_+$ (resp. $\gamma_-$). We let $\gh\subset\Gamma$ be the subset consisting of infinite order elements. The conjugacy class of $\gamma\in\Gamma$ is denoted by $[\gamma]$, and the set of conjugacy classes of elements of $\Gamma$ (resp. $\gh$) will be denoted by $[\Gamma]$ (resp. $[\gh]$).

A central feature of $\Theta$-Anosov representations is that they admit \textit{limit maps}. By definition, these are H\"older continuous, $\rho$-equivariant, dynamics preserving maps
$$\xi_\rho: \partial \Gamma \to \mathscr{F}_\Theta \textnormal{ and } \overline{\xi}_\rho: \partial \Gamma \to \overline{\mathscr{F}}_\Theta,$$
\noindent which are moreover \textit{transverse}, that is, for every $x\neq y$ in $\partial\Gamma$ one has
$$ (\overline{\xi}_\rho(x),\xi_\rho(y))\in\f^{(2)}_\Theta. $$
\noindent The limit maps exist and are unique (see \cite{BPS,GGKW,KLPanosovcharacterizations} for details).

\begin{ex}\label{ex: limit map in grassmanian}
Let $\g$ be as in Example \ref{ex: roots} and $\Theta=\{1\leq i_1<\dots<i_p\leq d-1\}$ for some $1\leq p\leq d-1$ (c.f. Example \ref{ex: flags in psl}). For $j=1,\dots ,p$, we let $$\xi_\rho^{i_j}:\bg\to\mathbb{G}_{i_j}(V)$$ \noindent be the $i_j$-coordinate of $\xi_\rho$ into the Grassmannian $\mathbb{G}_{i_j}(V)$ of $i_j$-dimensional subspaces of $V$.
\end{ex}

The set of $\Theta$-Anosov representations from $\Gamma$ to $\g$ is an open subset of the space of all representations $\Gamma\to\g$. This is a consequence of the original definition  \cite{Lab,GW}. Indeed, the original definition requires \textit{a priori} the word hyperbolicity of $\Gamma$ and the existence of the limit maps, with them one constructs a flow space which, by definition, satisfies certain form of uniform hyperbolicity. General results in hyperbolic dynamics give that this is an open condition.

Projective Anosov representations are very general:

\begin{prop}[Guichard-W. {\cite[Proposition 4.3]{GW}}]\label{prop: anosov and tits}
Let $\rho:\Gamma\to\g$ be  $\Theta$-Anosov. Then for every $\alpha\in\Theta$ the representation $\Lambda_\alpha\circ\rho:\Gamma\to\mathsf{PGL}(V_\alpha)$ is projective Anosov.
\end{prop}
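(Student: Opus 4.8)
The plan is to use the Cartan‑projection reformulation of the Anosov condition recorded in Definition~\ref{def: anosov rep}: to prove that $\Lambda_\alpha\circ\rho:\G\to\mathsf{PGL}(V_\alpha)$ is projective Anosov it is enough to produce constants $C',c'>0$ with $\alpha_1\big(\mu(\Lambda_\alpha(\rho(\gamma)))\big)\ge C'\vert\gamma\vert-c'$ for every $\gamma\in\G$, where $\alpha_1$ denotes the first simple root of $\mathsf{PGL}(V_\alpha)$ (as in Example~\ref{ex: roots}), that is, $\alpha_1(\mu(h))$ is the logarithm of the ratio of the two largest singular values of a linear representative of $h$. First I would reduce the whole statement to the pointwise identity
\begin{equation}\label{eq:tits-cartan}
\alpha_1\big(\mu(\Lambda_\alpha(g))\big)=\alpha(\mu(g))\qquad\text{ for all }g\in\g,
\end{equation}
which is the Cartan‑projection counterpart of \eqref{eq: spectral radious and fund weight}. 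Granting \eqref{eq:tits-cartan}, the conclusion is immediate: for $\alpha\in\Theta$ the hypothesis that $\rho$ is $\Theta$-Anosov gives $C,c>0$ with $\alpha(\mu(\rho(\gamma)))\ge C\vert\gamma\vert-c$, hence $\alpha_1(\mu(\Lambda_\alpha(\rho(\gamma))))\ge C\vert\gamma\vert-c$, which is exactly the defining inequality of Definition~\ref{def: anosov rep} for $\Lambda_\alpha\circ\rho$ with $\Theta=\{\alpha_1\}$.

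To prove \eqref{eq:tits-cartan} I would fix on $V_\alpha$ a Euclidean structure adapted to the Cartan decomposition $\g=\ko\exp(\liea^+)\ko$, namely one for which $\Lambda_\alpha(\ko)$ acts by isometries and $d\Lambda_\alpha(\liea)$ acts by self-adjoint operators; such a structure exists for any finite dimensional representation of $\g$. Then, writing $g=k_1\exp(\mu(g))k_2$ with $k_i\in\ko$, the expression $\Lambda_\alpha(g)=\Lambda_\alpha(k_1)\exp(d\Lambda_\alpha(\mu(g)))\Lambda_\alpha(k_2)$ is a singular value decomposition, so the singular values of $\Lambda_\alpha(g)$ are the eigenvalues of $\exp(d\Lambda_\alpha(\mu(g)))$, i.e.\ the numbers $e^{\chi(\mu(g))}$ with $\chi$ ranging over the weights of $\Lambda_\alpha$, each counted with multiplicity $\dim(V_\alpha)_\chi$. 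Now I would invoke Proposition~\ref{prop: tits}: the highest weight is $\chi_\alpha=k_\alpha\omega_\alpha$ with $k_\alpha\ge1$, and $\Lambda_\alpha$ is proximal, so $\dim(V_\alpha)_{\chi_\alpha}=1$. Since $\chi_\alpha-\chi\in\sum_{\beta\in\Pi}\mathbb Z_{\ge0}\,\beta$ for every weight $\chi$ and $\mu(g)\in\liea^+$, one gets $\chi(\mu(g))\le\chi_\alpha(\mu(g))$, so the top singular value has logarithm $\chi_\alpha(\mu(g))$. Moreover, as $\langle\chi_\alpha,\beta\rangle=0$ for every simple root $\beta\neq\alpha$ while $\langle\chi_\alpha,\alpha\rangle=k_\alpha\ge1$, a highest weight vector is killed by $f_\beta$ for $\beta\neq\alpha$; since $V_\alpha$ is spanned by the images of a highest weight vector under compositions of simple lowering operators and the first such operator must be $f_\alpha$, every weight $\chi\neq\chi_\alpha$ has $\alpha$-coefficient $\ge1$ in $\chi_\alpha-\chi$, whereas $\chi_\alpha-\alpha$ is itself a weight. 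Hence $\chi(\mu(g))\le(\chi_\alpha-\alpha)(\mu(g))$ for all $\chi\neq\chi_\alpha$, with equality attained, so the second largest singular value has logarithm $(\chi_\alpha-\alpha)(\mu(g))$; subtracting the two logarithms gives \eqref{eq:tits-cartan}.

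The step I expect to require the most care is the identification of the two largest singular values with weight values of $\mu(g)$, in particular checking that it stays valid when $\mu(g)$ lies on a wall with $\alpha(\mu(g))=0$: there $\chi_\alpha(\mu(g))$ and $(\chi_\alpha-\alpha)(\mu(g))$ coincide and the maximal singular value is attained with multiplicity $\ge2$, but then both sides of \eqref{eq:tits-cartan} vanish, so the identity persists; the dominance inequalities used above handle this uniformly. The remaining points — the existence of the adapted Euclidean structure and the elementary facts about the weights of the proximal representation $\Lambda_\alpha$ — are standard and present no real difficulty. (One could alternatively argue via the limit maps of $\rho$ and the induced map for $\Lambda_\alpha\circ\rho$, as in the original proof, but the Cartan‑projection route above is the most direct given the characterization in Definition~\ref{def: anosov rep}.)
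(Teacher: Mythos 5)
Your argument is correct, and it proves the right identity: with a $\ko$-adapted inner product on $V_\alpha$, the singular values of $\Lambda_\alpha(g)$ are the numbers $e^{\chi(\mu(g))}$ over the restricted weights $\chi$, the top one is $e^{\chi_\alpha(\mu(g))}$ with multiplicity one (proximality), the second highest weight is $\chi_\alpha-\alpha$, and hence $\alpha_1\bigl(\mu(\Lambda_\alpha(g))\bigr)=\alpha(\mu(g))$, which combined with Definition \ref{def: anosov rep} gives the proposition at once. Note, however, that the paper does not prove this statement itself: it imports it from Guichard--Wienhard, whose original argument runs through the dynamical definition of Anosov representations (equivariant embedding $\f_\Theta\hookrightarrow\pp(V_\alpha)$ sending a flag to the highest weight line, composition of limit maps, and verification of the contraction property on the associated flow bundle). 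Your route is the one natural for the Cartan-projection characterization the paper adopts, and is essentially the standard modern proof (as in Gu\'eritaud--Guichard--Kassel--Wienhard and Bochi--Potrie--Sambarino); it is shorter given Definition \ref{def: anosov rep}, while the original proof additionally produces the limit maps of $\Lambda_\alpha\circ\rho$ explicitly. Two small points deserve care. First, since $\g$ is a real group, the weights here are restricted weights: restricted root spaces can have dimension larger than one and the system may be non-reduced, so the cleanest way to get ``every weight $\chi\neq\chi_\alpha$ has $\alpha$-coefficient at least $1$ in $\chi_\alpha-\chi$'' is via the parabolic decomposition $\lieg=\mathfrak{u}_\alpha^-\oplus\mathfrak{l}\oplus\mathfrak{u}_\alpha$ associated to $\Pi\setminus\{\alpha\}$: because $\langle\chi_\alpha,\beta\rangle=0$ for $\beta\in\Pi\setminus\{\alpha\}$, the derived algebra of the Levi acts trivially on the highest weight line, and $\m\oplus\liea$ preserves it, so $V_\alpha=U(\mathfrak{u}_\alpha^-)\,v$ and every other weight drops by at least one $\alpha$; your simple-lowering-operator version is the same idea but should be phrased this way (or by passing to the complexification) to be airtight. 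Second, the Cartan projection of $\mathsf{PGL}(V_\alpha)$ depends on the choice of maximal compact, i.e.\ of the inner product; changing it alters $\alpha_1\circ\mu$ only by a bounded additive constant, which is harmless for the coarse inequality in Definition \ref{def: anosov rep}, but this should be said since the adapted inner product you construct need not be the one implicitly fixed for $\mathsf{PGL}(V_\alpha)$.
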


We denote by $\ha$ the space of conjugacy classes of $\p_\Theta$-Anosov representations from $\Gamma$ to $\g$. Length functions and entropies are important invariants to study this space. By work of Sambarino \cite{HyperconvexRepsExponentialGrowth,Quantitative} that we will recall in Section \ref{sec: anosov flows and reps}, they provide a way of associating to each $\rho\in\ha$ certain flow space as in Sections \ref{sec: thermodynamics} and \ref{sec: asymmetric metric and finsler norm for flows}, and therefore one may use the Thermodynamical Formalism to study $\ha$. To define length functions and entropies properly we need to recall the definition of a fundamental object, introduced by Benoist \cite{Benoist_AsymtoticLinearGroups} for general discrete subgroups of $\g$.

\begin{dfn}
The $\Theta$-\textit{limit cone} of $\rho\in \ha$ is the smallest closed cone $\cone\subset\mathfrak a_\Theta ^+$ containing the set $\{\lambda_{\Theta}(\rho (\gamma)): \gamma\in \Gamma\}$. The \textit{limit cone} $\mathscr{L}_\rho$ of $\rho$ is the $\Pi$-limit cone.
\end{dfn}

In the above definition we abuse notations, because $\rho$ is a conjugacy class of representations. However, it is clear that the $\Theta$-limit cone is independent of the choice of a representative in this conjugacy class. 

Under the assumption that $\rho$ is Zariski dense, Benoist \cite{Benoist_AsymtoticLinearGroups} showed that $\mathscr{L}_\rho$ is a convex cone with non empty interior\footnote{In fact, Benoist shows this result for any Zariski dense discrete subgroup of $\g$.}. Since $p_\Theta$ is a surjective linear map, the same properties hold for the $\Theta$-limit cone.

Let $$\dcone:=\{\varphi \in \mathfrak{a}_{\Theta}^{*}: \varphi|_{\cone}\geq 0\}$$
\noindent be the \textit{dual cone}. We denote by $\text{int} (\dcone)$ the interior of $\dcone$, that is, the set of functionals in $\liea_\Theta^*$ which are positive on $\cone\setminus\{0\}$.

Fix a functional $$\varphi \in \bigcap_{{\rho}\in\ha} \text{int} (\dcone).$$
\noindent The above intersection is non empty. For example, it contains $\lambda_1$ and more generally $\omega_\alpha$ for all $\alpha\in\Pi$.

\begin{dfn}
The $\varphi$-\textit{marked length spectrum} (or simply $\varphi$-\textit{length spectrum}) of $\rho\in\ha$ is the function $L^\varphi_{\rho}:\Gamma \to \rr_{\geq 0} $ given by
    \begin{align*}
    L^\varphi_{\rho}&(\gamma):= \varphi(\lambda_{\Theta}(\rho(\gamma))).
    \end{align*}
    \end{dfn}
\noindent Observe that for a $\Theta$-Anosov representation $\rho$,  $L_\rho^\varphi(\gamma)>0$ if and only if $\gamma\in\gh$ (that is, if it has infinite order). Furthermore the $\varphi$-length spectrum is invariant under conjugation in $\Gamma$ and therefore descends to a function $[\Gamma]\to\rr_{\geq 0}$. We will often abuse notations and denote this function by $L^\varphi_{\rho}$ as well.

\begin{dfn}
The $\varphi$-\textit{entropy} of $\rho$ is defined by $$h_\rho^\varphi:=\displaystyle\limsup_{t\to\infty}\frac{1}{t}\log\#\{[\gamma]\in[\Gamma]:L_\rho^\varphi(\gamma)\leq t\}\in[0,\infty].$$
\end{dfn}

The $\varphi$-entropy of $\rho$ was introduced by Sambarino \cite{HyperconvexRepsExponentialGrowth,Quantitative}, who showed that this quantity is defined by a true limit, is positive, finite, and coincides with the topological entropy of a suitable flow associated to $\rho$ and $\varphi$. We will briefly recall these results and facts in Section \ref{sec: anosov flows and reps}. 

\begin{ex}
Here is a concrete set of length spectra that will be of interest (the corresponding entropies are named accordingly). Let $\g=\mathsf{PSL}(V)$ with $V$ as in Example \ref{ex: roots}:

\begin{itemize}\label{list, Lengths}
    \item If $\rho:\Gamma\to\g$ is $\Theta$-Anosov and $\alpha_i\in\Theta$ belongs to $\liea_\Theta^*$ (this is always the case if $\Theta=\Pi$), then $L_\rho^{\alpha_i}$ is called the $i^{\tn{\tn{th}}}$-\textit{simple root length spectrum} of $\rho$.
    \item If $\rho:\Gamma\to\g$ is projective Anosov, then $L_\rho^{\alpha_{1,d}}$ is called the \textit{Hilbert length spectrum} of $\rho$. We denote it by $L_\rho^\mathrm{H}$.
    \item If $\rho:\Gamma\to\g$ is projective Anosov, then $L_\rho^{\lambda_1}$ is called the \textit{spectral radius length spectrum} of $\rho$.
\end{itemize}

\end{ex}

\subsection{Examples of Anosov representations}\label{subsec: examples anosov reps}

Schottky type constructions as in Benoist \cite{BenPropres} provide basic examples of $\Theta$-Anosov representations of free groups. In this subsection we give a list of other examples that will be of interest to us.

\begin{ex}[Teichm\"uller space] 
Let $S$ be a connected, closed, orientable surface of genus $\geq 2$ and $\Gamma=\pi_1(S)$ be its fundamental group (in short, $\Gamma$ is a \textit{surface group}). The \textit{Teichm\"uller space} of $S$ is the space of isotopy classes of Riemannian metrics on $S$ of constant curvature equal to $-1$. Throughout the paper we identify this space with a connected component $\teichrep$ of the space of $\mathsf{PSL}(2,\rr)$-conjugacy classes of faithful and discrete representations $\Gamma\to\mathsf{PSL}(2,\rr)$. By the \v{S}varc-Milnor Lemma (see \cite[Proposition 19 of Ch. 3]{GdlH}), representations in $\teichrep$ are Anosov. 
\end{ex}

\begin{ex}[Hitchin representations]\label{ex: hitchin and positive}
An important class of Anosov representations is given by Hitchin representations. For every split real Lie group $\sf G$, we denote by $\tau:\PSL(2,\R)\to\sf G$ the \emph{principal embedding} \cite{Kos}, which is well defined up to conjugation. In the case of $\mathsf{G}=\PSL(d,\R)$, $\tau$ gives the unique irreducible linear representation of $\PSL(2,\R)$. It was proven by Labourie \cite{Lab} and Fock-Goncharov \cite{FG} that, given the holonomy $\rho_h:\G\to\PSL(2,\R)$ of any chosen hyperbolization $h$ of $S$, the entire connected component of $\tau\circ\rho_h$ consists of Borel-Anosov representations. This component is usually referred to as the \emph{Hitchin component}. An element in it is called a (conjugacy class of)  \emph{Hitchin representation}. We will denote by $\Hit_d(S)$ (resp. $\Hit(S,\sf G)$) the Hitchin component  of $\G$ in $\PSL(d,\R)$ (resp. in $\sf G$).  Any Hitchin-representation is Borel-Anosov, i.e. it is Anosov with respect to any subset of $\Pi$. It was proven in \cite{PS,PSW1} that the entropy of each simple root is constant and equal to one on each Hitchin component, when $\sf G$ is not of exceptional type. 
\end{ex}
\begin{ex}[$\Theta$-positive representations]\label{ex:positive}
A general framework encompassing all cases of connected components of character varieties of fundamental groups of surfaces only consisting of Anosov representations was proposed by Guichard-W. \cite{GWTheta}, see also \cite{GLW}. They introduce the class of $\Theta$-positive representations, which includes, apart from Hitchin components, \emph{maximal representations} in Hermitian Lie groups, as well as the conncected components of representation in the  ${\sf PO}_0(p,q)$-character variety and some components in the character varieties of the four exceptional Lie groups with restricted root system of type $\mathsf{F}_4$. While Hitchin representations are Borel-Anosov, the other representations are, in general, only Anosov with respect to a proper subset $\Theta<\Pi$, which consists of a single root in the case of maximal representations, and has $p-1$ elements in the case of ${\sf PO}_0(p,q)$-positive representations. 
It was proven in \cite{PSW2} that for  maximal and $\Theta$-positive representations in ${\sf PO}_0(p,q)$ the entropy with respect to any root in $\Theta$ is equal to one.
\end{ex}

\begin{ex}[Hyperconvex representations]\label{ex: hyperconvex}
Another important class of Anosov representations are \emph{$(1,1,p)$-hyperconvex representations} studied in \cite{PSW1}. These are representations $\rho:\G\to\PGL(d,\R)$ that are $\{\alpha_1,\alpha_p\}$-Anosov, and satisfy the additional transversality property that for all triples of pairwise distinct points $x,y,z\in\partial\G$, the sum $\xi_\rho^1(x)+\xi_\rho^1(y)+\xi_\rho^{d-p}(z)$ is direct. If $\G$ is a cocompact lattice in ${\sf PO}(1,p)$, so that $\partial\G=\mathbb S^{p-1}$, it follows from \cite{PSW1} that $\xi_\rho^1(\partial\G)$ is a C$^1$-submanifold of $\pp(\rr^d)$. Furthermore it was proven in \cite{PSW2} that for these representations, which sometimes admit non-trivial deformations, the entropy for the functional $p\omega_{\alpha_1}-\omega_{\alpha_p}$ is constant and equal to 1. Important examples of this class are the groups $\G$ dividing a properly convex domain in $\pp(\rr^d)$ studied by Benoist \cite{BenoistDivII,BenoistDivI,BenoistDivIII,BenoistDivIV}. These are $(1,1,d-1)$-hyperconvex, and were already studied by Potrie-Sambarino \cite{PS}.
\end{ex}

\begin{ex}[AdS-quasi-Fuchsian representations]\label{ex: AdSquasi-fuchsian}

Let $q\geq 2$ and $\Gamma$ be the fundamental group of a closed $q$-dimensional manifold. A representation $\rho:\Gamma\to\mathsf{PO}(2,q)$ is said to be \textit{AdS-quasi-Fuchsian} if it is faithful, discrete and preserves an acausal topological $(q-1)$-sphere on the boundary of the anti-de Sitter space $\mathbb{A}\tn{d}\mathbb{S}^{1,q}$. Recall that $\mathbb{A}\tn{d}\mathbb{S}^{1,q}$ is defined as the set of negative lines for the underlying quadratic form $\langle\cdot,\cdot\rangle_{2,q}$, and its boundary is the space $\partial\mathbb{A}\tn{d}\mathbb{S}^{1,q}$ of isotropic lines. A subset of $\partial\mathbb{A}\tn{d}\mathbb{S}^{1,q}$ is said to be \textit{acausal} if it lifts to a cone in $\rr^{2+q}\setminus\{0\}$ in which all $\langle\cdot,\cdot\rangle_{2,q}$-products of non collinear vectors are negative. The fundamental example of an AdS-quasi-Fuchsian representation is given by \textit{AdS-Fuchsian} representations, i.e. representations of the form $$\Gamma\to\mathsf{PO}(1,q)\to\mathsf{PO}(2,q),$$ \noindent where the first map is the holonomy of a closed real hyperbolic manifold, and the second arrow is the standard embedding stabilizing a negative line in $\rr^{2+q}$.

AdS-quasi-Fuchsian representations were introduced in seminal work by Mess \cite{Mess} for $q=2$, and then generalized by Barbot-M\'erigot and Barbot \cite{BM,Barbot} for $q>2$. They are $\{\alpha_1\}$-Anosov representations, where $\alpha_1$ is the simple root in $\mathsf{PO}(2,q)$ corresponding to the stabilizer of an isotropic line (see \cite{BM}). Furthermore, the space of AdS-quasi-Fuchsian representations is a union of connected components of the representation space (see \cite{Barbot}). AdS-quasi-Fuchsian representations were generalized to $\mathbb{H}^{p-1,q}$-\textit{convex-cocompact} representations by Danciger-Gu\'eritaud-Kassel \cite{DGKHpqCC}.

\end{ex}

\section{Flows associated to Anosov representations}\label{sec: anosov flows and reps}

We now recall Sambarino's Reparametrizing Theorem \cite{HyperconvexRepsExponentialGrowth,Quantitative}. This result associates to each $\rho\in\ha$ and each $\varphi\in\tn{int}((\cone)^*)$ a topological flow on a compact space, recording the data of the $\varphi$-length spectrum of $\rho$, and admitting a strong Markov coding. Through the Thermodynamical Formalism, this provides a powerful tool to study the representation $\rho$ and the space $\ha$ of $\p_\Theta$-Anosov representations.

Sambarino deals originally with Anosov representations of the fundamental group of a closed negatively curved manifold. In that case he uses the geodesic flow of the manifold (which is Anosov) as a ``reference" flow, and from $\rho$ and $\varphi$ builds a H\"older reparametrization of that flow encoding the periods $L_\rho^\varphi(\gamma)=\varphi(\lambda_\Theta(\rho(\gamma)))$. In the present framework, we are dealing with more general word hyperbolic groups. Nevertheless, his result is known to still hold: one may replace the reference geodesic flow of the manifold by the \textit{Gromov-Mineyev geodesic flow} of $\Gamma$. This is a topologically transitive H\"older continuous flow on a compact metric space $\tn{U}\Gamma$, well defined up to H\"older orbit equivalence. It was introduced by Gromov \cite{Gro} (see also Mineyev \cite{Min} for details). To define this flow space one considers a proper and cocompact action of $\Gamma$ on $\bgs\times\rr$, extending the natural action of $\Gamma$ on $\bgs$. The space $\bgs\times\rr$ equipped with this action will be denoted by $\widetilde{\tn{U}\Gamma}$, and we refer to this action as the $\Gamma$-\textit{action} on $\bgs\times\rr$. In the sequel we will consider many different actions of $\Gamma$ on $\bgs\times\rr$, depending on various choices, and this justifies this specific terminology and notation. 

The $\Gamma$-action commutes with the $\rr$-action given by $$t:(x,y,s)\mapsto(x,y,s+t).$$\noindent We let $\phi=(\phi_t:\tn{U}\Gamma\to\tn{U}\Gamma)$ be the quotient \textit{Gromov-Mineyev geodesic} flow. Central in all what follows is a result by Bridgeman-Canary-Labourie-Sambarino \cite[Sections 4 \& 5]{BCLS}, stating that in the present setting $\phi$ is metric Anosov, and one has the following (see also \cite{CLT}).

\begin{teo}[\cite{PeriodicOrbitsFlows, Bowen-Symbolic,PolMetricAnosov,BCLS}]\label{thm: gromov flow coding}
Let $\Gamma$ be a word hyperbolic group admitting an Anosov representation. Then $\phi$ admits a strong Markov coding.
\end{teo}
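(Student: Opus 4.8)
The plan is to verify the two hypotheses of Pollicott's Theorem~\ref{teo: pollicott coding metric anosov} for $\phi$ --- topological transitivity and the metric Anosov property of Definition~\ref{dfn,metricAnosov} --- and then to upgrade the Markov coding it produces to a \emph{strong} one by tracking H\"older regularity. Topological transitivity was already recorded above: it holds because $\Gamma$ is non-elementary word hyperbolic, so the pairs of fixed points of infinite-order elements are dense in $\bgs$ and, the $\Gamma$-action on $\widetilde{\tn{U}\Gamma}=\bgs\times\rr$ being proper, cocompact and commuting with the $\rr$-translation, this descends to a dense $\phi$-orbit on the compact quotient $\tn{U}\Gamma$.

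The substantive point is that $\phi$ is metric Anosov, and this is where the hypothesis that $\Gamma$ admits an Anosov representation enters; it is the content of \cite[Sections 4 and 5]{BCLS}, and I would organize it as follows. Fix a $\Theta$-Anosov representation $\rho\colon\Gamma\to\g$ with its limit maps $\xi_\rho\colon\bg\to\f_\Theta$ and $\overline{\xi}_\rho\colon\bg\to\overline{\f}_\Theta$. Using these maps together with a visual (Gromov-product) metric on $\bg$, one constructs a $\Gamma$-invariant H\"older continuous metric on $\widetilde{\tn{U}\Gamma}$, hence a H\"older metric $d$ on $\tn{U}\Gamma$, with respect to which the local stable set $W^s_\varepsilon(x,y,s)$ is, modulo the flow direction, the leaf on which the forward endpoint $x\in\bg$ is fixed and the backward endpoint varies near $y$ (and symmetrically for $W^u_\varepsilon$). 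One then checks condition (1) of Definition~\ref{dfn,metricAnosov}: two geodesics sharing the forward endpoint $x$ fellow-travel, and the Anosov inequality $\alpha(\mu(\rho(\gamma)))\geq C|\gamma|-c$ for $\alpha\in\Theta$, fed through the $\rho$-equivariance and the dynamics-preserving property of the limit maps, forces this fellow-travelling to be \emph{exponential} in $d$ --- concretely, the uniform contraction of the flow on the associated flat bundle $E_\rho$ over $\tn{U}\Gamma$ bounds the contraction of $d$ along stable leaves. Condition (2), the local product structure, follows because two sufficiently close points determine, through their forward and backward endpoints in $\bg$, a unique intersection point of the corresponding local unstable and local stable leaves, and this bracket point depends H\"older-continuously on the two points. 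Hence $\phi$ is metric Anosov.

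Granting these two properties, Theorem~\ref{teo: pollicott coding metric anosov} furnishes a Markov coding $(\Sigma,\sigma_A,\pi,r)$ for $\phi$. To see that it can be taken to be a strong Markov coding --- that is, with $\pi$ and $r$ H\"older --- I would inspect Pollicott's construction (which follows Bowen's for Axiom~A flows): the partition is assembled from rectangles built out of the bracket map and the local stable/unstable holonomies, the roof function $r$ is, up to a coboundary, a first-return time to a cross-section, and both are governed by the same exponential estimates that entered the metric Anosov verification. Since in our setting the bracket map is H\"older and the contraction is genuinely exponential, Bowen--Pollicott's estimates promote to H\"older bounds on $\pi$ and $r$; this is exactly what is carried out in \cite[Sections 4 and 5]{BCLS}. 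Alternatively, one may first establish a strong Markov coding for the explicit H\"older reparametrization of $\phi$ attached to $\rho$ by the symmetric-space geometry, where the cross-section picture is more transparent, and then transfer it back to $\phi$ via Proposition~\ref{prop: coding for reparametrization}, since H\"older orbit equivalence preserves the existence of a strong Markov coding.

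\textbf{Main obstacle.} The real work is in the second and third steps taken together: producing the correct H\"older metric on $\tn{U}\Gamma$ and deducing exponential contraction along stable and unstable leaves directly from the Anosov inequality, and then tracking H\"older --- as opposed to merely continuous --- regularity through the symbolic construction of Bowen and Pollicott, so as to land on a \emph{strong} rather than a plain Markov coding. These are precisely the technical cores of \cite[Sections 4 and 5]{BCLS}.
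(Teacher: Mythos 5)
Your outline is essentially the same argument the paper relies on: the paper gives no independent proof of this theorem but cites \cite[Sections 4 and 5]{BCLS} exactly for the two points you isolate --- topological transitivity together with the metric Anosov property for (a suitable model of) the flow, and the H\"older regularity needed to upgrade Pollicott's Markov coding to a strong one --- so your sketch is correct with the technical cores deferred to the same reference. Your closing alternative (establish the strong Markov coding for the reparametrized flow attached to $\rho$ and transfer it back to $\phi$ via Proposition \ref{prop: coding for reparametrization}) is in fact how BCLS organize the proof, and it is the cleanest way to sidestep the ambiguity of which metric on $\tn{U}\Gamma$ one works with.
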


\subsection{The Reparametrizing Theorem}\label{subsec: reparam thm}
Provided Theorem \ref{thm: gromov flow coding}, Sambarino's Re\-pa\-ra\-me\-trizing Theorem carry on to this more general setting, as summarized in detail in \cite{SambarinoDichotomy}. More precisely, Sambarino shows that to define a H\"older re\-pa\-ra\-me\-tri\-zation of $\phi$ it suffices to consider a \textit{H\"older cocycle} over $\Gamma$ with non-negative \textit{periods} and finite \textit{entropy}. We do not give full definitions here and refer the reader to \cite[Sections 3.1 and 3.2]{SambarinoDichotomy} for details, but let us now recall how this construction works specifically for the $\varphi$-\textit{Busemann-Iwasawa cocycle} of $\rho$ (also called the $\varphi$-\textit{refraction cocycle} of $\rho$ in \cite[Definition 3.5.1]{SambarinoDichotomy}).

Let $\rho\in\ha$ and consider the pullback $\beta^{\rho}_{\Theta}: \Gamma \times \partial \Gamma \to \liea_{\Theta}$ of the Busemann-Iwasawa cocycle of $\g$ through the representation $\rho$, that is, 
$$\beta^{\rho}_{\Theta}(\gamma,x):=\sigma_{\Theta}(\rho(\gamma), \xi_\rho(x)).$$ \noindent The group $\Gamma$ acts on $\bgs\times\rr$ by $$\gamma\cdot (x,y,s):= (\gamma\cdot  x, \gamma\cdot  y, s- \varphi\circ\beta^{\rho}_{\Theta}(\gamma,y)) .$$ \noindent The space $\bgs\times\rr$ equipped with this action will be denoted by $\widetilde{\tn{U}\Gamma}^{\rho,\varphi}$ and we refer to this action as the $(\rho,\varphi)$-\textit{refraction action} (or simply the $(\rho,\varphi)$-\textit{action}). We let $\tn{U}\Gamma^{\rho,\varphi}$ be the quotient space. The $(\rho,\varphi)$-action commutes with the $\rr$-action given by $$t:(x,y,s)\mapsto(x,y,s-t).$$ \noindent We let $\phi
^{\rho,\varphi}=(\phi
^{\rho,\varphi}_t:\tn{U}\Gamma^{\rho,\varphi}\to\tn{U}\Gamma^{\rho,\varphi})$ be the quotient flow, called the $(\rho,\varphi)$-\textit{refraction} flow. As shown by Sambarino, to prove that $\phi^{\rho,\varphi}$ is H\"older orbit equivalent to $\phi$ one needs to analyse the \textit{periods} and \textit{entropy} of the $(\rho,\varphi)$-refraction cocycle. Let us now recall these notions. 

For every $\gamma\in\gh$ one has $\beta^{\rho}_{\Theta}(\gamma,\gamma_{+})=\lambda_{\Theta}(\rho \gamma)$ (c.f. \cite[Lemma 7.5]{Quantitative}). In particular, the \textit{period} $\varphi(\beta^{\rho}_{\Theta}(\gamma,\gamma_{+}))=L_\rho^\varphi(\gamma)$ of $\gamma\in\gh$ is positive. In \cite[Section 3.2]{SambarinoDichotomy}, the \textit{entropy} of $\varphi\circ\beta_\Theta^\rho$ is defined by $$\displaystyle\limsup_{t\to\infty}\frac{1}{t}\log\#\{[\gamma]\in[\gh]:\varphi(\beta^{\rho}_{\Theta}(\gamma,\gamma_{+}))\leq t\}\in[0,\infty].$$ \noindent Note that the definition of this entropy differs from the $\varphi$-entropy of $\rho$ by the fact that here we are only considering conjugacy classes of infinite order elements in $\Gamma$, while for $h_\rho^\varphi$ we also allow conjugacy classes represented by finite order elements. However, the two numbers coincide: a theorem by Bogopolskii-Gerasimov \cite{BogoGera} (see also Brady \cite{Brady}), states that there exists a positive $K_\Gamma$ such that every finite subgroup of $\Gamma$ has at most $K_\Gamma$ elements. In particular, there are only finitely many conjugacy classes of finite order elements in $\Gamma$ and therefore 
\begin{equation}\label{eq: entropy of rho and entropy of cocycle}
  h_\rho^\varphi=\displaystyle\limsup_{t\to\infty}\frac{1}{t}\log\#\{[\gamma]\in[\gh]:\varphi(\beta^{\rho}_{\Theta}(\gamma,\gamma_{+}))\leq t\}\in[0,\infty].  
\end{equation} \noindent Moreover, the $\varphi$-entropy is positive and finite. Indeed, let $\alpha\in\Theta$ and consider the function $\pp(\cone)\to\rr_{>0}$ given by $$\rr v\mapsto \frac{\varphi(v)}{\chi_\alpha(v)},$$ \noindent where $v\neq 0$ is any vector representing the line $\rr v$. Since $\pp(\cone)$ is compact, we find a constant $c>1$ so that $$c^{-1}\leq \frac{L_{\rho}^\varphi(\gamma)}{\chi_\alpha(\lambda(\rho(\gamma)))}\leq c$$ \noindent for all $\gamma\in\gh$. Applying Equation (\ref{eq: spectral radious and fund weight}) we conclude $$c^{-1}\leq \frac{L_{\rho}^\varphi(\gamma)}{\lambda_1(\Lambda_\alpha(\rho(\gamma)))}\leq c$$ \noindent for all $\gamma\in\gh$. Thanks to Proposition \ref{prop: anosov and tits}, to show $0<h_\rho^\varphi<\infty$ it suffices to show that the spectral radius entropy of a projective Anosov representation is positive and finite. On the one hand, finiteness follows by an easy geometric argument (see \cite[Lemma 5.1.2]{SambarinoDichotomy}). Positiveness though follows from dynamical reasons: the spectral radius entropy coincides with the topological entropy of the \textit{geodesic flow} of $\rho$, introduced in \cite[Section 4]{BCLS}. Since the latter flow is metric Anosov, we know by Subsection \ref{subsec:coding and metric Anosov} that its topological entropy is positive (see \cite[Theorem 5.1.3]{SambarinoDichotomy} for details).

We have checked the hypothesis on periods and entropy needed to have Sambarino's Reparametrizing Theorem.

\begin{teo}[see {\cite[Corollary 5.3.3]{SambarinoDichotomy}}]\label{thm: reparametrizing theorem}
Let $\rho\in\ha$ and $\varphi\in\tn{int}((\cone)^*)$. Then there exists an equivariant H\"older homeomorphism $$\tilde{\nu}^{\rho,\varphi}:\widetilde{\tn{U}\Gamma}\to\widetilde{\tn{U}\Gamma}^{\rho,\varphi},$$ \noindent such that for all $(x,y)\in\bgs$ there exists an increasing homeomorphism $\tilde{h}_{(x,y)}^{\rho,\varphi}:\rr\to\rr$ satisfying \begin{equation}\label{eq: rep thm}
    \tilde{\nu}^{\rho,\varphi}(x,y,s)=(x,y,\tilde{h}_{(x,y)}^{\rho,\varphi}(s))
\end{equation} \noindent for all $s\in\rr$. In particular, the $(\rho,\varphi)$-refraction action is proper and cocompact. Moreover, if we let $\nu^{\rho,\varphi}:\tn{U}\Gamma\to\tn{U}\Gamma^{\rho,\varphi}$ be the map induced by $\tilde{\nu}^{\rho,\varphi}$, then the flow $$(\nu^{\rho,\varphi})^{-1}\circ\phi^{\rho,\varphi}\circ\nu^{\rho,\varphi}$$ \noindent is a H\"older reparametrization of $\phi$.
\end{teo}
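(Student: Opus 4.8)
The plan is to realize the $(\rho,\varphi)$-refraction action on $\widetilde{\tn{U}\Gamma}^{\rho,\varphi}$ as the Gromov--Mineyev action reparametrized by a single strictly positive H\"older function on $\tn{U}\Gamma$, following Ledrappier and Sambarino \cite{Quantitative,SambarinoDichotomy}. Write $c:\Gamma\times\bg\to\rr$ for the cocycle $c(\gamma,x):=\varphi(\beta^{\rho}_{\Theta}(\gamma,x))$. By the discussion preceding the statement, $c$ is H\"older continuous (because $\xi_\rho$ is H\"older and $\sigma_\Theta$ is analytic), it has positive periods $c(\gamma,\gamma_+)=L_\rho^\varphi(\gamma)>0$ for all $\gamma\in\gh$, and its entropy equals $h_\rho^\varphi$, which was shown to be positive and finite. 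Let $c_0$ be the H\"older cocycle defining the Gromov--Mineyev $\Gamma$-action on $\widetilde{\tn{U}\Gamma}=\bgs\times\rr$, so that the associated periods are the periods $p_\phi(a_{[\gamma]})$ of $\phi$. The whole point is thus to compare the two cocycles.

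\emph{Step 1: a H\"older reparametrizing function.} First I would produce a strictly positive H\"older function $r:\tn{U}\Gamma\to\rr_{>0}$ whose integral over each periodic orbit recovers the corresponding period of $c$, i.e. $\int_{a_{[\gamma]}}r\,\od t=c(\gamma,\gamma_+)=L_\rho^\varphi(\gamma)$ for every $[\gamma]\in[\gh]$. This is the Ledrappier correspondence between H\"older cocycles with positive periods and H\"older functions on the geodesic flow space, and it is here that the metric Anosov structure enters: Theorem~\ref{thm: gromov flow coding} provides a strong Markov coding of $\phi$, which models $\phi$ as a suspension over a subshift of finite type. Transporting $c$ to the symbolic side and using the cocycle identity yields a H\"older roof function, and Liv\v{s}ic's Theorem~\ref{teo: livsic} lets one replace it, up to Liv\v{s}ic cohomology, by a H\"older function on $\tn{U}\Gamma$ with the prescribed periods. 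Positivity of all periods together with topological transitivity of $\phi$ then allows one to choose the Liv\v{s}ic representative $r$ to be strictly positive; since $\tn{U}\Gamma$ is compact, $r$ is also bounded above, so $0<\inf r\le\sup r<\infty$.

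\emph{Step 2: assembling $\tilde\nu^{\rho,\varphi}$.} For $(x,y)\in\bgs$ let $\bar v_{x,y}\in\tn{U}\Gamma$ be the image of $(x,y,0)$, and set
\[
\tilde h^{\rho,\varphi}_{(x,y)}(s):=\int_0^{s} r\bigl(\phi_t(\bar v_{x,y})\bigr)\,\od t\;+\;g(x,y),
\]
where $g:\bgs\to\rr$ is an auxiliary H\"older function still to be chosen. Since $0<\inf r\le\sup r<\infty$, each $\tilde h^{\rho,\varphi}_{(x,y)}$ is an increasing homeomorphism of $\rr$. Requiring that $\tilde\nu^{\rho,\varphi}(x,y,s):=(x,y,\tilde h^{\rho,\varphi}_{(x,y)}(s))$ intertwine the Gromov--Mineyev action with the $(\rho,\varphi)$-refraction action reduces, after using $\int_{a_{[\gamma]}}r=c(\gamma,\gamma_+)$ and the defining property of $c_0$, to a cohomological equation of the form $g(\gamma\cdot x,\gamma\cdot y)-g(x,y)=F(\gamma,x,y)$ for an explicit H\"older cocycle $F$ whose periods vanish identically (on closed orbits they are differences of equal quantities). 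A Liv\v{s}ic-type argument, again available thanks to the Markov coding, then supplies a H\"older solution $g$. With this choice $\tilde\nu^{\rho,\varphi}$ is $\Gamma$-equivariant, H\"older (being built from the H\"older data $r$ and $g$), and a homeomorphism of $\widetilde{\tn{U}\Gamma}$ onto $\widetilde{\tn{U}\Gamma}^{\rho,\varphi}$ of the fibered form \eqref{eq: rep thm}.

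\emph{Step 3: consequences, and the main difficulty.} Since the Gromov--Mineyev action on $\bgs\times\rr$ is proper and cocompact and $\tilde\nu^{\rho,\varphi}$ is a $\Gamma$-equivariant homeomorphism, the $(\rho,\varphi)$-refraction action inherits properness and cocompactness. Passing to quotients, $\nu^{\rho,\varphi}$ sends each $\phi$-orbit to the corresponding orbit of $\phi^{\rho,\varphi}$ traversed at the speed prescribed by $r$; with the normalization fixed via $\alpha_r$ in Subsection~\ref{subsec: orbit equiv}, this means precisely that $(\nu^{\rho,\varphi})^{-1}\circ\phi^{\rho,\varphi}\circ\nu^{\rho,\varphi}=\phi^{r}$, a H\"older reparametrization of $\phi$ whose periodic orbits have periods $\int_{a_{[\gamma]}}r=L_\rho^\varphi(\gamma)$. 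The main obstacle is Step~1 together with the solvability in Step~2: converting the boundary cocycle $\varphi\circ\beta^{\rho}_{\Theta}$ into a genuinely H\"older \emph{function} on $\tn{U}\Gamma$ with the correct periods, and then correcting the fibered map by a H\"older $g$ — both of which rest on the strong Markov coding of $\phi$ and on Liv\v{s}ic regularity. By contrast, the verification of the hypotheses (positive periods, finite entropy) has already been carried out above, and the topological conclusions are soft.
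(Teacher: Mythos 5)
The first thing to note is that the paper does not actually reprove this statement: in the text it plays the role of a quoted result, the ``proof'' consisting of the verification, carried out in the paragraphs preceding the statement, that the cocycle $\varphi\circ\beta_\Theta^\rho$ has positive periods and finite positive entropy, after which Sambarino's theorem \cite[Corollary 5.3.3]{SambarinoDichotomy} is invoked. Your proposal instead tries to reconstruct that proof, and the reconstruction has a genuine gap exactly at the step you yourself single out as the main difficulty.

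In Step 1 you ask Liv\v{s}ic's Theorem \ref{teo: livsic} to do something it cannot do: it says that a H\"older function all of whose periods vanish is a coboundary, but it does not manufacture a H\"older function on $\tn{U}\Gamma$ with prescribed periods $L_\rho^\varphi(\gamma)$. Producing such a function is essentially the entire content of the theorem (Ledrappier's correspondence, or the explicit constructions in \cite{BCLS} and \cite{SambarinoDichotomy}, which build the reparametrization from the limit maps and the Anosov contraction property); ``transporting $c$ to the symbolic side'' is not a defined operation, since $c$ lives on $\Gamma\times\bg$, and the Markov coding only sees the period data, which by itself does not determine a roof function. Moreover, the positivity claim is false as stated: a H\"older function with strictly positive integrals over every periodic orbit need not be Liv\v{s}ic cohomologous to a strictly positive function; what is needed is a uniform bound of the form $\int_a r\,\od t\geq\varepsilon\,p_\phi(a)$ for all periodic orbits (equivalently, positivity of the integral against every invariant measure, the positive Liv\v{s}ic theorem of Lopes--Thieullen). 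This bound does hold here, because $\varphi\in\tn{int}((\cone)^*)$ together with the Anosov property forces $L_\rho^\varphi(\gamma)$ to grow linearly in $\vert\gamma\vert$, hence in the period of the corresponding orbit, but you neither state nor use it. Finally, in Step 2 the equation $g(\gamma\cdot x,\gamma\cdot y)-g(x,y)=F(\gamma,x,y)$ is a cohomological equation over the $\Gamma$-action on $\bgs$, not over the flow, so the paper's Liv\v{s}ic theorem does not apply to it as written; in the actual proofs the fibered conjugacy is produced directly (for instance by an equivariant averaging against a compactly supported H\"older function subordinate to the proper cocompact Gromov--Mineyev action, with monotonicity in the fiber coming from positivity of the periods and finiteness of the entropy), and properness and cocompactness of the refraction action fall out of that construction rather than of ``coding plus Liv\v{s}ic''. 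In short, both pillars of your argument are precisely the nontrivial points, and as written neither is justified.
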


Define $\mathtt{R}_\varphi:\ha\to\phr$ by $$\mathtt{R}_\varphi(\rho):=[(\nu^{\rho,\varphi})^{-1}\circ\phi^{\rho,\varphi}\circ\nu^{\rho,\varphi}].$$ 
\noindent The map $\mathtt{R}_\varphi$ is well defined because the map $\nu^{\rho,\varphi}$, while not canonical, is well defined up to Liv\v{s}ic equivalence. We will use $\mathtt{R}_\varphi$ together with the work in Sections \ref{sec: thermodynamics} and \ref{sec: asymmetric metric and finsler norm for flows} to define and study an asymmetric metric on a suitable quotient of $\ha$:  $\mathtt{R}_\varphi$ might not be injective.
To this aim we will relate, in Section \ref{sec, generalizedThurston}, the $\varphi$-length spectrum (resp. $\varphi$-entropy) of $\rho$ with the periods of periodic orbits (resp. topological entropy) of $\phi^{\rho,\varphi}$. We conclude this section discussing the equality: $$h_\rho^\varphi=h_{\tn{top}}(\phi^{\rho,\varphi}).$$ \noindent 
When $\Gamma$ is torsion free this follows directly from \cite[Theorem 3.2.2]{SambarinoDichotomy}; we include in the next subsection a proof allowing for finite order elements in $\Gamma$.

\subsection{Strongly primitive elements, periodic orbits and entropy}\label{subsec: strongly primitive}

The \textit{axis} of an element $\gamma\in\gh$ is  $A_\gamma:=(\gamma_-,\gamma_+)\times\rr\subset\bgs\times\rr$. The element $\gamma$ acts via $(\rho,\varphi)$  on $A_\gamma$ as translation by $-\varphi(\lambda_\Theta(\rho(\gamma)))=-L_\rho^\varphi(\gamma)$. The axis $A_\gamma$ descends to a periodic orbit $a_\rho^\varphi(\gamma)=a_\rho^\varphi([\gamma])$ of $\phi
^{\rho,\varphi}$:  conjugate elements in $\Gamma$ determine the same periodic orbit. We let   $\mathcal{O}^{\rho,\varphi}$ be the set of periodic orbits of $\phi^{\rho,\varphi}$. The period $p_{\phi^{\rho,\varphi}}(a_\rho^\varphi(\gamma))$ of $a_\rho^\varphi(\gamma)$ divides the number $L_\rho^\varphi(\gamma)$, and we say that $\gamma$ is \textit{strongly primitive} (w.r.t the pair $(\rho,\varphi)$) if this period is precisely $L_\rho^\varphi(\gamma)$. Denote by $\gsp\subset\gh$ the set of strongly primitive elements. A priori, this set depends on the $(\rho,\varphi)$-action. However, we will show in Lemma \ref{lem: strongly primitive for other reparametrization} that this is not the case. 

\begin{rem}\label{rem: primitive and s primitive}
When $\Gamma$ is torsion free, strongly primitive elements coincide with \textit{primitive} elements of $\Gamma$, that is, elements that cannot be written as a power of another element. In that case, there is a one to one correspondence between periodic orbits of $\phi^{\rho,\varphi}$ and conjugacy classes of primitive elements in $\Gamma$. However, if $\Gamma$ contains finite order elements this correspondence no longer holds (see e.g. Blayac \cite[Section 3.4]{BlayacThesis} for a detailed discussion).
\end{rem}

The discussion above yields a well defined map
\begin{equation}\label{eq: projection gh to periodic orbits}
    [\gh]\to\mathcal{O}^{\rho,\varphi}\times(\zz_{>0}): [\gamma]\mapsto (a_\rho^\varphi(\gamma),n_\rho^\varphi(\gamma)),
\end{equation} \noindent where $n_\rho^\varphi(\gamma)=n_\rho^\varphi([\gamma])$ is determined by the equality $$L_{\rho}^\varphi(\gamma)=n_\rho^\varphi(\gamma)p_{\phi^{\rho,\varphi}}(a_\rho^\varphi(\gamma)).$$

To prove the equality $h_\rho^\varphi=h_{\tn{top}}(\phi^{\rho,\varphi})$ we first show the following technical lemma (recall that $K_\Gamma>0$ is the constant given by Bogopolskii-Gerasimov's Theorem \cite{BogoGera}).

\begin{lema}\label{lem: fibers of projection gh to periodic}
The fibers of the map \textnormal{(\ref{eq: projection gh to periodic orbits})} have at most $K_\Gamma$ elements. 
\end{lema}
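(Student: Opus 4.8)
The plan is to analyze the fibers of the map $[\gamma]\mapsto(a_\rho^\varphi(\gamma),n_\rho^\varphi(\gamma))$ directly, by understanding which conjugacy classes in $\gh$ give rise to the same periodic orbit with the same multiplicity. First I would fix a periodic orbit $a\in\mathcal{O}^{\rho,\varphi}$ and recall that it is the image in $\tn{U}\Gamma^{\rho,\varphi}$ of some axis $A_{\gamma_0}=(\gamma_{0,-},\gamma_{0,+})\times\rr$ for some $\gamma_0\in\gh$. If $\gamma\in\gh$ is such that $a_\rho^\varphi(\gamma)=a$, then the axis $A_\gamma$ must descend to the same orbit, which means there exists $\eta\in\Gamma$ with $\eta\cdot A_{\gamma_0}=A_\gamma$; consequently $\{\gamma_-,\gamma_+\}=\{\eta\gamma_{0,-},\eta\gamma_{0,+}\}$ as subsets of $\bg$, and after replacing $\gamma$ by a conjugate (which does not change $[\gamma]$) we may assume $\gamma$ and $\gamma_0$ have the same attracting/repelling pair, hence lie in the same maximal cyclic-by-finite subgroup — namely the stabilizer $\tn{Stab}_\Gamma(\gamma_{0,+},\gamma_{0,-})$ of the ordered pair in $\bg$.

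The second step is to control this stabilizer. Since $\Gamma$ is word hyperbolic and non-elementary, the stabilizer $H:=\tn{Stab}_\Gamma(\gamma_{0,+},\gamma_{0,-})$ of an ordered pair of boundary points is virtually cyclic: it contains a maximal infinite cyclic subgroup $\langle g\rangle$ (generated by a primitive element with that axis) as a finite-index normal subgroup, and there is an exact sequence $1\to\langle g\rangle\to H\to F\to 1$ with $F$ finite. By Bogopolskii--Gerasimov's Theorem (already invoked in the excerpt, with constant $K_\Gamma$), every finite subgroup of $\Gamma$ has at most $K_\Gamma$ elements; moreover any element of finite order in $H$ projects to $F$ and generates a finite subgroup, so in fact $|F|\leq K_\Gamma$. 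The elements of $H$ that translate the $(\rho,\varphi)$-axis in the positive direction (i.e.\ with $L_\rho^\varphi>0$, equivalently the infinite-order ones with attracting point $\gamma_{0,+}$) are exactly those mapping to a fixed ``half'' under the orientation, and they form a coset structure over $\langle g\rangle$ of size at most $|F|\leq K_\Gamma$ once we fix the translation length. More precisely, two such elements $\gamma,\gamma'\in H\cap\gh$ with $a_\rho^\varphi(\gamma)=a_\rho^\varphi(\gamma')=a$ and $n_\rho^\varphi(\gamma)=n_\rho^\varphi(\gamma')=n$ satisfy $L_\rho^\varphi(\gamma)=L_\rho^\varphi(\gamma')=n\,p_{\phi^{\rho,\varphi}}(a)$, so $\gamma'\gamma^{-1}$ acts trivially (by translation by $0$) on the axis $A_{\gamma_0}$; hence $\gamma'\gamma^{-1}$ fixes the axis pointwise and lies in the kernel of the action of $H$ on $A_{\gamma_0}\cong\rr$, which is a finite group of order at most $K_\Gamma$. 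This shows that within $H$ there are at most $K_\Gamma$ such elements $\gamma$, and since every class in the fiber has a representative in such an $H$, the fiber has at most $K_\Gamma$ elements.

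The main obstacle I anticipate is bookkeeping the relationship between ``same periodic orbit'' and ``conjugate, with the same axis-pair'': one must be careful that conjugating to align axis-pairs is compatible with passing to conjugacy classes, and that the kernel of the $H$-action on the axis line $\rr$ (an action by affine transformations $t\mapsto t-L_\rho^\varphi(\cdot)$, i.e.\ translations) is precisely the set of finite-order elements of $H$ with zero period, which is a genuine subgroup to which Bogopolskii--Gerasimov applies. One should also note that $\tilde{\nu}^{\rho,\varphi}$ of Theorem~\ref{thm: reparametrizing theorem} only reparametrizes the $\rr$-direction fiberwise and is $\Gamma$-equivariant, so it does not affect which elements share an axis; this lets us do all the counting at the level of the $(\rho,\varphi)$-action on $\bgs\times\rr$ rather than on the abstract flow. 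Once these points are set up, the bound $K_\Gamma$ on fiber size is immediate from the order bound on finite subgroups.
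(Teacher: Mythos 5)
Your argument is correct and is essentially the paper's proof: after aligning axes by conjugation, any two representatives of fiber classes differ by an element of the pointwise stabilizer of the axis $A_{\gamma_0}$, which is finite and hence of order at most $K_\Gamma$ by Bogopolskii--Gerasimov (the paper phrases this by fixing a strongly primitive $\gamma_0$ and noting the fiber lies in $\{[\gamma_0^n\eta]:\eta\in H(\gamma_0)\}$, with finiteness of $H(\gamma_0)$ coming from properness of the $(\rho,\varphi)$-action). Your detour through the virtually cyclic structure of $\tn{Stab}_\Gamma(\gamma_{0,+},\gamma_{0,-})$ and the bound $|F|\leq K_\Gamma$ is not needed for the count, and finiteness of the kernel is cleaner via properness (or via the torsion-subgroup remark), but this does not affect correctness.
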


\begin{proof}

Take $(a,n)\in \mathcal{O}^{\rho,\varphi}\times(\zz_{>0})$ and fix $\gamma_0\in\gsp$ such that $a_\rho^\varphi(\gamma_0)=a$. Let $H(\gamma_0)$ be the set of elements in $\gh$ that act trivially on $A_{\gamma_0}$. Since the $(\rho,\varphi)$-action is proper, the subgroup $H(\gamma_0)$ is finite and therefore $\# H(\gamma_0)\leq K_\Gamma$. We conclude observing that the fiber over $(a,n)$ is contained in $$\left\{[\gamma_0^n\eta]: \eta\in H(\gamma_0)\right\}.$$
\end{proof}

\begin{cor}\label{cor: varhpi entropy is entropy of traslation flow}
Let $\rho\in\ha$ and $\varphi\in\tn{int}((\cone)^*)$. Then the $\varphi$-entropy of $\rho$ coincides with the topological entropy of the refraction flow $\phi^{\rho,\varphi}$.
\end{cor}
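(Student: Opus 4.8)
The plan is to compare the exponential growth rate of conjugacy classes ordered by $\varphi$-length with the exponential growth rate of periodic orbits of $\phi^{\rho,\varphi}$ ordered by period, and then to invoke Equation (\ref{eq: entropy}).

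First I would observe that $\phi^{\rho,\varphi}$ admits a strong Markov coding: by Theorem \ref{thm: reparametrizing theorem} it is H\"older orbit equivalent to the Gromov--Mineyev geodesic flow $\phi$, which admits a strong Markov coding by Theorem \ref{thm: gromov flow coding}, so Proposition \ref{prop: coding for reparametrization} applies. In particular Equation (\ref{eq: entropy}) gives
\[
h_{\tn{top}}(\phi^{\rho,\varphi}) = \lim_{t\to\infty}\frac{1}{t}\log P(t),\qquad P(t):=\#\{a\in\mathcal{O}^{\rho,\varphi}: p_{\phi^{\rho,\varphi}}(a)\leq t\},
\]
and since $P(t)$ is finite for every $t$, the set of periods of $\phi^{\rho,\varphi}$ admits a positive minimum $p_0$. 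On the other hand, using the identity $\varphi(\beta^{\rho}_{\Theta}(\gamma,\gamma_+))=L^\varphi_\rho(\gamma)$, Equation (\ref{eq: entropy of rho and entropy of cocycle}) reads
\[
h_\rho^\varphi = \limsup_{t\to\infty}\frac{1}{t}\log N(t),\qquad N(t):=\#\{[\gamma]\in[\gh]: L^\varphi_\rho(\gamma)\leq t\}.
\]
Thus it suffices to establish $P(t)\leq N(t)\leq K_\Gamma\,\lfloor t/p_0\rfloor\,P(t)$ for all large $t$, and then to take $\tfrac1t\log$ and let $t\to\infty$.

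For $P(t)\leq N(t)$: by the discussion preceding Lemma \ref{lem: fibers of projection gh to periodic}, every $a\in\mathcal{O}^{\rho,\varphi}$ equals $a_\rho^\varphi(\gamma)$ for some strongly primitive $\gamma\in\gsp$, and for such $\gamma$ one has $L^\varphi_\rho(\gamma)=p_{\phi^{\rho,\varphi}}(a)$; choosing one such conjugacy class for each orbit defines an injection from $\{a:p_{\phi^{\rho,\varphi}}(a)\leq t\}$ into $\{[\gamma]\in[\gh]:L^\varphi_\rho(\gamma)\leq t\}$, since $[\gamma]$ determines $a_\rho^\varphi(\gamma)$. For the reverse inequality, given $[\gamma]\in[\gh]$ with $L^\varphi_\rho(\gamma)\leq t$, the pair $(a_\rho^\varphi(\gamma),n_\rho^\varphi(\gamma))$ from Equation (\ref{eq: projection gh to periodic orbits}) satisfies $p_{\phi^{\rho,\varphi}}(a_\rho^\varphi(\gamma))\leq L^\varphi_\rho(\gamma)\leq t$ and $n_\rho^\varphi(\gamma)=L^\varphi_\rho(\gamma)/p_{\phi^{\rho,\varphi}}(a_\rho^\varphi(\gamma))\leq t/p_0$, so it lies in a set of at most $P(t)\lfloor t/p_0\rfloor$ elements; since the fibers of (\ref{eq: projection gh to periodic orbits}) have at most $K_\Gamma$ elements by Lemma \ref{lem: fibers of projection gh to periodic}, this forces $N(t)\leq K_\Gamma\,\lfloor t/p_0\rfloor\,P(t)$. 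Taking $\tfrac1t\log$ in the sandwich, the term $\tfrac1t\log(K_\Gamma\lfloor t/p_0\rfloor)$ vanishes as $t\to\infty$, whence $\limsup_{t\to\infty}\tfrac1t\log N(t)=\lim_{t\to\infty}\tfrac1t\log P(t)$, i.e. $h_\rho^\varphi=h_{\tn{top}}(\phi^{\rho,\varphi})$.

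The argument is essentially bookkeeping, and I do not anticipate a genuine obstacle; the point that needs a little care is the uniform positive lower bound $p_0$ on the periods of $\phi^{\rho,\varphi}$, since it is what controls the multiplicities $n_\rho^\varphi(\gamma)$. This follows cleanly from the finiteness of $P(t)$ for all $t$ (Equation (\ref{eq: entropy})), which itself rests on the strong Markov coding supplied by Proposition \ref{prop: coding for reparametrization}; the surjectivity of $\gamma\mapsto a_\rho^\varphi(\gamma)$ onto $\mathcal{O}^{\rho,\varphi}$ and the fiber bound are already recorded in the excerpt. Everything else reduces to Equations (\ref{eq: entropy}), (\ref{eq: entropy of rho and entropy of cocycle}), (\ref{eq: projection gh to periodic orbits}) and Lemma \ref{lem: fibers of projection gh to periodic}.
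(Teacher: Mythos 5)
Your proof is correct and follows essentially the same route as the paper: the easy inequality $h_{\tn{top}}(\phi^{\rho,\varphi})\leq h_\rho^\varphi$ via strongly primitive representatives, and the reverse inequality by combining the fiber bound of Lemma \ref{lem: fibers of projection gh to periodic} with the positive lower bound on periods to get the count $\#\{(a,n): np_{\phi^{\rho,\varphi}}(a)\leq t\}\leq (t/p_0)\,P(t)$, then invoking Equation \eqref{eq: entropy}. The only difference is that you spell out the points the paper leaves implicit (the strong Markov coding of $\phi^{\rho,\varphi}$ via Proposition \ref{prop: coding for reparametrization}, and why the minimal period is positive), which is fine.
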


\begin{proof}
The inequality $ h_{\tn{top}}(\phi_\rho^\varphi)\leq h_\rho^\varphi$ is easily seen. To show the reverse inequality, recall from Equation (\ref{eq: entropy of rho and entropy of cocycle}) that $$h_\rho^\varphi=\displaystyle\limsup_{t\to\infty}\frac{1}{t}\log\#\{[\gamma]\in[\gh]:L_\rho^\varphi(\gamma)\leq t\}.$$
\noindent Lemma \ref{lem: fibers of projection gh to periodic} implies then $$h_\rho^\varphi\leq\displaystyle\limsup_{t\to\infty}\frac{1}{t}\log\#\{(a,n)\in\mathcal{O}^{\rho,\varphi}\times(\zz_{>0}): np_{\phi^{\rho,\varphi}}(a)\leq t\}.$$
\noindent If we let 
$$k:=\displaystyle\min_{a\in\mathcal{O}^{\rho,\varphi}}p_{\phi^{\rho,\varphi}}(a)>0,$$
\noindent we have $$\#\{(a,n)\in\mathcal{O}^{\rho,\varphi}\times(\zz_{>0}): np_{\phi^{\rho,\varphi}}(a)\leq t\}\leq \frac{t}{k}\times\#\{a\in\mathcal{O}^{\rho,\varphi}: p_{\phi^{\rho,\varphi}}(a)\leq t\}.$$
\noindent Equation (\ref{eq: entropy}) implies the desired inequality. 
\end{proof}

\section{Thurston's metric and Finsler norm for Anosov representations}\label{sec, generalizedThurston}

Fix a functional $$\varphi \in \bigcap_{{\rho}\in\ha} \text{int} (\dcone).$$ \noindent Recall from Section \ref{sec: anosov flows and reps} that this induces a map $$\mathtt{R}_\varphi:\ha\to\phr,$$ \noindent where $\phi$ is a H\"older parametrization of the Gromov-Mineyev geodesic flow of $\Gamma$. In view of the contents of Section \ref{sec: asymmetric metric and finsler norm for flows} (and thanks to Theorem \ref{thm: gromov flow coding}), it is natural to try to ``pull back" the asymmetric metric on $\phr$ to $\ha$ under this map. This motivates the following definition.

\begin{dfn}\label{def: asymmetric distance anosov reps}
 Define $d_{\tn{Th}}^{\varphi}: \ha \times \ha  \to \rr\cup\{\infty\}$ by\footnote{When $\gamma\notin\gh$ one has $L_{\rho}^\varphi(\gamma)=0=L_{\widehat{\rho}}^\varphi(\gamma)$. In the above definition it is understood that in that case we set $$\frac{ L_{\widehat{\rho}}^\varphi(\gamma)}{ L_{\rho}^\varphi(\gamma)}=0.$$} $$d_{\tn{Th}}^\varphi(\rho,\widehat{\rho}):=\log\left(\displaystyle\sup_{[\gamma]\in[\Gamma]}\frac{h_{\widehat{\rho}}^\varphi}{h_{\rho}^\varphi}\frac{ L_{\widehat{\rho}}^\varphi(\gamma) }{L_{\rho}^\varphi(\gamma)}\right).$$
\end{dfn}

The main theorem of this section is the following.

\begin{teo}\label{thm: dth for anosov}
The function $d_{\tn{Th}}^\varphi(\cdot,\cdot)$ is real valued, non-negative, and satisfies the triangle inequality. Furthermore $$d_{\tn{Th}}^\varphi(\rho,\widehat{\rho})=0 \Leftrightarrow h_{\rho}^\varphi L_\rho^\varphi=h_{\widehat{\rho}}^\varphi L_{\widehat{\rho}}^\varphi.$$
\end{teo}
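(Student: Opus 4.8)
The plan is to reduce everything to the abstract dynamical statement of Theorem \ref{teo: asymmetric distance flows} via the map $\mathtt{R}_\varphi$ from Section \ref{sec: anosov flows and reps}. First I would record the dictionary between the $(\rho,\varphi)$-refraction flow $\phi^{\rho,\varphi}$ and the representation-theoretic data: by the discussion in Subsection \ref{subsec: strongly primitive}, the periodic orbits $a\in\mathcal{O}^{\rho,\varphi}$ correspond (finitely-to-one, with multiplicity recorded by $n_\rho^\varphi$) to conjugacy classes $[\gamma]\in[\gh]$, with $L_\rho^\varphi(\gamma)=n_\rho^\varphi(\gamma)\,p_{\phi^{\rho,\varphi}}(a_\rho^\varphi(\gamma))$; and by Corollary \ref{cor: varhpi entropy is entropy of traslation flow} one has $h_\rho^\varphi=h_{\tn{top}}(\phi^{\rho,\varphi})$. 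Since for a flow admitting a strong Markov coding the supremum $\sup_{a\in\mathcal{O}}p_{\widehat\psi}(a)/p_\psi(a)$ over periodic orbits is unaffected by passing to multiples $np_\psi(a)$ versus $np_{\widehat\psi}(a)$ (the ratio is the same), and since conjugacy classes of finite-order elements contribute $0/0$ which we have declared to be $0$, I get
$$\sup_{[\gamma]\in[\Gamma]}\frac{h_{\widehat\rho}^\varphi}{h_\rho^\varphi}\frac{L_{\widehat\rho}^\varphi(\gamma)}{L_\rho^\varphi(\gamma)}=\sup_{a\in\mathcal{O}^{\rho,\varphi}}\frac{h_{\widehat\psi}}{h_\psi}\frac{p_{\widehat\psi}(a)}{p_\psi(a)},$$
where $\psi$ and $\widehat\psi$ are H\"older reparametrizations of $\phi$ representing $\mathtt{R}_\varphi(\rho)$ and $\mathtt{R}_\varphi(\widehat\rho)$ respectively (here I use that the reparametrizing function $r_{\psi,\widehat\psi}$, hence all periods and entropies, depends only on the Liv\v sic class, so the identification with $\mathcal{O}^{\rho,\varphi}$ is legitimate). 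In other words $d_{\tn{Th}}^\varphi(\rho,\widehat\rho)=d_{\tn{Th}}(\mathtt{R}_\varphi(\rho),\mathtt{R}_\varphi(\widehat\rho))$ in the notation of Subsection \ref{subsec: asymm metric flows}.

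Granting this identification, the three asserted properties follow immediately. Real-valuedness is Equation \eqref{eq: sup of periods is finite} (finiteness of the sup of period ratios) combined with positivity and finiteness of the entropies $h_\rho^\varphi$, $h_{\widehat\rho}^\varphi$ recalled in Subsection \ref{subsec: reparam thm}; in particular the sup is a positive real number, so its logarithm is finite. Non-negativity is exactly Proposition \ref{prop: BCLS renorm int rigidity} (via Proposition \ref{prop:sup of periods and measures}), which gives $\sup_{m\in\ppsi}\rintm\ge \rintBM\ge 1$. The triangle inequality is the elementary computation already invoked in the proof of Theorem \ref{teo: asymmetric distance flows}: writing the sup over $[\gamma]$ and using $L_{\rho_3}^\varphi(\gamma)/L_{\rho_1}^\varphi(\gamma)\le \big(L_{\rho_3}^\varphi(\gamma)/L_{\rho_2}^\varphi(\gamma)\big)\big(L_{\rho_2}^\varphi(\gamma)/L_{\rho_1}^\varphi(\gamma)\big)$ together with the multiplicativity of the entropy prefactors and submultiplicativity of suprema.

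For the equivalence, the direction $(\Leftarrow)$ is clear: if $h_\rho^\varphi L_\rho^\varphi=h_{\widehat\rho}^\varphi L_{\widehat\rho}^\varphi$ as functions on $[\Gamma]$ then every ratio inside the sup equals $1$, so $d_{\tn{Th}}^\varphi(\rho,\widehat\rho)=\log 1=0$. For $(\Rightarrow)$, $d_{\tn{Th}}^\varphi(\rho,\widehat\rho)=0$ means $d_{\tn{Th}}(\mathtt{R}_\varphi(\rho),\mathtt{R}_\varphi(\widehat\rho))=0$, so by Proposition \ref{prop: BCLS renorm int rigidity} (equality case) $(h_{\widehat\psi}r_{\phi,\widehat\psi})\sim_\phi (h_\psi r_{\phi,\psi})$, which by Lemma \ref{lem: equivalence rel in hr}(1) forces $h_{\widehat\psi}p_{\widehat\psi}(a)=h_\psi p_\psi(a)$ for all periodic orbits $a$; translating back through the dictionary above (periods times multiplicities give the $L^\varphi$-values, entropies match) yields $h_\rho^\varphi L_\rho^\varphi(\gamma)=h_{\widehat\rho}^\varphi L_{\widehat\rho}^\varphi(\gamma)$ for all $\gamma\in\gh$, and trivially for $\gamma\notin\gh$ since both sides vanish. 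The main obstacle I anticipate is purely bookkeeping: making the identification $d_{\tn{Th}}^\varphi=d_{\tn{Th}}\circ\mathtt{R}_\varphi$ fully rigorous requires care with the finite-order elements of $\Gamma$ and with the finitely-to-one map \eqref{eq: projection gh to periodic orbits} between conjugacy classes and periodic orbits — one must check that the supremum over $[\Gamma]$ and the supremum over $\mathcal{O}^{\rho,\varphi}$ genuinely agree, using Lemma \ref{lem: fibers of projection gh to periodic} and the fact that taking $n$-th multiples of a periodic orbit does not change the period ratio. Once this correspondence is in place, everything else is a direct appeal to the already-proven abstract results.
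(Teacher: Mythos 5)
Your overall strategy is exactly the paper's: reduce to Theorem \ref{teo: asymmetric distance flows} by showing $d_{\tn{Th}}^\varphi(\rho,\widehat{\rho})=d_{\tn{Th}}(\mathtt{R}_\varphi(\rho),\mathtt{R}_\varphi(\widehat{\rho}))$ and computing which pairs are identified by $\mathtt{R}_\varphi$. But there is a genuine gap in the step you dismiss as ``purely bookkeeping''. When $\Gamma$ has torsion, the length of $\gamma\in\gh$ decomposes as $L_\rho^\varphi(\gamma)=n_\rho^\varphi(\gamma)\,p_{\phi^{\rho,\varphi}}(a_\rho^\varphi(\gamma))$, and a priori the multiplicity $n_\rho^\varphi(\gamma)$ --- equivalently, whether $\gamma$ is strongly primitive --- could depend on the representation. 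Your phrase ``passing to multiples $np_\psi(a)$ versus $np_{\widehat{\psi}}(a)$ (the ratio is the same)'' silently uses the same $n$ on both sides, i.e.\ it assumes $n_\rho^\varphi(\gamma)=n_{\widehat{\rho}}^\varphi(\gamma)$. If these integers could differ, then $L_{\widehat{\rho}}^\varphi(\gamma)/L_\rho^\varphi(\gamma)$ would differ from the corresponding period ratio by the factor $n_{\widehat{\rho}}^\varphi(\gamma)/n_\rho^\varphi(\gamma)$, and neither the identification of the two suprema (hence real-valuedness and non-negativity along your route), nor the translation of $h_\psi p_\psi=h_{\widehat{\psi}}p_{\widehat{\psi}}$ on periodic orbits back into $h_\rho^\varphi L_\rho^\varphi=h_{\widehat{\rho}}^\varphi L_{\widehat{\rho}}^\varphi$ on $[\Gamma]$, would go through. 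This invariance is precisely Lemma \ref{lem: strongly primitive for other reparametrization} of the paper, and it is not a formality: its proof uses the equivariant H\"older homeomorphism $\widetilde{\tn{U}\Gamma}^{\rho,\varphi}\to\widetilde{\tn{U}\Gamma}^{\widehat{\rho},\varphi}$ provided by Theorem \ref{thm: reparametrizing theorem} and a computation with the translation behaviour along the axis of $\gamma$. Moreover, the tool you cite for the bookkeeping, Lemma \ref{lem: fibers of projection gh to periodic}, is not the relevant one: it bounds the fibers of the map $[\gh]\to\mathcal{O}^{\rho,\varphi}\times(\zz_{>0})$ and serves the entropy comparison in Corollary \ref{cor: varhpi entropy is entropy of traslation flow}, but it says nothing about whether the integer component of that map is the same for $\rho$ and $\widehat{\rho}$.

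Everything else in your proposal matches the paper's proof and is correct: Corollary \ref{cor: varhpi entropy is entropy of traslation flow} for the entropies, Proposition \ref{prop:sup of periods and measures} together with Proposition \ref{prop: BCLS renorm int rigidity} and Lemma \ref{lem: equivalence rel in hr} (packaged in Theorem \ref{teo: asymmetric distance flows}) for non-negativity and the equality case, the elementary estimate for the triangle inequality, and the trivial direction $(\Leftarrow)$. So the single missing ingredient is the representation-independence of $n_\rho^\varphi(\gamma)$; supply a proof of that (as in Lemma \ref{lem: strongly primitive for other reparametrization}) and your argument closes.
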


We deduce Theorem \ref{thm: dth for anosov} from Theorem \ref{teo: asymmetric distance flows}: in Corollary \ref{cor: dist for reps coincides with distance for flows} we show that for all $\rho,\widehat{\rho}\in\ha$,  $$d_{\tn{Th}}^\varphi(\rho,\widehat{\rho})=d_{\tn{Th}}(\mathtt{R}_\varphi(\rho),\mathtt{R}_\varphi(\widehat{\rho}))$$ and in Corollary \ref{cor: kernel or rvarphi} we prove that  $\mathtt{R}_\varphi(\rho)=\mathtt{R}_\varphi(\widehat{\rho})$ if and only if $h_{\rho}^\varphi L_\rho^\varphi=h_{\widehat{\rho}}^\varphi L_{\widehat{\rho}}^\varphi$.  Both Corollaries \ref{cor: dist for reps coincides with distance for flows} and \ref{cor: kernel or rvarphi} are straightforward when $\Gamma$ is torsion free (see Remark \ref{rem: primitive and s primitive}). We explain the details in Subsection \ref{subsec: proof of dth for representations}  allowing for finite order elements in $\Gamma$. In Subsection \ref{subsec: renorm length rigidity} we discuss general conditions that guarantee renormalized length spectrum rigidity. As a consequence, we will have an asymmetric metric defined in interesting subsets of $\ha$ (under some assumptions on $\g$). More examples will be discussed in Sections \ref{sec: hitchin} and \ref{s.other}. In Subsection \ref{subsec: finsler for anosov reps} we use the map $\mathtt{R}_\varphi$ to pull back the Finsler norm of $\phr$ to $\ha$.

\subsection{Proof of Theorem \ref{thm: dth for anosov}}\label{subsec: proof of dth for representations}

Let $\rho\in\ha$. Recall from Subsection \ref{subsec: strongly primitive} that $\gamma\in\gh$ is strongly primitive (w.r.t $(\rho,\varphi)$) if the $(\rho,\varphi)$-action of $\gamma$ on the axis $A_\gamma$ is a translation by the period of the corresponding periodic orbit of $\phi^{\rho,\varphi}$. The following technical lemma implies in particular that this notion is independent of $\rho$ (recall the notation introduced in Equation (\ref{eq: projection gh to periodic orbits})). We note that this holds in the more general setting of H\"older reparametrizations of the Gromov geodesic flow (see also Remark \ref{rem: sp for geodesic flow} below).

\begin{lema}\label{lem: strongly primitive for other reparametrization}

Let $\rho$ and $\widehat{\rho}$ in $\ha$, then for every $\gamma\in\gh$ one has $$n_\rho^\varphi(\gamma)=n_{\widehat{\rho}}^\varphi(\gamma).$$ \noindent In particular, $\gamma$ is strongly primitive for the $(\rho,\varphi)$-action if and only if it is strongly primitive for the $(\widehat{\rho},\varphi)$-action.

\end{lema}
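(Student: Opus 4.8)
The plan is to identify $n_\rho^\varphi(\gamma)$ with a purely group-theoretic index attached to $\gamma$ and the pair $(\gamma_-,\gamma_+)\in\bgs$, which manifestly does not involve $\rho$ or $\varphi$. Let $G:=\{\eta\in\Gamma:\eta\cdot\gamma_+=\gamma_+\text{ and }\eta\cdot\gamma_-=\gamma_-\}$ be the stabilizer of the ordered pair $(\gamma_-,\gamma_+)$; since $\Gamma$ is non-elementary hyperbolic and $\gamma$ has infinite order, $G$ is well known to be virtually cyclic and to sit in an exact sequence $1\to\mathrm{tors}(G)\to G\to\zz\to1$ in which $\mathrm{tors}(G)$ is the finite torsion subgroup (the unique maximal finite subgroup of $G$) and $\langle\gamma\rangle$ has finite index in $G$. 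The group $G$ preserves the axis $A_\gamma=(\gamma_-,\gamma_+)\times\rr$, and on this slice the $(\rho,\varphi)$-action moves the last coordinate of $(\gamma_-,\gamma_+,s)$ by $-\varphi\big(\beta^{\rho}_{\Theta}(\eta,\gamma_+)\big)$; the cocycle identity for $\beta^{\rho}_{\Theta}$ restricted to the stabilizer of $\gamma_+$ shows that $\tau_\rho\colon G\to(\rr,+)$, $\tau_\rho(\eta):=\varphi\big(\beta^{\rho}_{\Theta}(\eta,\gamma_+)\big)$, is a group homomorphism, and by Theorem \ref{thm: reparametrizing theorem} this action of $G$ on $A_\gamma$ is the restriction of the proper $(\rho,\varphi)$-refraction action.

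The crucial step is to show that $\ker\tau_\rho=\mathrm{tors}(G)$, with no dependence on $\rho$ or $\varphi$. One inclusion is immediate: if $\eta^k=1$ then $k\,\tau_\rho(\eta)=\tau_\rho(\eta^k)=0$, hence $\tau_\rho(\eta)=0$. For the converse, any $\eta\in\ker\tau_\rho$ acts as the identity on $A_\gamma$ and thus fixes every point of $A_\gamma$; since the $(\rho,\varphi)$-refraction action is proper, point stabilizers are finite, so $\ker\tau_\rho$ is finite and therefore contained in the maximal finite subgroup $\mathrm{tors}(G)$. It follows that $\tau_\rho$ induces an injection $G/\mathrm{tors}(G)\hookrightarrow\rr$, so that $\tau_\rho(G)$ is a discrete subgroup of $\rr$, say $\tau_\rho(G)=c_\rho\zz$ with $c_\rho>0$.

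Finally I read off the multiplicity. The periodic orbit $a_\rho^\varphi(\gamma)$ is the image of $A_\gamma$ in $\tn{U}\Gamma^{\rho,\varphi}$, whose setwise $\Gamma$-stabilizer is exactly $G$, so $a_\rho^\varphi(\gamma)\cong \rr/\tau_\rho(G)$ and its period for $\phi^{\rho,\varphi}$ equals the positive generator $c_\rho$ of $\tau_\rho(G)$. On the other hand $L_\rho^\varphi(\gamma)=\varphi\big(\beta^{\rho}_{\Theta}(\gamma,\gamma_+)\big)=\tau_\rho(\gamma)$, using $\beta^{\rho}_{\Theta}(\gamma,\gamma_+)=\lambda_{\Theta}(\rho(\gamma))$ (cf. \cite[Lemma 7.5]{Quantitative}). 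Hence
$$n_\rho^\varphi(\gamma)=\frac{L_\rho^\varphi(\gamma)}{p_{\phi^{\rho,\varphi}}(a_\rho^\varphi(\gamma))}=\frac{\tau_\rho(\gamma)}{c_\rho}=[\,\tau_\rho(G):\tau_\rho(\langle\gamma\rangle)\,]=[\,G:\langle\gamma\rangle\cdot\mathrm{tors}(G)\,],$$
and the right-hand side is a function of $\gamma$ and $(\gamma_-,\gamma_+)$ alone. Applying this to both $\rho$ and $\widehat{\rho}$ yields $n_\rho^\varphi(\gamma)=n_{\widehat{\rho}}^\varphi(\gamma)$, and the last assertion of the lemma is the special case $n_\rho^\varphi(\gamma)=1\Leftrightarrow n_{\widehat\rho}^\varphi(\gamma)=1$. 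I expect the only genuinely delicate point to be the identity $\ker\tau_\rho=\mathrm{tors}(G)$, which really uses both the properness supplied by Sambarino's Reparametrizing Theorem and the structure of virtually cyclic subgroups of hyperbolic groups to rule out torsion obstructions; checking that $\tau_\rho$ is a homomorphism and tracking the sign conventions in the flow direction are routine.
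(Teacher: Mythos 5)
Your argument is correct, but it takes a genuinely different route from the paper. The paper proves the lemma by contradiction: it picks strongly primitive representatives $\gamma_0,\widehat{\gamma}_0$ of the two periodic orbits with the same fixed points and feeds them into the equivariant, fiberwise-increasing homeomorphism between $\widetilde{\tn{U}\Gamma}^{\rho,\varphi}$ and $\widetilde{\tn{U}\Gamma}^{\widehat{\rho},\varphi}$ supplied by Theorem \ref{thm: reparametrizing theorem}, deducing $n\,L_{\rho}^\varphi(\gamma_0)=\widehat{n}\,L_{\rho}^\varphi(\widehat{\gamma}_0)$ and contradicting minimality of $L_\rho^\varphi(\gamma_0)$. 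You instead compute the multiplicity intrinsically: writing $G$ for the stabilizer of the ordered pair $(\gamma_-,\gamma_+)$, the cocycle identity makes $\tau_\rho=\varphi\circ\beta_\Theta^\rho(\cdot,\gamma_+)$ a homomorphism on $G$, properness of the refraction action identifies $\ker\tau_\rho$ with the torsion of $G$, and since the quotient map identifies points of $A_\gamma$ exactly along $G$, the period of the orbit is the positive generator of $\tau_\rho(G)$, giving the clean formula $n_\rho^\varphi(\gamma)=[\,G:\langle\gamma\rangle\cdot\mathrm{tors}(G)\,]$, manifestly independent of $\rho$ and $\varphi$. What your route buys: an explicit group-theoretic description of $n_\rho^\varphi(\gamma)$ (which also makes Remark \ref{rem: sp for geodesic flow} transparent), and it uses only the properness assertion of Theorem \ref{thm: reparametrizing theorem}, not the orbit equivalence itself; what the paper's route buys is that it stays entirely at the level of flows and Liv\v{s}ic-type data, avoiding the structure theory of elementary subgroups of hyperbolic groups. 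Two small points to tighten: the discreteness of $\tau_\rho(G)$ follows because $G/\mathrm{tors}(G)\cong\zz$ (equivalently, directly from properness, since only finitely many $\eta\in G$ can move a point of $A_\gamma$ a bounded amount), not from mere injectivity into $\rr$; and the finite-by-$\zz$ structure of the ordered-pair stabilizer (no dihedral part, since its elements fix both endpoints) deserves a citation, though it is indeed standard.
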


\begin{proof}

To ease notations we let $n:=n_\rho^\varphi(\gamma)$ and $\widehat{n}:=n_{\widehat{\rho}}^\varphi(\gamma)$. Suppose by contradiction that $n\neq \widehat{n}$, say $n<\widehat{n}$.

Let $a=a_\rho^\varphi(\gamma)$ (resp. $\widehat{a}=a_{\widehat{\rho}}^\varphi(\gamma)$) be the periodic orbit of $\phi^{\rho,\varphi}$ (resp. $\phi^{\widehat{\rho},\varphi}$) associated to $[\gamma]$. Fix a strongly primitive $\gamma_0$ (resp. $\widehat{\gamma}_0$) representing $a$ (resp. $\widehat{a}$) for the $(\rho,\varphi)$-action (resp. $(\widehat{\rho},\varphi)$-action). By definition of $n$ and $\widehat{n}$ we have \begin{equation}\label{eq: in lemma n equals widehat n}
    L_\rho^\varphi(\gamma)=nL_\rho^\varphi(\gamma_0) \tn{ and } L_{\widehat{\rho}}^\varphi(\gamma)=\widehat{n}L_{\widehat{\rho}}^\varphi(\widehat{\gamma}_0).
\end{equation} \noindent We may assume furthermore that $(\gamma_0)_\pm=(\widehat{\gamma}_0)_\pm$.

On the other hand, by Theorem \ref{thm: reparametrizing theorem} there exists an equivariant H\"older homeomorphism $$\nu:\widetilde{\tn{U}\Gamma}^{\rho,\varphi}\to\widetilde{\tn{U}\Gamma}^{\widehat{\rho},\varphi},$$ \noindent such that for all $(x,y)\in\bgs$ there exists an increasing homeomorphism $h_{(x,y)}:\rr\to\rr$ satisfying $$\nu(x,y,s)=(x,y,h_{(x,y)}(s)).$$ \noindent Hence, for all $\eta\in\Gamma$ and all $(x,y,s)\in\widetilde{\tn{U}\Gamma}^{\rho,\varphi}$ one has
$$h_{(\eta\cdot x,\eta\cdot y)}(s-\varphi\circ\beta_\Theta^\rho(\eta,y))=h_{( x, y)}(s)-\varphi\circ\beta_\Theta^{\widehat{\rho}}(\eta,y).$$
\noindent In particular, Equation (\ref{eq: in lemma n equals widehat n}) gives
$$h_{((\gamma_0)_-,(\gamma_0)_+)}(s-nL_{\rho}^\varphi(\gamma_0))=h_{((\gamma_0)_-,(\gamma_0)_+)}(s-L_{\rho}^\varphi(\gamma))=h_{((\gamma_0)_-,(\gamma_0)_+)}(s)-L_{\widehat{\rho}}^\varphi(\gamma),$$ \noindent and therefore $$h_{((\gamma_0)_-,(\gamma_0)_+)}(s-nL_{\rho}^\varphi(\gamma_0))=h_{((\gamma_0)_-,(\gamma_0)_+)}(s)-\widehat{n}L_{\widehat{\rho}}^\varphi(\widehat{\gamma}_0).$$ \noindent Hence $$h_{((\gamma_0)_-,(\gamma_0)_+)}(s-nL_{\rho}^\varphi(\gamma_0))=h_{((\gamma_0)_-,(\gamma_0)_+)}(s)-L_{\widehat{\rho}}^\varphi(\widehat{\gamma}_0^{\widehat{n}})=h_{((\gamma_0)_-,(\gamma_0)_+)}(s-L_{\rho}^\varphi(\widehat{\gamma}_0^{\widehat{n}})).$$ \noindent We then conclude $$h_{((\gamma_0)_-,(\gamma_0)_+)}(s-nL_{\rho}^\varphi(\gamma_0))=h_{((\gamma_0)_-,(\gamma_0)_+)}(s-\widehat{n}L_{\rho}^\varphi(\widehat{\gamma}_0)).$$ \noindent This implies $$nL_{\rho}^\varphi(\gamma_0)=\widehat{n}L_{\rho}^\varphi(\widehat{\gamma}_0)>nL_{\rho}^\varphi(\widehat{\gamma}_0).$$ \noindent This is a contradiction because $\gamma_0$ was assumed to be strongly primitive for the $(\rho,\varphi)$-action.
\end{proof}

\begin{cor}\label{cor: dist for reps coincides with distance for flows}
For every $\rho$ and $\widehat{\rho}$ in $\ha$ one has $$d_{\tn{Th}}^\varphi(\rho,\widehat{\rho})=d_{\tn{Th}}(\mathtt{R}_\varphi(\rho),\mathtt{R}_\varphi(\widehat{\rho})).$$
\end{cor}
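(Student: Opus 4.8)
The plan is to check that the two displayed suprema compute the same quantity. Write $\psi_\rho := (\nu^{\rho,\varphi})^{-1}\circ\phi^{\rho,\varphi}\circ\nu^{\rho,\varphi}$ for the H\"older reparametrization of $\phi$ representing $\mathtt{R}_\varphi(\rho)$, and similarly $\psi_{\widehat{\rho}}$ for $\widehat{\rho}$. By its very definition $\nu^{\rho,\varphi}$ is a topological conjugacy between $\psi_\rho$ and the refraction flow $\phi^{\rho,\varphi}$, so it carries periodic orbits to periodic orbits, preserves their periods, and preserves topological entropy; in particular $h_{\psi_\rho}=h_{\tn{top}}(\phi^{\rho,\varphi})=h_\rho^\varphi$ by Corollary \ref{cor: varhpi entropy is entropy of traslation flow}, and likewise $h_{\psi_{\widehat{\rho}}}=h_{\widehat{\rho}}^\varphi$.

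The first step is to identify periodic orbits across the three flows $\phi$, $\psi_\rho$ and $\phi^{\rho,\varphi}$. For $\gamma\in\gh$ let $b_\Gamma(\gamma)\in\mathcal O$ be the periodic orbit of $\phi$ obtained as the image of the axis $A_\gamma\subset\widetilde{\tn{U}\Gamma}$ under the quotient map; this does not depend on $\rho$, and $\mathcal O$ (the common set of periodic orbits of all reparametrizations of $\phi$) is the set appearing in the definition of $d_{\tn{Th}}$. By Equation \eqref{eq: rep thm} the conjugating homeomorphism $\tilde\nu^{\rho,\varphi}$ preserves the $\bgs$-coordinate, hence it sends $A_\gamma$ viewed in $\widetilde{\tn{U}\Gamma}$ to $A_\gamma$ viewed in $\widetilde{\tn{U}\Gamma}^{\rho,\varphi}$; passing to quotients gives $\nu^{\rho,\varphi}(b_\Gamma(\gamma))=a_\rho^\varphi(\gamma)$. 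Since the first coordinate of the surjection \eqref{eq: projection gh to periodic orbits} is onto $\mathcal O^{\rho,\varphi}$ and $\nu^{\rho,\varphi}$ identifies $\mathcal O^{\rho,\varphi}$ with the periodic orbit set $\mathcal O$ of $\psi_\rho$, the map $[\gamma]\mapsto b_\Gamma(\gamma)$ is a surjection $[\gh]\to\mathcal O$; and because $\nu^{\rho,\varphi}$ is a conjugacy, $p_{\psi_\rho}(b_\Gamma(\gamma))=p_{\phi^{\rho,\varphi}}(a_\rho^\varphi(\gamma))$, and likewise for $\widehat{\rho}$.

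The second step is to cancel multiplicities. By the defining relation of $n_\rho^\varphi(\gamma)$ one has $L_\rho^\varphi(\gamma)=n_\rho^\varphi(\gamma)\,p_{\phi^{\rho,\varphi}}(a_\rho^\varphi(\gamma))=n_\rho^\varphi(\gamma)\,p_{\psi_\rho}(b_\Gamma(\gamma))$, and similarly for $\widehat{\rho}$; Lemma \ref{lem: strongly primitive for other reparametrization} gives $n_\rho^\varphi(\gamma)=n_{\widehat{\rho}}^\varphi(\gamma)$, so for every $[\gamma]\in[\gh]$
$$\frac{h_{\widehat{\rho}}^\varphi}{h_\rho^\varphi}\,\frac{L_{\widehat{\rho}}^\varphi(\gamma)}{L_\rho^\varphi(\gamma)}=\frac{h_{\psi_{\widehat{\rho}}}}{h_{\psi_\rho}}\,\frac{p_{\psi_{\widehat{\rho}}}(b_\Gamma(\gamma))}{p_{\psi_\rho}(b_\Gamma(\gamma))}.$$
Now the conjugacy classes of finite order elements contribute the value $0$ to the supremum defining $d_{\tn{Th}}^\varphi(\rho,\widehat{\rho})$, by the convention of Definition \ref{def: asymmetric distance anosov reps}, whereas every class in $[\gh]$ contributes a strictly positive value, so the supremum is unchanged by restricting to $[\gh]$. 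Combining this with the surjectivity of $[\gamma]\mapsto b_\Gamma(\gamma)$ and the displayed identity yields
$$\sup_{[\gamma]\in[\Gamma]}\frac{h_{\widehat{\rho}}^\varphi}{h_\rho^\varphi}\,\frac{L_{\widehat{\rho}}^\varphi(\gamma)}{L_\rho^\varphi(\gamma)}=\sup_{a\in\mathcal O}\frac{h_{\psi_{\widehat{\rho}}}}{h_{\psi_\rho}}\,\frac{p_{\psi_{\widehat{\rho}}}(a)}{p_{\psi_\rho}(a)},$$
and taking logarithms gives $d_{\tn{Th}}^\varphi(\rho,\widehat{\rho})=d_{\tn{Th}}(\mathtt{R}_\varphi(\rho),\mathtt{R}_\varphi(\widehat{\rho}))$.

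I expect the only delicate point to be the bookkeeping in the first step: one must be sure that the axis $A_\gamma$ is tracked consistently as one passes from $\phi$ to $\psi_\rho$ (a reparametrization, hence the same orbits) and then to $\phi^{\rho,\varphi}$ (a conjugate flow), and this is exactly where the special form \eqref{eq: rep thm} of $\tilde\nu^{\rho,\varphi}$ — preservation of the endpoints in $\bgs$ — is essential; once $b_\Gamma(\gamma)\in\mathcal O$ is defined independently of $\rho$, the rest is formal, with Lemma \ref{lem: strongly primitive for other reparametrization} handling the possible torsion of $\Gamma$.
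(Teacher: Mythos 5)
Your argument is correct and follows essentially the same route as the paper: replace $h_\rho^\varphi$ by the topological entropy of the refraction flow via Corollary \ref{cor: varhpi entropy is entropy of traslation flow}, decompose $L_\rho^\varphi(\gamma)=n_\rho^\varphi(\gamma)\,p_{\phi^{\rho,\varphi}}(a_\rho^\varphi(\gamma))$ as in \eqref{eq: projection gh to periodic orbits}, cancel the multiplicities with Lemma \ref{lem: strongly primitive for other reparametrization}, and identify the supremum over conjugacy classes with the supremum over periodic orbits of the reparametrized flows. You merely spell out more explicitly the orbit bookkeeping (via the form \eqref{eq: rep thm} of $\tilde\nu^{\rho,\varphi}$) and the harmless contribution of torsion classes, points the paper leaves implicit (cf. Remark \ref{rem: sp for geodesic flow}).
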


\begin{proof}
By Corollary \ref{cor: varhpi entropy is entropy of traslation flow} we have $$d_{\tn{Th}}^\varphi(\rho,\widehat{\rho})=\log\left(\displaystyle\sup_{[\gamma]\in[\Gamma]}\frac{h_{\tn{top}}(\phi^{\widehat{\rho},\varphi})}{h_{\tn{top}}(\phi^{\rho,\varphi})}\frac{ L_{\widehat{\rho}}^\varphi(\gamma) }{L_{\rho}^\varphi(\gamma)}\right).$$ \noindent Equation (\ref{eq: projection gh to periodic orbits}) gives then $$d_{\tn{Th}}^\varphi(\rho,\widehat{\rho})=\log\left(\displaystyle\sup_{[\gamma]\in[\Gamma]}\frac{h_{\tn{top}}(\phi^{\widehat{\rho},\varphi})}{h_{\tn{top}}(\phi^{\rho,\varphi})}\frac{ n_{\widehat{\rho}}^\varphi(\gamma)}{n_{\rho}^\varphi(\gamma)}\frac{p_{\phi^{\widehat{\rho},\varphi}}(a_{\widehat{\rho}}^\varphi(\gamma))}{p_{\phi^{\rho,\varphi}}(a_{\rho}^\varphi(\gamma))}\right).$$ \noindent By Lemma \ref{lem: strongly primitive for other reparametrization} we have $$d_{\tn{Th}}^\varphi(\rho,\widehat{\rho})=\log\left(\displaystyle\sup_{[\gamma]\in[\Gamma]}\frac{h_{\tn{top}}(\phi^{\widehat{\rho},\varphi})}{h_{\tn{top}}(\phi^{\rho,\varphi})}\frac{ p_{\phi^{\widehat{\rho},\varphi}}(a_{\widehat{\rho}}^\varphi(\gamma))}{p_{\phi^{\rho,\varphi}}(a_{\rho}^\varphi(\gamma))}\right).$$ \noindent This finishes the proof.

\end{proof}

\begin{rem}\label{rem: AvoidDomination}
There are geometric settings in which the renormalization by entropy in the definition of the asymmetric metric is essential (see also Section \ref{subsec: inter and renormalized intersection}). For instance, Tholozan \cite[Theorem B]{ThoEntropy} shows that there exist pairs $\rho$ and $j$ in $\Hit_3(S)$ for which there is a $c>1$ so that \begin{equation}\label{eq: domination}
  L^\mathrm{H}_{\rho}(\gamma) \geq c L^\mathrm{H}_{j}(\gamma)  
\end{equation} \noindent for all $\gamma\in \pi_1(S)$ (recall the notation introduced in Example \ref{list, Lengths}). Hence $$\log\left(\displaystyle\sup_{[\gamma]\in[\pi_1(S)]}\frac{L^\mathrm{H}_j(\gamma)}{L_\rho^\mathrm{H}(\gamma)}\right)\leq\log\left(\frac{1}{c}\right)<0.$$ 

On the other hand some length functions on some spaces of Anosov representations have constant entropies (c.f. Subsection \ref{subsec: examples anosov reps}). In these situations, renormalizing by entropy is not needed.
\end{rem}

We now compute the set of points which are identified under the map $\mathtt{R}_\varphi$, finishing the proof of Theorem \ref{thm: dth for anosov}.

\begin{cor}\label{cor: kernel or rvarphi}
Let $\rho$ and $\widehat{\rho}$ be two points in $\ha$. Then $$\mathtt{R}_\varphi(\rho)=\mathtt{R}_\varphi(\widehat{\rho})\Leftrightarrow h_{\rho}^\varphi L_\rho^\varphi=h_{\widehat{\rho}}^\varphi L_{\widehat{\rho}}^\varphi.$$
\end{cor}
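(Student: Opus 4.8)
The equivalence reduces, via Corollary \ref{cor: dist for reps coincides with distance for flows} and the definition of $\mathtt{R}_\varphi$, to a statement purely about the two reparametrized flows $\phi^{\rho,\varphi}$ and $\phi^{\widehat{\rho},\varphi}$. Recall $\mathtt{R}_\varphi(\rho)$ is the projective class $[(\nu^{\rho,\varphi})^{-1}\circ\phi^{\rho,\varphi}\circ\nu^{\rho,\varphi}]\in\phr$. So $\mathtt{R}_\varphi(\rho)=\mathtt{R}_\varphi(\widehat{\rho})$ means precisely that the two H\"older reparametrizations of $\phi$ obtained from $\rho$ and $\widehat{\rho}$ are \emph{projectively equivalent} in the sense of Lemma \ref{lem: equivalence rel in hr}, i.e. the corresponding reparametrizing functions become Liv\v{s}ic cohomologous (w.r.t. $\phi$) after scaling by the respective topological entropies. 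The strategy is to translate condition (1) of Lemma \ref{lem: equivalence rel in hr} — equality of entropy-normalized periods over all periodic orbits — into the representation-theoretic condition $h_\rho^\varphi L_\rho^\varphi = h_{\widehat\rho}^\varphi L_{\widehat\rho}^\varphi$.

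\textbf{Key steps.} First I would recall the dictionary between periodic orbits of $\phi^{\rho,\varphi}$ and conjugacy classes in $\gh$. By Subsection \ref{subsec: strongly primitive}, the map $[\gamma]\mapsto(a_\rho^\varphi(\gamma),n_\rho^\varphi(\gamma))$ sends a conjugacy class to a periodic orbit together with a positive integer, and $L_\rho^\varphi(\gamma)=n_\rho^\varphi(\gamma)\,p_{\phi^{\rho,\varphi}}(a_\rho^\varphi(\gamma))$. Crucially, Lemma \ref{lem: strongly primitive for other reparametrization} shows $n_\rho^\varphi(\gamma)=n_{\widehat\rho}^\varphi(\gamma)$, so the same integer $n(\gamma)$ works for both representations. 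Combining this with Corollary \ref{cor: varhpi entropy is entropy of traslation flow} ($h_\rho^\varphi=h_{\tn{top}}(\phi^{\rho,\varphi})$), the condition ``for every periodic orbit $a$, $h_{\tn{top}}(\phi^{\widehat\rho,\varphi})\,p_{\phi^{\widehat\rho,\varphi}}(a)=h_{\tn{top}}(\phi^{\rho,\varphi})\,p_{\phi^{\rho,\varphi}}(a)$'' — which is condition (1) of Lemma \ref{lem: equivalence rel in hr} applied to these two reparametrizations of $\phi$ — becomes, after multiplying through by the common $n(\gamma)$ and ranging over all $[\gamma]\in[\gh]$, exactly $h_\rho^\varphi L_\rho^\varphi(\gamma)=h_{\widehat\rho}^\varphi L_{\widehat\rho}^\varphi(\gamma)$ for all $\gamma\in\gh$. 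For $\gamma$ of finite order both $\varphi$-lengths vanish, so the condition over $\gh$ is equivalent to the condition over all of $\Gamma$. Then Lemma \ref{lem: equivalence rel in hr} (equivalence of (1) and (2), via Liv\v{s}ic's Theorem \ref{teo: livsic}) gives that $\phi^{\rho,\varphi}$ and $\phi^{\widehat\rho,\varphi}$ are projectively equivalent, i.e. $\mathtt{R}_\varphi(\rho)=\mathtt{R}_\varphi(\widehat\rho)$.

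\textbf{Main obstacle.} The routine-looking but genuinely necessary point is making the ``period $\leftrightarrow$ length'' bookkeeping precise in the presence of torsion in $\Gamma$: one must know that every periodic orbit of $\phi^{\rho,\varphi}$ arises from some (strongly primitive) $\gamma\in\gsp$, that the integers $n_\rho^\varphi$ and $n_{\widehat\rho}^\varphi$ agree, and that no periodic orbit is ``missed'' when passing between $[\Gamma]$ and the set $\mathcal O^{\rho,\varphi}\times\zz_{>0}$. All of this has been prepared in Subsection \ref{subsec: strongly primitive} (surjectivity of \eqref{eq: projection gh to periodic orbits} onto $\mathcal O^{\rho,\varphi}\times\zz_{>0}$ by taking powers of a strongly primitive representative) and in Lemma \ref{lem: strongly primitive for other reparametrization}, so the argument is a careful assembly rather than a new difficulty. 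I would then write: apply Lemma \ref{lem: equivalence rel in hr} to the two reparametrizations of $\phi$; condition (1) there, rephrased via Corollary \ref{cor: varhpi entropy is entropy of traslation flow} and Lemma \ref{lem: strongly primitive for other reparametrization} as above, is $h_\rho^\varphi L_\rho^\varphi=h_{\widehat\rho}^\varphi L_{\widehat\rho}^\varphi$; hence the two sides of the desired equivalence coincide. This completes the proof, and with Corollary \ref{cor: dist for reps coincides with distance for flows} also completes the proof of Theorem \ref{thm: dth for anosov}.
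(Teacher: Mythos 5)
Your proposal is correct and follows essentially the same route as the paper: identify $\mathtt{R}_\varphi(\rho)=\mathtt{R}_\varphi(\widehat\rho)$ with projective equivalence of the two reparametrizations, use Corollary \ref{cor: varhpi entropy is entropy of traslation flow} to replace topological entropies by $\varphi$-entropies, and use Lemma \ref{lem: strongly primitive for other reparametrization} together with $L_\rho^\varphi(\gamma)=n_\rho^\varphi(\gamma)\,p_{\phi^{\rho,\varphi}}(a_\rho^\varphi(\gamma))$ to pass from periods of periodic orbits to $\varphi$-lengths over $\gh$. The torsion bookkeeping you flag is exactly the point the paper handles via Subsection \ref{subsec: strongly primitive} and Remark \ref{rem: sp for geodesic flow}, so nothing is missing.
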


\begin{proof}
 By definition of $\phr$ and Corollary \ref{cor: varhpi entropy is entropy of traslation flow} we have $$\mathtt{R}_\varphi(\rho)=\mathtt{R}_\varphi(\widehat{\rho})\Leftrightarrow  h_\rho^\varphi p_{\phi^{\rho,\varphi}}(a_\rho^\varphi(\gamma))=h_{\widehat{\rho}}^\varphi p_{\phi^{\widehat{\rho},\varphi}}(a_{\widehat{\rho}}^\varphi(\gamma))$$ \noindent for all $\gamma\in\gh$. Thanks to Lemma \ref{lem: strongly primitive for other reparametrization} this is equivalent to $$h_\rho^\varphi  n_{\rho}^\varphi(\gamma) p_{\phi^{\rho,\varphi}}(a_\rho^\varphi(\gamma))=h_{\widehat{\rho}}^\varphi n_{\widehat{\rho}}^\varphi(\gamma) p_{\phi^{\widehat{\rho},\varphi}}(a_{\widehat{\rho}}^\varphi(\gamma))$$ \noindent for all $\gamma\in\gh$. Since for all $\gamma\in\gh$ we have $$ n_{\rho}^\varphi(\gamma) p_{\phi^{\rho,\varphi}}(a_\rho^\varphi(\gamma))=L_\rho^\varphi(\gamma) \tn{ and } n_{\widehat{\rho}}^\varphi(\gamma) p_{\phi^{\widehat{\rho},\varphi}}(a_{\widehat{\rho}}^\varphi(\gamma))=L_{\widehat{\rho}}^\varphi(\gamma),$$ \noindent the proof is finished.

\end{proof}

To finish this subsection we record the following technical remark for future use.

\begin{rem}\label{rem: sp for geodesic flow}
One may define the notion of strongly primitive elements for the action $\Gamma \curvearrowright \widetilde{\tn{U}\Gamma}$, in a way analogue to the definition for the action $\Gamma \curvearrowright \widetilde{\tn{U}\Gamma}^{\rho,\varphi}$. As in Lemma \ref{lem: strongly primitive for other reparametrization}, one shows that $\gamma$ is strongly primitive for $\Gamma \curvearrowright \widetilde{\tn{U}\Gamma}$ if and only if it is strongly primitive for the $(\rho,\varphi)$-action, for some (any) $\rho\in\ha$. 

On the other hand, if we let $\mathcal{O}$ be the set of periodic orbits of $\phi$, we may take for each $a\in\mathcal{O}$ a strongly primitive representative $\gamma_a\in\gsp$. We see that $$a\mapsto [A_{\gamma_a}]$$ \noindent defines a one to one correspondence between $\mathcal{O}$ and $\mathcal{O}^{\rho,\varphi}$ for all $\rho\in\ha$, where $[A_{\gamma_a}]$ is the image of the axis $A_{\gamma_a}$ under the quotient map $\widetilde{\tn{U}\Gamma}^{\rho,\varphi}\to\tn{U}\Gamma^{\rho,\varphi} $. A set $\{\gamma_a\}_{a\in\mathcal{O}}$ of strongly primitive elements representing each periodic orbit will be fixed from now on.
\end{rem}

\subsection{Renormalized length spectrum rigidity}\label{subsec: renorm length rigidity}

Recall that $\g$ is a connected semisimple real algebraic group of non-compact type. In this subsection we discuss necessary conditions that two $\Theta$-Anosov representations with the same renormalized length spectra must satisfy.

For a Lie group $\g_1$ we denote by $(\g_1)_0$ the connected component, in the Hausdorff topology, containing the identity. If $\sigma:\g_1\to\g_2$ is a Lie group isomorphism, we denote, with a slight abuse of notation, by $\sigma:\mathfrak a_{\g_1}^+\to \mathfrak a_{\g_2}^+$ the induced linear isomorphism between Weyl chambers. Furthermore, if $\g_1<\g$ is a Lie group inclusion, we denote by $\pi_{\g_1}:\mathfrak a_{\g_1}^+\to \mathfrak a_{\g}^+$ the induced piecewise linear map.

We will need the following fairly general classical rigidity result, which is an application of Benoist \cite[Theorem 1]{Benoist_AsymtoticLinearGroups}. See for instance \cite[Corollary 11.6]{BCLS}, Burger \cite{BurgerManhattan} and Dal'bo-Kim \cite{Criterion_Zariki}.

\begin{teo}\label{thm:rigidity}
Let $\rho$ and $\widehat{\rho}$ be two $\Theta$-Anosov representations into $\g$. Denote by $\g_{\rho}$ (resp. $\g_{\widehat{\rho}}$) the Zariski closure of $\rho(\Gamma)$ (resp. $\widehat{\rho}(\Gamma)$). Assume that $\g_{\rho}$ and $\g_{\widehat{\rho}}$ are simple, real algebraic and center-free. Assume furthermore $\rho(\Gamma)\subset (\g_\rho)_0$ and $\widehat{\rho}(\Gamma)\subset (\g_{\widehat{\rho}})_0$. Then if the equality $h^\varphi_{\rho}L^\varphi_{\rho}=h^\varphi_{{\widehat{\rho}}}L^\varphi_{\widehat{\rho}}$ holds, there exists an isomorphism $\sigma:(\g_{\rho})_0\to(\g_{\widehat{\rho}})_0$ such that $\sigma\circ \rho=\widehat{\rho}$. It furthermore holds $\varphi\circ \pi_{\g_{\widehat{\rho}}}\circ \sigma=\varphi\circ \pi_{\g_\rho}$.
\end{teo}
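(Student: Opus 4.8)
The plan is to derive the isomorphism $\sigma$ from Benoist's theorem on limit cones, and then verify the compatibility with $\varphi$ by a direct computation on Jordan projections.

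\textbf{Step 1: Reduce to the setting of Benoist's theorem.} Write $\G_1:=(\g_\rho)_0$ and $\G_2:=(\g_{\widehat\rho})_0$, both simple, real algebraic, center-free, connected. The representations $\rho,\widehat\rho$ take values in $\G_1,\G_2$ respectively and are Zariski dense there by construction. The hypothesis $h^\varphi_\rho L^\varphi_\rho=h^\varphi_{\widehat\rho}L^\varphi_{\widehat\rho}$ says that, for every $\gamma\in\G$,
\begin{equation}\label{eq:rigproof-lengtheq}
h^\varphi_\rho\,\varphi\big(\lambda_\Theta(\rho(\gamma))\big)=h^\varphi_{\widehat\rho}\,\varphi\big(\lambda_\Theta(\widehat\rho(\gamma))\big).
\end{equation}
The first thing I would do is promote this equality of a single functional evaluated on Jordan projections to an equality of \emph{full} Jordan projections (up to the scalar $t:=h^\varphi_{\widehat\rho}/h^\varphi_\rho$), i.e. to show $\lambda_{\G_1}(\rho(\gamma))$ and $\lambda_{\G_2}(\widehat\rho(\gamma))$ are matched by a linear isomorphism of Weyl chambers. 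This is exactly where Benoist's analysis of the limit cone enters: by \cite[Theorem 1]{Benoist_AsymtoticLinearGroups} the limit cones $\mathscr L_{\rho}\subset\mathfrak a_{\G_1}^+$ and $\mathscr L_{\widehat\rho}\subset\mathfrak a_{\G_2}^+$ are convex with non-empty interior, and the set of Jordan projections $\{\lambda_{\G_i}(\cdot(\gamma))\}$ is ``spread out'' enough (Zariski density of the product-type subgroup generated by pairs of elements) that an equality like \eqref{eq:rigproof-lengtheq} forces $\rho$ and $\widehat\rho$ to have proportional marked Jordan spectra in a strong sense. Concretely I would invoke the standard consequence (as used in \cite[Corollary 11.6]{BCLS}, Burger \cite{BurgerManhattan}, Dal'bo--Kim \cite{Criterion_Zariki}) that two Zariski dense subgroups of simple center-free groups with ``the same'' renormalized $\varphi$-length data are conjugate by an isomorphism; this yields $\sigma:\G_1\to\G_2$ with $\sigma\circ\rho=\widehat\rho$. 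The point to be careful about is that $\varphi\in\liea_\Theta^*$ is only \emph{one} functional, not the whole of $\liea^*$; the reason a single functional suffices is that $\varphi$ is positive on the limit cone (it lies in $\mathrm{int}(\cone)^*$), so its level sets are transverse to the cone, and the cone has non-empty interior — combined with the strong algebraic constraints this pins down the representation up to isomorphism and scaling.

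\textbf{Step 2: Identify the scalar and deduce the functional identity.} Once $\sigma\circ\rho=\widehat\rho$ is known, the induced linear map $\sigma:\mathfrak a_{\G_1}^+\to\mathfrak a_{\G_2}^+$ satisfies $\lambda_{\G_2}(\widehat\rho(\gamma))=\sigma\big(\lambda_{\G_1}(\rho(\gamma))\big)$ for all $\gamma$, since the Jordan projection is a conjugation/isomorphism invariant. Plugging this into \eqref{eq:rigproof-lengtheq} gives
$$h^\varphi_\rho\,\varphi\big(\pi_{\G_1}(\lambda_{\G_1}(\rho(\gamma)))\big)=h^\varphi_{\widehat\rho}\,\varphi\big(\pi_{\G_2}(\sigma(\lambda_{\G_1}(\rho(\gamma))))\big)$$
for all $\gamma$, where I have rewritten $\lambda_\Theta=p_\Theta\circ\lambda$ and used that $\varphi\circ p_\Theta=\varphi$ so that $\varphi(\lambda_\Theta(g))=\varphi(\lambda(g))=\varphi(\pi_{\G_i}(\lambda_{\G_i}(g)))$ for $g$ in $\G_i$. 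Since $\{\lambda_{\G_1}(\rho(\gamma)):\gamma\in\G\}$ generates the limit cone $\mathscr L_\rho$, which by Benoist spans $\mathfrak a_{\G_1}$, the two linear functionals $h^\varphi_\rho\,(\varphi\circ\pi_{\G_1})$ and $h^\varphi_{\widehat\rho}\,(\varphi\circ\pi_{\G_2}\circ\sigma)$ on $\mathfrak a_{\G_1}$ agree on a spanning set, hence are equal:
\begin{equation}\label{eq:rigproof-funceq}
h^\varphi_\rho\,(\varphi\circ\pi_{\G_1})=h^\varphi_{\widehat\rho}\,(\varphi\circ\pi_{\G_2}\circ\sigma).
\end{equation}
It remains to show the scalars coincide, i.e. $h^\varphi_\rho=h^\varphi_{\widehat\rho}$. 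This follows from the entropy being the exponential growth rate of $\{[\gamma]:L^\varphi_\rho(\gamma)\le T\}$ together with \eqref{eq:rigproof-lengtheq}: if $h^\varphi_\rho L^\varphi_\rho=h^\varphi_{\widehat\rho}L^\varphi_{\widehat\rho}$ as functions on $\G$, then $L^\varphi_{\widehat\rho}=(h^\varphi_\rho/h^\varphi_{\widehat\rho})L^\varphi_\rho$, and computing $h^\varphi_{\widehat\rho}$ from this rescaled length function gives $h^\varphi_{\widehat\rho}=(h^\varphi_{\widehat\rho}/h^\varphi_\rho)h^\varphi_\rho$, which is automatic — so I should instead extract $h^\varphi_\rho=h^\varphi_{\widehat\rho}$ directly: the renormalized counting function $\#\{[\gamma]:h^\varphi_\rho L^\varphi_\rho(\gamma)\le T\}$ has exponential growth rate $1$ from the $\rho$ side and $1$ from the $\widehat\rho$ side, and these are genuinely equal because the \emph{sets} being counted are literally the same by \eqref{eq:rigproof-lengtheq}. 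Hence $h^\varphi_\rho=h^\varphi_{\widehat\rho}$, and \eqref{eq:rigproof-funceq} collapses to $\varphi\circ\pi_{\G_1}=\varphi\circ\pi_{\G_2}\circ\sigma$, which is precisely $\varphi\circ\pi_{\g_{\widehat\rho}}\circ\sigma=\varphi\circ\pi_{\g_\rho}$.

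\textbf{Main obstacle.} The delicate point is Step 1: extracting a \emph{Lie group isomorphism} $\sigma$ with $\sigma\circ\rho=\widehat\rho$ from equality of a single renormalized length functional, rather than merely equality of limit cones or of marked Jordan spectra. I would handle this by citing the precise form already available — \cite[Corollary 11.6]{BCLS} for Anosov representations, which in turn rests on Benoist \cite[Theorem 1]{Benoist_AsymtoticLinearGroups} and the observation that Zariski density of $\rho(\G)$ in the simple center-free group $\g_\rho$ forces the ``correlation group'' (the Zariski closure of $\{(\rho(\gamma),\widehat\rho(\gamma))\}$ in $\G_1\times\G_2$) to be a graph of an isomorphism, once one knows the two length data are proportional. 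Verifying that hypothesis from our \eqref{eq:rigproof-lengtheq} is where the assumption $\varphi\in\mathrm{int}((\cone)^*)\cap\mathrm{int}((\mathscr L_{\widehat\rho}^\Theta)^*)$ is essential (otherwise the level sets of $\varphi$ could contain rays of the cone, losing injectivity of the reconstruction). I would spell this reduction out carefully and then let Benoist's theorem do the rest.
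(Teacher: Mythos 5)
The paper offers no argument for this theorem at all: it is quoted as a classical consequence of Benoist's limit cone theorem, with exactly the references you invoke (\cite[Corollary 11.6]{BCLS}, Burger, Dal'bo--Kim). So your Step 1 is the paper's route, and your sketch of the mechanism is the right one: the Zariski closure of $\{(\rho(\gamma),\widehat\rho(\gamma))\}$ in $(\g_\rho)_0\times(\g_{\widehat\rho})_0$ must be proper, since its limit cone lies in the zero set of the piecewise linear function $h^\varphi_\rho\,\varphi\circ\pi_{\g_\rho}-h^\varphi_{\widehat\rho}\,\varphi\circ\pi_{\g_{\widehat\rho}}$, which has empty interior; simplicity of the factors and Goursat then force this closure to be the graph of an isomorphism $\sigma$ with $\sigma\circ\rho=\widehat\rho$.

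The genuine gap is in Step 2, in the claim that the scalars coincide. Your argument for $h^\varphi_\rho=h^\varphi_{\widehat\rho}$ is circular: the renormalized counting functions $\#\{[\gamma]:h^\varphi_\rho L^\varphi_\rho(\gamma)\leq T\}$ and $\#\{[\gamma]:h^\varphi_{\widehat\rho} L^\varphi_{\widehat\rho}(\gamma)\leq T\}$ have exponential growth rate $1$ by the very definition of entropy, so the fact that the two counted sets coincide only yields $1=1$ and no relation between the entropies. Indeed, equality of entropies does \emph{not} follow from the hypotheses when the Zariski closures are proper subgroups: let $\rho_0:\Gamma\to\mathsf{PSL}(2,\rr)$ be Fuchsian with translation lengths $\ell(\gamma)$, let $\rho$ be its composition with the irreducible (principal) representation into $\mathsf{PSL}(5,\rr)$ and $\widehat\rho$ its composition with $\mathrm{Ad}\oplus 1\oplus 1$. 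Both are $\{\alpha_1\}$-Anosov with simple, center-free, connected Zariski closures isomorphic to $\mathsf{PSL}(2,\rr)$, and for $\varphi=\lambda_1$ one has $L^{\lambda_1}_\rho=2\ell$, $L^{\lambda_1}_{\widehat\rho}=\ell$, hence $h^{\lambda_1}_\rho=\tfrac12$, $h^{\lambda_1}_{\widehat\rho}=1$: the hypothesis $h^\varphi_\rho L^\varphi_\rho=h^\varphi_{\widehat\rho}L^\varphi_{\widehat\rho}$ holds, $\sigma$ exists (the tautological identification of the two copies of $\mathsf{PSL}(2,\rr)$), yet the entropies differ and $\varphi\circ\pi_{\g_{\widehat\rho}}\circ\sigma=\tfrac12\,\varphi\circ\pi_{\g_\rho}$. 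What your limit-cone/spanning argument legitimately produces is the entropy-weighted identity $h^\varphi_\rho\,\varphi\circ\pi_{\g_\rho}=h^\varphi_{\widehat\rho}\,\varphi\circ\pi_{\g_{\widehat\rho}}\circ\sigma$, and only on the limit cone $\mathscr L_\rho$: note that $\pi_{\g_\rho}$ and $\pi_{\g_{\widehat\rho}}$ are merely piecewise linear, so ``agreement on a spanning set implies equality'' is not available; what saves you is that the cone has non-empty interior. The unweighted identity (and $h^\varphi_\rho=h^\varphi_{\widehat\rho}$) can be recovered in the case where the paper actually uses it, namely $(\g_\rho)_0=(\g_{\widehat\rho})_0=\g$: there $\sigma$ induces on $\liea^+$ a linear map of finite order (a diagram automorphism), so iterating $h^\varphi_\rho\,\varphi=h^\varphi_{\widehat\rho}\,\varphi\circ\sigma$ forces the scalar to be $1$. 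You should either prove the weighted identity and add this finite-order argument for the Zariski-dense case, or explicitly flag the restriction; as written, Step 2 does not establish the last sentence of the statement.
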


Denote by $\haz\subset\ha$ the subset consisting of Zariski dense representations. 

\begin{cor}\label{cor: distance in zariski dense components}
Assume that $\g$ is simple, center-free, and for every non-inner automorphism $\sigma$ of $\g$ one has $\varphi\circ \sigma \neq \varphi$. Then $d_{\tn{Th}}^\varphi(\cdot,\cdot)$ defines a (possibly asymmetric) metric on $\haz$.
\end{cor}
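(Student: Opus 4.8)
The plan is to deduce Corollary \ref{cor: distance in zariski dense components} from Theorem \ref{thm: dth for anosov} and Theorem \ref{thm:rigidity} in a short and essentially formal way. Theorem \ref{thm: dth for anosov} already gives that $d_{\tn{Th}}^\varphi(\cdot,\cdot)$ is real valued, non-negative, and satisfies the triangle inequality on all of $\ha$, hence in particular on $\haz$. The only point that remains is the separation of points: I must show that for $\rho,\widehat{\rho}\in\haz$ the equality $d_{\tn{Th}}^\varphi(\rho,\widehat{\rho})=0$ forces $\rho=\widehat{\rho}$ in $\haz$ (i.e.\ as conjugacy classes).

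So first I would invoke the characterization in Theorem \ref{thm: dth for anosov}: $d_{\tn{Th}}^\varphi(\rho,\widehat{\rho})=0$ is equivalent to $h_\rho^\varphi L_\rho^\varphi = h_{\widehat{\rho}}^\varphi L_{\widehat{\rho}}^\varphi$ as functions on $\Gamma$ (equivalently on $[\Gamma]$). Next I would apply Theorem \ref{thm:rigidity}. Since $\rho$ and $\widehat{\rho}$ are Zariski dense, their Zariski closures are $\g_\rho=\g_{\widehat{\rho}}=\g$, which is assumed simple, real algebraic, and center-free; and $\rho(\Gamma),\widehat{\rho}(\Gamma)\subset\g=(\g)_0$ holds because $\g$, being a connected algebraic group, is its own identity component in the statement's convention (or one notes this is part of the hypothesis on $\frakX$ in the ambient set-up; in any case it is immediate from connectedness of $\g$). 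Thus the renormalized length spectrum equality yields an automorphism $\sigma$ of $\g$ with $\sigma\circ\rho=\widehat{\rho}$, and moreover $\varphi\circ\pi_{\g}\circ\sigma=\varphi\circ\pi_\g=\varphi$, i.e.\ $\varphi\circ\sigma=\varphi$ (here $\pi_\g=\operatorname{id}$ on $\liea^+$).

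At this point the hypothesis on $\varphi$ enters: since $\varphi\circ\sigma=\varphi$ and, by assumption, any non-inner automorphism $\sigma$ of $\g$ satisfies $\varphi\circ\sigma\neq\varphi$, the automorphism $\sigma$ must be inner, say $\sigma=\operatorname{Ad}(g)$ for some $g\in\g$ (using that $\g$ is center-free, so $\g$ coincides with its group of inner automorphisms). Then $\sigma\circ\rho=\widehat{\rho}$ means $g\rho(\cdot)g^{-1}=\widehat{\rho}(\cdot)$, so $\rho$ and $\widehat{\rho}$ are conjugate in $\g$, i.e.\ they define the same point of $\cha$, hence of $\haz$. This establishes non-degeneracy, and combined with the already-known non-negativity and triangle inequality, $d_{\tn{Th}}^\varphi(\cdot,\cdot)$ is a (possibly asymmetric) metric on $\haz$.

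I do not anticipate a serious obstacle here, since the substantive work has been packaged into Theorems \ref{thm: dth for anosov} and \ref{thm:rigidity}. The one subtlety to be careful about is the identification of automorphisms of $\g$ fixing $\varphi$ with inner ones: this uses center-freeness to identify $\g$ with $\operatorname{Inn}(\g)$, and uses precisely the way the ``$\varphi\circ\sigma=\varphi$ implies $\sigma$ inner'' hypothesis is phrased — one must make sure the automorphism produced by Theorem \ref{thm:rigidity} is an automorphism of $\g$ in the relevant sense (a group automorphism, which for a center-free simple algebraic group is the same as an automorphism of the Lie algebra up to the standard identifications) and that $\varphi\circ\pi_{\g_{\widehat\rho}}\circ\sigma=\varphi\circ\pi_{\g_\rho}$ simplifies to $\varphi\circ\sigma=\varphi$ in the Zariski dense case. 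Once that bookkeeping is in place the argument closes.
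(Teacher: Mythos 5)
Your proposal is correct and follows exactly the route the paper intends (the paper leaves this corollary without a written proof precisely because it is this formal combination of Theorem \ref{thm: dth for anosov} and Theorem \ref{thm:rigidity}): zero distance gives equality of renormalized length spectra, Zariski density makes both closures equal to $\g$, the resulting automorphism fixes $\varphi$ and is therefore inner, so the two representations coincide in $\haz$. The bookkeeping points you flag ($\pi_\g=\operatorname{id}$ and $\rho(\Gamma)\subset(\g)_0$ via the standing connectedness assumption on $\g$) are handled appropriately.
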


\begin{rem}
The group $\g$ needs to be center-free in Theorem \ref{thm:rigidity} and Corollary \ref{cor: distance in zariski dense components}:  the Jordan and Cartan projections of $\g$ factor through the adjoint form of $\g$, thus any two representations differing by a central character will have the same renormalized length spectrum, and thus distance zero. 
\end{rem}

\subsection{Finsler norm for Anosov representations}\label{subsec: finsler for anosov reps}

Bridgeman-Canary-Labourie-Sambarino \cite{BCLS,BCLSSIMPLEROOTS} used the map $\mathtt{R}_\varphi$ to pull-back the pressure norm on $\phru$ to produce a pressure metric on $\ha$ (for some choices of $\varphi$). We now imitate this procedure  working with the Finsler norm defined in Subsection \ref{subsection, FinslerNorm}.

A family of representations $\{\rho_z:\Gamma\to\g\}_{z\in D}$ parametrized by a real analytic disk $D$ is  \textit{real analytic} if for all $\gamma\in\Gamma$ the map $z\mapsto\rho_z(\gamma)$ is real analytic. We fix a real analytic neighbourhood of $\rho\in\ha$ and a real analytic family $\{\rho_z\}_{z\in D}\subset \ha$, parametrized by some real analytic disk $D$ around $0$, so that $\rho_0=\rho$ and $\cup_{z\in D}\rho_z$ coincides with this neighbourhood. By abuse of notation we will sometimes identify the neighbourhood with $D$ itself.

\begin{dfn}\label{dfn: finsler reps}
Given a tangent vector $v\in T_\rho\ha$ we set $$\Vert v\Vert_{\tn{Th}}^\varphi:= \displaystyle\sup_{[\gamma]\in[\gh]} \frac{\od_{\rho}(h_{\cdot}^\varphi)(v)L_\rho^\varphi(\gamma)+h_\rho^\varphi \od_\rho(L_\cdot^\varphi(\gamma))(v)}{h_\rho^\varphi L_\rho^\varphi(\gamma)}, $$ \noindent where $\od_{\rho}(h_{\cdot}^\varphi)$ (resp. $\od_{\rho}(L_{\cdot}^\varphi(\gamma))$) is the derivative of $\widehat{\rho}\mapsto h_{\widehat{\rho}}^\varphi$ (resp. $\widehat{\rho}\mapsto L_{\widehat{\rho}}^\varphi(\gamma)$) at $\rho$. In particular, if $\widehat{\rho}\mapsto h_{\widehat{\rho}}^\varphi$ is constant one has
\begin{equation}\label{eq: finsler reps constant entropy}  \Vert v\Vert_{\tn{Th}}^\varphi= \displaystyle\sup_{[\gamma]\in[\gh]} \frac{ \od_\rho(L_\cdot^\varphi(\gamma))(v)}{ L_\rho^\varphi(\gamma)}.\end{equation}

\end{dfn}

\begin{rem}\label{rem: def finsler anosov}
\begin{enumerate}
    \item Recall that by \cite[Section 8]{BCLS}, entropy varies in an analytic way over $\ha$. In particular, $h_\cdot^\varphi$ is differentiable.
    \item Equation (\ref{eq: finsler reps constant entropy}) generalizes Thurston's Finsler norm on Teichm\"uller space \cite[p.20]{ThurstonStretch}.
\end{enumerate}
\end{rem}

We want conditions  guaranteeing that $\Vert\cdot\Vert_{\tn{Th}}^\varphi$ defines a Finsler norm on $T_\rho\ha$; a priori it is not even clear that $\Vert\cdot\Vert_{\tn{Th}}^\varphi$ is real valued and non-negative. To  link  $\Vert\cdot\Vert_{\tn{Th}}^\varphi$ and the Finsler norm of Subsection \ref{subsection, FinslerNorm}, we need the following proposition. We  fixed a set of strongly primitive elements $\{\gamma_a\}$ representing each periodic orbit $a\in\mathcal{O}$ in Remark \ref{rem: sp for geodesic flow}.

\begin{prop}[{\cite[Proposition 6.2]{BCLS}, {\cite[Proposition 6.1]{BCLSSIMPLEROOTS}}}]\label{prop: Rvarhpi analytic}
Let $\{\rho_z\}_{z\in D}$ be a real analytic family of $\Theta$-Anosov representations. Then up to  restricting $D$ to a smaller disk around $0$, there exists $\upsilon>0$ and a real analytic family $\{\widetilde{g}_z:\tn{U}\Gamma\to \rr_{>0}\}_{z\in D}\subset\mathcal{H}^\upsilon(\tn{U}\Gamma)$ so that for all $z\in D$, all $a\in\mathcal{O}$ and all $x\in a$ one has $$\displaystyle\int_0^{p_\phi(a)}\widetilde{g}_z(\phi_s(x))\od s=L_{\rho_z}^\varphi(\gamma_a).$$ \noindent In particular, the map $D\to \phru$ given by $z\mapsto \mathtt{R}_\varphi(\rho_z)=\left[\phi^{\widetilde{g}_z}\right]$ is real analytic.
\end{prop}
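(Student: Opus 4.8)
The statement to prove is Proposition \ref{prop: Rvarhpi analytic}: given a real analytic family $\{\rho_z\}_{z\in D}$ of $\Theta$-Anosov representations, after shrinking $D$ there exist $\upsilon>0$ and a real analytic family $\{\widetilde g_z\}\subset\mathcal H^\upsilon(\tn U\Gamma)$ of positive Hölder functions whose integral over the periodic orbit $a$ equals $L^\varphi_{\rho_z}(\gamma_a)$, and the induced map $z\mapsto\mathtt R_\varphi(\rho_z)$ into $\phru$ is real analytic. Since the statement is quoted from \cite[Proposition 6.2]{BCLS} and \cite[Proposition 6.1]{BCLSSIMPLEROOTS}, the proof plan is essentially to adapt those arguments to the present normalization by the functional $\varphi$ and to Gromov's geodesic flow.

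\medskip

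\textbf{Plan of proof.} The starting point is Sambarino's Reparametrizing Theorem \ref{thm: reparametrizing theorem}, which for each fixed $z$ produces a Hölder reparametrizing function $r_{\phi,\psi_z}$ on $\tn U\Gamma$ with $\psi_z=\phi^{r_{\phi,\psi_z}}$ and $\int_0^{p_\phi(a)} r_{\phi,\psi_z}(\phi_s(x))\,\od s = L^\varphi_{\rho_z}(\gamma_a)$ for every periodic orbit $a$ and $x\in a$ (this is the content of Remark \ref{rem: sp for geodesic flow} together with Equation \eqref{eq: integral of reparametrizing over periodic orbit} and the identification $\beta^\rho_\Theta(\gamma,\gamma_+)=\lambda_\Theta(\rho\gamma)$). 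The issue is that Sambarino's construction does not come with analytic dependence on parameters built in; this is precisely the point addressed in \cite[Section 6]{BCLS}. First I would recall how the refraction cocycle $\beta^{\rho_z}_\Theta(\gamma,x)=\sigma_\Theta(\rho_z(\gamma),\xi_{\rho_z}(x))$ depends on $z$: the limit maps $\xi_{\rho_z}$ vary real analytically in the appropriate sense because they are obtained via attracting fixed points of proximal elements (using Proposition \ref{prop: anosov and tits}), so $z\mapsto\varphi\circ\beta^{\rho_z}_\Theta$ is a real analytic family of Hölder cocycles with a uniform Hölder exponent $\upsilon$ after shrinking $D$.

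\medskip

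\textbf{Key steps.} The core technical device, as in \cite[Proposition 6.2]{BCLS}, is the following: one fixes an auxiliary $z_0$ (say $z_0=0$) and uses the already-constructed Hölder orbit equivalence $\nu^{\rho_0,\varphi}$ to transport everything to a fixed flow space; then, rather than re-running Sambarino's construction for each $z$, one observes that the family of functions on $\tn U\Gamma$ one needs can be produced by a fixed-point / implicit function argument. Concretely: the desired $\widetilde g_z$ should be cohomologous to $\varphi\circ\beta^{\rho_z}_\Theta$ pulled back to the flow, and the coboundary correction — the Hölder function $V_z$ with $\widetilde g_z = (\text{pullback}) + \frac{\od}{\od t}\big|_{t=0}V_z(\phi_t(\cdot))$ — can be chosen to depend analytically on $z$ by solving the relevant cohomological equation with analytic dependence on parameters. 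This uses the Implicit Function Theorem in the Banach space $\mathcal H^\upsilon(\tn U\Gamma)$, exactly as in the passage quoted in Subsection \ref{subsection, FinslerNorm}, together with Livšic's Theorem \ref{teo: livsic} to identify the relevant subspaces. Positivity of $\widetilde g_z$ is preserved after shrinking $D$ by continuity (it holds at $z=0$ and positivity is an open condition in $\Vert\cdot\Vert_\infty$). Once $\widetilde g_z$ is constructed with the prescribed periods, the equality $\int_0^{p_\phi(a)}\widetilde g_z(\phi_s(x))\,\od s = L^\varphi_{\rho_z}(\gamma_a)$ together with the definition of $\mathtt R_\varphi$ and Proposition \ref{prop: pressureZero}, Proposition \ref{prop: firstDevPressure} gives that $z\mapsto\mathtt R_\varphi(\rho_z)=[\phi^{\widetilde g_z}]$ is a real analytic map into $\phru$: indeed $[\phi^{\widetilde g_z}]$ corresponds to the Livšic class of $\widetilde g_z/h_{\phi^{\widetilde g_z}}$, and $z\mapsto h_{\phi^{\widetilde g_z}}$ is real analytic by \cite[Section 8]{BCLS} (cf. Remark \ref{rem: def finsler anosov}), so composing with the analytic division by entropy stays in $\calpu$.

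\medskip

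\textbf{Main obstacle.} The delicate point is the analytic dependence of the \emph{coboundary correction} $V_z$ — i.e., getting the Livšic solution to the cohomological equation to depend analytically on the parameter $z$ while keeping a uniform Hölder exponent. This is where one must invoke the parametrized version of Livšic's theorem / the implicit function theorem machinery in $\mathcal H^\upsilon(\tn U\Gamma)$, and carefully check that the linearization is an isomorphism between the appropriate complemented subspaces (coboundaries and a transversal), so that shrinking $D$ is legitimate. Apart from this, one must verify that the limit maps $\xi_{\rho_z}$ — and hence the refraction cocycles — vary analytically with a common Hölder exponent; this is standard from the stability theory of Anosov representations \cite{Lab,GW,BPS} but requires the shrinking of $D$ stated in the proposition. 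Everything else (positivity, the period identity, real analyticity of the final map) is then routine given Sections \ref{sec: thermodynamics} and \ref{sec: anosov flows and reps}.
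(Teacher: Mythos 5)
Your outline does not match the paper's proof, and the step you lean on hardest is the one that does not hold up. The paper's proof is a short reduction: since $\{\omega_\alpha\}_{\alpha\in\Theta}$ spans $\liea_\Theta^*$, write $\varphi=\sum_{\alpha\in\Theta}a_\alpha\omega_\alpha$; by Proposition \ref{prop: anosov and tits} each $\Lambda_\alpha\circ\rho_z$ is an analytic family of projective Anosov representations, so \cite[Proposition 6.2]{BCLS} (the spectral radius case) supplies, after shrinking $D$, analytic families of positive $\upsilon$-H\"older functions whose periods are $\lambda_1(\Lambda_\alpha(\rho_z(\gamma_a)))=k_\alpha\omega_\alpha(\lambda(\rho_z(\gamma_a)))$ by Equation \eqref{eq: spectral radious and fund weight}; one then combines these linearly exactly as in \cite[Proposition 6.1]{BCLSSIMPLEROOTS} to get $\widetilde g_z$ with periods $L^\varphi_{\rho_z}(\gamma_a)$. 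You instead propose to rebuild the analytic dependence from scratch, and this is where the gap is.

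Concretely: (1) your central device, choosing the ``coboundary correction'' $V_z$ by ``solving the relevant cohomological equation with analytic dependence on parameters'' via the Implicit Function Theorem in $\mathcal H^\upsilon(\tn U\Gamma)$, is not an argument. There is no cohomological equation to solve until you already have in hand a H\"older function on $\tn U\Gamma$ whose periods are $L^\varphi_{\rho_z}(\gamma_a)$ and which depends analytically on $z$ --- the cocycle $\varphi\circ\beta^{\rho_z}_\Theta$ lives on $\Gamma\times\partial\Gamma$, and passing from it to a function on $\tn U\Gamma$ is exactly the non-canonical step of Sambarino's construction whose analytic dependence is in question; prescribing the periods only pins the answer down to a coset of $\mathcal B^\upsilon(\tn U\Gamma)$, and there is no parametrized Liv\v sic theorem with analytic dependence (nor an obvious IFT setup --- the use of the IFT in Subsection \ref{subsection, FinslerNorm} describes a level set of the pressure, it does not solve cohomological equations in families) that produces an analytic selection. (2) Your justification that the limit maps $\xi_{\rho_z}$ vary real analytically with a uniform H\"older exponent ``because they are attracting fixed points of proximal elements'' asserts precisely the hard content of \cite[Section 6]{BCLS}: pointwise convergence of attracting fixed points gives neither analyticity as a map into the Banach space of H\"older maps nor a uniform exponent; in \cite{BCLS} this is proved by a contraction-mapping/stability argument and then the reparametrizing function is given by an explicit formula in the limit maps, which is what makes the analytic dependence transparent. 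The paper's route via Tits representations sidesteps both issues by quoting that projective Anosov case as a black box; if you want a self-contained proof you would need to reproduce that argument, not replace it by the Liv\v sic/IFT step. (Minor point: the final ``in particular'' needs no entropy renormalization --- analyticity of $z\mapsto[\phi^{\widetilde g_z}]$ is immediate from the definition of an analytic path in $\phru$ once the analytic positive family $\widetilde g_z$ exists.)
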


\begin{proof}
The argument follows \cite[Proposition 6.1]{BCLSSIMPLEROOTS}. Since $\{\omega_\alpha\}_{\alpha\in\Theta}$ span $\liea_\Theta^*$, there exist real numbers $a_\alpha$ so that $\varphi=\sum_{\alpha\in\Theta}a_\alpha\omega_\alpha$.  \cite[Proposition 6.2]{BCLS} gives the result for projective Anosov representations and the spectral radius length function, thus the proof of \cite[Proposition 6.1]{BCLSSIMPLEROOTS} applies  (c.f. Proposition \ref{prop: anosov and tits} and Equation (\ref{eq: spectral radious and fund weight})).
\end{proof}

Fix a real analytic family $\{\widetilde{g}_z\}$ as in Proposition \ref{prop: Rvarhpi analytic}. By \cite[Proposition 3.12]{BCLS} the function $z\mapsto h_{\phi^{\widetilde{g}_z}}$ is real analytic. By Corollary \ref{cor: varhpi entropy is entropy of traslation flow} we get that $z\mapsto h_{\rho_z}^\varphi$ is real analytic, as claimed in Remark \ref{rem: def finsler anosov}.

Proposition \ref{prop: Rvarhpi analytic}  bridges between $\Vert\cdot\Vert_{\tn{Th}}^\varphi$ and the Finsler norm on $\phru$, as we now explain. First, observe that in Definition \ref{dfn: finsler reps} it suffices to consider only strongly primitive elements when taking the sup, that is: $$\Vert v\Vert_{\tn{Th}}^\varphi= \displaystyle\sup_{[\gamma]\in[\gsp]} \frac{\od_{\rho}(h_{\cdot}^\varphi)(v)L_\rho^\varphi(\gamma)+h_\rho^\varphi \od_\rho(L_\cdot^\varphi(\gamma))(v)}{h_\rho^\varphi L_\rho^\varphi(\gamma)}.$$ 
\noindent Indeed the function $\widehat{\rho}\mapsto n_{\widehat{\rho}}^\varphi(\gamma)$ is constant for all $\gamma\in\gh$ (Lemma \ref{lem: strongly primitive for other reparametrization}), and Remark \ref{rem: sp for geodesic flow} gives \begin{equation}\label{eq: finsler anosov rep with strongly primitive}
    \Vert v\Vert_{\tn{Th}}^\varphi= \displaystyle\sup_{a\in\mathcal{O}} \frac{\od_{\rho}(h_{\cdot}^\varphi)(v)L_\rho^\varphi(\gamma_a)+h_\rho^\varphi \od_\rho(L_\cdot^\varphi(\gamma_a))(v)}{h_\rho^\varphi L_\rho^\varphi(\gamma_a)}.
\end{equation} \noindent Recalling the notations from Subsection \ref{subsection, FinslerNorm} we have the following.

\begin{lema}\label{lem: finsler reps and flows}
Let $\{\rho_z\}_{z\in D}\subset \ha$ be a real analytic family parametrizing an open neighbourhood around $\rho=\rho_0$. Fix an analytic path $z:(-1,1)\to D$ so that $z(0)=0$ and set $\rho_s:=\rho_{z(s)}$ and $ v:=\left.\frac{\od}{\od s}\right\vert_{s=0}\rho_s$. Let also $h_s:=h_{\rho_{s}}^\varphi$ and $g_s:=h_s\widetilde{g}_{z(s)}$. Then $$\Vert v\Vert_{\tn{Th}}^\varphi=\Vert [\dot{g}_0]\Vert_{\tn{Th}}.$$ \noindent 
\end{lema}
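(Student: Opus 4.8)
The plan is to unwind both sides of the claimed equality and recognize them as the same supremum over periodic orbits. First I would recall from Proposition \ref{prop: Rvarhpi analytic} and the surrounding discussion that with $g_s := h_s \widetilde{g}_{z(s)}$ we have $\mathtt{R}_\varphi(\rho_s) = [\phi^{\widetilde g_{z(s)}}] = [\phi^{g_s}]$, and moreover, since $h_s = h_{\rho_s}^\varphi = h_{\phi^{\widetilde g_{z(s)}}}$ by Corollary \ref{cor: varhpi entropy is entropy of traslation flow}, the reparametrization $\phi^{g_s}$ has entropy one, i.e. $\mathbf{P}(\phi, -g_s) = 0$. Thus the family $\{g_s\}$ is exactly of the type considered in Proposition \ref{prop: FinslerNorm}: it is an analytic path in $\mathcal H^\upsilon(\tn U\Gamma)$ of strictly positive functions with $\psi^s := \phi^{g_s}$ satisfying $h_{\psi^s} = 1$, and $[\dot g_0] \in T_{[\psi]}\phru$ where $\psi := \psi^0 = \mathtt{R}_\varphi(\rho)$ (the tangency condition $\int \dot g_0 \, \od m_{-g_0} = 0$ being automatic). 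Here I should check that $g_0 = r_{\phi, \psi}$, which holds because $\int_0^{p_\phi(a)} g_0(\phi_s(x))\,\od s = h_\rho^\varphi L_\rho^\varphi(\gamma_a) = p_{\psi}(a)$ for every $a \in \mathcal O$ (using that $h_\rho^\varphi L_\rho^\varphi(\gamma_a) = h_{\phi^{\widetilde g_0}} \cdot p_{\phi^{\widetilde g_0}}(a)$ since $\gamma_a$ is strongly primitive, cf. Remark \ref{rem: sp for geodesic flow}), so by Liv\v sic's Theorem \ref{teo: livsic} applied to $\phi$, the function $g_0$ is Liv\v sic cohomologous to the reparametrizing function $r_{\phi,\psi}$.

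Next I would compute $\Vert[\dot g_0]\Vert_{\tn{Th}}$ using Equation \eqref{eq: finsler succint}, which gives $\Vert[\dot g_0]\Vert_{\tn{Th}} = \sup_{m \in \ppsi}\int (\dot g_0 / r_{\phi,\psi})\,\od m = \sup_{m\in\ppsi} \int (\dot g_0 / g_0)\,\od m$. On the other hand, I would differentiate the defining expression \eqref{eq: finsler anosov rep with strongly primitive} for $\Vert v\Vert_{\tn{Th}}^\varphi$. The key observation is that for each fixed orbit $a \in \mathcal O$, $$\int_0^{p_\phi(a)} g_s(\phi_t(x))\,\od t = h_s L_{\rho_s}^\varphi(\gamma_a),$$ so that differentiating at $s = 0$ yields $\int_0^{p_\phi(a)} \dot g_0(\phi_t(x))\,\od t = \od_\rho(h_\cdot^\varphi)(v) L_\rho^\varphi(\gamma_a) + h_\rho^\varphi \od_\rho(L_\cdot^\varphi(\gamma_a))(v)$, which is exactly the numerator appearing in \eqref{eq: finsler anosov rep with strongly primitive}. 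Dividing by the denominator $h_\rho^\varphi L_\rho^\varphi(\gamma_a) = \int_0^{p_\phi(a)} g_0(\phi_t(x))\,\od t = p_\psi(a)$ and rewriting via the Dirac mass $\delta_\phi(a)$, the $a$-th term of the supremum becomes $$\frac{1}{p_\psi(a)}\int_0^{p_\phi(a)} \dot g_0(\phi_t(x))\,\od t = \frac{p_\phi(a)}{p_\psi(a)}\int_X \dot g_0 \, \od\delta_\phi(a) = \int_X \frac{\dot g_0}{g_0}\,\od\delta_\psi(a),$$ where in the last step I use Equation \eqref{eq: integral of reparametrizing over delta in periodic orbit} (with $r = r_{\phi,\psi} \sim_\phi g_0$) to convert $\delta_\phi(a)$ into $\delta_\psi(a)$; note $\int (\dot g_0/g_0)\,\od\delta_\psi(a) = p_\psi(a)^{-1}\int_0^{p_\psi(a)}(\dot g_0/g_0)(\psi_t \cdot)\,\od t$ matches since $\dot g_0$ and $g_0$ are $\phi$-pushforwards and the time-change cancels in the ratio against the Lebesgue normalization. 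Hence $\Vert v\Vert_{\tn{Th}}^\varphi = \sup_{a\in\mathcal O}\int (\dot g_0/g_0)\,\od\delta_\psi(a)$.

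Finally I would invoke Proposition \ref{prop:sup of periods and measures} — or more directly the density Theorem \ref{teo: periodic orbits dense} together with continuity of $m \mapsto \int (\dot g_0/g_0)\,\od m$ in the weak-$\star$ topology on the compact set $\mathcal P(\psi)$, exactly as in the proof of Proposition \ref{prop:sup of periods and measures} — to conclude that $\sup_{a\in\mathcal O}\int(\dot g_0/g_0)\,\od\delta_\psi(a) = \sup_{m\in\mathcal P(\psi)}\int (\dot g_0/g_0)\,\od m$. Comparing with the computation of $\Vert[\dot g_0]\Vert_{\tn{Th}}$ above, both quantities equal $\sup_{m\in\mathcal P(\psi)}\int(\dot g_0/g_0)\,\od m$, which gives $\Vert v\Vert_{\tn{Th}}^\varphi = \Vert[\dot g_0]\Vert_{\tn{Th}}$. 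The main obstacle I anticipate is bookkeeping with finite-order elements and the strong-primitivity correspondence between $\mathcal O$ and $\mathcal O^{\rho,\varphi}$: I must be careful that the supremum in Definition \ref{dfn: finsler reps} over $[\gh]$ really does reduce to a supremum over $\mathcal O$ (this is Equation \eqref{eq: finsler anosov rep with strongly primitive}, which relies on Lemma \ref{lem: strongly primitive for other reparametrization} and Remark \ref{rem: sp for geodesic flow}), and that the identification $g_0 = r_{\phi,\psi}$ up to Liv\v sic cohomology is legitimate — everything else is a routine differentiation-under-the-integral argument justified by the real analyticity in Proposition \ref{prop: Rvarhpi analytic}.
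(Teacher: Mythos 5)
Your proof is correct and follows essentially the same route as the paper: differentiate the orbit-wise ratios in Equation \eqref{eq: finsler anosov rep with strongly primitive} using Proposition \ref{prop: Rvarhpi analytic}, then pass from periodic-orbit Dirac masses to all invariant measures via Theorem \ref{teo: periodic orbits dense}. The only (immaterial) difference is that you phrase the result in terms of $\psi$-invariant measures and the succinct form \eqref{eq: finsler succint}, whereas the paper stays with $\phi$-invariant measures and the ratio form of Definition \ref{def:Finslernorm}; these agree by \eqref{eq: iso between ppsi and pwpsi}.
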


In the above statement, by construction, the Liv\v{s}ic cohomology class $[\dot{g}_0]=[\dot{g}_0]_\phi$ belongs to the tangent space $T_{[\phi^{g_0}]}\phru$.

\begin{proof}[Proof of Lemma \ref{lem: finsler reps and flows}]
Combining Equations (\ref{eq: finsler anosov rep with strongly primitive}) and (\ref{eq: integral of reparametrizing over delta in periodic orbit}), and Proposition \ref{prop: Rvarhpi analytic} we have $$\Vert v\Vert_{\tn{Th}}^\varphi=\displaystyle\sup_{a\in\mathcal{O}} \left.\frac{\od}{\od s}\right\vert_{s=0} \frac{h_sL_{\rho_s}^\varphi(\gamma_a) }{h_\rho^\varphi L_\rho^\varphi(\gamma_a)}=\displaystyle\sup_{a\in\mathcal{O}} \left.\frac{\od}{\od s}\right\vert_{s=0} \frac{h_s}{h_0}\frac{  \int \widetilde{g_s}\od \delta_\phi(a) }{ \int \widetilde{g_0}\od \delta_\phi(a)}.$$ \noindent Hence $$\Vert v\Vert_{\tn{Th}}^\varphi=\displaystyle\sup_{a\in\mathcal{O}} \left.\frac{\od}{\od s}\right\vert_{s=0} \frac{ \int g_s\od \delta_\phi(a) }{ \int g_0\od \delta_\phi(a)}=\displaystyle\sup_{a\in\mathcal{O}}  \frac{ \int \dot{g}_0\od \delta_\phi(a) }{\int g_0\od \delta_\phi(a)}.$$ \noindent By Theorem \ref{teo: periodic orbits dense} we get $$\Vert v\Vert_{\tn{Th}}^\varphi=\displaystyle\sup_{m\in\pphi}  \frac{ \int \dot{g}_0\od m }{\int g_0\od m}.$$ \noindent This finishes the proof.

\end{proof}

From Propositions \ref{prop: FinslerNorm} and \ref{prop: Rvarhpi analytic}, and Corollary \ref{cor: dist for reps coincides with distance for flows} we obtain the following.

\begin{cor}\label{cor: link finsler and asymm for reps}
Keep the notations from Lemma \ref{lem: finsler reps and flows}. Then $s\mapsto d_{\tn{Th}}^\varphi(\rho,\rho_s)$ is differentiable at $s=0$ and $$\Vert v\Vert_{\tn{Th}}^\varphi=\left.\frac{\od}{\od s}\right\vert_{s=0}d_{\tn{Th}}^\varphi(\rho,\rho_s).$$
\end{cor}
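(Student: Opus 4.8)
The statement we need is Corollary \ref{cor: link finsler and asymm for reps}: for a real analytic family $\{\rho_s\}$ in $\ha$ with $\rho=\rho_0$ and $v=\left.\frac{\od}{\od s}\right\vert_{s=0}\rho_s$, the function $s\mapsto d_{\tn{Th}}^\varphi(\rho,\rho_s)$ is differentiable at $s=0$ with derivative $\Vert v\Vert_{\tn{Th}}^\varphi$. The plan is to reduce everything to the already-proven flow-theoretic statement, Proposition \ref{prop: FinslerNorm}, via the bridge provided by the reparametrization map $\mathtt{R}_\varphi$ and Lemma \ref{lem: finsler reps and flows}.

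First I would invoke Proposition \ref{prop: Rvarhpi analytic}: after shrinking the analytic neighbourhood of $\rho$ if necessary, there is a H\"older exponent $\upsilon>0$ and a real analytic family $\{\widetilde g_z\}_{z\in D}\subset\mathcal H^\upsilon(\tn U\Gamma)$ of strictly positive functions whose periods over each periodic orbit $a\in\mathcal O$ equal $L_{\rho_z}^\varphi(\gamma_a)$, and such that $z\mapsto\mathtt R_\varphi(\rho_z)=[\phi^{\widetilde g_z}]$ is real analytic into $\phru$. Composing with the analytic path $z:(-1,1)\to D$, $z(0)=0$, we obtain an analytic path $s\mapsto[\psi^s]:=\mathtt R_\varphi(\rho_s)$ in $\phru$ in the sense of Subsection \ref{subsection, FinslerNorm}, namely with $[\psi^s]=[\phi^{\widetilde g_{z(s)}}]$. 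Next, following the normalization used in Subsection \ref{subsection, FinslerNorm}, I set $h_s:=h_{\rho_s}^\varphi$ (which is real analytic in $s$ by \cite[Proposition 3.12]{BCLS} together with Corollary \ref{cor: varhpi entropy is entropy of traslation flow}) and $g_s:=h_s\,\widetilde g_{z(s)}$, so that $\psi^s=\phi^{g_s}$ with $h_{\psi^s}=1$ for all $s$. Then $[\dot g_0]$, where $\dot g_0:=\left.\frac{\od}{\od s}\right\vert_{s=0}g_s$, lies in the tangent space $T_{[\psi]}\phru$ where $\psi:=\psi^0$, exactly as arranged before the statement of Proposition \ref{prop: FinslerNorm}.

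Now I would chain the three available equalities. By Corollary \ref{cor: dist for reps coincides with distance for flows}, $d_{\tn{Th}}^\varphi(\rho,\rho_s)=d_{\tn{Th}}(\mathtt R_\varphi(\rho),\mathtt R_\varphi(\rho_s))=d_{\tn{Th}}([\psi],[\psi^s])$ for every $s$. By Proposition \ref{prop: FinslerNorm} (applicable since $\{g_s\}$ is a C$^1$ path of the required type), $s\mapsto d_{\tn{Th}}([\psi],[\psi^s])$ is differentiable at $s=0$ with derivative $\Vert[\dot g_0]\Vert_{\tn{Th}}$. Finally, Lemma \ref{lem: finsler reps and flows} identifies $\Vert[\dot g_0]\Vert_{\tn{Th}}$ with $\Vert v\Vert_{\tn{Th}}^\varphi$. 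Combining these three identities yields that $s\mapsto d_{\tn{Th}}^\varphi(\rho,\rho_s)$ is differentiable at $s=0$ and $\left.\frac{\od}{\od s}\right\vert_{s=0}d_{\tn{Th}}^\varphi(\rho,\rho_s)=\Vert v\Vert_{\tn{Th}}^\varphi$, as claimed.

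There is essentially no serious obstacle remaining: the corollary is a formal consequence of machinery built up in the preceding sections. The only points requiring care are bookkeeping ones --- checking that the analytic path in $D$ pulls back to an \emph{analytic} (hence C$^1$) path in $\phru$ in the precise sense of Subsection \ref{subsection, FinslerNorm}, and verifying that the entropy renormalization $g_s=h_s\widetilde g_{z(s)}$ produces representatives with $h_{\psi^s}=1$ so that Proposition \ref{prop: FinslerNorm} applies verbatim. Both are immediate from Propositions \ref{prop: Rvarhpi analytic} and \ref{prop: pressureZero} and the analyticity of $s\mapsto h_{\rho_s}^\varphi$. Hence the proof is just the composition of the displayed identities above.
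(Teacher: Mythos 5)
Your proposal is correct and is essentially the paper's own proof: the corollary there is derived by exactly the same chain, combining Proposition \ref{prop: Rvarhpi analytic} (analyticity of $s\mapsto\mathtt R_\varphi(\rho_s)$ and of the entropy, allowing the normalization $g_s=h_s\widetilde g_{z(s)}$), Corollary \ref{cor: dist for reps coincides with distance for flows}, Proposition \ref{prop: FinslerNorm}, and Lemma \ref{lem: finsler reps and flows}. No gaps; your bookkeeping remarks about $h_{\psi^s}=1$ and the C$^1$ path in $\phru$ are precisely the points the paper's setup already handles.
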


We now turn to the study of conditions guaranteeing that $\Vert\cdot\Vert_{\tn{Th}}^\varphi$ defines a Finsler norm.

\begin{cor}\label{cor: finsler for reps}
Let $\rho\in\ha$ be a point admitting an analytic neighbourhood in $\ha$. Then function $\Vert\cdot\Vert_{\tn{Th}}^\varphi:T_\rho\ha\to\rr\cup\{\pm\infty\}$ is real valued and non-negative. Furthermore, it is $(\rr_{>0})$-homogeneous, satisfies the triangle inequality and $\Vert v\Vert_{\tn{Th}}^\varphi=0$ if and only if \begin{equation}\label{eq: cor finsler resp}
    d_\rho (L_\cdot^\varphi(\gamma))(v)=-\frac{\od_{\rho}(h_{\cdot}^\varphi)(v)}{h_\rho^\varphi}L_\rho^\varphi(\gamma)
\end{equation} \noindent for all $\gamma\in\gh$. In particular, if the function $\rho\mapsto h_{\rho}^\varphi$ is constant, then $$\Vert v\Vert_{\tn{Th}}^\varphi=0\Leftrightarrow d_\rho (L_\cdot^\varphi(\gamma))(v)=0$$ \noindent for all $\gamma\in\gh$.
\end{cor}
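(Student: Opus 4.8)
The plan is to transfer everything through the map $\mathtt{R}_\varphi$ and Proposition \ref{prop: Rvarhpi analytic} to the flow-theoretic Finsler norm $\Vert\cdot\Vert_{\tn{Th}}$ of Subsection \ref{subsection, FinslerNorm}, where the required properties were already established in Lemma \ref{lem: norm is non degenerate}. First I would fix a real analytic neighbourhood of $\rho$, realize it by a real analytic family $\{\rho_z\}_{z\in D}\subset\ha$ with $\rho_0=\rho$, and apply Proposition \ref{prop: Rvarhpi analytic} to produce an analytic family $\{\widetilde g_z\}\subset\mathcal H^\upsilon(\tn{U}\Gamma)$ of positive functions with $\int_0^{p_\phi(a)}\widetilde g_z\od s=L_{\rho_z}^\varphi(\gamma_a)$, and set $g_z:=h_{\rho_z}^\varphi\widetilde g_z$ so that $\mathtt R_\varphi(\rho_z)=[\phi^{g_z}]$ with the normalization $h_{\phi^{g_z}}=1$. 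Given any $v\in T_\rho\ha$, choose an analytic path $z(s)$ in $D$ with $z(0)=0$ and $\tfrac{\od}{\od s}\big|_{s=0}\rho_{z(s)}=v$ (possible after shrinking $D$, since tangent directions are realized by analytic paths in the analytic disk), and let $\dot g_0:=\tfrac{\od}{\od s}\big|_{s=0}g_{z(s)}$, which lies in $T_{[\phi^{g_0}]}\phru$ by the discussion preceding Proposition \ref{prop: FinslerNorm}. Lemma \ref{lem: finsler reps and flows} then gives $\Vert v\Vert_{\tn{Th}}^\varphi=\Vert[\dot g_0]\Vert_{\tn{Th}}$.

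With this identity in hand, real-valuedness, non-negativity, $(\rr_{>0})$-homogeneity and the triangle inequality for $\Vert\cdot\Vert_{\tn{Th}}^\varphi$ follow immediately from the corresponding properties of $\Vert\cdot\Vert_{\tn{Th}}$: the latter is real valued and non-negative by definition of the tangent space $T_{[\psi]}\phru$ (it is a supremum of integrals of $g/r_{\phi,\psi}$ over the compact set $\pphi$), homogeneity and the triangle inequality being noted just before Lemma \ref{lem: norm is non degenerate}. For the non-degeneracy statement, Lemma \ref{lem: norm is non degenerate} tells us $\Vert[\dot g_0]\Vert_{\tn{Th}}=0$ iff $[\dot g_0]=0$, i.e. iff $\dot g_0$ is Liv\v sic cohomologous (w.r.t. $\phi$) to a constant, which by Liv\v sic's Theorem \ref{teo: livsic} and the fact that $[\dot g_0]\in T_{[\phi^{g_0}]}\phru$ forces that constant to be zero, i.e. $\int_0^{p_\phi(a)}\dot g_0\od s=0$ for all $a\in\mathcal O$. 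Differentiating the defining relation $\int_0^{p_\phi(a)}g_z\od s=h_{\rho_z}^\varphi L_{\rho_z}^\varphi(\gamma_a)$ at $z=0$ along the path, this says exactly $\od_\rho(h_\cdot^\varphi)(v)L_\rho^\varphi(\gamma_a)+h_\rho^\varphi\od_\rho(L_\cdot^\varphi(\gamma_a))(v)=0$ for all $a\in\mathcal O$, and since every $\gamma\in\gh$ is conjugate to a power of some $\gamma_a$ (Remark \ref{rem: sp for geodesic flow} together with Lemma \ref{lem: strongly primitive for other reparametrization}, using that both $L_\cdot^\varphi$ and $h_\cdot^\varphi L_\cdot^\varphi$ scale linearly under powers), this is equivalent to Equation \eqref{eq: cor finsler resp} for all $\gamma\in\gh$. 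The final assertion is the special case $\od_\rho(h_\cdot^\varphi)(v)=0$ of this equivalence, which holds whenever $\widehat\rho\mapsto h_{\widehat\rho}^\varphi$ is constant.

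The main obstacle I anticipate is the bookkeeping needed to guarantee that the identifications are genuinely well defined and independent of all the auxiliary choices (the analytic family $\{\rho_z\}$, the realizing path $z(s)$, the family $\{\widetilde g_z\}$, and the representatives $\gamma_a$), and in particular that the passage from strongly primitive elements to arbitrary $\gamma\in\gh$ does not lose information; this is precisely where Lemma \ref{lem: strongly primitive for other reparametrization} and Remark \ref{rem: sp for geodesic flow} are essential, since they ensure $n_{\widehat\rho}^\varphi(\gamma)$ is independent of $\widehat\rho$ so that derivatives along the family behave coherently. A secondary point is that $\Vert v\Vert_{\tn{Th}}^\varphi$ a priori takes values in $\rr\cup\{\pm\infty\}$, so one should note that finiteness of the supremum is part of what Lemma \ref{lem: finsler reps and flows} delivers, via the compactness of $\pphi$ and continuity of $m\mapsto\int(\dot g_0/g_0)\od m$.
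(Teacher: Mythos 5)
Your proposal is correct and follows essentially the same route as the paper: identify $\Vert v\Vert_{\tn{Th}}^\varphi$ with the flow-theoretic norm via Lemma \ref{lem: finsler reps and flows}, import real-valuedness, non-negativity, homogeneity, the triangle inequality and non-degeneracy from Lemma \ref{lem: norm is non degenerate}, differentiate $\int_0^{p_\phi(a)}g_s\,\od t=h_{\rho_s}^\varphi L_{\rho_s}^\varphi(\gamma_a)$ at $s=0$, and pass from the $\gamma_a$ to all of $\gh$ via Lemma \ref{lem: strongly primitive for other reparametrization}. One small wording caveat: when $\Gamma$ has torsion an element of $\gh$ need not be conjugate to a power of some $\gamma_a$; the correct (and sufficient) statement, which is exactly what the lemma you cite provides, is that $L_{\widehat\rho}^\varphi(\gamma)=n\,L_{\widehat\rho}^\varphi(\gamma_a)$ with $n$ independent of $\widehat\rho$, so the differentiated identity transfers.
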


\begin{proof}
By Lemma \ref{lem: norm is non degenerate} and Lemma \ref{lem: finsler reps and flows}, the function $\Vert\cdot\Vert_{\tn{Th}}^\varphi$ is real valued, non-negative, $(\rr_{>0})$-homogeneous and satisfies the triangle inequality. Furthermore, keeping the notation from Lemma \ref{lem: finsler reps and flows}, if $\Vert v\Vert_{\tn{Th}}^\varphi=0$ then $ \dot{g}_0\sim_\phi 0$ and this condition is equivalent to $$0=\displaystyle\int_0^{p_\phi(a)} \dot{g}_0(\phi_t(x))\od t=\left.\frac{\od}{\od s}\right\vert_{s=0}\displaystyle\int_0^{p_\phi(a)} g_s(\phi_t(x))\od t$$ \noindent for all $a\in\mathcal{O}$ and $x\in a$. Hence $$0=\left.\frac{\od}{\od s}\right\vert_{s=0} h_s\displaystyle\int_0^{p_\phi(a)} \widetilde{g}_s(\phi_t(x))\od t=\left.\frac{\od}{\od s}\right\vert_{s=0} h_s L_{\rho_s}^\varphi(\gamma_a).$$ \noindent Thus $$d_\rho (L_\cdot^\varphi(\gamma_a))(v)=-\frac{\od_{\rho}(h_{\cdot}^\varphi)(v)}{h_\rho^\varphi}L_\rho^\varphi(\gamma_a)$$ \noindent for all $a\in\mathcal{O}$. Now by Lemma \ref{lem: strongly primitive for other reparametrization} for every $\gamma\in\gh$ there is some $n\geq 1$ and $a\in\mathcal{O}$ so that $L_{\rho}^\varphi(\gamma)=nL_{\rho}^\varphi(\gamma_a)$ for all $\rho\in\ha$. This finishes the proof.
\end{proof}

In view of Corollary \ref{cor: finsler for reps}, to show that $\Vert\cdot\Vert_{\tn{Th}}^\varphi$ is a Finsler norm, one needs to guarantee that condition (\ref{eq: cor finsler resp}) implies $v=0$. These type of questions have been addressed by Bridgeman-Canary-Labourie-Sambarino \cite{BCLS,BCLSSIMPLEROOTS} in some situations. Rather than discussing these results here, we will recall them in the next sections, when needed.

\section{Hitchin representations}\label{sec: hitchin}

In this section we focus on Hitchin representations. The Zariski closures of $\mathsf{PSL}(d,\rr)$-Hitchin representations have been classified by Guichard. Hence, the results of the previous section apply nicely in this setting giving global rigidity results and leading to asymmetric distances in the whole component. This is explained in detail in Subsection \ref{subsec: rigidity hitchin}, where we also treat the case of $\mathsf{PSO}_0(p,p)$, the remaining classical case not covered by Guichard's classification, using recent results by Sambarino \cite{sambarino2020infinitesimal}. In Subsection \ref{subsec: finsler for hitchin} we discuss Finsler norms associated to some special length functionals in the $\mathsf{PSL}(d,\rr)$-Hitchin component, showing that they are non degenerate (this will be a consequence of Corollary \ref{cor: finsler for reps} and results in \cite{BCLS,BCLSSIMPLEROOTS}).

Throughout this section we let $S$ be a closed oriented surface of genus $g\geq2$, and denote by $\Gamma=\pi_1(S)$ its fundamental group. We also let $\g$ be an adjoint, connected, simple real-split Lie group. Apart from exceptional cases, $\g$ is one of the following $$\mathsf{PSL}(d,\rr),\mathsf{PSp}(2r,\rr), \mathsf{SO}_0(p,p+1), \tn{ or } \mathsf{PSO}_0(q,q),$$ \noindent for $q>2$.  Hitchin representations are  $\Pi$-Anosov (c.f. Example \ref{ex: hitchin and positive}). We denote by $\tn{Hit}(S,\g)$ the Hitchin component into $\g$, when $\g=\mathsf{PSL}(d,\rr)$ we also use the special notation $\Hit_d(S)$.

\subsection{Length spectrum rigidity}\label{subsec: rigidity hitchin}

For $\rho\in\Hit(S,\g)$ denote $\mathscr{L}_\rho^*:=(\mathscr{L}^{\Pi}_\rho)^*$ and consider $\varphi \in \bigcap_{{\rho}\in \Hit(S,\g)} \text{int} (\mathscr{L}_\rho^*) \subset \liea_{\Pi}^*=\liea^*$. 

The main goal of this section is to prove the following.

\begin{teo}\label{Thm:rigitity length hitchin}
Let $\g$ be an adjoint, simple, real-split Lie group of classical type. In the case $\g=\mathsf{PSO}_0(p,p)$, assume furthermore $p\neq 4$.
Let $\varphi \in \bigcap_{{\rho}\in \Hit(S,\g)} \tn{int} (\mathscr{L}_\rho^*)$ be so that $\varphi\circ\sigma\neq\varphi$ for every non inner automorphism $\sigma$ of $\g$. If $\rho, \widehat{\rho} \in \Hit(S,\g)$ satisfy $h^\varphi_{\rho}L^{\varphi}_{\rho}=h^\varphi_{\widehat{\rho}}L^{\varphi}_{\widehat{\rho}}$, then $\rho=\widehat{\rho}$.
\end{teo}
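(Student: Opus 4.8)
The plan is to combine the classification of Zariski closures of Hitchin representations with the general rigidity statement of Theorem \ref{thm:rigidity}. Write $\g_\rho$ (resp.\ $\g_{\widehat{\rho}}$) for the Zariski closure of $\rho(\Gamma)$ (resp.\ $\widehat{\rho}(\Gamma)$). The first step is to record, from Guichard's classification \cite{Guichard} for $\g=\mathsf{PSL}(d,\rr)$ (written up in \cite{sambarino2020infinitesimal}, which also treats $\mathsf{PSp}(2r,\rr)$ and $\mathsf{PSO}(p,p+1)$) together with its extension to $\mathsf{PSO}_0(p,p)$ with $p\neq4$ described in Subsection \ref{subsec: rigidity hitchin}, the following facts: $\g_\rho$ is a connected, center-free, simple real algebraic subgroup of $\g$; every proper Zariski closure that occurs has type $\mathsf{A}_1$, $\mathsf{B}$, $\mathsf{C}$ or $\mathsf{G}_2$, and therefore has trivial group of outer automorphisms; and any two Zariski closures of Hitchin representations into $\g$ that are abstractly isomorphic are conjugate inside $\g$. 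The same applies to $\g_{\widehat{\rho}}$. Among the classical split groups only those of type $\mathsf{A}_{\geq 2}$ and $\mathsf{D}_{\geq 3}$ have nontrivial outer automorphisms, so this dichotomy is exactly what singles out $\mathsf{PSO}_0(4,4)$ as exceptional.

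Next I would feed the hypothesis $h^\varphi_{\rho}L^\varphi_{\rho}=h^\varphi_{\widehat{\rho}}L^\varphi_{\widehat{\rho}}$ into Theorem \ref{thm:rigidity}: since $\g_\rho$ and $\g_{\widehat{\rho}}$ are connected one has $\rho(\Gamma)\subset(\g_\rho)_0$ and $\widehat{\rho}(\Gamma)\subset(\g_{\widehat{\rho}})_0$, so the theorem produces a Lie group isomorphism $\sigma\colon\g_\rho\to\g_{\widehat{\rho}}$ with $\sigma\circ\rho=\widehat{\rho}$ and $\varphi\circ\pi_{\g_{\widehat{\rho}}}\circ\sigma=\varphi\circ\pi_{\g_\rho}$. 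In particular $\g_\rho\cong\g_{\widehat{\rho}}$, so these two groups have the same dimension; I then split into the cases $\g_\rho=\g$ and $\g_\rho\subsetneq\g$.

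If $\g_\rho\subsetneq\g$, then $\g_{\widehat{\rho}}$ is a proper Zariski closure too, hence conjugate to $\g_\rho$ inside $\g$, say $\g_{\widehat{\rho}}=g_0\,\g_\rho\,g_0^{-1}$ with $g_0\in\g$. Then $\mathrm{Ad}(g_0)^{-1}\circ\sigma$ is an automorphism of $\g_\rho$, which is inner because $\g_\rho$ has no outer automorphisms and is center-free; writing it as $\mathrm{Ad}(h)$ with $h\in\g_\rho\subset\g$, we get $\sigma=\mathrm{Ad}(g_0h)$, hence $\widehat{\rho}=\mathrm{Ad}(g_0h)\circ\rho$ and $\rho=\widehat{\rho}$ in $\cha$ — note that the hypothesis on $\varphi$ is not even used in this case. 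If instead $\g_\rho=\g$, then $\g_{\widehat{\rho}}$ is a closed connected subgroup of $\g$ of full dimension, so $\g_{\widehat{\rho}}=\g$; thus $\sigma\in\mathrm{Aut}(\g)$, the maps $\pi_{\g_\rho}$ and $\pi_{\g_{\widehat{\rho}}}$ are the identity of $\liea^+$, and the relation above becomes $\varphi\circ\sigma=\varphi$ on the Weyl chamber, hence on all of $\liea$. By the hypothesis on $\varphi$ the automorphism $\sigma$ is then inner; since $\g$ is adjoint, $\sigma=\mathrm{Ad}(g)$ for some $g\in\g$, so $\widehat{\rho}=\mathrm{Ad}(g)\circ\rho$ and $\rho=\widehat{\rho}$ in $\cha$. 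In either case $\rho=\widehat{\rho}$, as claimed; and when $\g$ itself has no outer automorphisms (e.g.\ $\g=\mathsf{PSp}(2r,\rr)$ or $\mathsf{PSO}(p,p+1)$) the second case requires no assumption on $\varphi$, consistently with Table \ref{table:1}.

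I expect the genuinely substantial point to be the classification of Zariski closures, which is the deep cited input; the delicate part is extracting from it exactly the package used above — connectedness, simplicity and center-freeness of $\g_\rho$, triviality of $\mathrm{Out}(\g_\rho)$ for proper $\g_\rho$, and uniqueness up to $\g$-conjugacy of the subgroups that occur. This is also where the hypothesis $p\neq4$ enters: the $\mathsf{S}_3$ of outer automorphisms of $\mathsf{D}_4$ (triality) disrupts the clean ``$\g_\rho=\g$ versus $\mathrm{Out}(\g_\rho)=1$'' dichotomy, so that $\mathsf{PSO}_0(4,4)$ would require the separate, more delicate analysis indicated in Subsection \ref{subsec: rigidity hitchin}.
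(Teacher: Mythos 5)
Your skeleton is the same as the paper's (classification of Zariski closures, then Theorem \ref{thm:rigidity}, with the hypothesis on $\varphi$ used only when the closure is all of $\g$), but your Case 1 rests on a claim that is false for real groups: having Dynkin type $\mathsf{A}_1$, $\mathsf{B}$ or $\mathsf{C}$ does \emph{not} give trivial outer automorphism group. Indeed $\mathrm{Out}(\mathsf{PSL}(2,\rr))$, $\mathrm{Out}(\mathsf{PSp}(2r,\rr))$ and $\mathrm{Out}(\mathsf{SO}_0(p,q))$ (split, $q=p\pm1$) are all of order two, the nontrivial class being conjugation by an orientation-reversing isometry, an antisymplectic similitude, resp.\ an element of $\mathsf{O}(p,q)$ outside $\pm\mathsf{SO}_0(p,q)$. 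These classes act trivially on the restricted Weyl chamber, so the hypothesis on $\varphi$ cannot rule them out, and whether they are induced by conjugation by an element of the ambient $\g$ is a genuine case-by-case question: for instance for $\mathsf{Sp}(2r,\rr)\subset\mathsf{SL}(2r,\rr)$ with $r$ odd every antisymplectic representative has negative determinant, so the nontrivial class is \emph{not} conjugation by an element of $\mathsf{PSL}(2r,\rr)$; and for the principal $\mathsf{PSL}(2,\rr)$ the nontrivial class is conjugation by an orientation-reversing element of $\mathsf{PGL}(2,\rr)$. So your step ``$\mathrm{Ad}(g_0)^{-1}\circ\sigma$ is inner, hence $\sigma=\mathrm{Ad}(g_0h)$ with $g_0h\in\g$, hence $\rho=\widehat\rho$ in $\cha$, and the hypothesis on $\varphi$ is not even used'' does not follow. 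The gap is most serious exactly in the Fuchsian case: two Fuchsian representations differing by an orientation-reversing conjugation have identical marked length spectra and are not $\mathsf{PSL}(2,\rr)$-conjugate, and nothing in your argument excludes this configuration. (Your auxiliary claim that abstractly isomorphic proper closures are $\g$-conjugate is also an input you would need to extract explicitly from the classification.)

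This is precisely where the paper argues differently. It first reduces $\mathsf{PSp}(2r,\rr)$ and $\mathsf{SO}_0(p,p+1)$ to $\mathsf{PSL}(d,\rr)$ via the inclusions of Hitchin components, so only $\mathsf{PSL}(d,\rr)$ and $\mathsf{PSO}_0(p,p)$ are analysed; and in the principal-$\mathsf{PSL}(2,\rr)$ case it does \emph{not} attempt an automorphism-counting argument at all, but invokes marked length spectrum rigidity on $\Teich(S)$ (both representations lie on the Fuchsian locus of the same Hitchin component, hence define points of the same Teichm\"uller space with equal marked length spectra, hence coincide). Only for the intermediate closure $\mathsf{SO}_0(p,p-1)\subset\mathsf{PSO}_0(p,p)$ does the paper use a diagram-automorphism argument (itself stated tersely), and for the full closure it uses the hypothesis on $\varphi$, as you do. To repair your proof you would need, for each proper closure on the list, either the Teichm\"uller-rigidity input (the $\mathsf{A}_1$ case) or a verification that the extra $\zz/2$ in $\mathrm{Out}$ of the real form is realized by conjugation inside $\g$ (or cannot relate two representations of the same Hitchin component); that verification is exactly what your ``therefore has trivial group of outer automorphisms'' skips.
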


Before going into the proof of Theorem \ref{Thm:rigitity length hitchin} we make few remarks and establish the main corollaries of interest.

\begin{rem}
\begin{itemize}
    \item When $\g=\mathsf{PSL}(d,\rr)$, Bridgeman-Canary-Labourie-Sambarino \cite[Corollary 11.8]{BCLS} proved Theorem \ref{Thm:rigitity length hitchin} for the spectral radius length function $\varphi=\lambda_1$. The proof of Theorem \ref{Thm:rigitity length hitchin} follows the same approach.
    \item We aim to define a simple root asymmetric metric on $\Hit(S,\g)$ (Corollary \ref{cor: asymm for hitchin roots} below). As every simple root of $\mathsf{PSO}_0(4,4)$ is fixed by a non inner automorphism, the function $$d_{\tn{Th}}^{\alpha}:\Hit(S,\mathsf{PSO}_0(4,4))\times\Hit(S,\mathsf{PSO}_0(4,4))\to\rr$$ \noindent does not separate points for any simple root $\alpha$. This is the main reason why we exclude the case $\g=\mathsf{PSO}_0(4,4)$ in the statement of Theorem \ref{Thm:rigitity length hitchin}. 
\end{itemize}
\end{rem}

We have the following two consequences of Theorem \ref{Thm:rigitity length hitchin}.

\begin{cor} \label{cor: asymm for hitchin roots}
Let $\g$ be an adjoint, simple, real-split Lie group of classical type. Let $\alpha$ be any simple root of $\sf G$, with the exception of the roots listed in Table \ref{table:1}. Then the function $d_{\tn{Th}}^{\alpha}: \Hit(S,\g) \times \Hit(S,\g)  \to \rr$ given by $$d_{\tn{Th}}^{\alpha}(\rho,\widehat{\rho}):=\log\left(\displaystyle\sup_{[\gamma]\in[\Gamma]}  \frac{ L_{\widehat{\rho}}^{\alpha}(\gamma) }{L_{\rho}^{\alpha}(\gamma)}\right)$$ \noindent defines an asymmetric distance on $\Hit(S,\sf G)$.

\end{cor}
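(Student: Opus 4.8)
The plan is to deduce Corollary \ref{cor: asymm for hitchin roots} directly from Theorem \ref{Thm:rigitity length hitchin} together with the general results already established for the abstract distance $d_{\tn{Th}}^\varphi$. First I would recall that for $\g$ adjoint, simple and real-split, Hitchin representations are $\Pi$-Anosov, so $\liea_\Pi^* = \liea^*$, and by \cite{PS,PSW1} the entropy with respect to any simple root $\alpha$ is constant and equal to $1$ on $\Hit(S,\g)$ when $\g$ is of classical type. Consequently $h_\rho^\alpha = h_{\widehat\rho}^\alpha = 1$ for all $\rho,\widehat\rho \in \Hit(S,\g)$, so the normalizing prefactor in Definition \ref{def: asymmetric distance anosov reps} disappears and the displayed formula for $d_{\tn{Th}}^\alpha$ in the corollary genuinely agrees with the general $d_{\tn{Th}}^\varphi$ from Definition \ref{def: asymmetric distance anosov reps} applied to $\frakX = \Hit(S,\g)$ and $\varphi = \alpha$. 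I would also need to note that $\alpha$ lies in $\bigcap_{\rho\in\Hit(S,\g)}\mathrm{int}((\mathscr L_\rho)^*)$: this is because the limit cone of a Hitchin representation has nonempty interior (Zariski density / Benoist's theorem) and, more concretely, because simple roots are positive on the limit cone of any $\Pi$-Anosov representation by definition of Anosov (indeed $\alpha(\mu(\rho(\gamma))) \geq C|\gamma| - c$ forces $\alpha(\lambda_\Pi(\rho(\gamma)))$ to grow linearly).

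Next I would invoke Theorem \ref{thm: dth for anosov}: for any $\varphi$ positive on the limit cones, $d_{\tn{Th}}^\varphi$ is real-valued, non-negative, and satisfies the triangle inequality, and $d_{\tn{Th}}^\varphi(\rho,\widehat\rho) = 0$ if and only if $h_\rho^\varphi L_\rho^\varphi = h_{\widehat\rho}^\varphi L_{\widehat\rho}^\varphi$. Applying this with $\varphi = \alpha$ and using $h^\alpha \equiv 1$, we get $d_{\tn{Th}}^\alpha(\rho,\widehat\rho) = 0 \iff L_\rho^\alpha = L_{\widehat\rho}^\alpha$. Now the content of Theorem \ref{Thm:rigitity length hitchin} — valid precisely because $\alpha$ is not one of the bad roots in Table \ref{table:1}, so that no non-inner automorphism of $\g$ fixes $\alpha$, and because $\g \neq \mathsf{PSO}_0(4,4)$ (again since all its simple roots are bad) — gives that $L_\rho^\alpha = L_{\widehat\rho}^\alpha$ implies $\rho = \widehat\rho$. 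Combining these two equivalences yields that $d_{\tn{Th}}^\alpha$ separates points, which together with non-negativity and the triangle inequality shows it is a (possibly asymmetric) distance on $\Hit(S,\g)$.

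The one remaining point is to verify that $\alpha$ being not in Table \ref{table:1} is exactly the hypothesis ``$\alpha\circ\sigma \neq \alpha$ for every non-inner automorphism $\sigma$ of $\g$'' required by Theorem \ref{Thm:rigitity length hitchin}. For $\g$ adjoint simple real-split of classical type, the group of outer automorphisms is generated by diagram automorphisms; these act on simple roots by the corresponding diagram symmetries, and a root $\alpha$ is fixed by some non-trivial diagram automorphism precisely when it appears marked in black in Table \ref{table:1} (the fixed nodes of the order-two symmetry of $\A_{2n-1}$, which is $\alpha_n$; and for $\D_n$ the nodes fixed by the $\alpha_{n-1}\leftrightarrow\alpha_n$ flip, i.e. $\alpha_1,\dots,\alpha_{n-2}$, with $\D_4$ additionally having the triality symmetry fixing only $\alpha_2$, so in fact all four roots of $\D_4$ are bad because no single root is fixed by all of $S_3$ — wait, triality permutes $\alpha_1,\alpha_3,\alpha_4$ and fixes $\alpha_2$, so $\alpha_2$ is bad via the flip but $\alpha_1,\alpha_3,\alpha_4$ are bad via the flip too; this is consistent with the table listing all of $\{\alpha_1,\dots,\alpha_4\}$ for $\PO(4,4)$). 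Thus for $\alpha$ outside Table \ref{table:1}, no non-trivial diagram automorphism fixes $\alpha$, hence no non-inner automorphism does, and Theorem \ref{Thm:rigitity length hitchin} applies. I expect this bookkeeping with the diagram automorphisms, and the exclusion of $\mathsf{PSO}_0(4,4)$, to be the only delicate part; the rest is a formal consequence of Theorems \ref{thm: dth for anosov} and \ref{Thm:rigitity length hitchin} and the constancy of the simple-root entropy.
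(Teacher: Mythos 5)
Your argument reproduces the paper's proof of the ``distance'' part essentially verbatim: constancy of the simple-root entropy ($h^\alpha_\rho\equiv 1$ by \cite{PS,PSW1}), Theorem \ref{thm: dth for anosov} to reduce to the equality $L^\alpha_\rho=L^\alpha_{\widehat\rho}$, and Theorem \ref{Thm:rigitity length hitchin} (applicable because $\alpha$ is not fixed by any non-trivial diagram automorphism, and all roots of $\mathsf{PSO}_0(4,4)$ are excluded) to conclude $\rho=\widehat\rho$. Your extra remarks --- that $\alpha$ lies in $\bigcap_\rho\tn{int}((\mathscr L_\rho)^*)$ by the very definition of Borel--Anosov, and the diagram-automorphism bookkeeping identifying the bad roots with Table \ref{table:1} --- are correct and consistent with the paper.

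There is, however, one missing step. The corollary asserts that $d_{\tn{Th}}^{\alpha}$ is an \emph{asymmetric} distance, not merely a possibly asymmetric one, and your conclusion (``a (possibly asymmetric) distance'') is strictly weaker than the statement. The paper closes this gap by exhibiting actual asymmetry: Thurston \cite[p.5]{ThurstonStretch} produces $\rho,\widehat\rho\in\Teich(S)$ with $d_{\tn{Th}}(\rho,\widehat\rho)\neq d_{\tn{Th}}(\widehat\rho,\rho)$, and since $\Hit(S,\g)$ contains the Fuchsian locus, on which the $\alpha$-length of $\tau\circ\rho_h$ is a fixed positive multiple of the hyperbolic length and hence $d_{\tn{Th}}^{\alpha}$ restricts to Thurston's metric on $\Teich(S)$, the same pair witnesses asymmetry of $d_{\tn{Th}}^{\alpha}$ on $\Hit(S,\g)$. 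You should add this final observation (or an equivalent one) to justify the word ``asymmetric'' in the statement; with it, your proof coincides with the paper's.
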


\begin{proof}

By Potrie-Sambarino \cite[Theorem B]{PS} and P.-Sambarino-W. \cite[Theorem 9.9]{PSW1} we have $h^{\alpha}_{\rho}=1$ for all $\rho\in  \Hit(S,\g)$. Since roots as in the statement are not fixed by non inner automorphisms of $\g$, then by Theorems \ref{thm: dth for anosov} and \ref{Thm:rigitity length hitchin} the function $d_{\tn{Th}}^\alpha$ defines a possibly asymmetric metric. 

It remains to show that $d_{\tn{Th}}^\alpha$ is indeed asymmetric. But Thurston \cite[p.5]{ThurstonStretch} exhibits examples of points $\rho,\widehat{\rho}\in \Teich(S)$ for which the distance from $\rho$ to $\widehat{\rho}$ is different from the distance from $\widehat{\rho}$ to $\rho$. Since $\Hit(S,\g)$ contains a copy of $\Teich(S)$, the claim follows.

\end{proof}

\begin{cor} \label{cor: asymm for hitchin spectral}
Let $\g=\mathsf{PSL}(d,\rr)$ and $\varphi=\lambda_1$ be the spectral radius length function. Then the function $d_{\tn{Th}}^{\lambda_1}: \Hit_d(S) \times \Hit_d(S)  \to \rr$ given by $$d_{\tn{Th}}^{\lambda_1}(\rho,\widehat{\rho})=\log\left(\displaystyle\sup_{[\gamma]\in[\Gamma]} \frac{h_{\widehat{\rho}}^{\lambda_1}}{h_{\rho}^{\lambda_1}} \frac{ L_{\widehat{\rho}}^{\lambda_1}(\gamma) }{L_{\rho}^{\lambda_1}(\gamma)}\right)$$ \noindent defines an asymmetric distance on $\Hit_d(S)$.
\end{cor}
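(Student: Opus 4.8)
The plan is to deduce this corollary directly from the general machinery assembled in the previous sections, in complete parallel with the proof of Corollary \ref{cor: asymm for hitchin roots}. First I would check that $\lambda_1$ is an admissible functional: since $\lambda_1 = \omega_{\alpha_1}$ in the $\mathsf{PSL}(d,\rr)$ normalization (up to the integer $k_{\alpha_1}$, which is irrelevant for the ratios), we have $\lambda_1 \in \liea^*$ and, by the remarks following the definition of the dual cone in Section \ref{sec, AnosovReps}, $\lambda_1 \in \bigcap_{\rho \in \Hit_d(S)} \tn{int}(\mathscr{L}_\rho^*)$. Thus Definition \ref{def: asymmetric distance anosov reps} applies and $d_{\tn{Th}}^{\lambda_1}$ is well-defined, real-valued, non-negative, and satisfies the triangle inequality by Theorem \ref{thm: dth for anosov}. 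Note that here, unlike in Corollary \ref{cor: asymm for hitchin roots}, the $\lambda_1$-entropy is \emph{not} constant on $\Hit_d(S)$, which is exactly why the entropy renormalization is retained in the displayed formula.

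The second step is to verify that $d_{\tn{Th}}^{\lambda_1}$ separates points, i.e. that $h_\rho^{\lambda_1} L_\rho^{\lambda_1} = h_{\widehat\rho}^{\lambda_1} L_{\widehat\rho}^{\lambda_1}$ forces $\rho = \widehat\rho$. This is precisely the content of Theorem \ref{Thm:rigitity length hitchin} applied to $\g = \mathsf{PSL}(d,\rr)$ and $\varphi = \lambda_1$ (this case being the original result of Bridgeman-Canary-Labourie-Sambarino, \cite[Corollary 11.8]{BCLS}). To invoke Theorem \ref{Thm:rigitity length hitchin} I must check the hypothesis that $\lambda_1 \circ \sigma \neq \lambda_1$ for every non-inner automorphism $\sigma$ of $\mathsf{PSL}(d,\rr)$. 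For $d \geq 3$ the outer automorphism group is generated by the contragredient (transpose-inverse), which acts on $\liea^*$ by the opposition involution $\iota$ sending $\lambda_1 = \omega_{\alpha_1}$ to $\omega_{\alpha_{d-1}} = -\lambda_d \neq \lambda_1$; for $d = 2$ all automorphisms are inner so the condition is vacuous. Hence the hypothesis holds and rigidity follows. Combining with the first step, $d_{\tn{Th}}^{\lambda_1}$ is a (possibly asymmetric) metric on $\Hit_d(S)$.

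The final step is to show the metric is genuinely asymmetric. As in the proof of Corollary \ref{cor: asymm for hitchin roots}, I would use that $\Hit_d(S)$ contains a copy of Teichm\"uller space $\Teich(S)$ via the principal embedding $\PSL(2,\rr) \to \PSL(d,\rr)$, on which $\lambda_1$ restricts (up to a positive scalar and the fact that $h^{\lambda_1} = 1$ on the Fuchsian locus after the appropriate normalization) to a positive multiple of the hyperbolic length functional $2\lambda_1$ appearing in Equation \eqref{e.Th}. Thurston's examples \cite[p.5]{ThurstonStretch} of pairs $g_1, g_2 \in \Teich(S)$ with $d_{\tn{Th}}(g_1,g_2) \neq d_{\tn{Th}}(g_2,g_1)$ then yield pairs in $\Hit_d(S)$ witnessing asymmetry of $d_{\tn{Th}}^{\lambda_1}$. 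The one point that requires a small argument is that the embedding of $\Teich(S)$ is isometric for $d_{\tn{Th}}^{\lambda_1}$ up to rescaling, i.e. that the supremum over $[\gamma]$ of the renormalized length ratio is unchanged when one restricts attention to Fuchsian representations in the target; this follows because the $\varphi$-length spectrum of the composed representation is a constant multiple of the hyperbolic length spectrum and the constant cancels in the ratio, while the entropies agree.

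The main obstacle here is essentially bookkeeping rather than a deep point: one must be careful that the entropy renormalization in the definition of $d_{\tn{Th}}^{\lambda_1}$ does not interfere with the restriction-to-the-Fuchsian-locus argument in the asymmetry step, since $h_\rho^{\lambda_1}$ varies over $\Hit_d(S)$. But along the Fuchsian locus $h^{\lambda_1}$ is constant (equal to the value $h^{\lambda_1}$ takes on the principal $\PSL(2,\rr)$, which by \cite{PS,PSW1} is $1$ in the simple-root normalization and a fixed constant otherwise), so on the copy of $\Teich(S)$ the functional $d_{\tn{Th}}^{\lambda_1}$ reduces, up to a positive scalar, to Thurston's $d_{\tn{Th}}$, and the asymmetry is inherited. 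Everything else is an immediate application of Theorems \ref{thm: dth for anosov} and \ref{Thm:rigitity length hitchin}.
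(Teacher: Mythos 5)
Your proof for $d>2$ is essentially the paper's own argument (Theorem \ref{thm: dth for anosov} for non-negativity and the triangle inequality, Theorem \ref{Thm:rigitity length hitchin} for separation of points via $\lambda_1\circ\iota=-\lambda_d\neq\lambda_1$), and your asymmetry step — restricting to the Fuchsian locus, where $L^{\lambda_1}$ is $\tfrac{d-1}{2}$ times the hyperbolic length and $h^{\lambda_1}$ is the constant $\tfrac{2}{d-1}$, so the renormalized ratios reduce to Thurston's and his example applies — is correct and is exactly the device the paper uses in the proof of Corollary \ref{cor: asymm for hitchin roots}. The genuine gap is your treatment of $d=2$. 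It is false that every automorphism of $\mathsf{PSL}(2,\rr)$ is inner: conjugation by an element of $\mathsf{PGL}(2,\rr)$ of negative determinant (e.g.\ $\mathrm{diag}(1,-1)$) is not realized by conjugation inside $\mathsf{PSL}(2,\rr)$, so $\tn{Out}(\mathsf{PSL}(2,\rr))\cong\zz/2$. This outer automorphism acts trivially on $\liea$ (for type $\mathsf{A}_1$ the opposition involution is the identity), hence fixes $\lambda_1$; consequently the hypothesis of Theorem \ref{Thm:rigitity length hitchin} — that $\varphi\circ\sigma\neq\varphi$ for \emph{every} non-inner automorphism — genuinely fails for $d=2$, $\varphi=\lambda_1$, and the theorem cannot be invoked there. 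As written, your argument leaves the separation of points unproved in the case $d=2$.

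The paper closes this case differently: for $d=2$ it concludes from Theorem \ref{thm: dth for anosov} together with the classical marked length spectrum rigidity for hyperbolic surfaces. Equivalently, one can run Theorem \ref{thm:rigidity} and observe that if the isomorphism $\sigma$ it produces were the outer automorphism of $\mathsf{PSL}(2,\rr)$, then $\widehat{\rho}=\sigma\circ\rho$ would be the mirror representation, which lies in the other connected component of discrete faithful representations and hence not in $\Hit_2(S)=\teichrep$; so within the Hitchin component the conclusion $\rho=\widehat{\rho}$ still holds. Either patch repairs your proof; but the statement ``for $d=2$ all automorphisms are inner'' cannot stand as the justification.
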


\begin{proof}
The action on $\liea$ of the unique non inner automorphism of $\mathsf{PSL}(d,\rr)$ coincides with the opposition involution $\iota$.
When $d>2$ note that $\lambda_1 \neq \lambda_1 \circ \iota$, hence in this case the result follows from Theorems \ref{thm: dth for anosov} and \ref{Thm:rigitity length hitchin}.  If $d=2$, the result follows from Theorem \ref{thm: dth for anosov} and the Length Spectrum Rigidity for hyperbolic surfaces. 
\end{proof}

We now turn to the proof of Theorem \ref{Thm:rigitity length hitchin}. In view of the natural inclusions $$\Hit(S,\mathsf{PSp}(2r,\rr))\subset \Hit_{2r}(S) \tn{ and } \Hit(S,\mathsf{SO}_0(p,p+1))\subset \Hit_{2p+1}(S),$$ \noindent we may assume that $\g$ is either $\mathsf{PSL}(d,\rr)$ or $\mathsf{PSO}_0(p,p)$. We will focus on the case $\g=\mathsf{PSO}_0(p,p)$, the argument for $\g=\mathsf{PSL}(d,\rr)$
is similar (and further, in that case the reader can also compare with \cite[Corollary 11.8]{BCLS}). 

The main step in the proof is to carefully analyse the possible Zariski closures of $\mathsf{PSO}_0(p,p)$-Hitchin representations, and show that they satisfy the hypotheses of Theorem \ref{thm:rigidity}. This is achieved in Corollaries \ref{cor: Z closure hitchin opp simple and center free} and \ref{cor: Z closure0 contains rho} below, as an application of recent work by Sambarino \cite{sambarino2020infinitesimal}.

Let then $p>2$ and consider a principal embedding $\tau:\mathsf{PSL}(2,\rr)\to\mathsf{PSO}_0(p,p)$. Then $\tau$ factors as $$\tau:\mathsf{PSL}(2,\rr)\to\mathsf{SO}_0(p,p-1)\to\mathsf{PSO}_0(p,p),$$ \noindent where the first map is the irreducible representation into $\mathsf{SL}(2p-1,\rr)$, and the second  is induced by the standard embedding stabilizing a non isotropic line $\ell_\tau\subset\rr^{2p}$. We let $\pi_\tau$ be the complementary $(p,p-1)$-hyperplane. Note that $\tau$ lifts to a principal embedding $\widehat{\tau}:\mathsf{PSL}(2,\rr)\to\mathsf{SO}_0(p,p)$. A \textit{Fuchsian} representation is a Hitchin representation into $\mathsf{PSO}_0(p,p)$ (resp. $\mathsf{SO}_0(p,p-1)$) whose image is contained in a conjugate of $\tau(\mathsf{PSL}(2,\rr))$ (resp. $\widehat{\tau}(\mathsf{PSL}(2,\rr))$). The following is well-known (see e.g. \cite[p.25]{sambarino2020infinitesimal} for a proof).

\begin{lema}\label{lem: hitchin PSOpp lifts}
Let $\rho\in\tn{Hit}(S,\mathsf{PSO}_0(p,p))$. Then there exists a representation $\widehat{\rho}:\Gamma\to\mathsf{SO}_0(p,p)$ lifting $\rho$ that may be deformed to a Fuchsian representation.
\end{lema}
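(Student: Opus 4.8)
The plan is to combine obstruction theory for the covering $\pi:\mathsf{SO}_0(p,p)\to\mathsf{PSO}_0(p,p)$ with the connectedness of the Hitchin component. Write $Z:=\ker\pi$, an order-two subgroup of the center of $\mathsf{SO}_0(p,p)$. To a representation $\eta:\Gamma\to\mathsf{PSO}_0(p,p)$ one attaches an obstruction class $o(\eta)\in H^2(\Gamma;Z)$ which vanishes exactly when $\eta$ lifts through $\pi$, and, when non-empty, the set of lifts is a torsor under $H^1(\Gamma;Z)$, acting by $\widehat\eta\mapsto\bigl(\gamma\mapsto c(\gamma)\widehat\eta(\gamma)\bigr)$ for $Z$-valued cocycles $c$. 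Since $S$ is a closed surface $H^2(\Gamma;Z)\cong\zz/2$, and $\eta\mapsto o(\eta)$ is locally constant in families of representations; as the Hitchin locus in $\mathrm{Hom}(\Gamma,\mathsf{PSO}_0(p,p))$ is path-connected (Hitchin's parametrization), $o$ is constant there.

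First I would evaluate $o$ at a Fuchsian point. Such a point has the form $\rho_1=\tau\circ h$ with $h:\Gamma\to\mathsf{PSL}(2,\rr)$ the holonomy of a hyperbolization of $S$. Using the principal embedding $\widehat\tau:\mathsf{PSL}(2,\rr)\to\mathsf{SO}_0(p,p)$ lifting $\tau$ (recalled in the excerpt), the representation $\widehat\tau\circ h$ is an explicit lift of $\rho_1$, so $o(\rho_1)=0$. By the previous paragraph $o$ then vanishes on all of $\Hit(S,\mathsf{PSO}_0(p,p))$; in particular the given $\rho$ admits a lift to $\mathsf{SO}_0(p,p)$.

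To obtain a lift that deforms to a Fuchsian representation, I would fix a path $\{\rho_t\}_{t\in[0,1]}$ inside the Hitchin locus with $\rho_0=\rho$ and $\rho_1=\tau\circ h$ Fuchsian. Over the liftable locus---which by the above contains the whole Hitchin component---the projection $\mathrm{Hom}(\Gamma,\mathsf{SO}_0(p,p))\to\mathrm{Hom}(\Gamma,\mathsf{PSO}_0(p,p))$ is a covering map with deck group $H^1(\Gamma;Z)$, so $\{\rho_t\}$ admits a lift $\{\widehat\rho_t\}$. Carrying out this path-lifting \emph{starting from the Fuchsian end}, that is normalizing the lift so that $\widehat\rho_1=\widehat\tau\circ h$ (possible because the lifts of $\rho_1$ form an $H^1(\Gamma;Z)$-torsor containing $\widehat\tau\circ h$), one obtains a lift $\widehat\rho:=\widehat\rho_0$ of $\rho$ together with the deformation $\{\widehat\rho_t\}$ joining it to the Fuchsian representation $\widehat\tau\circ h$. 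Finally each $\widehat\rho_t$ is Anosov since its projection $\rho_t$ is and Anosovness only involves the limit maps into the flag varieties, which $\pi$ canonically identifies; as $\widehat\rho_1$ is Fuchsian, the whole path lies in a Hitchin component of $\mathsf{SO}_0(p,p)$, as required.

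The step demanding the most care is the last one: one must ensure that the endpoint of the lifted path is an honest Fuchsian representation of $\mathsf{SO}_0(p,p)$ (image conjugate into $\widehat\tau(\mathsf{PSL}(2,\rr))$) rather than a $Z$-twist of one, which is precisely why the path-lifting is anchored at the Fuchsian end and not at $\rho$. A secondary verification is that $\pi$ is indeed a $\zz/2$-covering (equivalently, that $Z(\mathsf{SO}_0(p,p))$ has order two) and that $\widehat\tau$ exists with $\pi\circ\widehat\tau=\tau$; for this bookkeeping one may consult Sambarino \cite{sambarino2020infinitesimal}.
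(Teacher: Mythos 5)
The paper never writes out a proof of this lemma: it is quoted as well known, with a pointer to \cite[p.~25]{sambarino2020infinitesimal}, so there is no in-text argument for you to match, and your obstruction-theoretic proof is a correct, self-contained substitute along the standard lines. Your two ingredients are the right ones: constancy of the lifting obstruction $o(\cdot)\in H^2(\Gamma;Z)$ in connected families, combined with the explicit lift $\widehat\tau\circ h$ at a Fuchsian point, kills the obstruction on the whole Hitchin locus; and path-lifting through the covering $\mathrm{Hom}(\Gamma,\mathsf{SO}_0(p,p))\to\mathrm{Hom}(\Gamma,\mathsf{PSO}_0(p,p))$ over the liftable locus, anchored at the Fuchsian end, produces a lift of $\rho$ together with the required deformation to $\widehat\tau\circ h$; anchoring at that end is indeed the point needing care, and you handle it. Two small touch-ups. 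First, your parenthetical claim that $Z(\mathsf{SO}_0(p,p))$ has order two is only true for $p$ even; for $p$ odd the element $-\mathrm{id}$ does not lie in the identity component, the center is trivial, $\pi$ is an isomorphism and the lemma is immediate — fortunately your argument only uses that $Z$ is finite, central and discrete, so this is cosmetic. Second, the connectedness you invoke must be that of the Hitchin locus inside $\mathrm{Hom}(\Gamma,\mathsf{PSO}_0(p,p))$, not merely of the Hitchin component of the character variety that Hitchin parametrizes; this does hold (conjugation orbits are connected because $\mathsf{PSO}_0(p,p)$ is connected, and Hitchin representations have trivial centralizer, so paths in the component lift to $\mathrm{Hom}$), but it deserves a sentence, and one should note that the endpoint of such a path may only be a conjugate $g(\tau\circ h)g^{-1}$ — harmless, since conjugating $\widehat\tau\circ h$ by any lift of $g$ again yields a Fuchsian representation into $\mathsf{SO}_0(p,p)$.
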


Here is another useful lemma. 

\begin{lema}\label{lem: Zclosure hitchin is reductive}
Let $\widehat{\rho}:\Gamma\to\mathsf{SO}_0(p,p)$ be a Hitchin representation. Then the Zariski closure $\g_{\widehat{\rho}}$ of $\widehat{\rho}$ is reductive. 
\end{lema}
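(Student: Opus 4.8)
The plan is to show that the Zariski closure $\g_{\widehat\rho}$ of a Hitchin representation $\widehat\rho\colon\Gamma\to\mathsf{SO}_0(p,p)$ has trivial unipotent radical, which for an algebraic subgroup of a reductive group is equivalent to being reductive. First I would recall the standard criterion: $\g_{\widehat\rho}$ is reductive if and only if its action on the defining representation $\rr^{2p}$ (equipped with the quadratic form of signature $(p,p)$) is completely reducible, equivalently if and only if $\g_{\widehat\rho}$ is not contained in any proper parabolic subgroup of $\mathsf{SO}_0(p,p)$. So the task reduces to excluding that $\widehat\rho(\Gamma)$ preserves a nonzero totally isotropic subspace of $\rr^{2p}$, or more generally an isotropic flag.

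The key mechanism is \emph{proximality coming from the Anosov property}. Since $\rho$ (hence $\widehat\rho$, up to the finite cover) is Borel-Anosov, for every simple root $\alpha$ the representation $\Lambda_\alpha\circ\widehat\rho$ is projective Anosov (Proposition \ref{prop: anosov and tits}), and in particular $\widehat\rho$ acts proximally on each exterior power $\Lambda^k\rr^{2p}$ for $1\le k\le p-1$, with $\xi^k_{\widehat\rho}$ a continuous equivariant curve of $k$-planes. Equivariance of these limit curves together with transversality forces the family $\{\xi^k_{\widehat\rho}(x):x\in\bg\}$ to span $\rr^{2p}$ (otherwise the span would be a proper $\widehat\rho(\Gamma)$-invariant subspace, and one checks using the dynamics-preserving property and the existence of proximal elements that this is impossible unless $\Gamma$ is elementary, which it is not). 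Concretely, an invariant isotropic subspace $W$ would have to contain or be contained in $\xi^{\dim W}_{\widehat\rho}(\gamma_+)$ for every $\gamma\in\gh$ — but the attracting $k$-planes $\xi^k_{\widehat\rho}(\gamma_+)$, as $\gamma$ ranges over $\gh$ (whose pairs of fixed points are dense in $\bgs$), are in general position by transversality of $\xi_{\widehat\rho}$ and $\overline\xi_{\widehat\rho}$, so no single subspace can be invariant. This rules out invariant isotropic subspaces, hence $\widehat\rho(\Gamma)$ is not contained in any parabolic, hence $\g_{\widehat\rho}$ is reductive.

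Alternatively, and perhaps more cleanly, I would deform: by Lemma \ref{lem: hitchin PSOpp lifts} (used for the lift, though here $\widehat\rho$ is already assumed valued in $\mathsf{SO}_0(p,p)$ — one still has the analogous fact that a Hitchin representation into $\mathsf{SO}_0(p,p)$ connects to a Fuchsian one inside the Hitchin component), $\widehat\rho$ lies in a connected family $\{\widehat\rho_t\}$ with $\widehat\rho_0$ Fuchsian. For the Fuchsian representation $\widehat\rho_0 = \widehat\tau\circ j$, the Zariski closure is exactly $\widehat\tau(\mathsf{PSL}(2,\rr))$ up to the issue of the line $\ell_\tau$ — in any case it is reductive since the principal $\mathsf{PSL}_2$ acts on $\rr^{2p}$ as $\mathrm{Sym}^{2p-2}(\rr^2)\oplus\rr$, a sum of irreducibles. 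One then invokes the fact that the Zariski closure can only \emph{grow} or stay reductive along a path of Anosov representations with uniformly proximal behavior: the set of $t$ for which $\g_{\widehat\rho_t}$ is reductive is both open (stability of the no-invariant-isotropic-subspace condition, using continuity of limit maps under the Anosov perturbation) and its complement would force a non-reductive, hence non-semisimple proximal group, contradicting proximality on all the relevant exterior powers.

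The main obstacle, I expect, is making the "invariant isotropic subspace forces degenerate limit curve" step fully rigorous — one has to argue carefully that a $\widehat\rho(\Gamma)$-invariant subspace $W$ must be compatible with the limit flags at \emph{every} fixed point $\gamma_\pm$, and then extract a contradiction from the transversality $(\overline\xi_{\widehat\rho}(x),\xi_{\widehat\rho}(y))\in\mathscr F^{(2)}_\Pi$ for $x\neq y$ together with density of $\{(\gamma_-,\gamma_+)\}$ in $\bgs$. This is standard in the Anosov literature (it is essentially the statement that an Anosov representation with reductive Zariski closure argument runs both ways), but writing it out requires either citing the appropriate structural result or reproducing the short dynamical argument; I would cite Guichard--Wienhard or the analysis in \cite{BCLS} rather than reprove it. The deformation argument's obstacle is instead the semicontinuity statement for Zariski closures, which I would handle by the openness/closedness dichotomy sketched above rather than by a general semicontinuity theorem.
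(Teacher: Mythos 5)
Your reduction via Borel--Tits (non-reductive forces containment in a proper parabolic of $\mathsf{SO}_0(p,p)$, i.e.\ a nonzero invariant isotropic subspace) is the same first step as the paper's, but the way you then exclude invariant isotropic subspaces has a genuine gap: everything you invoke --- Borel-Anosov-ness, transversality of the limit maps, density of fixed pairs --- holds for arbitrary Borel-Anosov representations, makes no essential use of the isotropy of $W$, and so would show that there is no proper invariant subspace at all, i.e.\ that $\widehat{\rho}$ acts irreducibly on $\rr^{2p}$. That is false: a Fuchsian representation $\widehat{\tau}\circ j$ preserves the non-isotropic line $\ell_\tau$ and the hyperplane $\pi_\tau\cong\mathrm{Sym}^{2p-2}(\rr^2)$, and its limit lines $\xi^1_{\widehat{\rho}}(x)$ all lie in $\pi_\tau$, so your claim that the limit curves must span $\rr^{2p}$ already fails on the Fuchsian locus. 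The intermediate assertion that an invariant $W$ must contain or be contained in $\xi^{\dim W}_{\widehat{\rho}}(\gamma_+)$ for every $\gamma\in\gh$ is also unjustified (an invariant subspace of a loxodromic element is a sum of its eigenspaces, not necessarily comparable with the attracting $k$-plane), and there is no general theorem ``Anosov implies reductive Zariski closure'' to cite from Guichard--Wienhard or \cite{BCLS}: non-reductive Anosov representations exist, since the semisimplification of an Anosov representation is again Anosov with the same Cartan and Jordan data. The missing ingredient is exactly the Hitchin-specific input the paper uses: Fock--Goncharov positivity. If $\widehat{\rho}(\Gamma)$ were contained in some $\p_\Theta$, one produces a positive-dimensional subgroup (coming from $\exp(\liea_\Theta)$) stabilizing triples of limit flags, whereas positivity forces such stabilizers to be finite; your sketch never brings in anything of this kind.

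Your second, deformation-based route has the same problem in disguise: the locus in the Hitchin component of representations with no invariant isotropic subspace is open (an invariant isotropic subspace persists under limits of representations), but your open-and-closed argument also needs the complementary locus to be open, and your justification --- that a non-reductive closure would ``contradict proximality on the exterior powers'' --- is not a contradiction, since proximal elements can perfectly well preserve isotropic subspaces (upper-triangular proximal matrices do). So the closedness direction is unsupported, and semicontinuity of Zariski closures points the wrong way for you. Either route needs positivity (or another genuinely Hitchin-theoretic fact, such as the structure of the limit curve coming from positivity of triples) to close the argument, which is precisely how the paper's proof concludes.
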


\begin{proof}

Suppose by contradiction that $\g_{\widehat{\rho}}$ is not reductive. 
	Then $\g_{\widehat{\rho}}$ is contained in a proper parabolic subgroup of $\mathsf{SO}_0(p,p)$ \cite{BorelTits}. 
	That is, $\widehat{\rho}(\Gamma)$ stabilizes a totally isotropic subspace $W$ of $\rr^{2p}$.
	
	Now the proof reduces to Sambarino \cite[Theorem B]{sambarino2020infinitesimal}. 
	Indeed, let $\lieg_{\widehat{\rho}}^{ss}$ be the semisimple part of the Lie algebra $\lieg_{\widehat{\rho}}$ of $\g_{\widehat{\rho}}$.
	By assumption, we have $\lieg_{\widehat{\rho}}^{ss}\neq \lieg_{\widehat{\rho}}$ and so \cite[Theorem B]{sambarino2020infinitesimal} implies that $\lieg_{\widehat{\rho}}^{ss}$ is either a principal $\mathfrak{sl}_2(\rr)$ or isomorphic to a copy of $\mathfrak{so}(p,p-1)$, stabilizing a non isotropic line $\ell$ and a complementary hyperplane $\pi$. 
	In either case, $\lieg_{\widehat{\rho}}^{ss}$ acts irreducibly on a hyperplane $\pi$ of signature $(p,p-1)$.
	But then $\ell\oplus W$ intersects $\pi$ non trivially and is $\lieg_{\widehat{\rho}}^{ss}$-invariant, a contradiction.

\end{proof}

 For a Hitchin representation $\widehat{\rho}:\Gamma\to \mathsf{SO}_0 (p,p)$, let $\lieg_{\widehat{\rho}}^{ss}$ be the semisimple part of the Lie algebra $\lieg_{\widehat{\rho}}$ of $\g_{\widehat{\rho}}$. By Sambarino \cite[Theorem A]{sambarino2020infinitesimal}, if $p\neq 4$ then $\lieg_{\widehat{\rho}}^{ss}$ is either $\mathfrak{so}(p,p)$, a principal $\mathfrak{sl}_2$, or the image of the standard embedding $\mathfrak{so}(p,p-1)\to\mathfrak{so}(p,p)$. In each case $\lieg_{\widehat{\rho}}^{ss}$ contains, up to conjugation, the Lie subalgebra $\od\widehat{\tau}(\mathfrak{sl}_2)$.

\begin{lema}\label{lem: center of Z closure hitchin opp}
Let $\widehat{\rho}:\Gamma\to\mathsf{SO}_0(p,p)$ be a Hitchin representation. Suppose that $g\in\g_{\widehat{\rho}}$ satisfies $ghg^{-1}=\pm h$ for all $h\in\g_{\widehat{\rho}}$. Then $g\in \{\tn{id},-\tn{id}\}$.
\end{lema}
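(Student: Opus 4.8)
The plan is to show that any $g \in \g_{\widehat\rho}$ with $ghg^{-1} = \pm h$ for all $h \in \g_{\widehat\rho}$ must in fact centralize $\g_{\widehat\rho}$, and then to identify the centralizer of $\g_{\widehat\rho}$ inside $\mathsf{SO}_0(p,p)$ explicitly. The first reduction uses that the sign $\epsilon(h) \in \{\pm 1\}$ defined by $ghg^{-1} = \epsilon(h) h$ is multiplicative on the subset of $\g_{\widehat\rho}$ where it is defined (if $g h_1 g^{-1} = \epsilon_1 h_1$ and $g h_2 g^{-1} = \epsilon_2 h_2$ then $g(h_1h_2)g^{-1} = \epsilon_1\epsilon_2 h_1 h_2$), but more cleanly one works at the Lie algebra level: conjugation by $g$ gives a Lie algebra automorphism $\mathrm{Ad}(g)$ of $\lieg_{\widehat\rho}$, and the hypothesis says $\mathrm{Ad}(g)$ acts as $\pm 1$ on each element, hence it is $+1$ on $[\lieg_{\widehat\rho},\lieg_{\widehat\rho}] = \lieg_{\widehat\rho}^{ss}$ (a bracket of two eigenvectors with eigenvalues $\pm1$ has eigenvalue $+1$). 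So $g$ centralizes $\lieg_{\widehat\rho}^{ss}$.

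The key input is then Sambarino's classification \cite[Theorem A]{sambarino2020infinitesimal}, recalled just above the lemma: since $p \neq 4$, the semisimple part $\lieg_{\widehat\rho}^{ss}$ is either $\mathfrak{so}(p,p)$, the image of the standard embedding $\mathfrak{so}(p,p-1) \hookrightarrow \mathfrak{so}(p,p)$, or a principal $\mathfrak{sl}_2$; in every case $\lieg_{\widehat\rho}^{ss}$ contains (a conjugate of) the principal $\od\widehat\tau(\mathfrak{sl}_2)$. Therefore it suffices to show that the centralizer of the principal $\mathfrak{sl}_2$ in $\mathsf{SO}_0(p,p)$ is $\{\mathrm{id}, -\mathrm{id}\}$. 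Under the principal embedding $\od\widehat\tau: \mathfrak{sl}_2 \to \mathfrak{so}(p,p)$, the standard representation $\rr^{2p}$ of $\mathsf{SO}_0(p,p)$ decomposes as an $\mathfrak{sl}_2$-module. For the principal $\mathfrak{sl}_2$ in $\mathfrak{so}(p,p) = \mathfrak{so}(2p)$-type with split real form, $\rr^{2p}$ is the direct sum of the two irreducible $\mathfrak{sl}_2$-modules of dimensions $2p-1$ and $1$ (this matches the factorization $\widehat\tau: \mathsf{PSL}(2,\rr) \to \mathsf{SO}_0(p,p-1) \to \mathsf{SO}_0(p,p)$ described right before Lemma \ref{lem: hitchin PSOpp lifts}: the $(2p-1)$-dimensional irreducible piece is $\pi_\tau$ and the trivial piece is the non-isotropic line $\ell_\tau$). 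Both isotypic components are multiplicity one, so by Schur's lemma any element of $\mathsf{GL}(2p,\rr)$ commuting with the $\mathfrak{sl}_2$-action is scalar on each, i.e. of the form $(a\cdot\mathrm{id}_{\pi_\tau}, b\cdot\mathrm{id}_{\ell_\tau})$; requiring it to preserve the quadratic form of signature $(p,p)$ (which restricts non-degenerately to each piece) forces $a, b \in \{\pm 1\}$.

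Finally I must rule out the mixed possibilities $(a,b) = (1,-1)$ and $(-1,1)$. Here one uses that $\g_{\widehat\rho}$ need not coincide with the centralizer of the $\mathfrak{sl}_2$ but does contain the limit curve data: since $\widehat\rho$ is a lift of a Hitchin (hence positive, hence Zariski-dense-in-its-reductive-closure) representation, and the limit curve $\xi_{\widehat\rho}: \bg \to \mathscr F$ is non-degenerate, the element $g$ — which we have just shown lies in $\{\pm\mathrm{id}_{\pi_\tau}\} \times \{\pm\mathrm{id}_{\ell_\tau}\}$ and moreover normalizes (indeed centralizes, on the semisimple part) $\g_{\widehat\rho}$ — must normalize $\g_{\widehat\rho}$, hence preserve the decomposition $\rr^{2p} = \pi_{\widehat\rho} \oplus \ell_{\widehat\rho}$ intrinsic to $\g_{\widehat\rho}$ (which, by the classification, is either trivial, when $\lieg_{\widehat\rho}^{ss} = \mathfrak{so}(p,p)$, or equals the $\widehat\tau$-decomposition otherwise). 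In the case $\lieg_{\widehat\rho}^{ss} = \mathfrak{so}(p,p)$ the centralizer of $\lieg_{\widehat\rho}^{ss}$ in $\mathsf{SO}_0(p,p)$ is already the center $\{\pm\mathrm{id}\}$ of the full orthogonal group intersected with $\mathsf{SO}_0$, and we are done directly; in the two remaining cases the mixed signs $(1,-1),(-1,1)$ are elements of $\mathrm{O}(p,p)$ of determinant $(-1)$ or $(-1)^{2p-1} = -1$, hence not in $\mathsf{SO}_0(p,p)$ at all, contradiction. Thus $g \in \{\mathrm{id}, -\mathrm{id}\}$.

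The main obstacle I anticipate is the bookkeeping in the last paragraph: making precise, for each of the three cases in Sambarino's classification, exactly which $(a,b)$ are realized by elements of the \emph{identity component} $\mathsf{SO}_0(p,p)$ (as opposed to $\mathrm{O}(p,p)$ or $\mathsf{SO}(p,p)$), and checking that the determinant parity argument indeed excludes the mixed signs — in particular pinning down the decomposition of $\rr^{2p}$ under the principal $\mathfrak{sl}_2$ and the parity of $2p-1$. Everything else (the Lie-algebra eigenvalue argument, Schur's lemma, invariance of the quadratic form) is routine.
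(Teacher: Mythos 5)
Your argument is correct in substance but follows a genuinely different route from the paper's. The paper stays at the group level: using the same input you invoke (that, by Sambarino's classification, $\widehat{\tau}(\mathsf{PSL}(2,\rr))\subset(\g_{\widehat{\rho}})_0$ up to conjugation), it observes that for a hyperbolic $h$ all eigenvalues of $\widehat{\tau}(h)$ are positive, so the sign in $g\widehat{\tau}(h)g^{-1}=\pm\widehat{\tau}(h)$ must be $+$; then $g$ preserves every eigenline of $\widehat{\tau}(h)$, and by playing two hyperbolic elements against each other it preserves $\ell_\tau$ and finally $2p+1$ lines in general position, forcing $g$ to be scalar. You instead linearize: $g$ centralizes $\lieg_{\widehat{\rho}}^{ss}\supset\od\widehat{\tau}(\mathfrak{sl}_2)$, the $\mathfrak{sl}_2$-module $\rr^{2p}=\pi_\tau\oplus\ell_\tau$ has two non-isomorphic, absolutely irreducible summands, so Schur's lemma plus invariance of the form gives $g=(a\,\mathrm{id}_{\pi_\tau},b\,\mathrm{id}_{\ell_\tau})$ with $a,b\in\{\pm1\}$, and the mixed signs are excluded because they have determinant $-1$ while $g\in\mathsf{SO}_0(p,p)$. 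This is shorter and more conceptual; the paper's version is more elementary (positivity of eigenvalues and a general-position count instead of Schur and a determinant computation). Like the paper, your use of the principal $\mathfrak{sl}_2$ rests on the classification, hence implicitly on $p\neq 4$, which is harmless since the lemma is only applied under that hypothesis.

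Two fixable blemishes. First, the parenthetical "a bracket of two eigenvectors with eigenvalues $\pm1$ has eigenvalue $+1$" is false as stated: the eigenvalue is the product of the signs. The cleanest repair is to note that for $h=\exp(tX)$ with $t$ small the sign in $ghg^{-1}=\pm h$ must be $+$ by continuity (the left side tends to $\mathrm{id}$, while $-h$ tends to $-\mathrm{id}$), so in fact $\mathrm{Ad}(g)=\mathrm{id}$ on all of $\lieg_{\widehat{\rho}}$, which is more than you need; alternatively, if every vector were a $\pm1$-eigenvector then $\mathrm{Ad}(g)=\pm\mathrm{id}$, and $-\mathrm{id}$ is an automorphism only of an abelian algebra. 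Second, the digression in your last paragraph about the limit curve and a decomposition intrinsic to $\g_{\widehat{\rho}}$ is unnecessary: once $g$ commutes with the principal $\mathfrak{sl}_2$, the Schur-plus-determinant argument handles all three cases of the classification uniformly.
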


\begin{proof}
Let $g\in\g_{\widehat{\rho}}$ be as in the statement. Since $\widehat{\tau}(\mathsf{PSL}(2,\rr))\subset(\g_{\widehat{\rho}})_0$, then $g$ centralizes (up to a sign) the principal $\mathsf{PSL}(2,\rr)$, which  factors through $\mathsf{SO}_0(p,p-1)$. 

Now if $h\in\mathsf{PSL}(2,\rr)$ is a hyperbolic element with eigenvalues $\pm\lambda$ (well defined up to $\pm 1$), then $\widehat{\tau}(h)$ acting on $\pi_\tau$ is diagonalizable with eigenvalues $$\lambda^{2(p-1)},\dots,\lambda^2,1,\lambda^{-2},\dots,\lambda^{-2(p-1)}.$$ \noindent Note that these are positive independently on whether we choose $\lambda$ or $-\lambda$ for the eigenvalues of $h$, hence to fix ideas we will assume $\lambda>1$. In particular, all the eigenvalues of $\widehat{\tau}(h)$ are positive. We let $\pi_h$ be the two dimensional plane spanned by $\ell_\tau$ and the eigenline in $\pi_\tau$ of eigenvalue $ 1$, which we denote by $\ell^1_h$. That is, $\pi_h$ is the eigenspace of $\widehat{\tau}(h)$ associated to the eigenvalue $1$. We also let $\ell_h^{i}$ be the eigenline of eigenvalue $i=\lambda^{2(p-1)},\dots,\lambda^2,\lambda^{-2},\dots,\lambda^{-2(p-1)}$.

Observe that actually $g\widehat{\tau}(h)g^{-1}=\widehat{\tau}(h)$. Indeed, otherwise we would have $g\widehat{\tau}(h)g^{-1}=-\widehat{\tau}(h)$ and for $v\in\ell_h^i$ one has $$g\cdot v=\frac{1}{\lambda^i}g\widehat{\tau}(h)\cdot v=-\frac{1}{\lambda^i}\widehat{\tau}(h)g\cdot v.$$ \noindent We would then find a negative eigenvalue of $\widehat{\tau}(h)$, a contradiction. We conclude that $g\widehat{\tau}(h)g^{-1}=\widehat{\tau}(h)$ as claimed.

It follows that $g$ preserves $\ell_h^i$ for all $i$, and also preserves $\pi_h$. We claim that $g$ preserves $\ell_\tau$. Indeed, note that there is some $m\in\mathsf{PSL}(2,\rr)$ so that $\widehat{\tau}(m)\cdot \ell_h^1 \neq \ell_h^1$, as the action of $\widehat{\tau}(\mathsf{PSL}(2,\rr))$ on $\pi_\tau$ is irreducible. Furthermore, $\widehat{\tau}(m)\cdot\ell_h^1$ is different from $\ell_h^i$, as all these lines are isotropic, while $\widehat{\tau}(m)\cdot\ell_h^1$ is not. By what we just proved, $g$ preserves $\pi_{mhm^{-1}}$ and therefore preserves $\pi_{mhm^{-1}}\cap\pi_h=\ell_\tau$. Hence $g\cdot\ell_\tau=\ell_\tau$ and therefore $g\cdot\ell_h^1=\ell_h^1$ for every hyperbolic $h\in\mathsf{PSL}(2,\rr)$.

We conclude that for every hyperbolic $h\in\mathsf{PSL}(2,\rr)$, the element $g$ preserves the projective basis $$\mathcal{B}_h:=\{\ell^{2(p-1)}_h,\dots,\ell^2_h,\ell^1_h,\ell_\tau,\ell^{-2}_h,\dots,\ell^{-2(p-1)}_h\}.$$ \noindent Fix such an $h$. Let $m\in\mathsf{PSL}(2,\rr)$ be so that $\widehat{\tau}(m)\cdot \ell_h^1\notin\mathcal{B}_h$. Then $g$ preserves the elements of the basis $\mathcal{B}_{mhm^{-1}}$ as well, and therefore preserves $2p+1$ lines {in general position} in $\rr^{2p}$. It follows that $g=\mu\tn{id}$ for some $\mu\in\rr$. Since $g\in\mathsf{SO}_0(p,p)$, then $\mu=\pm 1$.
\end{proof}

\begin{cor}\label{cor: Z closure hitchin opp simple and center free}
Assume $p\neq 4$ and let $\rho\in\tn{Hit}(S,\mathsf{PSO}_0(p,p))$. Then the Zariski closure $\g_\rho$ of $\rho$ is simple and center free, and with Lie algebra $\mathfrak{so}(p,p)$, $\mathfrak{so}(p,p-1)$, or a principal $\mathfrak{sl}_2$.
\end{cor}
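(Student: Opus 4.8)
The plan is to combine the structural results established just above — Lemmas \ref{lem: hitchin PSOpp lifts}, \ref{lem: Zclosure hitchin is reductive}, and \ref{lem: center of Z closure hitchin opp}, together with Sambarino's classification \cite[Theorem A]{sambarino2020infinitesimal} — with a short descent argument from $\mathsf{SO}_0(p,p)$ to $\mathsf{PSO}_0(p,p)$. First I would fix $\rho \in \Hit(S,\mathsf{PSO}_0(p,p))$ and, using Lemma \ref{lem: hitchin PSOpp lifts}, choose a lift $\widehat{\rho}:\Gamma\to\mathsf{SO}_0(p,p)$ that is itself a Hitchin representation. By Lemma \ref{lem: Zclosure hitchin is reductive} the Zariski closure $\g_{\widehat{\rho}}$ is reductive, so its Lie algebra decomposes as $\lieg_{\widehat{\rho}} = \lieg_{\widehat{\rho}}^{ss}\oplus\mathfrak{z}$ with $\mathfrak{z}$ the center. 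Sambarino's Theorem A (valid since $p\neq 4$) identifies $\lieg_{\widehat{\rho}}^{ss}$ as one of $\mathfrak{so}(p,p)$, the image of $\mathfrak{so}(p,p-1)$ under the standard embedding, or a principal $\mathfrak{sl}_2$; in each of these cases the semisimple part is already a simple Lie algebra containing (up to conjugation) the principal $\od\widehat{\tau}(\mathfrak{sl}_2)$.

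Next I would rule out a nonzero center of $\lieg_{\widehat{\rho}}$. Any central element of $\g_{\widehat{\rho}}$ centralizes all of $\g_{\widehat{\rho}}$, in particular centralizes $\widehat{\tau}(\mathsf{PSL}(2,\rr))$; Lemma \ref{lem: center of Z closure hitchin opp} then forces such an element to lie in $\{\pm\tn{id}\}$. Hence the center of $\g_{\widehat{\rho}}$ is contained in $\{\pm\tn{id}\}$, so $\mathfrak{z}=0$, and therefore $\g_{\widehat{\rho}}$ is semisimple with $\lieg_{\widehat{\rho}}=\lieg_{\widehat{\rho}}^{ss}$ simple. Thus $\g_{\widehat{\rho}}$ is a simple real algebraic group whose Lie algebra is $\mathfrak{so}(p,p)$, $\mathfrak{so}(p,p-1)$, or a principal $\mathfrak{sl}_2$, and its center is contained in $\{\pm\tn{id}\}$.

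Finally I would pass from $\widehat{\rho}$ to $\rho$. The covering map $q:\mathsf{SO}_0(p,p)\to\mathsf{PSO}_0(p,p)$ has kernel $\{\pm\tn{id}\}$ and $\rho = q\circ\widehat{\rho}$; since $q$ is a surjective morphism of algebraic groups, it carries $\g_{\widehat{\rho}}$ onto the Zariski closure $\g_\rho$ of $\rho(\Gamma)$, and induces an isomorphism of Lie algebras $\lieg_{\widehat{\rho}}\cong\lieg_\rho$ (the kernel being finite, hence central and contained in any reductive subgroup but of dimension zero). So $\lieg_\rho$ is one of the three listed simple Lie algebras, and $\g_\rho$ is simple. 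For center-freeness: the center of $\g_\rho$ is the image under $q$ of a subgroup of $\mathsf{SO}_0(p,p)$ projecting into the centralizer of $\g_{\widehat{\rho}}$ — concretely, $Z(\g_\rho) = q(Z_{\mathsf{SO}_0(p,p)}(\g_{\widehat{\rho}}))$, and by Lemma \ref{lem: center of Z closure hitchin opp} this preimage lies in $\{\pm\tn{id}\}=\ker q$, so $Z(\g_\rho)$ is trivial. This gives that $\g_\rho$ is simple and center free, completing the proof.

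The main obstacle I anticipate is the bookkeeping in this last paragraph: one must be careful that Lemma \ref{lem: center of Z closure hitchin opp} is stated for $\widehat{\rho}$-Hitchin representations into $\mathsf{SO}_0(p,p)$ and controls elements of $\g_{\widehat{\rho}}$ centralizing $\g_{\widehat{\rho}}$ up to sign, which is exactly the condition cutting out the preimage of $Z(\g_\rho)$ under $q$ — so the $\pm$ in that lemma is precisely what makes the descent work, and one should phrase the argument so that the ``up to sign'' ambiguity is absorbed by $\ker q$ rather than overlooked.
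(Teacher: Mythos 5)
Your proof is correct and follows essentially the same route as the paper: lift via Lemma \ref{lem: hitchin PSOpp lifts}, reductivity via Lemma \ref{lem: Zclosure hitchin is reductive}, Sambarino's Theorem A for the list of possible (simple) semisimple parts, and Lemma \ref{lem: center of Z closure hitchin opp} — whose ``up to sign'' hypothesis is exactly the condition for centrality in the quotient by $\{\pm\tn{id}\}$ — to conclude that $\g_\rho$ is center free. The only blemish is the displayed identity $Z(\g_\rho)=q(Z_{\mathsf{SO}_0(p,p)}(\g_{\widehat{\rho}}))$, which should instead be the image of the up-to-sign centralizer of $\g_{\widehat{\rho}}$ taken inside $\g_{\widehat{\rho}}$ itself (so that the lemma applies), as your closing paragraph in fact acknowledges.
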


\begin{proof}
Let $\widehat{\rho}$ be a lift of $\rho$. Then $\g_\rho=\g_{\widehat{\rho}}/\{\pm \tn{id}\}$ and by Lemmas \ref{lem: Zclosure hitchin is reductive} and \ref{lem: center of Z closure hitchin opp}, $\g_\rho$ is reductive and center free. In particular, it is semisimple and by Sambarino \cite[Theorem A]{sambarino2020infinitesimal} the result follows.
\end{proof}

The proof of the following well-known fact can be found in \cite[Corollary 6.2]{sambarino2020infinitesimal} for $\mathsf{PSL}(d,\rr)$-Hitchin representations, but the proof applies in our setting.

\begin{cor}\label{cor: Z closure0 contains rho}
Let $\rho\in\tn{Hit}(S,\mathsf{PSO}_0(p,p))$. Then $\rho(\Gamma)\subset(\g_\rho)_0$.
\end{cor}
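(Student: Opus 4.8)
The plan is to prove Corollary \ref{cor: Z closure0 contains rho} by combining the structure of the Zariski closure obtained in Corollary \ref{cor: Z closure hitchin opp simple and center free} with the connectedness of $\G = \pi_1(S)$ and the fact that $\rho$ deforms to a Fuchsian representation. First I would recall that by Corollary \ref{cor: Z closure hitchin opp simple and center free} the Zariski closure $\g_\rho$ is simple and center free, hence has finitely many connected components: the component group $\g_\rho/(\g_\rho)_0$ is finite, and since $\g_\rho$ is the Zariski closure of $\rho(\G)$, the image $\rho(\G)$ surjects onto this finite quotient. Thus there is a finite-index normal subgroup $\G_0 := \rho^{-1}((\g_\rho)_0) \trianglelefteq \G$ with $\rho(\G_0) \subset (\g_\rho)_0$, and $\G/\G_0$ is a finite group which is a quotient of $\pi_1(S)$.

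Next I would invoke Lemma \ref{lem: hitchin PSOpp lifts} (and the deformation it provides) together with the classification of the semisimple part from Sambarino \cite{sambarino2020infinitesimal}: $\lieg_\rho^{ss}$ is either $\mathfrak{so}(p,p)$, $\mathfrak{so}(p,p-1)$, or a principal $\mathfrak{sl}_2$. In each of these cases one checks that the relevant connected real algebraic group $(\g_\rho)_0$ has trivial outer automorphism contribution coming from its embedding, or more directly: the key point is that $\rho$ lies in the same connected component as a Fuchsian representation, and being Fuchsian (hence valued in a conjugate of $\tau(\PSL(2,\rr))$, which is connected) certainly has image in the identity component of its own Zariski closure. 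The property ``$\rho(\G) \subset (\g_\rho)_0$'' is then shown to be invariant under deformation within $\Hit(S,\mathsf{PSO}_0(p,p))$: since the Zariski closure can only jump to a larger group along special loci, and since on the open dense (Zariski dense) locus $\g_\rho = \mathsf{PSO}_0(p,p)$ is connected, the component-counting map $\rho \mapsto [\rho(\G) \to \g_\rho/(\g_\rho)_0]$ is locally constant in an appropriate sense.

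The cleanest argument, and the one I would actually write, uses the following observation: the homomorphism $\G \to \g_\rho/(\g_\rho)_0$ factors through $H_1(\G;\zz/n\zz)$ for suitable $n$ only if the target is abelian, which it need not be; so instead I would argue that $\g_\rho/(\g_\rho)_0$ is a finite group acting on $\g_\rho$, and that the composition $\G \to \g_\rho \to \g_\rho/(\g_\rho)_0$ must be trivial because its kernel $\G_0$ would otherwise give a representation $\rho|_{\G_0}$ whose Zariski closure is $(\g_\rho)_0$, and then the limit curve $\xi_\rho$ restricted to $\partial \G_0 = \partial \G$ would still be the (unique) Hitchin limit curve, forcing $\rho$ itself to have Zariski closure inside $(\g_\rho)_0$ by Zariski density considerations — contradicting $\g_\rho \neq (\g_\rho)_0$ unless the quotient is trivial. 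Concretely: $\rho(\G_0)$ is Zariski dense in $(\g_\rho)_0$, and $\rho(\G)$ normalizes $(\g_\rho)_0$; since $(\g_\rho)_0$ is simple and center free, its normalizer in the ambient group acts by automorphisms, and the outer action of the finite quotient would be detected on the marked length spectrum / limit curve, which is incompatible with $\rho$ being a limit of Fuchsian (hence fully connected-image) representations.

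The main obstacle will be making the last deformation/connectedness argument fully rigorous without circularity: one must ensure that ``having connected Zariski closure image'' is genuinely a closed-and-open condition, or else replace this by the direct structural argument that any finite quotient $\G \twoheadrightarrow Q$ arising this way must be trivial. I expect the honest route is to cite that Hitchin representations are connected to the Fuchsian locus (Lemma \ref{lem: hitchin PSOpp lifts}), that along the Fuchsian locus the image sits in the connected group $\tau(\PSL(2,\rr))$, hence $\rho_{\text{Fuch}}(\G) \subset (\g_{\rho_{\text{Fuch}}})_0$, and then that the property $\rho(\G)\subset(\g_\rho)_0$ is preserved along any continuous path in $\Hit(S,\mathsf{PSO}_0(p,p))$ because the finite group $\g_\rho/(\g_\rho)_0$ and the induced map from $\G$ vary locally constantly (the set of points where a given finite quotient appears is constructible, and $\Hit$ is connected). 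This reduces everything to the Fuchsian base case, which is immediate.
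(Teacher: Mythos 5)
Your deformation argument has a genuine gap at exactly the point you flagged. The property ``$\rho(\Gamma)\subset(\g_\rho)_0$'' is not locally constant along paths in $\Hit(S,\mathsf{PSO}_0(p,p))$, because the Zariski closure itself jumps: on the Fuchsian locus $\g_\rho$ is a principal $\PSL(2,\rr)$, while at a generic nearby point it is all of $\mathsf{PSO}_0(p,p)$, so the component group $\g_\rho/(\g_\rho)_0$ and the induced map from $\Gamma$ do not vary continuously in any usable sense; saying the relevant loci are ``constructible'' and $\Hit$ is connected does not give local constancy (a connected space is covered by constructible pieces in many nontrivial ways). Your intermediate argument does not close this gap either: from $\rho(\Gamma_0)$ Zariski dense in $(\g_\rho)_0$ you only get that $\rho(\Gamma)$ normalizes $(\g_\rho)_0$, i.e.\ maps to $\mathrm{Aut}((\g_\rho)_0)$, and nothing you wrote rules out that some $\rho(\gamma)$ acts by an outer automorphism; the claimed contradiction via ``the limit curve restricted to $\partial\Gamma_0=\partial\Gamma$ forces the closure inside $(\g_\rho)_0$'' is not a deduction, and the appeal to being a limit of Fuchsian representations reintroduces the same unjustified continuity.

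For comparison, the paper does not argue by deformation at all: it simply cites Sambarino \cite[Corollary 6.2]{sambarino2020infinitesimal} (stated there for $\mathsf{PSL}(d,\rr)$-Hitchin representations) and notes that the proof applies verbatim. The underlying argument is pointwise and uses positivity: for a Hitchin representation every $\rho(\gamma)$, $\gamma\neq 1$, is diagonalizable with positive eigenvalues, and for such an element the Zariski closure of the cyclic group $\langle\rho(\gamma)\rangle$ is a \emph{connected} torus (positivity forces the character relations satisfied by $\rho(\gamma)$ to form a saturated lattice, so no finite component group can appear). That connected algebraic subgroup lies in $\g_\rho$, hence in $(\g_\rho)_0$, so $\rho(\gamma)\in(\g_\rho)_0$ for every $\gamma$, with no need to control how $\g_\rho$ varies in the component. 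If you want a self-contained proof, this is the route to take; your component-group reduction in the first paragraph is fine as far as it goes, but the conclusion has to come from such an element-by-element argument rather than from connectedness of the Hitchin component.
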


We have now completed the analysis of the possible Zariski closures of $\mathsf{PSO}_0(p,p)$-Hitchin representations, and we can prove Theorem \ref{Thm:rigitity length hitchin}.

\begin{proof}[Proof of Theorem \ref{Thm:rigitity length hitchin}]
By Corollaries \ref{cor: Z closure hitchin opp simple and center free} and \ref{cor: Z closure0 contains rho} and Theorem \ref{thm:rigidity} there exists an isomorphism $\sigma:(\g_\rho)_0\to(\g_{\widehat{\rho}})_0$ so that $\sigma\circ\rho=\widehat{\rho}$. In particular, $(\g_\rho)_0\cong(\g_{\widehat{\rho}})_0$ and we have three possibilities. If $(\g_\rho)_0$ is a principal $\mathsf{PSL}(2,\rr)$, then the result follows from Length Spectrum Rigidity in Teichm\"uller space. If $(\g_\rho)_0\cong \mathsf{PSO}_0(p,p-1)$, then the corresponding Dynkin diagram is of type $\mathsf{B}_{p-1}$ and therefore admits no non trivial automorphism. Hence, in that case $\sigma$ is inner as desired. 

Finally, assume $(\g_\rho)_0=\mathsf{PSO}_0(p,p)$ and suppose by contradiction that $\rho\neq\widehat{\rho}$. Hence $\sigma$ is a non internal automorphism. But on the other hand by Theorem \ref{thm:rigidity} we have $\varphi\circ\sigma=\varphi$, contradicting our hypothesis.

\end{proof}

\begin{rem}\label{rem: asymmetric}
A natural length function on $\Hit_d(S)$, specially relevant in the case $d=3$, is the Hilbert length (c.f. Example \ref{list, Lengths}). However, the Hilbert length is not rigid, as the \textit{contragredient} representation $\rho^\star(\gamma):=\leftidx{^t}\rho(\gamma)^{-1}$ of $\rho$ satisfies $h_\rho^{\tn{H}}L_\rho^{\tn{H}}=h_{\rho^\star}^{\tn{H}}L_{\rho^\star}^{\tn{H}}$, but in general one has $\rho^\star\neq\rho$. Hence, $d_{\tn{Th}}^{\tn{H}}(\cdot,\cdot)$ does not separate points of $\Hit_d(S)$. It follows from the proof of Theorem \ref{Thm:rigitity length hitchin} that this is the only possible situation where two different $\mathsf{PSL}(d,\rr)$-Hitchin representations can have the same Hilbert length spectra. Similar comments apply to the simple roots listed in Table \ref{table:1}.
   
\end{rem}

\subsection{Simple root and spectral radius Finsler norms}\label{subsec: finsler for hitchin}
We  now restrict to $\g=\mathsf{PSL} (d,\rr)$. 
We list some useful consequences of Corollary \ref{cor: finsler for reps} and \cite{BCLS,BCLSSIMPLEROOTS}. For the first simple root we have the following.

\begin{cor}\label{cor: finsler norm hitchin first root}
Let $\varphi=\alpha_1\in \Pi$ be the first simple root. 
The function on $T \Hit_d(S)$
$$\Vert v\Vert_{\tn{Th}}^{\alpha_1}=\displaystyle\sup_{[\gamma]\in[\Gamma]} \frac{ \od_\rho (L_{\cdot}^{\alpha_1}(\gamma))(v) }{L_{\rho}^{\alpha_1}(\gamma)}$$ \noindent defines a Finsler norm on $\Hit_d(S)$.

\end{cor}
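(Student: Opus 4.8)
The strategy is to apply Corollary~\ref{cor: finsler for reps} together with the constancy of the first simple root entropy on $\Hit_d(S)$ and a non-degeneracy (infinitesimal rigidity) statement from \cite{BCLSSIMPLEROOTS}. Recall from Example~\ref{ex: hitchin and positive} (via \cite{PS,PSW1}) that $h_\rho^{\alpha_1}=1$ for every $\rho\in\Hit_d(S)$; in particular $\widehat\rho\mapsto h_{\widehat\rho}^{\alpha_1}$ is constant. Since every point of $\Hit_d(S)$ admits a real analytic neighbourhood (the Hitchin component is a smooth, indeed analytic, manifold and Anosov representations form an analytic family — c.f.\ the discussion preceding Proposition~\ref{prop: Rvarhpi analytic}), Corollary~\ref{cor: finsler for reps} applies with $\varphi=\alpha_1$. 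It gives that $\Vert\cdot\Vert_{\tn{Th}}^{\alpha_1}:T_\rho\Hit_d(S)\to\rr$ is real valued, non-negative, $(\rr_{>0})$-homogeneous and satisfies the triangle inequality, and moreover, because the entropy is constant, that $\Vert v\Vert_{\tn{Th}}^{\alpha_1}=0$ if and only if $\od_\rho(L_\cdot^{\alpha_1}(\gamma))(v)=0$ for all $\gamma\in\gh$. Also, by that same constancy, the formula for $\Vert v\Vert_{\tn{Th}}^{\alpha_1}$ collapses to the simple expression $\sup_{[\gamma]}\od_\rho(L_\cdot^{\alpha_1}(\gamma))(v)/L_\rho^{\alpha_1}(\gamma)$ displayed in the statement (Equation~\eqref{eq: finsler reps constant entropy}). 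So the only thing left to prove is non-degeneracy: $v\neq 0$ implies $\Vert v\Vert_{\tn{Th}}^{\alpha_1}\neq 0$.

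\textbf{The non-degeneracy step.} By the above, it suffices to show: if $v\in T_\rho\Hit_d(S)$ satisfies $\od_\rho(L_\cdot^{\alpha_1}(\gamma))(v)=0$ for every $\gamma\in\gh$, then $v=0$. Equivalently, the marked simple-root length spectrum $\rho\mapsto(L_\rho^{\alpha_1}(\gamma))_{\gamma}$ has injective differential on $\Hit_d(S)$. This is precisely an infinitesimal rigidity statement for the first simple root, and it is exactly what Bridgeman-Canary-Labourie-Sambarino establish in \cite{BCLSSIMPLEROOTS}: they show that the pressure form associated to the $\alpha_1$-length functional is positive definite on $\Hit_d(S)$, equivalently that the $\alpha_1$-reparametrizing functions $\widetilde g_z$ (as in Proposition~\ref{prop: Rvarhpi analytic}) have the property that a non-trivial tangent direction $v$ produces $\dot{\widetilde g}_0$ which is not Liv\v sic cohomologous to zero — hence $\od_\rho(L_\cdot^{\alpha_1}(\gamma))(v)=\int_0^{p_\phi(a)}\dot{\widetilde g}_0\neq 0$ for some periodic orbit $a$, corresponding to some $\gamma\in\gsp\subset\gh$. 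I would quote their result directly; the translation between ``$\dot{\widetilde g}_0\sim_\phi 0$'' and ``$\od_\rho(L_\cdot^{\alpha_1}(\gamma))(v)=0$ for all $\gamma$'' is exactly the chain of identifications already set up in the proof of Corollary~\ref{cor: finsler for reps} (via Lemma~\ref{lem: finsler reps and flows}, Theorem~\ref{teo: livsic}, and Remark~\ref{rem: sp for geodesic flow}).

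\textbf{Putting it together.} So the proof is short: (1) invoke $h_\cdot^{\alpha_1}\equiv 1$ and the analyticity of $\Hit_d(S)$ to apply Corollary~\ref{cor: finsler for reps}, obtaining all Finsler-norm axioms except non-degeneracy plus the reduced formula; (2) invoke the positive-definiteness of the first-simple-root pressure form on $\Hit_d(S)$ from \cite{BCLSSIMPLEROOTS} to conclude that $\od_\rho(L_\cdot^{\alpha_1}(\gamma))(v)=0$ for all $\gamma\in\gh$ forces $v=0$; (3) combine (1) and (2) to see $\Vert\cdot\Vert_{\tn{Th}}^{\alpha_1}$ is a genuine Finsler norm on each tangent space, varying over $\Hit_d(S)$. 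The main obstacle is entirely contained in step (2): one needs the infinitesimal rigidity / non-degeneracy of the $\alpha_1$-pressure form, which is not something one proves from scratch here but rather imports from \cite{BCLSSIMPLEROOTS} — all the rest is bookkeeping already done in the general framework of Sections~\ref{sec: asymmetric metric and finsler norm for flows} and \ref{sec, generalizedThurston}. It is worth noting that this is why the statement restricts to the \emph{first} simple root (and to $\mathsf{PSL}(d,\rr)$): non-degeneracy of the pressure form for other simple roots, or at the relevant ``bad'' roots, is more delicate, whereas for $\alpha_1$ it follows from the hyperconvexity properties of Hitchin limit curves exploited in \cite{BCLS,BCLSSIMPLEROOTS}.
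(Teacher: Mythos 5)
Your proposal is correct and follows essentially the same route as the paper: constancy of $h^{\alpha_1}_\cdot\equiv 1$ on $\Hit_d(S)$, Corollary \ref{cor: finsler for reps} for everything except non-degeneracy, and then an imported infinitesimal rigidity statement from \cite{BCLSSIMPLEROOTS}. The only cosmetic difference is that you quote that input as positive-definiteness of the $\alpha_1$-pressure form, whereas the paper cites it as the statement that $\{\od_\rho(L_\cdot^{\alpha_1}(\gamma))\}_{\gamma\in\Gamma}$ generates $T_\rho^*\Hit_d(S)$; via the Liv\v{s}ic-cohomology dictionary you describe, these are equivalent formulations of the same result.
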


\begin{proof}
By Potrie-Sambarino \cite[Theorem B]{PS} we have $h_\rho^{\alpha_1}=1$ for all $\rho\in\Hit_d(S)$. Hence, thanks to Corollary \ref{cor: finsler for reps} we only have to show that $\Vert v\Vert_{\tn{Th}}^{\alpha_1}=0$ implies $v=0$. But this follows from Corollary \ref{cor: finsler for reps} and \cite[Theorem 1.7]{BCLSSIMPLEROOTS}: the set $\{\od_\rho(L_\cdot^{\alpha_1}(\gamma))\}_{\gamma\in\Gamma}$ generates the cotangent space $T_\rho^*\Hit_d(S)$.
\end{proof}

When $d=2j>2$, it is shown in \cite[Proposition 8.1]{BCLSSIMPLEROOTS} that the middle root pressure quadratic form is degenerate along representations that factor through $\mathsf{PSp}(2j,\rr)$. The proof shows that $\Vert\cdot\Vert_{\tn{Th}}^{\alpha_j}$ is degenerate as well.

With the same argument as in Corollary \ref{cor: finsler norm hitchin first root} (but applying \cite[Lemma 9.8 \& Proposition 10.1]{BCLS} instead of \cite[Theorem 1.7]{BCLSSIMPLEROOTS}), we obtain the following.

\begin{cor}\label{cor: finsler norm hitchin spectral}
Let $\varphi=\lambda_1$ be the spectral radius length function. Then the function
$\Vert\cdot\Vert_{\tn{Th}}^{\lambda_1}: T \Hit_d(S)  \to \rr_{\geq 0}$, taking $v\in T_\rho \Hit_d(S)$ to $$\Vert v\Vert_{\tn{Th}}^{\lambda_1}=\displaystyle\sup_{[\gamma]\in[\Gamma]} \frac{\od_{\rho}(h_{\cdot}^{\lambda_1})(v)L_\rho^{\lambda_1}(\gamma)+h_\rho^{\lambda_1} \od_\rho(L_\cdot^{\lambda_1}(\gamma))(v)}{h_\rho^{\lambda_1} L_\rho^{\lambda_1}(\gamma)}$$ \noindent defines a Finsler norm on $\Hit_d(S)$.\end{cor}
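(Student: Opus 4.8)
The plan is to mirror exactly the argument of Corollary \ref{cor: finsler norm hitchin first root}, replacing the input about simple-root length functions by the corresponding facts about the spectral radius length function $\lambda_1$. By Corollary \ref{cor: finsler for reps}, the function $\Vert\cdot\Vert_{\tn{Th}}^{\lambda_1}$ is already known to be real valued, non-negative, $(\rr_{>0})$-homogeneous and to satisfy the triangle inequality on $T_\rho\Hit_d(S)$ for every $\rho$ (note that $\Hit_d(S)$ is a real analytic manifold, so every point admits an analytic neighbourhood, as required by that corollary — this follows from Hitchin's parametrization, or from \cite[Section 6]{BCLS}). Thus the only thing left to verify is non-degeneracy: if $v\in T_\rho\Hit_d(S)$ satisfies $\Vert v\Vert_{\tn{Th}}^{\lambda_1}=0$, then $v=0$.

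First I would recall the non-degeneracy criterion from Corollary \ref{cor: finsler for reps}: since, in general, the entropy $\rho\mapsto h_\rho^{\lambda_1}$ is \emph{not} constant on $\Hit_d(S)$, the vanishing $\Vert v\Vert_{\tn{Th}}^{\lambda_1}=0$ is equivalent to the system of equations
$$
\od_\rho(L_\cdot^{\lambda_1}(\gamma))(v)=-\frac{\od_\rho(h_\cdot^{\lambda_1})(v)}{h_\rho^{\lambda_1}}\,L_\rho^{\lambda_1}(\gamma)
$$
holding for all $\gamma\in\gh$. Writing $c:=-\od_\rho(h_\cdot^{\lambda_1})(v)/h_\rho^{\lambda_1}$, this says that the linear functional $v$ acts on each length function $L_\cdot^{\lambda_1}(\gamma)$ exactly as the ``Fried direction'' that rescales the whole spectrum by the factor $c$; equivalently, $\od_\rho(L_\cdot^{\lambda_1}(\gamma))(v)-c\,L_\rho^{\lambda_1}(\gamma)=0$ for all $\gamma$. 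The content of \cite[Lemma 9.8 and Proposition 10.1]{BCLS} — which treat precisely the spectral radius functional on the $\mathsf{PSL}(d,\rr)$-Hitchin component — is that the only tangent vectors annihilating all the differentials $\od_\rho(L_\cdot^{\lambda_1}(\gamma))$ modulo the rescaling direction are zero; more precisely, those two results identify the kernel of the pressure form (built from exactly this family of derivatives) and show it is trivial on $\Hit_d(S)$. Feeding this into the criterion above forces $v=0$, which completes the proof.

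The one point that requires a little care, and which I expect to be the main (mild) obstacle, is to check that the hypotheses of \cite[Lemma 9.8 and Proposition 10.1]{BCLS} match our situation verbatim: those statements are phrased in terms of the pressure-metric construction and the reparametrizing function associated with $\lambda_1$, whereas here we have the supremum-over-measures Finsler norm. But the translation is exactly the one carried out in Lemma \ref{lem: finsler reps and flows} together with Corollary \ref{cor: finsler for reps}: the vanishing of $\Vert v\Vert_{\tn{Th}}^{\lambda_1}$ says that the corresponding Liv\v{s}ic cohomology class $[\dot g_0]$ is zero (equivalently $\dot g_0\sim_\phi 0$), and this is the same condition that appears in the kernel description of \cite{BCLS}. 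So the argument is a direct transcription of the proof of Corollary \ref{cor: finsler norm hitchin first root}, using \cite[Lemma 9.8 and Proposition 10.1]{BCLS} in place of \cite[Theorem 1.7]{BCLSSIMPLEROOTS}, and I would present it as such, without reproducing the internal arguments of \cite{BCLS}.
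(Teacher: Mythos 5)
Your proposal is correct and follows essentially the same route as the paper: the paper's proof of this corollary is literally ``the same argument as in Corollary~\ref{cor: finsler norm hitchin first root}, but applying \cite[Lemma 9.8 \& Proposition 10.1]{BCLS} instead of \cite[Theorem 1.7]{BCLSSIMPLEROOTS}'', i.e.\ Corollary~\ref{cor: finsler for reps} handles everything except non-degeneracy, and the BCLS results on the spectral-radius pressure form kernel rule out a nonzero $v$ satisfying the proportional-variation condition \eqref{eq: cor finsler resp}. Your identification of that condition and of the translation via Lemma~\ref{lem: finsler reps and flows} matches the paper's reasoning.
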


We finish this subsection with a comment on Labourie and Wentworth work \cite{VariationAlongFuchsian}, which explicitly compute the derivative of the spectral radius and simple root length functions at points of the Fuchsian locus $\Teich(S)\subset \Hit_d(S)$, along some special directions. More explicitly, fixing a Riemann surface structure $X_0$ on $S$,  the canonical line bundle $K$ associated to $X_0$ is the $(1,0)$-part of the complexified cotangent bundle $T^*X_0^{\mathbb{C}}=\mathbb{C}\otimes_{\mathbb{R}}T^*X_0$. An \textit{holomorphic $k$-differential} is an holomorphic section of the bundle $K^k$, where the power $k$ is taken with respect to tensor operation. In local holomorphic coordinates $z=x+i y$, an holomorphic $k$-differential can be written as $$q_k=q_k(z) 
\underbrace{dz \otimes \cdots \otimes dz}_{k \text{ times}  } = q_k(z)dz^k,$$ \noindent with $q_k(z)$ holomorphic. Hitchin's seminal work \cite{Hitchin_HitchinComponent} parametrizes $\Hit_d(S)$ by the space of holomorphic differentials over $X_0$. More precisely, there exists a homemomorphism $$\Hit_d(S)\cong \bigoplus\limits_{k=2}^{d} H^{0}(X_0, K^k),$$ \noindent where $H^{0}(X_0, K^k)$ denotes the space of holomorphic $k$-differentials over $X_0$. Given an holomorphic $k$-differential $q_k\in H^0(X_0,K^k)$, one may consider a natural family of Hitchin representations $\{\rho_t\}_{t\geq 0}$, corresponding to $\{tq_k\}_{t\geq 0}\subset H^{0}(X_0, K^k)$ under this parametrization, with $\rho_0$ corresponding to the point $X_0$ in the Teichm\"uller space $\Teich(S)$. Infinitesimally, this gives a vector space isomorphism: $$T_{\rho_0} \Hit_d(S) \cong \bigoplus\limits_{k=2}^{d} H^{0}(X_0, K^k). $$ 

Given a family of Hitchin representations $\{\rho_t\}_{t\geq 0}$ as above, we denote by $v=v(q_k):=\frac{d}{dt}\big|_{t=0} \rho_t \in T_{X_0} \Hit_d(S)$ the corresponding tangent vector. The computation of the derivatives $\od_{\rho_0}(L_\cdot^{\lambda_j}(\gamma))(v)$, for $1\leq j \leq d$, has been carried out by Labourie-Wentworth \cite[Theorem 4.0.2]{VariationAlongFuchsian}, using the above identification and information of $H^{0}(X_0, K^k)$. 
To be more precise, define the function $\textnormal{Re } q_k : T^1X_0 \to \mathbb{R}$ as the real part of the holomorphic differential $q_k$ evaluated on unit tangent vectors. More precisely, $$\textnormal{Re } q_k(x):=\textnormal{Re } \bigg(  q_k|_{p}(w,w ,\cdots, w) \bigg) $$ \noindent for $x=(p,w)\in T^1X_0$.

Let $\phi$ be the geodesic flow on $T^1X_0$. For $\gamma\in\Gamma$, let $l_{\rho_0}(\gamma):=\frac{2}{d-1}L^{\lambda_1}_{\rho_0}(\gamma)$ be the hyperbolic length of the closed geodesic on $X_0$ corresponding to the free homotopy class $[\gamma]$. 

\begin{prop}\label{prop: finsler fuchsian locus hitchin} There exist constants $C_1$ and $C_2$, only depending on $d$ and $k$, such that for any  vector $v=v(q_k)\in T_{X_0} \Hit_d(S)$ as above, $$\Vert v(q_k)\Vert_{\tn{Th}}^{\lambda_1}=C_1\displaystyle\sup_{[\gamma]\in[\Gamma]} \frac{1}{l_{\rho_0}(\gamma)} \int_0^{l_{\rho_0}(\gamma)}   \textnormal{Re } q_k(\phi_s(x)) \od s $$\noindent and $$\Vert v(q_k)\Vert_{\tn{Th}}^{\alpha_1}=C_2\displaystyle\sup_{[\gamma]\in[\Gamma]} \frac{1}{l_{\rho_0}(\gamma)} \int_0^{l_{\rho_0}(\gamma)} \textnormal{Re } q_k(\phi_s(x)) \od s, $$ \noindent where $x=x_\gamma$ is any point on $T^1X_0$ that lies in the periodic orbit corresponding to $\gamma$.
\end{prop}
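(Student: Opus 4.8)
The plan is to derive Proposition \ref{prop: finsler fuchsian locus hitchin} as a direct combination of three ingredients: the general formula for $\Vert\cdot\Vert_{\tn{Th}}^\varphi$ established in Corollary \ref{cor: finsler for reps} (together with its explicit form \eqref{eq: finsler reps constant entropy} when entropy is constant), the fact that the $\lambda_1$- and $\alpha_1$-entropies are constant and equal to one on $\Hit_d(S)$ (Potrie-Sambarino \cite{PS}), and Labourie-Wentworth's computation \cite[Theorem 4.0.2]{VariationAlongFuchsian} of the derivatives $\od_{\rho_0}(L_\cdot^{\lambda_j}(\gamma))(v(q_k))$ along the special directions coming from holomorphic differentials at the Fuchsian locus.

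First I would recall that, by Corollary \ref{cor: finsler norm hitchin spectral} and Corollary \ref{cor: finsler norm hitchin first root}, the Finsler norms $\Vert\cdot\Vert_{\tn{Th}}^{\lambda_1}$ and $\Vert\cdot\Vert_{\tn{Th}}^{\alpha_1}$ are well-defined on $T\Hit_d(S)$, and that since $h_\rho^{\lambda_1}=h_\rho^{\alpha_1}=1$ identically, the norm reduces to
$$\Vert v\Vert_{\tn{Th}}^{\varphi}=\sup_{[\gamma]\in[\Gamma]}\frac{\od_\rho(L_\cdot^\varphi(\gamma))(v)}{L_\rho^\varphi(\gamma)}$$
for $\varphi\in\{\lambda_1,\alpha_1\}$, with no entropy-derivative correction term. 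Next I would substitute the Labourie-Wentworth formula: at a Fuchsian point $\rho_0$ corresponding to $X_0$, their result expresses $\od_{\rho_0}(L_\cdot^{\lambda_1}(\gamma))(v(q_k))$ as a constant (depending only on $d$ and $k$) times $\int_0^{l_{\rho_0}(\gamma)}\textnormal{Re}\, q_k(\phi_s(x_\gamma))\,\od s$; similarly for $\alpha_1=\lambda_1-\lambda_2$ one gets a (different) constant. Since $L_{\rho_0}^{\lambda_1}(\gamma)=\tfrac{d-1}{2}\,l_{\rho_0}(\gamma)$ and $L_{\rho_0}^{\alpha_1}(\gamma)$ is likewise a fixed multiple of $l_{\rho_0}(\gamma)$, dividing the derivative by the length produces exactly $C_1\,\frac{1}{l_{\rho_0}(\gamma)}\int_0^{l_{\rho_0}(\gamma)}\textnormal{Re}\, q_k(\phi_s(x_\gamma))\,\od s$, respectively $C_2\,\frac{1}{l_{\rho_0}(\gamma)}\int_0^{l_{\rho_0}(\gamma)}\textnormal{Re}\, q_k(\phi_s(x_\gamma))\,\od s$, where $C_1,C_2$ absorb all the numerical factors and depend only on $d$ and $k$. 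Taking the supremum over $[\gamma]$ then yields the two displayed identities.

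The one point requiring a little care is the independence of the integrand's periodic-orbit average on the chosen base point $x_\gamma$: this is immediate because the integral of a function over a full period of the geodesic flow does not depend on the starting point of the orbit, so $\frac{1}{l_{\rho_0}(\gamma)}\int_0^{l_{\rho_0}(\gamma)}\textnormal{Re}\, q_k(\phi_s(x_\gamma))\,\od s=\int\textnormal{Re}\, q_k\,\od\delta_\phi(a_{[\gamma]})$ in the notation of Theorem \ref{thm:INTROFinslerhitchin}; one should note this reformulation so the statement matches the introduction. I expect the main (and essentially only) obstacle to be bookkeeping: correctly tracking which constant from \cite[Theorem 4.0.2]{VariationAlongFuchsian} and which normalizing factor relating $L_{\rho_0}^\varphi$ to the hyperbolic length $l_{\rho_0}$ enter $C_1$ versus $C_2$, and checking that they indeed depend only on $d$ and $k$ and not on $\gamma$ or on $q_k$ beyond its degree $k$. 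No new dynamical input is needed beyond what has already been assembled.
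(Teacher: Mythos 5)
Your overall route is the same as the paper's (combine the definition of $\Vert\cdot\Vert_{\tn{Th}}^\varphi$ with Labourie--Wentworth and the entropy facts), but one step rests on a false claim: you assert that both the $\alpha_1$- and the $\lambda_1$-entropies are constant equal to one on $\Hit_d(S)$, citing Potrie--Sambarino, and on that basis you apply the constant-entropy formula \eqref{eq: finsler reps constant entropy} to $\varphi=\lambda_1$. Potrie--Sambarino's constancy result \cite{PS} only covers the simple roots ($h^{\alpha_i}_\cdot\equiv 1$); the spectral-radius entropy $h^{\lambda_1}_\cdot$ is genuinely non-constant on $\Hit_d(S)$ --- it satisfies a bound with equality precisely at Fuchsian points. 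So for $\varphi=\lambda_1$ you must work with the full expression of Definition \ref{dfn: finsler reps} (as in Corollary \ref{cor: finsler norm hitchin spectral}), which contains the extra term $\od_{\rho_0}(h^{\lambda_1}_\cdot)(v)\,L^{\lambda_1}_{\rho_0}(\gamma)\big/\bigl(h^{\lambda_1}_{\rho_0}L^{\lambda_1}_{\rho_0}(\gamma)\bigr)$, and your justification for discarding it is incorrect as written.

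The statement survives, and the fix is exactly the extra input the paper invokes: since $h^{\lambda_1}_\cdot$ is differentiable (indeed analytic) on $\Hit_d(S)$ and attains its maximum precisely along the Fuchsian locus, its differential at the Fuchsian point $\rho_0$ vanishes in every direction, so the entropy-derivative term drops out; moreover the value $h^{\lambda_1}_{\rho_0}$ cancels between numerator and denominator, so it does not affect $C_1$. After this correction, the rest of your argument --- substituting \cite[Theorem 4.0.2, Corollary 4.0.5]{VariationAlongFuchsian} for $\od_{\rho_0}(L^{\lambda_j}_\cdot(\gamma))(v(q_k))$ (and taking the difference $\lambda_1-\lambda_2$ for $\alpha_1$), using that $L^{\lambda_1}_{\rho_0}(\gamma)$ and $L^{\alpha_1}_{\rho_0}(\gamma)$ are fixed multiples of $l_{\rho_0}(\gamma)$, absorbing the numerical factors into constants depending only on $d$ and $k$, and noting the base-point independence of the orbit average --- matches the paper's proof; your treatment of the $\alpha_1$ case via $h^{\alpha_1}_\cdot\equiv 1$ is correct.
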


\begin{proof}
The proof is a simple combination of Definition \ref{dfn: finsler reps} together with \cite[Theorem 4.0.2, Corollary 4.0.5.]{VariationAlongFuchsian}. One also needs the fact that $h_{\rho}^{\lambda_1}\leq 1$ with equality precisely when $\rho$ is Fuchsian, and $h_\cdot^{\alpha_1}\equiv 1$ (by \cite[Theorem B]{PS}).
\end{proof}

\section{Other examples}\label{s.other}

As discussed  in the Introduction in \S\ref{subsec: outlineINTRO}, we need two ingredients to gain a good understanding of the asymmetric metric $d_{\tn{Th}}^\varphi(\cdot,\cdot)$: 
\begin{itemize}
\item A reparametrization of the geodesic flow of $\G$ with periods given by the functional $\varphi$: this is needed to show that $d_{\tn{Th}}^\varphi(\cdot,\cdot)$ is non-negative, degenerating if and only if the renormalized length spectra coincide. Sambarino provides such a reparametrization  whenever $\varphi\in\tn{int}((\cone)^*)$ and $\Theta$ is the set of Anosov roots (see Section \ref{sec: anosov flows and reps}).
\item A good understanding of the Zariski closure and its outer automorphism group for representations belonging to a given class of interests: this is necessary to obtain renormalized length spectrum rigidity.
\end{itemize}

Furthermore on subsets of representations for which the entropy of some functional is constant, one can avoid the renormalization by entropy.

We discuss here further classes in which simultaneous knowledge of some of these aspects can be achieved.

\subsection{Benoist representations}\label{subsec: benoist}

Let $\G$ be a torsion free word hyperbolic group. A \textit{Benoist representation} is a faithful and discrete representation $\rho:\G\to\mathsf{PSL}(d+1,\rr)$ dividing an open, strictly convex set $\Omega_\rho\subset\rr\pp^{d}$ (recall Example \ref{ex: hyperconvex}). We denote by $\Ben_d(\G)\subset\frak X(\G,\mathsf{PSL}(d+1,\rr))$ the space of conjugacy classes of Benoist representations. Koszul \cite{Koszul} showed that $\Ben_d(\G)$ is an open subset of the character variety, and Benoist \cite{BenoistDivIII} showed it is closed. Hence, $\Ben_d(\G)$ is a union of connected components of $\frak X(\G,\mathsf{PSL}(d+1,\rr))$.

As Benoist representations are $\Theta$-Anosov for $\Theta=\{\alpha_1,\alpha_d\}$, both the unstable Jacobian $\tn {J}_{d-1}:=d\omega_1-\omega_{d}=d\lambda_1+\lambda_{d+1}$ and $\tn{H}:=\lambda_{1}-\lambda_{d+1}$ belong to $\tn{int}((\cone)^*)$ for every $\rho\in\Ben_d(\G)$. We focus here on these two functionals since it was proven in  \cite[Corollary 7.1]{PS} that  $\tn J_{d-1}$ has constant entropy, and the Hilbert length function has particular geometric significance as  $L_\rho^\tn{H}(\gamma)$ coincides with the length of the unique Hilbert geodesic in $\rho(\G)\backslash \Omega_\rho$ in the isotopy class corresponding to $[\gamma]$.

\begin{cor} \label{cor: unst Jac for benoist hilbert}

The function $d_{\tn{Th}}^{{\tn J_{d-1}}}: \Ben_d(\G) \times \Ben_d(\G)  \to \rr$ given by 
$$d_{\tn{Th}}^{{\tn J_{d-1}}}(\rho,\widehat{\rho})=\log\left(\displaystyle\sup_{[\gamma]\in[\Gamma]}  \frac{ L_{\widehat{\rho}}^{\tn{J}_{d-1}}(\gamma) }{L_{\rho}^{\tn J_{d-1}}(\gamma)}\right)$$ \noindent defines a (possibly asymmetric) distance on $\Ben_d(\G)$.
\end{cor}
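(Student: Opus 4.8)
The plan is to deduce this corollary from the general machinery of Theorem~\ref{thm: dth for anosov} together with the two facts highlighted in the paragraph preceding the statement: first, that the unstable Jacobian $\tn{J}_{d-1}$ has constant entropy on $\Ben_d(\G)$ by \cite[Corollary 7.1]{PS}; second, that Benoist representations are $\{\alpha_1,\alpha_d\}$-Anosov, so that $\tn{J}_{d-1}=d\omega_1-\omega_d\in\liea_\Theta^*$ and moreover $\tn{J}_{d-1}\in\tn{int}((\cone)^*)$ for every $\rho\in\Ben_d(\G)$ (since Benoist representations have limit cone contained in the interior of the Weyl chamber up to the faces cut out by $\alpha_2,\dots,\alpha_{d-1}$, and $d\lambda_1+\lambda_{d+1}$ is positive there). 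Since the entropy $h_\rho^{\tn{J}_{d-1}}$ is the constant $1$ (after the normalization; in any case it is a fixed constant independent of $\rho$, which is all we need), the renormalization factor $h_{\widehat\rho}^{\tn{J}_{d-1}}/h_\rho^{\tn{J}_{d-1}}$ equals $1$, so $d_{\tn{Th}}^{\tn{J}_{d-1}}$ as written agrees with the general $d_{\tn{Th}}^\varphi$ of Definition~\ref{def: asymmetric distance anosov reps} for $\varphi=\tn{J}_{d-1}$. Theorem~\ref{thm: dth for anosov} then gives that it is real valued, non-negative, satisfies the triangle inequality, and vanishes exactly when $L_\rho^{\tn{J}_{d-1}}=L_{\widehat\rho}^{\tn{J}_{d-1}}$ (again the entropy constants cancel). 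So the only remaining point is to upgrade this to: $L_\rho^{\tn{J}_{d-1}}=L_{\widehat\rho}^{\tn{J}_{d-1}}$ forces $\rho=\widehat\rho$ in $\Ben_d(\G)$.

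For that separation-of-points step I would invoke Benoist's classification of the possible Zariski closures of a Benoist representation \cite{BenoistAutomorphismes}, exactly as announced in the Introduction, and combine it with the general rigidity statement Theorem~\ref{thm:rigidity}. First one checks, using Remark~\ref{rem: Zclosure positive is reductive}/\cite{BenoistAutomorphismes} (or directly the strict convexity of $\Omega_\rho$ and cocompactness, which prevents the Zariski closure from sitting in a proper parabolic), that the Zariski closure $\g_\rho$ of a Benoist representation is reductive, hence semisimple once one quotients by the (trivial, since $\Gamma$ is torsion free and the representation divides a properly convex set) issues with the center — more precisely, one notes that $\mathsf{PSL}(d+1,\rr)$ is center free, so any Zariski closure is center free, and Benoist's list shows it is simple (the possibilities being essentially $\mathsf{PSL}(d+1,\rr)$ itself, or $\mathsf{PSO}(d,1)$ acting via the projective model of real hyperbolic space, or $\mathsf{PSp}/\mathsf{PSO}$ type subgroups, all simple and center free). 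One also needs $\rho(\Gamma)\subset(\g_\rho)_0$, which follows as in Corollary~\ref{cor: Z closure0 contains rho} since Benoist representations are connected by paths to (conjugates of) the hyperbolic holonomy and the Zariski closure is connected in the relevant cases, or directly because $\Gamma$ is generated by elements whose image lies in the identity component. Then Theorem~\ref{thm:rigidity} produces an isomorphism $\sigma:(\g_\rho)_0\to(\g_{\widehat\rho})_0$ with $\sigma\circ\rho=\widehat\rho$ and $\tn{J}_{d-1}\circ\pi_{\g_{\widehat\rho}}\circ\sigma=\tn{J}_{d-1}\circ\pi_{\g_\rho}$.

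The last thing to verify — and this is where a little care is needed — is that such a $\sigma$ must be \emph{inner} as an automorphism of $\mathsf{PSL}(d+1,\rr)$, so that $\rho$ and $\widehat\rho$ are actually conjugate in $\mathsf{PSL}(d+1,\rr)$ and hence equal in $\Ben_d(\G)$. The only non-inner automorphism of $\mathsf{PSL}(d+1,\rr)$ is (up to inner) the one induced by $g\mapsto {}^tg^{-1}$, which acts on $\liea$ by the opposition involution $\iota$, sending $\lambda_1\mapsto -\lambda_{d+1}$ and $\lambda_{d+1}\mapsto-\lambda_1$, hence $\tn{J}_{d-1}=d\lambda_1+\lambda_{d+1}\mapsto -d\lambda_{d+1}-\lambda_1=\lambda_{d+1}\cdot(-1)+\dots$; explicitly $\tn{J}_{d-1}\circ\iota=-\lambda_1-d\lambda_{d+1}\neq \tn{J}_{d-1}$ whenever $d\geq 1$. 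So the constraint $\tn{J}_{d-1}\circ\sigma=\tn{J}_{d-1}$ rules out the outer automorphism of $\mathsf{PSL}(d+1,\rr)$; if the Zariski closure is a proper simple subgroup one argues similarly that the intertwining automorphism extends to (or is the restriction of) an inner automorphism of the ambient group after the constraint on $\tn{J}_{d-1}$, or one notes the contragredient of a Benoist representation is again Benoist dividing the dual convex set, and $\tn{J}_{d-1}$-rigidity excludes exactly that symmetry. This shows $\rho=\widehat\rho$, completing the proof that $d_{\tn{Th}}^{\tn{J}_{d-1}}$ separates points; together with the previous paragraph it is a (possibly asymmetric) distance. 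I expect the main obstacle to be the bookkeeping in this last step — pinning down precisely which automorphisms of the various possible Zariski closures appear in Benoist's classification and checking case-by-case that the functional $\tn{J}_{d-1}$ is not preserved by any non-inner one (equivalently, that the contragredient symmetry is the only obstruction and $\tn{J}_{d-1}$ breaks it); the rest is a direct application of Theorems~\ref{thm: dth for anosov} and~\ref{thm:rigidity}.
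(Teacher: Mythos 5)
Your overall strategy coincides with the paper's: the constant entropy of $\tn J_{d-1}$ from \cite[Corollary 7.1]{PS} reconciles the displayed formula with Definition \ref{def: asymmetric distance anosov reps}; Theorem \ref{thm: dth for anosov} gives real-valuedness, non-negativity, the triangle inequality and the characterization of the vanishing locus; and your treatment of the Zariski-dense case is exactly the paper's (it is routed there through Corollary \ref{cor: distance in zariski dense components}): the unique outer automorphism class of $\mathsf{PSL}(d+1,\rr)$ acts on $\liea$ by the opposition involution $\iota$, and $\tn J_{d-1}\circ\iota=-\lambda_1-d\lambda_{d+1}\neq \tn J_{d-1}$.

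The genuine gap is in the non-Zariski-dense case. First, your recollection of Benoist's classification is inaccurate: \cite[Th\'eor\`eme 3.6]{BenoistAutomorphismes} gives a clean dichotomy -- either $\rho$ is Zariski dense, or $\Omega_\rho$ is an ellipsoid and $\rho(\G)\subset\mathsf{PSO}(d,1)$ -- and there are no intermediate ``$\mathsf{PSp}/\mathsf{PSO}$-type'' closures; your case analysis depends on knowing this exact list, which you only assert ``essentially''. Second, in the $\mathsf{PSO}(d,1)$ case your two proposed outs are assertions rather than arguments: the claim that the intertwining isomorphism ``extends to an inner automorphism of the ambient group after the constraint on $\tn J_{d-1}$'' is not proved (and the $\tn J_{d-1}$ constraint is beside the point there; what one would actually need is that every automorphism of $\mathsf{PSO}_0(d,1)$ is induced by conjugation by an element of $\mathsf{PO}(d,1)\subset\mathsf{PGL}(d+1,\rr)$), while the contragredient remark does not help, since a representation preserving a quadratic form is conjugate to its contragredient anyway. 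The paper closes this case by a different, concrete argument: when the closures are $\mathsf{PSO}_0(1,d)$ both representations are holonomies of closed hyperbolic manifolds, and equality of the (renormalized) length spectra forces $\rho=\widehat\rho$ by marked length spectrum rigidity in Teichm\"uller space when $d=2$ and by Mostow rigidity when $d>2$. Finally, your justification of the hypothesis $\rho(\G)\subset(\g_\rho)_0$ of Theorem \ref{thm:rigidity} -- that Benoist representations can be deformed to hyperbolic holonomies -- is false in general, as Benoist components need not contain a Fuchsian locus (Gromov--Thurston-type examples); in the Zariski-dense case the hypothesis is automatic because $\mathsf{PSL}(d+1,\rr)$ is connected, and in the remaining case the Mostow route sidesteps it.
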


\begin{proof}

Benoist \cite[Th\'eor\`eme 3.6]{BenoistAutomorphismes} showed that if $\rho\in\tn{Ben}_d(\Gamma)$ is not Zariski dense, then $\rho(\Gamma)\subset\mathsf{PSO}(d,1)$. Hence, by Theorems \ref{thm: dth for anosov} and \ref{thm:rigidity}, if $d_{\tn{Th}}(\rho,\widehat{\rho})=0 $ then there exists an isomorphism  $\sigma:(\g_\rho)_0\to(\g_{\widehat{\rho}})_0$ so that $\sigma\circ\rho=\widehat{\rho}$. If $(\g_\rho)_0\cong(\g_{\widehat{\rho}})_0\cong\mathsf{PSO}_0(1,d)$, then the equality $\rho=\widehat{\rho}$ follows from Length Spectrum Rigidity in Teichm\"uller space (when $d=2$), or by Mostow rigidity (when $d>2$). 

On the other hand, if $(\g_\rho)_0\cong(\g_{\widehat{\rho}})_0\cong\mathsf{PSL}(d+1,\rr)$ and $\sigma$ is non inner, it acts non trivially on the Dynkin diagram of type $\mathsf{A}_d$, hence its action on $\liea$ coincides with the opposition involution $\iota$. Since $\tn J_{d-1}$ is not $\iota$-invariant, and has constant entropy by \cite[Corollary 7.1]{PS}, Corollary \ref{cor: distance in zariski dense components} finishes the proof.
\end{proof}
\begin{rem}
The same applies for all $(1,1,p)$-hyperconvex representations $\rho:\G\to\PSL(d,\R)$ of hyperbolic groups having as boundary a $(p-1)$-dimensional sphere (see Example \ref{ex: hyperconvex}): it follows from \cite[Proposition 7.4]{PSW1} that their projective limit set is a $\tn C^1$-sphere, and from \cite[Theorem A]{PSW2} that then the entropy of the unstable Jacobian $\tn J_{p-1}:=p\omega_1-\omega_p$ is constant and equal to 1. If we then denote by $\tn{Hyp}^{\tn Z}(\G)$ the open subset of the character variety consisting of Zariski dense $(1,1,p)$-hyperconvex representations, the function 
$$d_{\tn{Th}}^{\tn{J}_{p-1}}(\rho,\widehat{\rho})=\log\left(\displaystyle\sup_{[\gamma]\in[\Gamma]}  \frac{ L_{\widehat{\rho}}^{\tn{J}_{p-1}}(\gamma) }{L_{\rho}^{\tn J_{p-1}}(\gamma)}\right)$$ \noindent defines a (possibly asymmetric) distance on $\tn{Hyp}^{\tn Z}(\G)$.
\end{rem}

With the same proof as in Corollary \ref{cor: unst Jac for benoist hilbert} we get the following result.

\begin{cor} \label{cor: asymm for benoist hilbert}
 
The function $d_{\tn{Th}}^{\tn{H}}: \Ben_d(\G) \times \Ben_d(\G)  \to \rr$ given by $$d_{\tn{Th}}^{\tn{H}}(\rho,\widehat{\rho})=\log\left(\displaystyle\sup_{[\gamma]\in[\Gamma]} \frac{h_{\widehat{\rho}}^{\tn{H}}}{h_{\rho}^{\tn{H}}} \frac{ L_{\widehat{\rho}}^{\tn{H}}(\gamma) }{L_{\rho}^{\tn{H}}(\gamma)}\right)$$ \noindent is real-valued, non-negative and $d_{\tn{Th}}^{\tn{H}}(\rho,\widehat{\rho})=0$ if and only if 
$\rho=\widehat{\rho}$ or  $\rho=\widehat{\rho}^\star$, where $\rho^\star(\gamma):=\leftidx{^t}\rho(\gamma)^{-1}$ for all $\gamma\in\Gamma$.
\end{cor}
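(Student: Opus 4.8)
The plan is to imitate the proof of Corollary \ref{cor: unst Jac for benoist hilbert} word for word, with only two modifications: the functional under consideration is now the Hilbert length $\tn{H}=\lambda_1-\lambda_{d+1}$ instead of the unstable Jacobian, and $\tn{H}$ does \emph{not} have constant entropy, so one must keep the entropy renormalization in the definition of $d_{\tn{Th}}^{\tn{H}}(\cdot,\cdot)$ and invoke Theorem \ref{thm: dth for anosov} (not Corollary \ref{cor: distance in zariski dense components}) for the final identification of the zero set. First I would note that $\tn{H}\in\tn{int}((\cone)^*)$ for every Benoist representation, since these are $\{\alpha_1,\alpha_d\}$-Anosov and $\tn{H}=\omega_1-\omega_d+(\text{something in }\langle\alpha_1,\dots,\alpha_d\rangle)$ — more precisely $\tn{H}\in\liea_\Theta^*$ for $\Theta=\{\alpha_1,\alpha_d\}$, and positivity on the limit cone is classical for divisible convex sets. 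Hence Theorem \ref{thm: dth for anosov} applies and gives that $d_{\tn{Th}}^{\tn{H}}(\cdot,\cdot)$ is real-valued, non-negative, satisfies the triangle inequality, and vanishes precisely when $h_\rho^{\tn{H}}L_\rho^{\tn{H}}=h_{\widehat\rho}^{\tn{H}}L_{\widehat\rho}^{\tn{H}}$.

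The substance of the argument is then to determine when $h_\rho^{\tn{H}}L_\rho^{\tn{H}}=h_{\widehat\rho}^{\tn{H}}L_{\widehat\rho}^{\tn{H}}$ holds. I would split according to the Zariski closure of $\rho$, using Benoist's theorem \cite[Th\'eor\`eme 3.6]{BenoistAutomorphismes}: either $\rho$ is Zariski dense, or $\rho(\Gamma)\subset\mathsf{PSO}(d,1)$. Since length spectra and entropies only see the adjoint form, and are unaffected by passing to the identity component, one may apply Theorem \ref{thm:rigidity} (after the standard verifications that $(\g_\rho)_0$, $(\g_{\widehat\rho})_0$ are simple, center-free, real algebraic, and contain the images of $\rho$, $\widehat\rho$ respectively — these hold in both cases, $\mathsf{PSL}(d+1,\rr)$ and $\mathsf{PSO}_0(1,d)$). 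This produces an isomorphism $\sigma:(\g_\rho)_0\to(\g_{\widehat\rho})_0$ with $\sigma\circ\rho=\widehat\rho$ and $\tn{H}\circ\pi_{\g_{\widehat\rho}}\circ\sigma=\tn{H}\circ\pi_{\g_\rho}$. In the hyperbolic case $(\g_\rho)_0\cong(\g_{\widehat\rho})_0\cong\mathsf{PSO}_0(1,d)$, the conclusion $\rho=\widehat\rho$ follows from Length Spectrum Rigidity for hyperbolic surfaces when $d=2$ and from Mostow rigidity when $d>2$; note the intermediate case where one of them is hyperbolic and the other Zariski dense cannot occur since $(\g_\rho)_0\cong(\g_{\widehat\rho})_0$.

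In the Zariski-dense case, $(\g_\rho)_0\cong(\g_{\widehat\rho})_0\cong\mathsf{PSL}(d+1,\rr)$ and $\sigma$ is an automorphism of $\mathsf{PSL}(d+1,\rr)$. If $\sigma$ is inner then $\rho=\widehat\rho$ in the character variety. If $\sigma$ is outer, it acts on the Dynkin diagram of type $\mathsf{A}_d$ by the unique nontrivial symmetry, so its action on $\liea$ is the opposition involution $\iota$, which sends $\lambda_i\mapsto -\lambda_{d+2-i}$; in particular $\iota$ fixes $\tn{H}=\lambda_1-\lambda_{d+1}$ (this is exactly why the Hilbert functional is \emph{not} rigid), and the compatibility condition $\tn{H}\circ\iota=\tn{H}$ from Theorem \ref{thm:rigidity} is automatically satisfied, so no contradiction arises. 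The outer automorphism given by $\iota$ is realized on $\mathsf{PSL}(d+1,\rr)$ by $g\mapsto\leftidx{^t}g^{-1}$, hence in this case $\widehat\rho=\rho^\star$. Conversely one checks directly that $\rho^\star$ is again a Benoist representation (it divides the dual convex body) and that $h_{\rho^\star}^{\tn{H}}L_{\rho^\star}^{\tn{H}}=h_{\rho}^{\tn{H}}L_{\rho}^{\tn{H}}$, since $\lambda(\rho^\star(\gamma))=\iota(\lambda(\rho(\gamma)))$ and $\tn{H}$ is $\iota$-invariant, so entropies agree too. This gives both implications in the statement.

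The only genuinely delicate point is bookkeeping in the Zariski-dense case: one must be careful that the conclusion of Theorem \ref{thm:rigidity} identifies the representations only up to the ambiguity $\rho\leftrightarrow\rho^\star$, and that the functional compatibility $\varphi\circ\sigma=\varphi$ does \emph{not} rule out the outer automorphism here (unlike in Corollary \ref{cor: unst Jac for benoist hilbert}, where $\tn J_{d-1}$ is not $\iota$-invariant, so Corollary \ref{cor: distance in zariski dense components} applies cleanly). Everything else is a routine transcription of the proof of Corollary \ref{cor: unst Jac for benoist hilbert}. I do not anticipate any real obstacle; the statement is essentially the ``$\mathsf{PSL}$-Hitchin Hilbert length'' Remark \ref{rem: asymmetric} transported to Benoist components, with the hyperbolic Zariski closures handled by Mostow rigidity.
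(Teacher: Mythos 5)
Your proposal is correct and follows essentially the same route as the paper: the paper proves this corollary by transcribing the proof of Corollary \ref{cor: unst Jac for benoist hilbert} (Theorem \ref{thm: dth for anosov} plus Benoist's classification of Zariski closures and Theorem \ref{thm:rigidity}, with Mostow rigidity in the $\mathsf{PSO}_0(1,d)$ case), observing exactly as you do that $\tn{H}$ is $\iota$-invariant so the outer (contragredient) automorphism cannot be excluded and instead accounts for the alternative $\widehat{\rho}=\rho^\star$ in the zero set (cf.\ Remark \ref{rem: other functionals in benoist components} and Remark \ref{rem: asymmetric}). The only slip is the throwaway formula ``$\tn{H}=\omega_1-\omega_d+\cdots$'' (one has $\tn{H}=\omega_{\alpha_1}+\omega_{\alpha_d}$), but the claim you actually use, namely $\tn{H}\in\liea_\Theta^*$ and positivity on the limit cone, is correct and is all that is needed.
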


\begin{rem}\label{rem: other functionals in benoist components}
The Hilbert length function $\tn H$ is the only element in $\tn{int}((\cone)^*)$ which is fixed by the opposition involution, and the unstable Jacobian $\tn J_{d-1}$ and its image $\tn J_{d-1}\circ \iota=-d\lambda_{d+1}-\lambda_1$ are the only elements in $\tn{int}((\cone)^*)$ that have constant entropy  on the whole $\Ben_d(\G)$. In particular for all other functionals $\varphi\in\tn{int}((\cone)^*)$, such as for example the spectral radius $\lambda_1$,
$$d_{\tn{Th}}^{\varphi}(\rho,\widehat{\rho}):=\log\left(\displaystyle\sup_{[\gamma]\in[\Gamma]} \frac{h_{\widehat{\rho}}^{\varphi}}{h_{\rho}^{\varphi}} \frac{ L_{\widehat{\rho}}^{\varphi}(\gamma) }{L_{\rho}^{\varphi}(\gamma)}\right)$$ 
\noindent defines a (possibly asymmetric) distance on $\Ben_d(\G)$. In all these cases the renormalization by entropy is, however, necessary.
\end{rem}

\subsection{AdS-quasi-Fuchsian representations}\label{subsec: AdSqf}

Let $q\geq 2$ and $\Gamma$ be the fundamental group of a closed real hyperbolic $q$-dimensional manifold. Denote by $\tn{QF}_q(\Gamma)$ the space of AdS-quasi-Fuchsian representations $\Gamma\to\mathsf{PO}_0(2,q)$, which is a union of connected components of the character variety (recall Example \ref{ex: AdSquasi-fuchsian}). Since representations in $\tn{QF}_q(\Gamma)$ are Anosov with respect to the space of isotropic lines, the Hilbert length functional $\tn{H}=\omega_1-\omega_{q+1}$ belongs to the Anosov-Levi space $\liea_\Theta^*$. This functional is a multiple of the spectral radius functional on $\mathsf{PO}_0(2,q)$.

\begin{cor} \label{cor: asymm for AdSQF hilbert}
If $q>2$,  the function $d_{\tn{Th}}^{\tn{H}}: \tn{QF}_q(\Gamma) \times \tn{QF}_q(\Gamma)  \to \rr$ given by $$d_{\tn{Th}}^{\tn{H}}(\rho,\widehat{\rho})=\log\left(\displaystyle\sup_{[\gamma]\in[\Gamma]} \frac{h_{\widehat{\rho}}^{\tn{H}}}{h_{\rho}^{\tn{H}}} \frac{ L_{\widehat{\rho}}^{\tn{H}}(\gamma) }{L_{\rho}^{\tn{H}}(\gamma)}\right)$$ \noindent defines a (possibly asymmetric) distance on $\tn{QF}_q(\Gamma)$.
\end{cor}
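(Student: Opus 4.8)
The plan is to deduce Corollary \ref{cor: asymm for AdSQF hilbert} from Theorems \ref{thm: dth for anosov} and \ref{thm:rigidity} (equivalently Corollary \ref{cor: distance in zariski dense components}), exactly following the template established for Benoist representations in Corollary \ref{cor: unst Jac for benoist hilbert}. By Theorem \ref{thm: dth for anosov}, $d_{\tn{Th}}^{\tn{H}}(\cdot,\cdot)$ is real-valued, non-negative and satisfies the triangle inequality, and $d_{\tn{Th}}^{\tn{H}}(\rho,\widehat\rho)=0$ if and only if $h_\rho^{\tn{H}}L_\rho^{\tn{H}}=h_{\widehat\rho}^{\tn{H}}L_{\widehat\rho}^{\tn{H}}$. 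So the only thing to prove is \emph{renormalized length spectrum rigidity}: that this equality of renormalized length spectra forces $\rho=\widehat\rho$ for $\rho,\widehat\rho\in\tn{QF}_q(\Gamma)$ when $q>2$.

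The key step is therefore to understand the possible Zariski closures $\g_\rho$ of an AdS-quasi-Fuchsian representation $\rho:\Gamma\to\mathsf{PO}_0(2,q)$. First I would record that $\Gamma$, being the fundamental group of a closed hyperbolic $q$-manifold, is a cocompact lattice in $\mathsf{PO}_0(1,q)$, hence word hyperbolic, non-elementary, and (for $q>2$) has no nontrivial finite normal subgroup and, by Mostow rigidity, very limited automorphisms; crucially it is torsion free if we pass to the relevant manifold group, so the torsion subtleties do not enter. The main structural input should be that $\g_\rho$ is reductive: arguing as in Lemma \ref{lem: Zclosure hitchin is reductive} via the Borel--Tits theorem, if $\g_\rho$ were contained in a proper parabolic then the limit flag curve $\xi_\rho$ would be pointwise fixed by a nontrivial $\exp(\liea_\Theta)$, contradicting the dynamical fact that AdS-quasi-Fuchsian representations are $\{\alpha_1\}$-Anosov with a genuinely moving limit curve (the acausal sphere is not contained in any proper flat). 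Given reductivity, the fact that $\rho$ is a quasi-isometric embedding of a group of cohomological dimension $q$ forces $\g_\rho$ to be semisimple of real rank at least — and in fact, by the constraints coming from the $\{\alpha_1\}$-Anosov property and the geometry of $\mathbb{A}\mathrm{d}\mathbb{S}^{1,q}$ — one expects $\g_\rho$ to be, up to conjugacy and center, either all of $\mathsf{PO}_0(2,q)$ or the subgroup $\mathsf{PO}_0(1,q)$ stabilizing the AdS-Fuchsian locus (this is precisely the classification one uses; one can invoke Barbot \cite{Barbot} and Barbot--M\'erigot \cite{BM} for the rigidity of the Anosov structure, and the general principle that a reductive Zariski closure of an Anosov representation of a hyperbolic group is simple in these low-complexity situations). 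Since $\mathsf{PO}_0(2,q)$ and $\mathsf{PO}_0(1,q)$ are center-free and simple (for $q>2$), and $\rho(\Gamma)$ lies in the identity component $(\g_\rho)_0$ by connectedness arguments as in Corollary \ref{cor: Z closure0 contains rho}, the hypotheses of Theorem \ref{thm:rigidity} are met.

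Applying Theorem \ref{thm:rigidity} then yields an isomorphism $\sigma:(\g_\rho)_0\to(\g_{\widehat\rho})_0$ with $\sigma\circ\rho=\widehat\rho$ and $\tn{H}\circ\pi_{\g_{\widehat\rho}}\circ\sigma=\tn{H}\circ\pi_{\g_\rho}$. In the case $(\g_\rho)_0\cong(\g_{\widehat\rho})_0\cong\mathsf{PO}_0(1,q)$, the equality $\rho=\widehat\rho$ follows from Mostow rigidity (for $q>2$). In the case $(\g_\rho)_0\cong(\g_{\widehat\rho})_0\cong\mathsf{PO}_0(2,q)$, the root system is of type $\mathsf{B}_q$, whose Dynkin diagram has no nontrivial automorphism, so $\sigma$ is inner and again $\rho=\widehat\rho$ (the potential mixed case where one closure is $\mathsf{PO}_0(1,q)$ and the other $\mathsf{PO}_0(2,q)$ is excluded because $\sigma$ is an isomorphism). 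This gives positivity of $d_{\tn{Th}}^{\tn{H}}$ off the diagonal. The main obstacle I anticipate is the Zariski-closure classification step: unlike the Benoist case, where Benoist \cite{BenoistAutomorphismes} supplies a ready-made list, for AdS-quasi-Fuchsian representations one must assemble this from the structure of $\mathsf{PO}_0(2,q)$, the $\{\alpha_1\}$-Anosov property, and the acausality of the limit sphere — and the exclusion of $q=2$ is essential, since there $\Gamma$ is a surface group and Mostow rigidity fails, so $\mathsf{PO}_0(1,2)$-valued deformations would obstruct point-separation; one should remark that in the $q=2$ case $d_{\tn{Th}}^{\tn H}$ degenerates along the Fuchsian locus just as the Hilbert length does on $\Ben_2(\G)$ in Remark \ref{rem: other functionals in benoist components}.
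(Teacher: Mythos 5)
Your overall strategy is the same as the paper's: reduce via Theorem \ref{thm: dth for anosov} to renormalized Hilbert length spectrum rigidity, handle the Zariski dense case through Theorem \ref{thm:rigidity}/Corollary \ref{cor: distance in zariski dense components} using that $\mathsf{PO}_0(2,q)$ has no non-inner automorphisms, and settle the non-dense case by rigidity of closed real hyperbolic manifolds. However, there is a genuine gap exactly where you yourself flag the "main obstacle": the classification of possible Zariski closures of an AdS-quasi-Fuchsian representation. Your sketch (Borel--Tits reductivity, then "one expects" the closure to be either $\mathsf{PO}_0(2,q)$ or $\mathsf{PO}_0(1,q)$, invoking a "general principle" and Barbot/Barbot--M\'erigot) is not a proof. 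The reductivity argument modelled on Lemma \ref{lem: Zclosure hitchin is reductive} used Fock--Goncharov positivity (finiteness of stabilizers of triples in the limit curve), which is not available here, and even granting reductivity you would still have to exclude intermediate semisimple subgroups of $\mathsf{PO}_0(2,q)$ (for instance $\mathsf{U}(1,q/2)$ for $q$ even, or stabilizers of non-degenerate subspaces of other signatures) compatible with an $\{\alpha_1\}$-Anosov representation preserving an acausal sphere; that exclusion is real work. The paper fills precisely this hole by quoting Glorieux--Monclair \cite[Proposition 1.4]{GMRegularity}: a non-Zariski-dense AdS-quasi-Fuchsian representation is AdS-Fuchsian, hence conjugate into $\mathsf{PO}(1,q)$, after which length spectrum rigidity for closed hyperbolic manifolds concludes. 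Without this input (or an argument replacing it), your proof of point-separation is incomplete.

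Two smaller remarks. The restricted root system of $\mathsf{PO}_0(2,q)$, $q>2$, is of type $\mathsf{B}_2$ (real rank two), not $\mathsf{B}_q$ as you write; this does not affect the conclusion you draw, since type $\mathsf{B}$ diagrams have no non-trivial automorphisms, which is the fact the paper uses. Your exclusion of the mixed case (one closure dense, the other not) because Theorem \ref{thm:rigidity} produces an isomorphism of the two closures is correct and is implicitly how the paper's case split works as well, and your closing comment on the degeneracy at $q=2$ matches the discussion following the corollary in the paper.
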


\begin{proof}

For $q>2$ the group $\mathsf{PO}_0(2,q)$ is simple, and the associated root system is of type $\mathsf{B}_2$. In particular, it has no non trivial automorphisms and therefore an automorphism of $\mathsf{PO}_0(2,q)$ is necessarily inner. Corollary \ref{cor: distance in zariski dense components} then proves the result when restricting to Zariski dense AdS-quasi-Fuchsian representations. 

Furthermore Glorieux-Monclair \cite[Proposition 1.4]{GMRegularity} computed the possible Zariski closures of an AdS-quasi-Fuchsian representation: if $\rho$ is not Zariski dense, then it is AdS-Fuchsian. This means that $\rho$ preserves a totally geodesic copy of $\mathbb{H}^q$ inside the Anti-de Sitter space and acts co-compactly on it (c.f. \cite[Remark 1.13]{DGKHpqCC}). Therefore $\rho(\Gamma)\subset\mathsf{PO}(1,q)\subset\mathsf{PO}_0(2,q)$. Hence the Length Spectrum Rigidity of closed real hyperbolic manifolds finishes the proof.

\end{proof}

In the special case $q=2$, the function $d_{\tn{Th}}^{\tn{H}}$ does not separate points. Indeed $\mathsf{PSO}_0(2,2)\cong\mathsf{PSL}(2,\rr)\times\mathsf{PSL}(2,\rr)$ and every representation of the form $$\rho=(\rho^{\tn{L}},\rho^{\tn{R}}):\pi_1(S)\to\mathsf{PSL}(2,\rr)\times\mathsf{PSL}(2,\rr),$$ \noindent where $\rho^\varepsilon$ is a point in Teichm\"uller space for $\varepsilon\in\{\tn{L},\tn{R}\}$, is AdS-quasi-Fuchsian. However, the representation $\widehat{\rho}:=(\rho^{\tn{R}},\rho^{\tn{L}})$ has the same Hilbert length spectrum as $\rho$, but $\rho\neq \widehat{\rho}$ (unless $\rho^{\tn{L}}=\rho^{\tn{R}}$).

\begin{rem}
Since AdS-quasi-Fuchsian representations have Lipschitz limit set, it follows again from \cite[Theorem A]{PSW2} that  the entropy of the unstable Jacobian $\tn J_{q-1}:=q\omega_1-\omega_q$ is constant and equal to 1 on $\tn{QF}_q(\Gamma)$. In particular, the function $d_{\tn{Th}}^{{\tn J_{q-1}}}: \tn{QF}_q(\Gamma) \times \tn{QF}_q(\Gamma)  \to \rr$ given by 
$$d_{\tn{Th}}^{{\tn J_{q-1}}}(\rho,\widehat{\rho}):=\log\left(\displaystyle\sup_{[\gamma]\in[\Gamma]}  \frac{ L_{\widehat{\rho}}^{\tn{J}_{q-1}}(\gamma) }{L_{\rho}^{\tn J_{q-1}}(\gamma)}\right)$$ \noindent is non negative. 

However, in this case the unstable Jacobian doesn't belong to the Levi-Anosov subspace. As a result it is not clear whether a metric Anosov flow with periods $\tn J_{q-1}$ exists allowing us to apply the Thermodynamical Formalism which is at the basis of this work. Thus, we don't know if the condition $d_{\tn{Th}}^{\tn J_{q-1}}(\rho,\widehat{\rho})=0$ leads to an equality between length spectra that allows to conclude that $d_{\tn{Th}}^{\tn J_{q-1}}$ separates points.
\end{rem}

\subsection{Zariski dense  $\Theta$-positive representations in $\mathsf{PO}_0(p,p+1)$}\label{s.Zdense}
Let $2\leq p\leq q$. Let $\G=\pi_1(S)$ be a surface group and $\tn{Pos}_{p,q}(S)$ be the space of $\Theta$-positive representations $\Gamma\to\mathsf{PO}_0(p,q)$ (c.f. Example \ref{ex:positive}).

\begin{cor}\label{cor: asymm for maximal sp4 root}
For $2<p\leq q$ and $j=1,\dots,p-2$ let $\alpha_j$ be the corresponding simple root of ${\sf PO}_0(p,q)$. Let $\Pos^\tn{Z}_{p,q}(\G)\subset\Pos_{p,q}(\G)$ be the subset consisting of Zariski dense representations. Then the function $$d_{\tn{Th}}^{\alpha_j}: \Pos^\tn{Z}_{p,q}(\G) \times \Pos^\tn{Z}_{p,q}(\G)  \to \rr$$ \noindent given by $$d_{\tn{Th}}^{\alpha_j}(\rho,\widehat{\rho})=\log\left(\displaystyle\sup_{[\gamma]\in[\Gamma]}  \frac{ L_{\widehat{\rho}}^{\alpha_j}(\gamma) }{L_{\rho}^{\alpha_j}(\gamma)}\right)$$ \noindent defines a (possibly asymmetric) distance on $\Pos^\tn{Z}_{p,q}(\G)$.
\end{cor}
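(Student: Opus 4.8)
The plan is to deduce Corollary~\ref{cor: asymm for maximal sp4 root} from the general machinery, exactly along the lines of Corollary~\ref{cor: asymm for hitchin roots} and Corollary~\ref{cor: unst Jac for benoist hilbert}. First I would check that the three ingredients highlighted in the introduction to Section~\ref{s.other} are in place. The relevant set of Anosov roots for $\Theta$-positive representations into $\mathsf{PO}_0(p,q)$ is $\Theta=\{\alpha_1,\ldots,\alpha_{p-1}\}$ (see Example~\ref{ex:positive}), so for $j\in\{1,\ldots,p-2\}$ the simple root $\alpha_j$ lies in $\Theta$ and hence in $\liea_\Theta^*$; moreover $\alpha_j\in\tn{int}((\cone)^*)$ for every $\rho$ in the component, since $\rho$ is $\Theta$-Anosov. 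Thus Sambarino's Reparametrizing Theorem (Theorem~\ref{thm: reparametrizing theorem}) applies, and Theorem~\ref{thm: dth for anosov} gives that $d_{\tn{Th}}^{\alpha_j}(\cdot,\cdot)$ is real valued, non-negative, satisfies the triangle inequality, and vanishes exactly when $h_\rho^{\alpha_j}L_\rho^{\alpha_j}=h_{\widehat\rho}^{\alpha_j}L_{\widehat\rho}^{\alpha_j}$.

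The second step is to remove the entropy renormalization: by \cite{PSW2} the entropy with respect to any root in $\Theta$ is equal to $1$ for $\Theta$-positive representations into $\mathsf{PO}_0(p,q)$, so $h_\rho^{\alpha_j}\equiv 1$ on $\Pos_{p,q}(\G)$ and the formula in the statement is exactly $d_{\tn{Th}}^{\alpha_j}$. Hence on the whole component the vanishing of $d_{\tn{Th}}^{\alpha_j}(\rho,\widehat\rho)$ is equivalent to the equality of $\alpha_j$-length spectra $L_\rho^{\alpha_j}=L_{\widehat\rho}^{\alpha_j}$.

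The third step is the rigidity input, and this is where I would invoke the Zariski density hypothesis. Since we restrict to $\Pos^{\tn Z}_{p,q}(\G)$, both $\rho$ and $\widehat\rho$ have Zariski closure equal to all of $\mathsf{PO}_0(p,q)$ (or at least a group with the same Jordan projection); one should note $\mathsf{PO}_0(p,q)$ is simple and center-free when $p+q$ is odd, i.e. when $q=p+1$, which is the case flagged in the subsection title, and $\rho(\G)\subset(\g_\rho)_0=\mathsf{PO}_0(p,q)$. Then Theorem~\ref{thm:rigidity} (equivalently Corollary~\ref{cor: distance in zariski dense components}) shows that $L_\rho^{\alpha_j}=L_{\widehat\rho}^{\alpha_j}$ forces an automorphism $\sigma$ of $\mathsf{PO}_0(p,q)$ with $\sigma\circ\rho=\widehat\rho$ and $\alpha_j\circ\sigma=\alpha_j$. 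The Dynkin diagram of $\mathsf{PO}_0(p,p+1)$ is of type $\mathsf{B}_p$, which has no non-trivial diagram automorphism, so every automorphism of $\mathsf{PO}_0(p,p+1)$ is inner; hence $\sigma$ is inner and $\rho=\widehat\rho$ in the character variety. This proves $d_{\tn{Th}}^{\alpha_j}$ separates points. (For general $q>p+1$ the group is still simple with root system $\mathsf{B}_p$, so the same argument goes through; this is presumably why the statement is phrased for all $2<p\le q$.) Finally, as in Corollary~\ref{cor: asymm for hitchin roots}, asymmetry follows because $\Pos_{p,q}(\G)$ contains a Fuchsian locus, i.e. a copy of $\Teich(S)$ embedded via a principal-type embedding $\mathsf{PSL}(2,\rr)\to\mathsf{PO}_0(p,q)$, along which $L^{\alpha_j}$ is a positive multiple of the hyperbolic length spectrum, and Thurston \cite[p.5]{ThurstonStretch} exhibits points of $\Teich(S)$ at different forward and backward distances.

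The main obstacle I anticipate is the precise control of Zariski closures: Theorem~\ref{thm:rigidity} requires $\g_\rho$ and $\g_{\widehat\rho}$ to be simple, real algebraic and center-free and to satisfy $\rho(\G)\subset(\g_\rho)_0$, and for this the hypothesis of Zariski density (rather than a Guichard/Benoist/Sambarino-type classification of all closures, unavailable here — which is precisely why \cite[Conjecture~1.7]{Collier} is still open) is exactly what lets us bypass the classification. So the genuine content beyond bookkeeping is (i) verifying $\mathsf{PO}_0(p,q)$ is center-free and that a Zariski dense $\Theta$-positive representation actually lands in the identity component, and (ii) checking that $\mathsf{B}_p$ admits no outer automorphisms so that the functional-invariance conclusion $\alpha_j\circ\sigma=\alpha_j$ is automatic; both are standard but must be stated. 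Everything else is a direct citation of Theorem~\ref{thm: dth for anosov}, \cite{PSW2}, and Corollary~\ref{cor: distance in zariski dense components}.
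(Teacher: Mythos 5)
Your proposal is correct and follows essentially the same route as the paper: $\alpha_j$ lies in the Anosov roots so Theorem \ref{thm: dth for anosov} applies, the entropy is constant by \cite{PSW2}, and rigidity on $\Pos^{\tn{Z}}_{p,q}(\G)$ comes from Theorem \ref{thm:rigidity} (equivalently Corollary \ref{cor: distance in zariski dense components}) together with the fact that the restricted root system $\mathsf{B}_p$ admits no diagram automorphism, so all automorphisms of the simple center-free group $\mathsf{PO}_0(p,q)$ are inner. Your closing asymmetry argument via a Fuchsian locus is not needed (the statement only claims a possibly asymmetric distance) and would not apply anyway, since Fuchsian representations are not Zariski dense and hence lie outside $\Pos^{\tn{Z}}_{p,q}(\G)$; likewise the aside tying center-freeness to $q=p+1$ is unnecessary, as the paper simply uses that $\mathsf{PO}_0(p,q)$ is simple and center-free for all $p\leq q$ in the stated range.
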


\begin{proof}

As ${\sf PO}_0(p,q)$ $\Theta$-positive representations are $\Theta$-Anosov for $\Theta=\{\alpha_1,\dots,\alpha_{p-1}\}$ (see \cite{GLW, BeyPSO}), we have $\alpha_j\in\tn{int}((\cone)^*)$ for every $\rho\in\tn{Pos}^{\tn{Z}}_{p,q}(\Gamma)$. Furthermore, $\alpha_j$-entropy is constant on the space of ${\sf PO}_0(p,q)$ positive representations \cite[Corollary 1.7]{PSW2}. Thus to finish the proof it only remains to show that $\alpha_j$-length spectrum rigidity holds on $\tn{Pos}^{\tn{Z}}_{p,q}(\Gamma)$.  

Since $\mathsf{PO}_0(p,q)$ is simple and center free, Theorem \ref{thm:rigidity} guarantees that two representations in $\tn{Pos}^{\tn{Z}}_{p,q}(\Gamma)$ having the same renormalized length spectra differ by an automorphism of $\mathsf{PO}_0(p,q)$. Since the Dynkin diagram associated to the root system of $\mathsf{PO}_0(p,q)$ is of type of type $\mathsf{B}_p$ and admits no non trivial automorphism, the outer automorphism group of $\mathsf{PO}_0(p,q)$ is trivial and this finishes the proof.

\end{proof}

\begin{rem}
The space $\tn{Pos}_{2,3}(\Gamma)$ contains connected components only consisting of Zariski dense representations \cite[Theorem 4.40]{AlessandriniCollier}. More generally, for all $p>2$ the space $\tn{Pos}_{p,p+1}(\Gamma)$ contains smooth connected components.  It is conjectured that these consist only of Zariski dense representations as well (see \cite[Conjecture 1.7]{Collier}), if the conjecture were true, the functions in Corollary \ref{cor: asymm for maximal sp4 root} would define metrics on these connected components.

On the other hand it follows from the classification in \cite{ABCGGO} that for $q\geq p$ all connected components of $\tn{Pos}_{p,q}(S)$, with the exception of the Hitchin component if $p=q$ contain representations with compact centralizer. 
\end{rem}

\appendix

\section{Geodesic currents}\label{appendix: currents}

Bridgeman-Canary-Labourie-Sambarino \cite[p.60]{BCLSSIMPLEROOTS} remarked that the renormalized intersection number of Subsection \ref{subsec: inter and renormalized intersection} can be linked to \textit{Bonahon's intersection number}, in the specific case of geodesic flows associated to points in the Teichm\"uller space of a surface. We explain this in more detail for the reader's convenience. 

Let $S$ be a connected closed orientable surface of genus bigger than one and $\tilde{S}$ be its universal cover. Let $\Gamma$ be the fundamental group of $S$. A \textit{(complete) geodesic} of $\tilde{S}$ is an element of $\bgs$. A \textit{geodesic current} is a Borel, locally finite, $\Gamma$-invariant measure on the space of geodesics of $\tilde{S}$, which is also invariant under the map $(x,y)\mapsto(y,x)$. We let $\mathscr{C}(S)$ be the space of geodesic currents in $S$. An important example of a geodesic current is given by isotopy classes of closed curves in $S$: every such class $\alpha$ defines an element $\delta_{\alpha}\in\mathscr{C}(S)$ by representing $\alpha$ as a conjugacy class $c_\alpha$ in $\Gamma$, and then considering the sum of Dirac masses supported on the axes of elements in $c_\alpha$. Another interesting example is given by \textit{measured geodesic laminations} on $S$ (c.f. Bonahon \cite[p. 153]{BonahonCurrents}).

Bonahon \cite{BonahonCurrents} defined a continuous, bilinear, symmetric pairing $$i:\mathscr{C}(S)\times\mathscr{C}(S)\to\rr_{\geq 0},$$
\noindent called the \textit{intersection number} between geodesic currents. This terminology is motivated by the following property: if $\alpha$ and $\beta$ are isotopy classes of closed curves in $S$, then one has
$$i(\delta_\alpha,\delta_\beta)=\displaystyle\inf_{\alpha'\in\alpha,\beta'\in\beta}\#(\alpha'\cap\beta').$$

Furthermore, Bonahon defines an embedding $$\mathtt{L}:\Teich(S)\hookrightarrow\mathscr{C}(S)$$ \noindent from the Teichm\"uller space $\Teich(S)$ into the space of geodesic currents that can be described as follows. Since every point $\rho\in \teichrep$ is Anosov, we have an equivariant limit map $\xi_\rho:\bg\to\pp(\rr^2)$ and we may pull back the Haar measure on $\pp(\rr^2)\times\pp(\rr^2)\setminus\{(\eta,\eta):\hspace{0,2cm} \eta\in\pp(\rr^2)\}$ under this map. We obtain an element $\mathtt{L}_\rho\in\mathscr{C}(S)$ which is called the \textit{Liouville current} of $\rho$. Furthermore, the Haar measure on $\pp(\rr^2)\times\pp(\rr^2)\setminus\{(\eta,\eta):\hspace{0,2cm} \eta\in\pp(\rr^2)\}$ can be normalized so that for every isotopy class of closed curves $\alpha$ in $S$
\begin{equation}\label{eq: intersection liouville}
  i(\mathtt{L}_\rho,\delta_\alpha)=L_\rho(\alpha),
\end{equation}
\noindent where $L_\rho(\alpha)$ is the length of the unique closed geodesic (for the metric $\rho$) in the isotopy class $\alpha$ (c.f. \cite[Proposition 14]{BonahonCurrents}).

The embedding $\mathtt{L}:\Teich(S)\to\mathscr{C}(S)$ allows us to relate renormalized intersection and Bonahon's intersection. Indeed, pick a base point $\rho_0\in \Teich(S)$ and denote by $S_{\rho_0}$ the underlying hyperbolic surface. The associated geodesic flow $\phi=\phi_{\rho_0}$ is a topologically transitive Anosov flow and admits a strong Markov coding (c.f. Theorem \ref{teo: pollicott coding metric anosov}). Furthermore, the choice of $\rho_0$ induces a homeomorphism between $\mathscr{C}(S)$ and the space $\pphi$. Indeed, the Busemann-Iwasawa cocycle of $\rho_0$ induces an identification between the unit tangent bundle of the Riemannian universal cover of $S_{\rho_0}$ with $$\bgs\times\rr,$$
\noindent in such a way that the action of the (lifted) geodesic flow is given by translation in the $\rr$-coordinate. The identification $\mathscr{C}(S)\cong\pphi$ is defined by associating to a geodesic current $\nu$ the probability measure $m_\nu$ homothetic to the quotient measure of $\nu\otimes\od t$.

 The geodesic flow $\psi=\psi_\rho$ corresponding to another point $\rho\in \Teich(S)$ is H\"older orbit equivalent to $\phi=\phi_{\rho_0}$, and therefore we may think $\psi$ as an element of\footnote{Formally, there is no canonical way of identifying $\psi$ with a specific reparametrization of $\phi$, but just to a Liv\v{s}ic cohomology class (c.f. Liv\v{s}ic's Theorem \ref{teo: livsic}). For simplicity we will ignore this detail in this discussion and think that the choice of $\rho$ induces a specific element $\psi\in\hr$. As it will become clear, the discussion is independent of this arbitrary choice.} $\hr$.

\begin{lema}\label{lem: renormalized and bonahon}
Let $\rho_0$ and $\rho$ be two points in $\Teich(S)$ and take $\nu\in\mathscr{C}(S)$. Then: $$\mathbf{I}_{m_\nu}(\phi,\psi)=\mathbf{J}_{m_\nu}(\phi,\psi)=\frac{i(\nu,\mathtt{L}_\rho)}{i(\nu,\mathtt{L}_{\rho_0})}.$$
\end{lema}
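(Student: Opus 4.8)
The statement has two parts: first, that the two intersection numbers $\mathbf{I}_{m_\nu}(\phi,\psi)$ and $\mathbf{J}_{m_\nu}(\phi,\psi)$ coincide; second, that their common value equals the ratio $i(\nu,\mathtt{L}_\rho)/i(\nu,\mathtt{L}_{\rho_0})$. The first equality is the easy one: since both $\phi=\phi_{\rho_0}$ and $\psi=\psi_\rho$ are geodesic flows of hyperbolic metrics on $S$, their topological entropies satisfy $h_\phi=h_\psi=1$ (the topological entropy of the geodesic flow of a closed hyperbolic surface is $1$), hence $\mathbf{J}_{m_\nu}(\phi,\psi)=(h_\psi/h_\phi)\mathbf{I}_{m_\nu}(\phi,\psi)=\mathbf{I}_{m_\nu}(\phi,\psi)$ by the very definition of renormalized intersection in Subsection~\ref{subsec: inter and renormalized intersection}. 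So the content is entirely in the second equality.

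\textbf{Main computation.} By definition $\mathbf{I}_{m_\nu}(\phi,\psi)=\int_X r_{\phi,\psi}\,\od m_\nu$, where $r_{\phi,\psi}$ is the reparametrizing function from $\phi$ to $\psi$. The plan is to unwind the identification $\mathscr{C}(S)\cong\mathcal{P}(\phi)$ described just before the lemma: $m_\nu$ is the probability measure homothetic to the quotient of $\nu\otimes\od t$ under the $\Gamma$-action on $\bgs\times\rr$ coming from the Busemann--Iwasawa cocycle of $\rho_0$. I would first observe that the normalizing constant making $\nu\otimes\od t$ into a probability measure is (up to the total mass of the compact quotient) proportional to $i(\nu,\mathtt{L}_{\rho_0})$ --- this is exactly the content of Equation~\eqref{eq: intersection liouville} together with Theorem~\ref{teo: periodic orbits dense} (the Liouville current of $\rho_0$ corresponds under the identification to the Bowen--Margulis/Liouville probability measure, and pairing with $\nu$ reads off the $\nu$-mass). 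Next I would recall, from Sambarino's Reparametrizing Theorem and the discussion of refraction cocycles in Section~\ref{sec: anosov flows and reps}, that $r_{\phi,\psi}$ represents the Liv\v{s}ic class of the $\rho$-refraction cocycle relative to the $\rho_0$-refraction cocycle; concretely, for a periodic orbit $a=a([\gamma])$ one has $\int_0^{p_\phi(a)} r_{\phi,\psi}(\phi_t x)\,\od t=p_\psi(a)=L_\rho(\gamma)=i(\delta_{[\gamma]},\mathtt{L}_\rho)$ by Equation~\eqref{eq: integral of reparametrizing over periodic orbit} and~\eqref{eq: intersection liouville}. Thus $\int r_{\phi,\psi}\,\od\delta_\phi(a)=L_\rho(\gamma)/L_{\rho_0}(\gamma)=i(\delta_{[\gamma]},\mathtt{L}_\rho)/i(\delta_{[\gamma]},\mathtt{L}_{\rho_0})$, which is precisely the claimed formula when $\nu=\delta_{[\gamma]}$ is the current of a closed curve.

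\textbf{From periodic orbits to general currents.} The final step is to pass from the dense set of currents supported on closed geodesics to an arbitrary $\nu\in\mathscr{C}(S)$. Here I would use two inputs: continuity of Bonahon's intersection pairing $i(\cdot,\cdot)$ in both variables (Bonahon~\cite{BonahonCurrents}), and continuity of $m\mapsto\int r_{\phi,\psi}\,\od m$ on $\mathcal{P}(\phi)$ in the weak-$\star$ topology, together with the fact that the identification $\mathscr{C}(S)\cong\mathcal{P}(\phi)$ is a homeomorphism and that Dirac masses on periodic orbits are dense in $\mathcal{E}(\phi)$ (Theorem~\ref{teo: periodic orbits dense}), hence --- after the homothety normalization --- currents of closed curves are dense among currents, up to scaling. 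Since both sides of the asserted equality are invariant under rescaling $\nu$ (the left side because the normalization defining $m_\nu$ is scale-invariant, the right side because $i$ is bilinear so the scalar cancels in the ratio), and both sides are weak-$\star$ continuous in $\nu$, the equality on the dense set of (scaled) curve currents propagates to all of $\mathscr{C}(S)$.

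\textbf{Anticipated obstacle.} The routine-but-delicate point will be bookkeeping the normalization constants: making precise that ``homothetic to the quotient of $\nu\otimes\od t$'' produces exactly the measure whose pairing against $r_{\phi,\psi}$ gives the stated ratio, i.e.\ checking that the Haar-measure normalization in~\eqref{eq: intersection liouville} is compatible with the probability-measure normalization in the definition of $m_\nu$, so that the constant $i(\nu,\mathtt{L}_{\rho_0})$ appears in the denominator and not, say, its square or a multiple depending on the area of $S$. I expect this to be a short but careful unwinding rather than a substantial difficulty, and the cleanest route is probably to verify it on curve currents (where everything is an honest period computation) and then invoke density and continuity, exactly as above.
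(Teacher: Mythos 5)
Your overall route---identifying $\mathbf{I}$ and $\mathbf{J}$ via $h_\phi=h_\psi=1$, verifying the formula on curve currents by the period computation that combines \eqref{eq: integral of reparametrizing over delta in periodic orbit} with \eqref{eq: intersection liouville} and the identity $m_{\delta_\alpha}=\delta_\phi(a_\alpha)$, and then propagating by continuity and density---is exactly the paper's proof. However, the density step as you justify it has a genuine gap. You deduce that closed-curve currents are dense, up to scaling, in $\mathscr{C}(S)$ from Theorem \ref{teo: periodic orbits dense}, i.e.\ from density of the orbital Dirac measures in $\ephi$. Under the homeomorphism $\nu\mapsto m_\nu$, the ergodic measures form only a proper subset of the image: a general current corresponds to a general invariant probability measure in $\pphi$, which need not be ergodic (already a multi-curve with two distinct components gives a nontrivial convex combination of two orbital measures, hence a non-ergodic $m_\nu$). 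So density of orbital measures in $\ephi$ does not yield density of curve currents in $\mathscr{C}(S)$, and your limiting argument as written only establishes the identity for the currents corresponding to ergodic measures. The statement you actually need is Bonahon's theorem that positive multiples of closed-curve currents are dense in all of $\mathscr{C}(S)$ \cite[Proposition 2]{BonahonCurrents}---this is what the paper invokes---or, alternatively, a Sigmund-type statement that orbital measures are dense in the whole simplex $\pphi$, which is true for flows admitting a strong Markov coding but is strictly stronger than Theorem \ref{teo: periodic orbits dense} as stated.

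A secondary remark: in your ``main computation'' you assert that the total mass of the quotient of $\nu\otimes\od t$ is proportional to $i(\nu,\mathtt{L}_{\rho_0})$, citing \eqref{eq: intersection liouville} and Theorem \ref{teo: periodic orbits dense}. Equation \eqref{eq: intersection liouville} only concerns pairings with curve currents, and extending that mass identity to arbitrary $\nu$ requires precisely the density-plus-continuity argument you are in the process of setting up, so as written this is circular; since you ultimately fall back on the curve-current computation it does not damage the proof, but it should be removed or deduced afterwards rather than used as an ingredient. Finally, the reduction to single curve currents (rather than weighted multi-curves) does require the scale-invariance of both sides, which you correctly note; with Bonahon's density result substituted for the ergodic-measure argument, the rest of your write-up coincides with the paper's proof.
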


\begin{proof}

The function $\mathbf{J}_{\cdot}(\phi,\psi)$ is continuous on $\pphi$. Similarly, $i(\cdot,\mathtt{L}_{\rho_0}) \tn{ and } i(\cdot,\mathtt{L}_{\rho})$ are continuous on $\mathscr{C}(S)$. Since $\nu\mapsto m_\nu$ is a homeomorphism and multi-curves are dense in $\mathscr{C}(S)$ (see Bonahon \cite[Proposition 2]{BonahonCurrents}), it suffices to prove the statement for $\nu=\delta_\alpha$, where $\alpha$ is any isotopy class of closed curves in $S$. 

Assume then $\nu=\delta_\alpha$. By Equation (\ref{eq: intersection liouville}) we have
$$i(\nu,\mathtt{L}_{\rho_0})=L_{\rho_0}(\alpha) \tn{ and } i(\nu,\mathtt{L}_\rho)=L_{\rho}(\alpha).$$
\noindent On the other hand, it is well known that $h_{\phi}=h_{\psi}=1$ (c.f. Manning \cite{ManningEntropy}), hence $\mathbf{J}_{m_\nu}(\phi,\psi)=\mathbf{I}_{m_\nu}(\phi,\psi)$. Also, $\alpha$ defines a periodic orbit $a_\alpha\in\mathcal{O}$ satisfying $p_{\phi}(a_\alpha)=L_{\rho_0}(\alpha)$ and $p_{\psi}(a_\alpha)=L_\rho(\alpha)$. Since $m_{\delta_\alpha}=\delta_{\phi}(a_\alpha)$, Equation (\ref{eq: integral of reparametrizing over delta in periodic orbit}) completes the proof. 
\end{proof}

One can check that $m^{\tn{BM}}(\phi)=m_{\mathtt{L}_{\rho_0}}$. Hence, combining Lemma \ref{lem: renormalized and bonahon} and Bonahon \cite[Proposition 15]{BonahonCurrents} we have \begin{equation}\label{eq: renormalized BM teich}
\mathbf{J}_{m^{\tn{BM}}(\phi)}(\phi,\psi)=\frac{i(\mathtt{L}_{\rho_0},\mathtt{L}_\rho)}{i(\mathtt{L}_{\rho_0},\mathtt{L}_{\rho_0})}=\frac{i(\mathtt{L}_{\rho_0},\mathtt{L}_\rho)}{\pi^2\vert \chi(S)\vert}.    
\end{equation} \noindent As an interesting consequence, one gets $$\mathbf{J}_{m^{\tn{BM}}(\phi)}(\phi,\psi)=\mathbf{J}_{m^{\tn{BM}}(\psi)}(\psi,\phi)$$\noindent for all $\rho_0,\rho\in \teichrep$. However, if one considers the supremum of all renormalized intersections (rather than just the Bowen-Margulis-renormalized intersection), this symmetry no longer holds: combine Theorem \ref{teo: asymmetric distance flows} with Thurston's example \cite[p.5]{ThurstonStretch}.

Another interesting consequence of Equation (\ref{eq: renormalized BM teich}) is that it recovers a result by Bonahon \cite[p. 156]{BonahonCurrents}. Indeed, combining that equation with
Proposition \ref{prop: BCLS renorm int rigidity} and length spectrum rigidity on $\Teich(S)$, one has $$i(\mathtt{L}_{\rho_0},\mathtt{L}_\rho)\geq \pi^2\vert \chi(S)\vert$$
\noindent for all $\rho_0,\rho\in \teichrep$, with equality if and only if $\rho=\rho_0$.

\bibliography{ref}
\bibliographystyle{alpha}

\end{document}